\documentclass[12pt]{book}

\usepackage{amsmath, amsthm, amsfonts, amssymb}
\usepackage[all]{xy}
\usepackage{fullpage}

\theoremstyle{plain}
\newtheorem{thm}{Theorem}[chapter]
\newtheorem{lem}[thm]{Lemma}
\newtheorem{prop}[thm]{Proposition}
\newtheorem{cor}[thm]{Corollary}
\newtheorem*{question}{Question}

\theoremstyle{definition}
\newtheorem*{defn}{Definition}

\newtheoremstyle{citing}
  {3pt}{3pt}{\itshape}{}{\bfseries}{.}{.5em}{\thmnote{#3}}

\theoremstyle{citing}
\newtheorem*{thmx}{}

\newcommand{\sbinom}[2]{\genfrac{[}{]}{0pt}{}{#1}{#2}}

\newcommand{\Aut}{\mathrm{Aut}}
\newcommand{\Hom}{\mathrm{Hom}}
\newcommand{\GL}{\mathrm{GL}}

\newcommand{\qemb}{\mathrm{qemb}}
\newcommand{\pow}{\mathrm{pow}}
\newcommand{\Fcom}{\mathrm{Fcom}}
\newcommand{\com}{\mathrm{com}}
\newcommand{\proj}{\mathrm{proj}}
\newcommand{\F}{\mathbb{F}}

\newcommand{\Z}{\mathbb{Z}}
\newcommand{\C}{\mathbb{C}}
\newcommand{\ep}{\varepsilon}
\newcommand{\Inn}{\mathrm{Inn}}
\newcommand{\Out}{\mathrm{Out}}
\newcommand{\Sp}{\mathrm{Sp}}

\allowdisplaybreaks[1]

\begin{document}

\title{Automorphism Groups of Finite \lowercase{$p$}-Groups: \\ Structure and Applications \\
\vspace{.25in}
{\small A Dissertation Submitted to the \\ Department of Mathematics at Stanford University \\ in Partial Fulfillment of the Requirements for \\ the Degree of Doctor of Philosophy \\}
\vspace{.5in}
{\large Geir T. Helleloid} \\
\vspace{.25in}
{\small Department of Mathematics \\
The University of Texas at Austin \\
1 University Station C1200 \\
Austin, TX 78712 \\
Current Work Email Address: \texttt{geir@ma.utexas.edu} \\
Permanent Email Address: \texttt{geir.helleloid@gmail.com} \\}}
\date{August 2007}

\maketitle

\section*{Abstract}
This thesis has three goals related to the automorphism groups of finite $p$-groups.  The primary goal is to provide a complete proof of a theorem showing that, in some asymptotic sense, the automorphism group of almost every finite $p$-group is itself a $p$-group.  We originally proved this theorem in a paper with Martin; the presentation of the proof here contains omitted proof details and revised exposition.  We also give a survey of the extant results on automorphism groups of finite $p$-groups, focusing on the order of the automorphism groups and on known examples.  Finally, we explore a connection between automorphisms of finite $p$-groups and Markov chains.  Specifically, we define a family of Markov chains on an elementary abelian $p$-group and bound the convergence rate of some of those chains.

\section*{Acknowledgments}

First, I would like to thank my advisor, Persi Diaconis.  Over the past five years, he has shown me the beautiful connections between random walks, combinatorics, finite group theory, and plenty of other mathematics.  His continued encouragement and advice made the completion of this thesis possible.  Thanks also go to Dan Bump, Nat Thiem, and Ravi Vakil for being on my defense committee and helping me finish my last hurdle as a Ph.D. student.

My mentor Joe Gallian has influenced my life in more ways than I ever could have imagined when I first went to his REU in the summer of 2001.  Spending one summer in Duluth as a student, two summers as a research advisor, and four summers as a research visitor has done more for my development as a research mathematician, teacher, and mentor than anything else in my life.  At the same time, I first met most of my best friends at Duluth, and I will always thank Joe for bringing Phil Matchett Wood, Melanie Wood, Dan Isaksen, David Arthur, Stephen Hartke, and so many others into my life.

I would also like to thank my friends at Stanford for their steadfast friendship over the years, particularly Leo Rosales, Dan Ramras, and Dana Paquin.

I've saved the best for last.  My parents and my girlfriend Jenny are the three most important people in my life, and I cannot thank them enough for their unconditional love and support.  Jenny was my cheerleader in the last stressful year of my graduate studies, and my parents have been my cheerleaders for the last 26 years.  Thank you.

\tableofcontents

\hspace{1in}

\vfill

\pagebreak

\section*{Notation and Terminology}

Let $G$ be a group.
\begin{itemize}
\item If $x,y \in G$, then $[x,y] = x^{-1}y^{-1}xy$.
\item $G' = [G,G] = \left< [x,y] \, : \, x,y \in G \right>$
\item $Z(G) = $ the center of $G$
\item $\Phi(G) = $ the Frattini subgroup of $G$
\item $\Inn(G) = $ the group of inner automorphisms of $G$
\item $\Out(G) = \Aut(G)/\Inn(G) = $ the group of outer automorphisms of $G$
\item $C_n = $ the cyclic group on $n$ elements
\item $d(G) = $ the minimum cardinality of a generating set of $G$.  If $G$ is a free group, then $d(G)$ is called the \emph{rank} of $G$.  If $G$ is a finite elementary abelian $p$-group, then $d(G)$ is the dimension of $G$ as an $\F_p$-vector space and thus is called the \emph{dimension} of $G$.
\item $\GL(d, \F_q) = $ the general linear group of dimension $d$ over the finite field $\F_q$
\item $\sbinom{n}{k}_q = $ the Gaussian (or $q$-binomial) coefficent.  This equals the number of $k$-dimensional subspaces of an $\F_q$-vector space of dimension $n$.
\item $\mathcal{G}_n(q) = $ the Galois number.  This equals the total number of subspaces of an $\F_q$-vector space of dimension $n$.
\end{itemize}

\chapter{Introduction}
\label{c_intro}

For any fixed prime $p$, a non-trivial group $G$ is a \emph{$p$-group} if the order of every element of $G$ is a power of $p$.  When $G$ is finite, this is equivalent to saying that the order of $G$ is a power of $p$.  The study of $p$-groups (and particularly finite $p$-groups) is an important subfield of group theory.  One motivation for studying finite $p$-groups is Sylow's Theorem 1, which states that if $G$ is a finite group, $p$ divides the order of $G$, and $p^n$ is the largest power of $p$ dividing the order of $G$, then $G$ has at least one subgroup of order $p^n$.  Such subgroups are called the \emph{Sylow $p$-subgroups} of $G$.  The fact that $G$ has at least one Sylow $p$-subgroup for each prime $p$ dividing the order of $G$ suggests that in some heuristic sense, finite $p$-groups are the ``building blocks'' of finite groups, and that to understand finite groups, we must first understand finite $p$-groups.  This thesis studies the automorphism groups of finite $p$-groups, but this introductory chapter begins with a description of two other aspects of finite $p$-group theory, both to give a sense for what $p$-group theorists study, and because they are relevant to the main question addressed in this thesis.

It turns out that understanding finite $p$-groups (whatever that means) is quite hard.  Mann~\cite{man} has a wonderful survey of research and open questions in $p$-group theory.  Much of the research relies on a basic fact about finite $p$-groups: they are \emph{nilpotent groups}.  A group $G$ is nilpotent if the series of subgroups $H_0 = G$, $H_1 = [G,G]$, $H_2 = [H_1, G]$, $\dots$, eventually reaches the trivial subgroup.  Here, $[H_i, G]$ denotes the subgroup of $G$ generated by all commutators consisting of an element in $H_i$ and an element in $G$.  If $m$ is the smallest positive integer such that $H_m$ is trivial, then we say that $G$ is \emph{nilpotent of class $m$}, or just \emph{of class $m$}.  When $G$ is a finite $p$-group and the order of $G$ is $p^n$, the class of $G$ is at least 1 and at most $n-1$.  Finite $p$-groups of class 1 are the abelian $p$-groups, and those of class $n-1$ are said to be of \emph{maximal class}.

One triumph in finite $p$-group theory over the past 30 years has been the positive resolution of the five \emph{coclass conjectures} via the joint efforts of several researchers.  While we have no hope of a complete classification of finite $p$-groups up to isomorphism (see Leedham-Green and McKay~\cite[Preface]{lm}), the coclass conjectures do offer a lot of information about finite $p$-groups.  Mann~\cite[Section 3]{man} has a short discussion of the subject, and the book by Leedham-Green and McKay~\cite{lm} is devoted to a proof of the conjectures and related research.  We will state only one of the conjectures here.  The \emph{coclass} of a finite $p$-group of order $p^n$ and class $m$ is defined to be $n-m$.  The coclass Conjecture A states that for some function $f(p,r)$, every finite $p$-group of coclass $r$ has a normal subgroup $K$ of class at most 2 and index at most $f(p,r)$.  If $p = 2$, one can require $K$ to be abelian.

Another aspect of $p$-group theory is the enumeration of finite $p$-groups by their order and related questions, as described in Mann~\cite[Section 1]{man}.  Let $g(k)$ equal the number of groups of order at most $k$, let $g_{\mathrm{nil}}(k)$ equal the number of nilpotent groups of order at most $k$, let $g_p(k)$ equal the number of $p$-groups of order at most $k$, and let $g_{p,2}(k)$ equal the number of $p$-groups of order at most $k$ and class 2.  It is known that
\[
\lim_{k \to \infty}{\frac{g_2(k)}{g_{\mathrm{nil}}(k)}} = 1.
\]
It is an open question as to whether or not
\[
\lim_{k \to \infty}{\frac{g_{\mathrm{nil}}(k)}{g(k)}} = 1;
\]
if so, it would imply that most finite groups are 2-groups.  Pyber~\cite{pyb} has shown the weaker result that
\[
\lim_{k \to \infty}{\frac{\log{g_{\mathrm{nil}}(k)}}{\log{g(k)}}} = 1.
\]
Higman~\cite{hig} and Sims~\cite{sim} show that $g_p(p^n)$ and $g_{p,2}(p^n)$ are both given by the formula $p^{(2/27)k^3+o(k^3)}$ (using little-oh notation).  It is an open problem to evaluate
\[
\lim_{n \to \infty}{\frac{g_{p,2}(p^n)}{g_p(p^n)}}.
\]
It is possible that the limit is 1 and that most $p$-groups have class 2.

This thesis explores the structure of the automorphism groups of finite $p$-groups and the connections between these automorphism groups and other topics.  There are three principal goals.  The first goal is to prove Theorem~\ref{mainthm}, which says that, in a certain asymptotic sense, the automorphism group of a finite $p$-group is almost always a $p$-group.  A weaker version of this result was announced by Martin in~\cite{mar}, and Helleloid and Martin~\cite{hm} prove the general result.  The presentation of the proof in this thesis contains some omitted proof details and revised exposition.

There are many reasonable asymptotic senses in which one could ask if the automorphism group of a finite $p$-group is almost always a $p$-group.  The most obvious is to sort $p$-groups by their order as in the above questions about the number of $p$-groups.  Nilpotence class is another important invariant that one might consider, while the coclass conjectures suggest that an approach using coclass might be more successful.  As it happens, Theorem~\ref{mainthm} does not use any of these parameters, instead turning to the minimum cardinality of a generating set of the $p$-group and an invariant known as the \emph{lower $p$-length}.  While it would be of great interest to prove analogous theorems using the invariants suggested above, it seems that the proofs would require very different machinery.

Chapter~\ref{c_main} contains a statement of Theorem~\ref{mainthm} and an outline of the proof, while Chapters~\ref{c_series} through~\ref{c_submodules} and Appendix~\ref{a_estimates} complete the full proof.  The proof relies on a variety of topics: analyzing the lower $p$-series of a free group via its connection with the free Lie algebra; counting normal subgroups of a finite $p$-group; counting submodules of a module via Hall polynomials; and using numerical estimates on Gaussian coefficients.  Some of the intermediate results may be of independent interest, including Theorems~\ref{expthm},~\ref{normalthm}, and~\ref{submodulesthm}.

The second goal of this thesis is to survey much of what is known about the automorphism groups of finite $p$-groups.  The latter part of this introductory chapter discusses what is known in general about these automorphism groups.  Chapter~\ref{c_examples} focuses on three other topics: explicit computations on the automorphism groups of finite $p$-groups; constructions of finite $p$-groups whose automorphism groups satisfy certain conditions; and examples of finite $p$-groups for which it is known whether or not the automorphism group is itself a $p$-group.

There are aspects of the research on the automorphism groups of finite $p$-groups that are largely omitted in this survey.  We mention three here.  The first is the conjecture that $|G| \le |\Aut(G)|$ for all non-cyclic finite $p$-groups $G$ of order at least $p^3$.  This has been verified for many families of $p$-groups, and no counter-examples are known; there is an old survey by Davitt~\cite{dav}.  The second is the (large) body of work on finer structural questions, like how the automorphism group of an abelian $p$-group splits or examples of finite $p$-groups whose automorphism group fixes all normal subgroups.  The third is the computational aspect of determining the automorphism group of a finite $p$-group.  Eick, Leedham-Green, and O'Brien~\cite{elo} describe an algorithm for constructing the automorphism group of a finite $p$-group.  This algorithm has been implemented by Eick and O'Brien in the GAP package AutPGroup~\cite{gap4}.  There are references in~\cite{elo} to other related research as well.

There are a few survey papers that also summarize some results on automorphism groups.  Corsi Tani~\cite{cor} gives examples of finite $p$-groups whose automorphism group is a $p$-group; all these examples are included in Chapter~\ref{c_examples} along with some others.  Starostin~\cite{sta} and Mann~\cite{man} survey open questions about finite $p$-groups, and each include a section on automorphism groups.  In particular, Starostin focuses on specific examples related to the $|G| \le |\Aut(G)|$ conjecture and finer structural questions.

The third goal of this thesis is to explore a connection between the automorphisms of a finite $p$-group and random walks.  Chapter~\ref{c_apps} focuses on computing the convergence rate of a certain Markov chain on a finite abelian $p$-group that has been ``twisted'' by an automorphism.  Appendix~\ref{a_rwestimates} contains numerical estimates used in this computation.  The introduction to Chapter~\ref{c_apps} briefly mentions the appearance of automorphisms of $p$-groups in two other contexts, namely  projections of random walks and supercharacter theory.

We conclude this introduction with some general results about the automorphism group of a finite $p$-group, for the most part following the survey of Mann~\cite{man}.  First, we can identify three subgroups of $\Aut(G)$ which are themselves $p$-groups.  The inner automorphism group $\Inn(G)$ is trivially a $p$-group.  More interestingly, let $\Aut_c(G)$ be the automorphisms of $G$ which induce the identity automorphism on $G/Z(G)$ (where $Z(G)$ is the center of $G$), and let $\Aut_f(G)$ be the automorphisms of $G$ which induce the identity automorphism on $G/\Phi(G)$ (where $\Phi(G)$ is the Frattini subgroup of $G$, defined as the intersection of all maximal subgroups of $G$).  Then $\Aut_c(G)$ and $\Aut_f(G)$ are $p$-groups, both of which contain $\Inn(G)$.  More results on $\Aut_c(G)$ are given by Curran and McCaughan~\cite{cm}.

The next result is a theorem of Gasch\"{u}tz~\cite{gas}, which states that all finite $p$-groups $G$ have outer automorphisms.  Furthermore, unless $G$ is cyclic of order $p$, there is an outer automorphism whose order is a power of $p$.  It is an open question of Berkovich as to whether this outer automorphism can be chosen to have order $p$.  Schmid~\cite{sch} extends Gasch\"{u}tz' theorem to show that if $G$ is a finite nonabelian $p$-group, then there is an outer automorphism that acts trivially on $Z(G)$.  Furthermore, if $G$ is neither elementary abelian nor extraspecial, then $\Out(G)$ has a non-trivial normal $p$-subgroup.  Webb~\cite{web3} proves Gash\"{u}tz's theorem and Schmid's first generalization in a simpler way and without group cohomology.  If $G$ is not elementary abelian nor extraspecial, then M\"{u}ller~\cite{mul} shows that $\Aut_f(G) > \Inn(G)$.

As mentioned earlier, one prominent open question is whether or not  $|G| \le |\Aut(G)|$ for all non-cyclic $p$-groups $G$ of order at least $p^3$.  A related question concerns the \emph{automorphism tower} of $G$, namely
\[
G = G_0 \to G_1 = \Aut(G_0) \to G_2 = \Aut(G_1) \to \cdots,
\]
where the maps are the natural maps from $G_i$ to $\Inn(G_i)$.  For general groups $G$, a theorem of Wielandt shows that if $G$ is centerless, then the automorphism tower of $G$ becomes stationary in a finite number of steps.  Little is known about the automorphism tower of finite $p$-groups.  In particular, it is not known whether or not there exist finite non-trivial $p$-groups $G$ other than $D_8$ with $\Aut(G) \cong G$.

\chapter{The Automorphism Group is Almost Always a $p$-Group}
\label{c_main}
\chaptermark{Almost Always a \lowercase{$p$}-Group}

Over the next five chapters, we will prove that, in some asymptotic sense, the automorphism group of a finite $p$-group is almost always a $p$-group.  This chapter begins with some examples of automorphism groups of finite $p$-groups and related computational data.  All of the examples are discussed in greater detail in Chapter~\ref{c_examples}, which is a survey of results on the automorphism groups of finite $p$-groups.  We continue with a precise statement of the main theorem, Theorem~\ref{mainthm}, as well as an outline of the proof.  The following chapters (and Appendix~\ref{a_estimates}) give the details of the proof.  A weaker version of this result was announced by Martin in~\cite{mar}, and Helleloid and Martin~\cite{hm} prove the general result.

\section{Examples and Computational Data}

The claim that the automorphism group of a finite $p$-group is almost always a $p$-group may not seem entirely plausible, since many common finite $p$-groups have an automorphism group that is not a $p$-group.  The first finite $p$-groups that spring to mind are probably the abelian ones.  Any finite abelian $p$-group $G$ is isomorphic to $C_{p^{\lambda_1}} \times C_{p^{\lambda_2}} \times \cdots \times C_{p^{\lambda_k}}$ for some choice of integers $\lambda_1 \ge \lambda_2 \ge \cdots \ge \lambda_k \ge 0$, where $C_m$ denotes the cyclic group of order $m$.  Macdonald~\cite[Chapter II, Theorem 1.6]{mac} offers an exact formula for the order of $\Aut(G)$ in terms of $\lambda_1, \lambda_2, \dots, \lambda_k$ which shows that $\Aut(G)$ is a $p$-group if and only if $p = 2$ and the integers $\lambda_i$ are not all distinct.

Another family of finite $p$-groups consists of the Sylow $p$-subgroups $G$ of the general linear groups over $\F_q$, where $q$ is a power of $p$. Pavlov~\cite{pav} and Weir~\cite{wei} offer an explicit description of the automorphisms of $G$ and an exact determination of the structure of $\Aut(G)$ in terms of semi-direct products of elementary abelian and cyclic groups.  It follows from their work that $\Aut(G)$ is a $p$-group if and only if $p = q = 2$.

A third family of finite $p$-groups familiar to group theorists consists of the extraspecial $p$-groups.  These are the nonabelian $p$-groups $G$ whose center, commutator subgroup, and Frattini subgroup are all equal to each other and isomorphic to $C_p$.  Winter~\cite{win} shows that $\Aut(G) \cong H \ltimes \left< \theta \right>$, where $\theta$ is an automorphism of order $p-1$ and the quotient of $H$ by the (elementary abelian) inner automorphism group of $G$ is a certain subgroup of a symplectic group.  As a consequence, $\Aut(G)$ is not a $p$-group for any prime $p$.

Besides these explicit examples of automorphism groups which are not $p$-groups, Bryant and Kov\'{a}cs~\cite{bk} show that \emph{any} finite group occurs as a certain quotient of $\Aut(G)$ for some finite $p$-group $G$.  Of course, if this finite group is not a $p$-group, then $\Aut(G)$ will not be a $p$-group.  But in the course of proving the main theorem, we will show that the quotient in question is in fact almost always trivial.

Finding finite $p$-groups whose automorphism group is a $p$-group is reasonably easy when $p = 2$ and quite difficult when $p > 2$.  In the case of $p = 2$, as mentioned before, if $G$ is an abelian $2$-group and (in the notation used above) the integers $\lambda_i$ are not all distinct, then $\Aut(G)$ is a $2$-group.  The automorphism group of the dihedral $2$-group $D_{2^n}$ (for $n \ge 3$) is the 2-group $C_{2^{n-1}}^{\times} \ltimes C_2^{n-1}$.  The automorphism group of the generalized quaternion 2-group $Q_{2^n}$ (for $n \ge 4$) is the 2-group $C_{2^{n-1}} \ltimes (C_{2^{n-3}} \times C_2)$ (see Zhu and Zuo~\cite{zz}).  Newman and O'Brien~\cite{no} offer three more infinite families.

When $p > 2$, the known examples of finite $p$-groups whose automorphisms groups are $p$-groups are much more complicated.  In~\cite{hor}, for every integer $n \ge 2$, Horo{\v{s}}evski{\u\i} constructs such a $p$-group with class $n$ and, for every integer $d \ge 3$, constructs such a $p$-group that is minimally generated by $d$ elements.  Furthermore, Horo{\v{s}}evski{\u\i} shows in~\cite{hor} and~\cite{hor2} that for any prime $p$, if $G_1, G_2, \dots, G_n$ are finite $p$-groups whose automorphism groups are $p$-groups, then the automorphism group of the iterated wreath product $G_1 \wr G_2 \wr \cdots \wr G_n$ is also a $p$-group.  The other known examples arise from complicated and unnatural-looking constructions (see Webb~\cite{web}).  As previously mentioned, Chapter~\ref{c_examples} offers a more detailed survey of the automorphism groups of specific $p$-groups.

In a computational vein, Eick, Leedham-Green, and O'Brien~\cite{elo} describe an algorithm for constructing the automorphism group of a finite $p$-group.  This algorithm has been implemented by Eick and O'Brien in the GAP package AutPGroup~\cite{gap4}.  Compiled with the gracious help of Eamonn O'Brien (personal communication) and the GAP packages AutPGroup and SmallGroups~\cite{gap4}, Table~\ref{data} summarizes data on the proportion of small $p$-groups whose automorphism group is a $p$-group.  (More information about the SmallGroups package can be found in Besche, Eick and O'Brien~\cite{beo}.)  Our ability to compute the massive amount of data encapsulated in Table~\ref{data} is a testament to the power of the AutPGroup algorithm.

Table~\ref{data} does not offer enough data to make any firm conjectures, but we can make some observations.  First, the behavior for $p = 2$ and for $p > 2$ seems to be different; the proportions in the table for $p = 2$ are much higher than for $p > 2$.  We have no explanation for this other than the na\"{i}ve guess that $p$-groups ``often'' have automorphisms of order $p-1$, which prevents the automorphism group from being a $p$-group unless $p = 2$.  The second observation is that for $2 \le p \le 5$ and $3 \le n \le 7$, the proportion shown in the table is a non-decreasing function of $n$.  Finally, for $3 \le p \le 5$ and $3 \le n \le 7$, the proportion shown in the table is a non-decreasing function of $p$.

Again, this is hardly enough data to make any conjectures, but we might begin to hope that the proportion of $p$-groups of order $p^n$ whose automorphism group is a $p$-group tends to a limit as $p$ or $n$ goes to infinity, and perhaps even that the limit is 1.  These questions remain open (see Mann~\cite[Question 9]{man}).  Indeed, our main theorem does show that the automorphism group of a finite $p$-group is almost always a $p$-group, but the asymptotic sense in which we mean ``almost always'' does not refer to the order of the group.  The next section will explain what we mean by ``almost always'' and will state the main theorem.

\begin{table} \label{data}
\begin{center}
\begin{tabular}{|c|c|c|c|}
\hline
Order & $p = 2$ & $p = 3$ & $p = 5$ \\
\hline
$p^3$ & 3 of 5 & 0 of 5 & 0 of 5 \\
$p^4$ & 9 of 14 & 0 of 15 & 0 of 15 \\
$p^5$ & 36 of 51 & 0 of 67 & 1 of 77 \\
$p^6$ & 211 of 267 & 30 of 504 & 65 of 685 \\
$p^7$ & 2067 of 2328 & 2119 of 9310 & 11895 of 34297 \\
\hline
\end{tabular}
\parbox{3in}{\caption{The proportion of $p$-groups of a given order whose automorphism group is a $p$-group.}}
\end{center}
\end{table}

\section{The Main Theorem}

The precise statement of our theorem depends on the \emph{lower $p$-series} of a group.  The lower $p$-series of a group $G$ is the descending series of subgroups $G_1 \ge G_2 \ge \cdots$ defined inductively by $G_1 = G$ and $G_{i+1} = G_i^p [G, G_i]$.  Here, $G_i^p$ is the subgroup generated by $p$-th powers of elements of $G_i$, and $[G, G_i]$ is the subgroup generated by commutators consisting of an element from $G$ and an element from $G_i$.  Section~\ref{lowerpsec} explores the properties of the lower $p$-series in more detail; for the moment, it suffices to know that each $G_i$ is a characteristic subgroup of $G$ and that the quotients $G_i/G_{i+1}$ are all elementary abelian $p$-groups.  Since $G_i/G_{i+1}$ is an elementary abelian $p$-group, it is also an $\F_p$-vector space, and we will refer to the dimension $\dim(G_i/G_{i+1})$ of $G_i/G_{i+1}$ when we mean the dimension of $G_i/G_{i+1}$ as an $\F_p$-vector space.  We say that $G$ has \emph{lower $p$-length} $n$ if the number of non-identity terms in its lower $p$-series is $n$.  Also, for any group $G$, we let $d(G)$ denote the smallest cardinality of a generating set of $G$.  As we will see later, every $p$-group $G$ with lower $p$-length $n$ and $d(G) = d$ is finite, and there are finitely many such $p$-groups.  Finally we can state the main theorem.

\begin{thm} \label{mainthm}
Let $r_{p, d, n}$ be the proportion of $p$-groups $G$ with lower $p$-length at most $n$ and $d(G) = d$ whose automorphism group is a $p$-group.  If $n \ge 2$, then
\[
\lim_{d \to \infty}{r_{p, d, n}} = 1.
\]
If $d \ge 5$, then
\[
\lim_{n \to \infty}{r_{p, d, n}} = 1.
\]
If one of the following conditions is satisfied:
\begin{eqnarray}
\bullet && n=2, \nonumber \\
\bullet && n \ge 3 \textrm{ and } d \ge 17, \nonumber \\
\bullet && n \ge 4 \textrm{ and } d \ge 8, \label{dnres} \\
\bullet && n \ge 5 \textrm{ and } d \ge 6, \textrm{ or } \nonumber \\
\bullet && n \ge 10 \textrm{ and } d \ge 5, \nonumber
\end{eqnarray}
then
\[
\lim_{p \to \infty}{r_{p, d, n}} = 1.
\]
\end{thm}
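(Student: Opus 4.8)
The plan is to translate the theorem into a statement about orbit counts and then reduce ``$\Aut(G)$ is a $p$-group'' to a stabilizer condition inside $\GL(d,\F_p)$. Fix $p$, $d$, $n$; let $F$ be the free group on $d$ generators with lower $p$-series $F = F_1 \ge F_2 \ge \cdots$, and set $P = F/F_{n+1}$. Standard relatively-free-group arguments identify the isomorphism classes of $p$-groups $G$ with $d(G) = d$ and lower $p$-length at most $n$ with the $\Aut(P)$-orbits on $\{N \trianglelefteq P : N \le \Phi(P)\}$, so $r_{p,d,n}$ is the proportion of such orbits for which $\Aut(P/N)$ is a $p$-group. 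Since $\Aut_f(P/N)$ is always a $p$-group, $\Aut(P/N)$ is a $p$-group if and only if the image $A(N)$ of $\Aut(P/N)$ acting on $(P/N)/\Phi(P/N) \cong \F_p^d$ lies in a Sylow $p$-subgroup of $\GL(d,\F_p)$. Now $A(N)$ is generated by the image of those automorphisms of $P/N$ that lift to automorphisms of $P$ fixing $N$, together with the Bryant--Kov\'{a}cs obstruction (the quotient of $\Aut(P/N)$ by that liftable part, which by Bryant--Kov\'{a}cs can be an arbitrary finite group but which we must show is almost always trivial). The liftable part is contained in $\bigcap_{i=2}^{n} \mathrm{Stab}_{\GL(d,\F_p)}(N_i)$, where $N_i \le L_i := F_i/F_{i+1}$ is the image of $N \cap F_i$ in the degree-$i$ piece of the free restricted Lie algebra $L = \bigoplus_{i \ge 1} L_i$ on $V = F/F_2$, equipped with its natural polynomial $\GL(d,\F_p)$-action; this is where Theorem~\ref{expthm} enters, both for the module structure and for lower bounds on $\dim L_i$ (which grows roughly like $d^i/i$).

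So it suffices to show that, for all but a vanishing proportion of $N$, the Bryant--Kov\'{a}cs obstruction is a $p$-group \emph{and} some $\mathrm{Stab}_{\GL(d,\F_p)}(N_i)$ is a $p$-group (one may take $i=n$, the largest piece). For the stabilizer part: a subspace $W \le L_i$ whose stabilizer is not a $p$-group is invariant under some $g \in \GL(d,\F_p)$ of prime order $\ell \ne p$, so the number of ``bad'' $N$ of a given type is at most $\sum_{[g]} \#\{g\text{-invariant subspaces of the appropriate shape}\}$, the sum over conjugacy classes $[g]$ of such $g$. As $g$ has order prime to $p$, its action on $L_i$ is semisimple, and the number of $g$-invariant subspaces is a product of Galois numbers $\mathcal{G}_{a}(p^e)$ over the isotypic components of $L_i$ under $\langle g\rangle$; this is smaller than the total number $\mathcal{G}_{\dim L_i}(p)$ of subspaces by a factor exponential in $\dim L_i$, the saving being large precisely because any non-scalar $g$ forces a large off-diagonal block in its action on $L_i$. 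The number of relevant conjugacy classes in $\GL(d,\F_p)$ grows only polynomially in $p$ of degree about $d$; the count of normal subgroups of $P$ (Theorem~\ref{normalthm}), together with the submodule counts via Hall polynomials that organize the contribution of each type (Theorem~\ref{submodulesthm}), bounds the denominator below. Feeding this into the numerical estimates on Gaussian coefficients from Appendix~\ref{a_estimates}: when $d \to \infty$ with $n \ge 2$, or $n \to \infty$ with $d \ge 5$, $\dim L_i$ outgrows both the conjugacy-class count and $|\Aut(P)|$ (the price of passing between subgroup counts and orbit counts), so $1 - r_{p,d,n} \to 0$; when $p \to \infty$ with $n,d$ fixed, $\dim L_i$ no longer grows, and one needs the per-$g$ exponential saving to strictly dominate the fixed polynomial overhead — which is exactly the trade-off recorded in the case list \eqref{dnres}, each bullet being a regime where the relevant $\dim L_i$ is just large enough for the inequality to close.

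The hard part will be the quantitative bookkeeping, which splits into two genuinely delicate pieces. First, one must understand the $\GL(d,\F_p)$-module structure of the $L_i$ well enough — via Witt-type formulas for the free restricted Lie algebra — to know not only that $\dim L_i$ is large but that every non-scalar element of $\GL(d,\F_p)$ acts on $L_i$ with a genuinely large mixing block, so its invariant-subspace count is exponentially smaller than $\mathcal{G}_{\dim L_i}(p)$; the weakest savings come from elements that are as close to scalar as possible, and pinning those down is what forces the specific thresholds in \eqref{dnres}. Second, $r_{p,d,n}$ counts isomorphism classes, i.e.\ $\Aut(P)$-orbits, not subgroups, and a ``bad'' orbit can be tiny (for instance $N = \Phi(P)$ is fixed by all of $\Aut(P)$, and the corresponding $G$ is elementary abelian with $\Aut(G) = \GL(d,\F_p)$), so passing from ``few bad subgroups'' to ``few bad orbits'' requires either tracking orbit sizes or, better, producing a large supply of pairwise non-isomorphic good groups $P/N$ — and this must be done compatibly with showing the Bryant--Kov\'{a}cs obstruction vanishes for almost all $N$, a separate cohomological input. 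Making all of these estimates simultaneously sharp enough to yield the stated ranges of $(n,d)$ is the crux of the argument.
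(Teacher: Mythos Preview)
Your outline has the right skeleton --- translate to $\Aut(P)$-orbits on normal subgroups of $P=F/F_{n+1}$, reduce ``$\Aut(G)$ is a $p$-group'' to a stabilizer condition in $\GL(d,\F_p)$, and count --- but two of your load-bearing steps are off.

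First, the ``Bryant--Kov\'acs obstruction'' you invoke does not exist here. Because $P$ is relatively free, \emph{every} automorphism of $P/N$ is induced by an automorphism of $P$ normalizing $N$ (this is the point of Theorem~\ref{isoenum1}); there is no non-liftable part to control. For $N \le P_n = F_n/F_{n+1}$, Theorem~\ref{isoenum2} gives the clean identification $A(P/N) \cong N_{\GL(d,\F_p)}(N)$, so what you want is exactly that this stabilizer be trivial, not merely a $p$-group. The paper therefore aims at the stronger statement $A(G)=1$, which translates to $N$ lying in a \emph{regular} $\GL(d,\F_p)$-orbit.

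Second, the paper does not try to handle all $N \le \Phi(P)$ at once. It uses the two-step decomposition
\[
\frac{|\mathfrak{D}_{d,n}|}{|\mathfrak{A}_{d,n}|} \;=\; \frac{|\mathfrak{D}_{d,n}|}{|\mathfrak{C}_{d,n}|}\cdot\frac{|\mathfrak{C}_{d,n}|}{|\mathfrak{A}_{d,n}|},
\]
where $\mathfrak{C}_{d,n}$ is the set of orbits of $N$ lying in the last layer $P_n$ and $\mathfrak{D}_{d,n}$ is the set of regular such orbits. The first ratio is handled by Cauchy--Frobenius on $\GL(d,\F_p)$ acting on subspaces of $P_n$: one bounds the number of $g$-invariant subspaces for \emph{every} $g\neq 1$ (not just semisimple $g$ of prime order $\ell\neq p$ as you propose --- Cauchy--Frobenius needs all non-identity elements, including unipotent ones, and this is where the Hall-polynomial submodule counts of Theorem~\ref{submodulesthm} enter). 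This avoids your orbit-versus-subgroup difficulty entirely, since the lemma counts orbits directly. The second ratio is handled crudely: Theorem~\ref{normalthm} bounds $|\mathcal{A}_{d,n}\setminus\mathcal{C}_{d,n}|$ so tightly that even if every orbit there were a singleton and every orbit in $\mathcal{C}_{d,n}$ were full-size, the ratio would still tend to $1$. This two-layer split is the organizing idea you are missing; without it, your orbit-size worry (``a bad orbit can be tiny'') is real and your proposed fix (``produce many pairwise non-isomorphic good groups'') is vague exactly where precision is needed.
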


Some of the given conditions on $d$ and $n$ are necessary.  For example, the only $p$-group with lower $p$-length 1 and $d(G) = d$ is $C_p^d$, so $r_{p,d,1} = 0$ for $p > 2$ or $d > 1$.  Similarly, the only $p$-group with lower $p$-length $n$ and $d(G) = 1$ is $C_{p^n}$, so $r_{p,1,n} = 0$ for $p > 2$ or $n > 1$.  However, it is not clear what conditions on $d$ and $n$ are absolutely necessary in Theorem~\ref{mainthm}.

\section{An Outline of the Proof of the Main Theorem}

The proof of Theorem~\ref{mainthm} breaks down into three parts, which are presented in Chapters~\ref{c_series}, \ref{c_normal}, and \ref{c_submodules}, and are assembled to prove Theorem~\ref{mainthm} at the end of this chapter.  In this section, we will outline the structure of the proof.

The first step is to connect the enumeration of finite $p$-groups to an analysis of certain subgroups and quotients of free groups.  In fact, we will prove bijections between certain families of finite $p$-groups and certain orbits of subgroups of a free group.  Let $F$ be the free group of rank $d$ and let $F_n$ be the $n$-th term in the lower $p$-series of $F$.  It turns out that the action of $\Aut(F/F_{n+1})$ on the vector space $F_n/F_{n+1}$ induces an action of $\GL(d, \F_p)$ on $F_n/F_{n+1}$, and the $\Aut(F/F_{n+1})$-orbits on the subgroups of $F/F_{n+1}$ lying in $F_n/F_{n+1}$ are also the $\GL(d, \F_p)$-orbits.  We say that an orbit is \emph{regular} if it has trivial stabilizer, that is, if the size of the orbit equals the size of the group that is acting.

For any finite $p$-group $G$, write $A(G)$ for the group of automorphisms of $G/\Phi(G)$ induced by $\Aut(G)$, where $\Phi(G)$ is the Frattini subgroup of $G$.  We shall see that if $A(G)$ is a $p$-group then so is $\Aut(G)$; in fact, our main goal is to prove that $A(G)$ is almost always trivial (in the same asymptotic sense as in Theorem~\ref{mainthm}).  In Chapter~\ref{c_series}, after defining and investigating the lower $p$-series, we prove the following theorem.

\begin{thm} \label{bijthm}
Fix a prime $p$ and integers $d, n \ge 2$.  Let $F$ be the free group of rank $d$ and define the following sets:
\begin{eqnarray*}
\mathcal{A}_{d,n} &=& \{\textrm{normal subgroups of $F/F_{n+1}$ lying in $F_2/F_{n+1}$}\} \\
\mathcal{B}_{d,n} &=& \{\textrm{normal subgroups of $F/F_{n+1}$ lying in $F_2/F_{n+1}$} \\
&& \qquad \textrm{and not containing $F_n/F_{n+1}$}\} \\
\mathcal{C}_{d,n} &=& \{\textrm{normal subgroups of $F/F_{n+1}$ lying in $F_n/F_{n+1}$}\} \\
\mathcal{D}_{d,n} &=& \{\textrm{normal subgroups of $F/F_{n+1}$ contained in the} \\
&& \qquad \textrm{regular $\GL(d,\F_p)$-orbits in $\mathfrak{C}_{d,n}$}\} \\
\\
\mathfrak{A}_{d,n} &=& \{\textrm{$\Aut(F/F_{n+1})$-orbits in $\mathcal{A}_{d,n}$}\} \\
\mathfrak{B}_{d,n} &=& \{\textrm{$\Aut(F/F_{n+1})$-orbits in $\mathcal{B}_{d,n}$}\} \\
\mathfrak{C}_{d,n} &=& \{\textrm{$\Aut(F/F_{n+1})$-orbits in $\mathcal{C}_{d,n}$}\} = \{\textrm{$\GL(d,\F_p)$-orbits in $\mathcal{C}_{d,n}$}\} \\
\mathfrak{D}_{d,n} &=& \{\textrm{regular $\GL(d,\F_p)$-orbits in $\mathcal{C}_{d,n}$}\}.
\end{eqnarray*}
Then there is a well-defined map $\pi_{d,n}: \mathfrak{A}_{d,n} \to \{\textrm{finite $p$-groups}\}$ given by $L/F_{n+1} \mapsto F/L$, where $L/F_{n+1} \in \mathcal{A}_{d,n}$.  Furthermore $\pi_{d,n}$ induces bijections
\begin{eqnarray*}
\mathfrak{A}_{d,n} &\leftrightarrow& \{\textrm{$p$-groups $H$ of lower $p$-length at most $n$ with $d(H) = d$}\} \\
\mathfrak{B}_{d,n} &\leftrightarrow& \{\textrm{$p$-groups $H$ of lower $p$-length $n$ with $d(H) = d$}\} \\
\mathfrak{D}_{d,n} &\leftrightarrow& \{\textrm{$p$-groups $H$ in $\pi_{d,n}(\mathfrak{C}_{d,n})$ with $A(H) = 1$}\}.
\end{eqnarray*}
\end{thm}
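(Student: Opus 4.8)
The plan is to write $\bar F := F/F_{n+1}$ and use freely the structural facts about the lower $p$-series established in Section~\ref{lowerpsec}: $\bar F$ is a finite $p$-group with $\Phi(\bar F) = F_2/F_{n+1}$ and $d(\bar F) = \dim(F/F_2) = d$; each $F_i/F_{n+1}$ is characteristic (indeed fully invariant) in $\bar F$; $F_n/F_{n+1} \le Z(\bar F)$, so every $\F_p$-subspace of $F_n/F_{n+1}$ is normal in $\bar F$ and $\mathcal C_{d,n}$ is exactly the lattice of those subspaces; the lower $p$-series is preserved by surjections; and $\Aut(\bar F)$ acts on the vector space $V := F_n/F_{n+1}$ through a surjection $\Aut(\bar F) \twoheadrightarrow \GL(d,\F_p) = \Aut(F/F_2)$, which is what makes the middle equality in the definition of $\mathfrak C_{d,n}$ hold. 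On the target of $\pi_{d,n}$, ``finite $p$-group'' means isomorphism class.

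That $\pi_{d,n}$ is well defined on orbits is immediate: an automorphism of $\bar F$ carrying $L/F_{n+1}$ to $L'/F_{n+1}$ descends to an isomorphism $F/L \cong F/L'$. For the $\mathfrak A_{d,n}$ bijection, surjectivity onto the set of $p$-groups $H$ of lower $p$-length at most $n$ with $d(H) = d$ comes from lifting: a surjection $F \twoheadrightarrow H$ kills $F_{n+1}$ (since $H_{n+1}=1$ and the lower $p$-series is preserved by surjections), hence factors as $\bar F \twoheadrightarrow H$ with kernel some $L/F_{n+1}$, and $d(H) = d = \dim(F/F_2)$ forces $L \subseteq F_2$, so $L/F_{n+1} \in \mathcal A_{d,n}$. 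Injectivity rests on the key lemma: \emph{any two surjections $\phi_1, \phi_2 \colon \bar F \twoheadrightarrow H$ with $d(H) = d$ satisfy $\phi_2 = \phi_1 \alpha$ for some $\alpha \in \Aut(\bar F)$.} To prove it, fix a minimal generating tuple $(\bar x_i)$ of $\bar F$; then $(\phi_2(\bar x_i))$ is a minimal generating tuple of $H$, and choosing $\bar y_i \in \phi_1^{-1}(\phi_2(\bar x_i))$ one checks that $(\bar y_i)$ is a minimal generating tuple of $\bar F$ (their images span $\bar F/\Phi(\bar F)$ because $\phi_1$ induces an isomorphism $\bar F/\Phi(\bar F) \cong H/\Phi(H)$, then invoke the Burnside basis theorem); finally, since $F$ is free and $\bar F$ is finite hence Hopfian, the substitution $\bar x_i \mapsto \bar y_i$ extends to a surjective, hence bijective, endomorphism $\alpha$ of $\bar F$, and $\phi_1\alpha = \phi_2$. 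As the elements of $\mathcal A_{d,n}$ mapping to $H$ are precisely the kernels of surjections $\bar F \twoheadrightarrow H$, the lemma shows they form a single $\Aut(\bar F)$-orbit, which is injectivity. The $\mathfrak B_{d,n}$ bijection follows by restriction, because for $L/F_{n+1} \in \mathcal A_{d,n}$ the group $F/L$ has lower $p$-length exactly $n$ iff $F_n \not\subseteq L$, a condition invariant under $\Aut(\bar F)$ since $F_n/F_{n+1}$ is characteristic.

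For $\mathfrak D_{d,n}$, observe $\mathcal C_{d,n} \subseteq \mathcal A_{d,n}$, so $\mathfrak C_{d,n} \subseteq \mathfrak A_{d,n}$ and $\pi_{d,n}$ is injective on $\mathfrak C_{d,n}$ with image $\pi_{d,n}(\mathfrak C_{d,n})$; it then suffices to show that for $H = F/L$ with $W := L/F_{n+1} \in \mathcal C_{d,n}$ one has $A(H) = \mathrm{Stab}_{\GL(d,\F_p)}(W)$ (both regarded inside $\GL(d,\F_p) = \Aut(F/F_2)$ via the canonical $H/\Phi(H) = \bar F/\Phi(\bar F) = F/F_2$), for then $A(H)=1$ iff the $\GL(d,\F_p)$-orbit of $W$ is regular, i.e. $W \in \mathcal D_{d,n}$. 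Since $W \le Z(\bar F)$ we have $H = \bar F/W$, and $g \in \GL(d,\F_p)$ lies in $A(H)$ iff it is induced by some $\theta \in \Aut(H)$. Given such $\theta$, apply the key lemma to the surjections $\bar F \twoheadrightarrow H$ and $\bar F \twoheadrightarrow H \xrightarrow{\theta} H$ to obtain $\tilde g \in \Aut(\bar F)$ inducing $g$ on $F/F_2$ with $\tilde g(W) = W$; as the $\Aut(\bar F)$-action on $V$ factors through $\GL(d,\F_p)$, this says $g \in \mathrm{Stab}_{\GL(d,\F_p)}(W)$. Conversely, any $g \in \mathrm{Stab}_{\GL(d,\F_p)}(W)$ lifts to some $\tilde g \in \Aut(\bar F)$ by surjectivity of $\Aut(\bar F) \twoheadrightarrow \GL(d,\F_p)$, and this $\tilde g$ automatically fixes $W$ (its action on $V$ is $g$), hence descends to an automorphism of $H$ inducing $g$, so $g \in A(H)$. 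Thus $A(H) = \mathrm{Stab}_{\GL(d,\F_p)}(W)$, and the $\mathfrak C$-part of the $\mathfrak A$-bijection restricts to the desired $\mathfrak D$-bijection.

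I expect the key lemma on surjections $\bar F \twoheadrightarrow H$ to be the main obstacle: it powers both the injectivity in the $\mathfrak A$-bijection and the identification $A(H) = \mathrm{Stab}_{\GL(d,\F_p)}(W)$, and the delicate point is extracting an honest automorphism of $\bar F$ — which one does by lifting to the free group $F$ and using that $\bar F$, being finite, is Hopfian, rather than by arguing inside the abstract finite $p$-group $\bar F$. Everything else is bookkeeping with characteristic subgroups, the Burnside basis theorem, and the lower-$p$-series facts imported from Section~\ref{lowerpsec}.
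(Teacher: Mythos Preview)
Your proposal is correct and follows essentially the same argument as the paper: the paper packages the result via the variety framework (Theorems~\ref{isoenum1} and~\ref{isoenum2}, citing Neumann for the lifting of an isomorphism $G/L \cong G/M$ to an endomorphism of the relatively free group, and Proposition~\ref{fratautprop} for the upgrade to an automorphism), whereas you prove the same ``key lemma'' by hand using freeness of $F$, full invariance of $F_{n+1}$, the Burnside basis theorem, and the Hopfian property---but these are the same ingredients, and your identification $A(H) = \mathrm{Stab}_{\GL(d,\F_p)}(W)$ is exactly the paper's $A(H) \cong N_{\GL(d,\F_p)}(L)$ from Theorem~\ref{isoenum2}.
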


In order to understand the usefulness of this theorem, note that $\mathfrak{A}_{d,n}$ is in bijection with finite $p$-groups $H$ of lower $p$-length at most $n$ with $d(H) = d$, and this bijection restricts to a bijection between $\mathfrak{D}_{d,n}$ and some of these $p$-groups whose automorphism groups are $p$-groups.  As we will see in Chapter~\ref{c_free}, $F/F_{n+1}$ is a finite group, and so $\mathfrak{A}_{d,n}$ is finite.  Therefore the ratio $|\mathfrak{D}_{d,n}|/|\mathfrak{A}_{d,n}|$ is well-defined and is at most the proportion of $p$-groups $H$ with lower $p$-length at most $n$ and $d(H) = d$ whose automorphism group is a $p$-group; in the notation of Theorem~\ref{mainthm}, $|\mathfrak{D}_{d,n}|/|\mathfrak{A}_{d,n}| \le r_{p,d,n}$.  So to prove the limiting statements about $r_{p,d,n}$ from Theorem~\ref{mainthm}, it suffices to prove the same limiting statements about $|\mathfrak{D}_{d,n}|/|\mathfrak{A}_{d,n}|$.  We state this formally as a corollary of Theorem~\ref{bijthm}.

\begin{cor}
\label{bijcor}
Suppose that
\[
\lim_{d \to \infty}{\frac{|\mathfrak{D}_{d,n}|}{|\mathfrak{A}_{d,n}|}} = 1
\]
for $n \ge 2$,
\[
\lim_{n \to \infty}{\frac{|\mathfrak{D}_{d,n}|}{|\mathfrak{A}_{d,n}|}} = 1
\]
for $d \ge 5$, and
\[
\lim_{p \to \infty}{\frac{|\mathfrak{D}_{d,n}|}{|\mathfrak{A}_{d,n}|}} = 1.
\]
for $d$ and $n$ satisfying one of the conditions in (\ref{dnres}).  Then Theorem~\ref{mainthm} is true.
\end{cor}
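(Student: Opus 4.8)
The plan is to derive the corollary from Theorem~\ref{bijthm} by a squeezing argument; essentially all that must be checked is that $|\mathfrak{D}_{d,n}|/|\mathfrak{A}_{d,n}|$ never exceeds $r_{p,d,n}$, which itself never exceeds $1$.

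First I would record that, by Theorem~\ref{bijthm}, $\pi_{d,n}$ gives a bijection between $\mathfrak{A}_{d,n}$ and the set $\mathcal{S}_{d,n}$ of $p$-groups $H$ with lower $p$-length at most $n$ and $d(H) = d$. Since $F/F_{n+1}$ is finite (Chapter~\ref{c_free}), $\mathcal{S}_{d,n}$ is finite, and it is nonempty (it contains $F/F_{n+1}$), so $|\mathfrak{A}_{d,n}| = |\mathcal{S}_{d,n}|$ is a well-defined positive integer and the ratio $|\mathfrak{D}_{d,n}|/|\mathfrak{A}_{d,n}|$ makes sense. By the definition of $r_{p,d,n}$, exactly $r_{p,d,n}\,|\mathfrak{A}_{d,n}|$ of the groups in $\mathcal{S}_{d,n}$ have automorphism group a $p$-group.

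Next I would show $|\mathfrak{D}_{d,n}| \le r_{p,d,n}\,|\mathfrak{A}_{d,n}|$. Again by Theorem~\ref{bijthm}, $\pi_{d,n}$ maps $\mathfrak{D}_{d,n}$ injectively onto the set of $H \in \pi_{d,n}(\mathfrak{C}_{d,n})$ with $A(H) = 1$. Because $n \ge 2$, $F_n/F_{n+1} \subseteq F_2/F_{n+1}$ and hence $\mathcal{C}_{d,n} \subseteq \mathcal{A}_{d,n}$, so every such $H$ lies in $\mathcal{S}_{d,n}$; and since $A(H) = 1$, the earlier-noted fact that $\Aut(H)$ is a $p$-group whenever $A(H)$ is shows that each of these $H$ has $\Aut(H)$ a $p$-group. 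Therefore $|\mathfrak{D}_{d,n}|$ is at most the number of $H \in \mathcal{S}_{d,n}$ with $\Aut(H)$ a $p$-group, namely $r_{p,d,n}\,|\mathfrak{A}_{d,n}|$. Dividing gives $|\mathfrak{D}_{d,n}|/|\mathfrak{A}_{d,n}| \le r_{p,d,n} \le 1$.

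Finally, from this chain of inequalities, each hypothesized limit $\lim |\mathfrak{D}_{d,n}|/|\mathfrak{A}_{d,n}| = 1$ forces the corresponding limit $\lim r_{p,d,n} = 1$ by the squeeze theorem, which is exactly what Theorem~\ref{mainthm} asserts under the stated conditions on $p$, $d$, and $n$. I do not expect a genuine obstacle here; the only points needing a moment's care are the containment $\mathcal{C}_{d,n} \subseteq \mathcal{A}_{d,n}$ and the implication from $A(H) = 1$ to $\Aut(H)$ being a $p$-group, and both are already available from Theorem~\ref{bijthm} and the preceding discussion.
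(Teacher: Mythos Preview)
Your proposal is correct and follows essentially the same argument as the paper: the paper observes in the paragraph preceding the corollary that $|\mathfrak{D}_{d,n}|/|\mathfrak{A}_{d,n}| \le r_{p,d,n}$ via the bijections of Theorem~\ref{bijthm} and the fact that $A(H)$ a $p$-group implies $\Aut(H)$ a $p$-group, then applies the squeeze. Your write-up simply spells out these steps in slightly more detail, including the check that $\mathcal{C}_{d,n} \subseteq \mathcal{A}_{d,n}$ for $n \ge 2$.
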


We will prove the hypotheses of Corollary~\ref{bijcor} in two steps.  Namely, we will show that the ratios $|\mathfrak{C}_{d,n}|/|\mathfrak{A}_{d,n}|$ and $|\mathfrak{D}_{d,n}|/|\mathfrak{C}_{d,n}|$ satisfy the same limiting statements (using the results of Chapters~\ref{c_normal} and~\ref{c_submodules} respectively), and therefore so does the ratio $|\mathfrak{D}_{d,n}|/|\mathfrak{A}_{d,n}|$.  Both of these steps require some knowledge of the structure of $F_n/F_{n+1}$.  In Chapter~\ref{c_free}, we prove that $F_n/F_{n+1}$ is isomorphic (as a $\F_p \GL(d,\F_p)$-module) to part of the free Lie algebra over $\F_p$.  In particular, this lets us use the combinatorics of the free Lie algebra to compute certain parameters of the group $F/F_{n+1}$ (see Corollary~\ref{expcor}).

To prove that the ratios $|\mathfrak{C}_{d,n}|/|\mathfrak{A}_{d,n}|$ and $|\mathfrak{D}_{d,n}|/|\mathfrak{C}_{d,n}|$ satisfy the desired limiting statements, we obtain explicit lower bounds for each in Theorems~\ref{limit1thm} and~\ref{limit2thm} respectively.  To state these bounds, we define, for any number $x > 1$, the quantities
\begin{equation}
\label{cdeqns}
C(x) = \sum_{r=-\infty}^{\infty}{x^{-r^2}} \textrm{ and } D(x) = \prod_{j=1}^{\infty}{\frac{1}{1-x^{-j}}}.
\end{equation}

\begin{thm} \label{limit1thm}
Fix a prime $p$ and integers $d$ and $n$ so that either $n \ge 3$ and $d \ge 6$ or $n \ge 10$ and $d \ge 5$.  Let $F$ be the free group of rank $d$ and let $d_i$ be the dimension of $F_i/F_{i+1}$ for $i = 1, \dots, n$.  Then
\[
1 \le \frac{|\mathfrak{A}_{d,n}|}{|\mathfrak{C}_{d,n}|} \le 1 + C(p^{15/16}) C(p)^{n-2} D(p)^{n-2} p^{d_{n-1} - d_n/4 + d^2-11/16}.
\]
\end{thm}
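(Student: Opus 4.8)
\emph{Lower bound.} Since $n\geq 2$ we have $\mathcal{C}_{d,n}\subseteq\mathcal{A}_{d,n}$, and — because $F_n$ is characteristic in $F$ — both of these sets as well as $\mathcal{C}_{d,n}$ itself are $\Aut(F/F_{n+1})$-invariant; hence the $\Aut(F/F_{n+1})$-orbit of any point of $\mathcal{C}_{d,n}$ lies inside $\mathcal{C}_{d,n}$, so the inclusion induces an injection $\mathfrak{C}_{d,n}\hookrightarrow\mathfrak{A}_{d,n}$ and $|\mathfrak{C}_{d,n}|\leq|\mathfrak{A}_{d,n}|$. This is the left inequality.

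\emph{Reduction of the upper bound.} Write $|\mathfrak{A}_{d,n}|=|\mathfrak{C}_{d,n}|+|\mathfrak{A}_{d,n}\setminus\mathfrak{C}_{d,n}|$. The orbits comprising $\mathfrak{A}_{d,n}\setminus\mathfrak{C}_{d,n}$ are exactly the $\Aut(F/F_{n+1})$-orbits on the invariant set $\mathcal{A}_{d,n}\setminus\mathcal{C}_{d,n}$, so the number of orbits being at most the number of points gives $|\mathfrak{A}_{d,n}\setminus\mathfrak{C}_{d,n}|\leq N$, where $N$ is the number of normal subgroups of $F/F_{n+1}$ lying in $F_2/F_{n+1}$ but not in $F_n/F_{n+1}$. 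On the other side, every $\Aut(F/F_{n+1})$-orbit on $\mathcal{C}_{d,n}$ has size at most $|\GL(d,\F_p)|$ (the action factoring through $\GL(d,\F_p)$), and $|\mathcal{C}_{d,n}|=\mathcal{G}_{d_n}(p)$ since the subgroups of the central elementary abelian $F_n/F_{n+1}$ are precisely its $\F_p$-subspaces; thus $|\mathfrak{C}_{d,n}|\geq\mathcal{G}_{d_n}(p)/|\GL(d,\F_p)|$. Therefore $|\mathfrak{A}_{d,n}|/|\mathfrak{C}_{d,n}|\leq 1+N\,|\GL(d,\F_p)|/\mathcal{G}_{d_n}(p)$, and with the estimates $|\GL(d,\F_p)|\leq p^{d^2}$ and $\mathcal{G}_{d_n}(p)\geq p^{\lfloor d_n^2/4\rfloor}$ from Appendix~\ref{a_estimates} (which supply the $p^{d^2}$ and part of the fractional exponent) it suffices to show $N\leq C(p^{15/16})C(p)^{n-2}D(p)^{n-2}\,p^{\,d_n^2/4-d_n/4+d_{n-1}-15/16}$.

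\emph{Bounding $N$.} Each $\bar L$ counted by $N$ is recorded by its image $\bar M=\bar LF_n/F_n$, a nonzero normal subgroup of $F/F_n$ lying in $F_2/F_n$, together with $V:=\bar L\cap(F_n/F_{n+1})$ and the gluing of $\bar L$ onto $\bar M$. Using the identification (Chapter~\ref{c_free}) of $\bigoplus_iF_i/F_{i+1}$ with the degrees $1,\dots,n$ part of the free Lie algebra over $\F_p$: no nonzero homogeneous element of degree $<n$ is annihilated by $\mathrm{ad}(F_1/F_2)$, so iterating $\mathrm{ad}$ on the leading term of $\bar M$ shows that the Lie ideal it generates has nonzero degree-$n$ component $W_n(\bar M)$, and normality of $\bar L$ forces $W_n(\bar M)\subseteq V$. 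Hence $N$ is bounded by a sum over the possible $\bar M$ of (number of subspaces $V$ with $W_n(\bar M)\subseteq V\subseteq F_n/F_{n+1}$) times (number of extensions of $\bar M$ by $V$ inside the abelian, exponent-$p^2$ group $F_{n-1}/F_{n+1}$, which is at most $p^{d(\bar M)(d_n-\dim V)}$). The number of possible $\bar M$ is controlled by the enumeration of normal subgroups (Theorem~\ref{normalthm}), whose bound is a product of Gaussian-coefficient factors over the levels $2,\dots,n-1$ — this is the source of $C(p)^{n-2}D(p)^{n-2}$ — with the dominant contribution coming from $\bar M$ of leading degree $n-1$, i.e.\ a subspace of $F_{n-1}/F_n$. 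The sums over $V$ are handled by $\sbinom{m}{k}_p\leq D(p)p^{k(m-k)}$ and the theta-like estimate for $\sum_kp^{k(m-k)}$, which produce the $C(p^{15/16})$ and the factor $p^{d_n^2/4}$; the decisive saving $p^{-d_n/4+d_{n-1}}$ comes from the fact that $W_n(\bar M)\neq 0$ replaces $\mathcal{G}_{d_n}(p)$ by $\mathcal{G}_{d_n-\dim W_n(\bar M)}(p)$, with leading degrees below $n-1$ forcing proportionally many more degree-$n$ relations and so contributing negligibly.

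\emph{The main obstacle.} The crux is the numerical optimization behind the previous paragraph: writing $k$ for the dimension of the leading-term space of $\bar M$, one must show that the product (number of such $\bar M$, which is largest — roughly $p^{d_{n-1}^2/4}$ — near $k=d_{n-1}/2$) times (number of completions $V$) times (number of extensions), summed over $k$ and over the leading degree, never exceeds $C(p^{15/16})C(p)^{n-2}D(p)^{n-2}p^{\,d_n^2/4-d_n/4+d_{n-1}-15/16}$. The two ends balance because for large $k$ the forced subspace $W_n(\bar M)$ is nearly all of $F_n/F_{n+1}$, collapsing the completion and extension counts, so the maximum is attained at small $k$, where the count of $\bar M$ is only about $p^{d_{n-1}}$; making this rigorous requires a good lower bound on $\dim[F_1/F_2,\bar V]$ for a $k$-dimensional subspace $\bar V$ of $F_{n-1}/F_n$, extracted from the free Lie algebra structure, together with careful bookkeeping of $d(\bar M)$ and of the non-split structure of $F_{n-1}/F_{n+1}$ over $F_n/F_{n+1}$. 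The hypotheses ($n\geq 3$, $d\geq 6$, or $n\geq 10$, $d\geq 5$) are exactly what make all these inequalities hold; assembling them with the size estimates $\mathcal{G}_{d_n}(p)$, $|\GL(d,\F_p)|$ and the various $\sbinom{m}{k}_p$ bounds from Appendix~\ref{a_estimates} yields the stated bound.
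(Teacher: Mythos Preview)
Your architecture is the paper's architecture. The lower bound and the reduction paragraph are exactly what the paper does: pass from orbits to the underlying sets using $|\mathfrak{A}_{d,n}|-|\mathfrak{C}_{d,n}|\le|\mathcal{A}_{d,n}|-|\mathcal{C}_{d,n}|$ on one side and $|\mathcal{C}_{d,n}|\le|\mathfrak{C}_{d,n}|\cdot|\GL(d,\F_p)|$ on the other, then bound $N=|\mathcal{A}_{d,n}\setminus\mathcal{C}_{d,n}|$. Your decomposition of a normal subgroup $\bar L$ into $(\bar M,V,\text{extension data})$ is precisely the inductive step of Theorem~\ref{normalthm}, and your free-Lie-algebra ``forced subspace $W_n(\bar M)$'' is the role played by Corollary~\ref{expcor}. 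So there is no alternative route here.

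What is missing is the execution of the two technical pillars, and without them the argument does not close. First, you need a \emph{quantitative} expansion bound, not just $W_n(\bar M)\neq 0$: the paper proves (Theorem~\ref{expthm}/Lemma~\ref{dimlem}) that for any subspace $W\subseteq F_i/F_{i+1}$ one has $\dim(W+[W,F_1/F_2])\ge\tfrac{3}{2}\dim W$, and it is this explicit $3/2$ that feeds into Corollary~\ref{fnormalcor} as $w_i\ge\lceil\tfrac{3}{2}u_{i-1}\rceil$. Your sketch only asserts that some such bound exists. Second, once the stratum bounds $|S(F/F_{n+1},0,u_2,\ldots,u_n)|\le D(p)^{n-1}\prod_i p^{(u_i-u_{i-1}/2)(d_i-u_i)}$ are in hand, the sum over all $(u_2,\ldots,u_n)$ is a genuinely delicate backward induction (Lemma~\ref{gaussprods}) whose point is that, thanks to the numerical inequalities on $d_i$ in Lemma~\ref{dnasymptotics2}, the quadratic-in-$u_i$ exponent is maximized at the boundary $u_{n-1}=1$, $u_{n-2}=\cdots=u_2=0$; this is where the $C(p^{15/16})$ and the exponent $d_{n-1}-d_n/4-15/16$ come from. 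Your ``Main obstacle'' paragraph correctly locates this as the crux but does not carry it out, and your heuristic (``the maximum is at small $k$ because for large $k$ the forced subspace is nearly everything'') is not by itself enough: it is the interplay of the $3/2$ factor with the $d_i$-inequalities that forces the optimum to the boundary at \emph{every} level of the recursion.

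Two smaller points. Your extension count $p^{d(\bar M)(d_n-\dim V)}$ is correct (since $d(\bar M)=\dim(\bar M/\bar M_2)$), but bounding $d(\bar M)$ requires the same expansion input, applied recursively---this is the $v_i\ge u_{i-1}$ part of Theorem~\ref{normalthm} via $\pow_i$. And your bookkeeping of $D(p)$ powers is slightly off: the paper gets $D(p)^{n-1}$ in the bound for $N$ and then cancels one $D(p)$ against the lower bound $\mathcal{G}_{d_n}(p)\ge D(p)\,p^{d_n^2/4-1/4}$ from Lemma~\ref{qests}; your use of $\mathcal{G}_{d_n}(p)\ge p^{\lfloor d_n^2/4\rfloor}$ loses that cancellation.
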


\begin{cor} \label{limit1cor}
If $n \ge 2$, then
\[
\lim_{d \to \infty}{\frac{|\mathfrak{C}_{d,n}|}{|\mathfrak{A}_{d,n}|}} = 1.
\]
If $d \ge 5$, then
\[
\lim_{n \to \infty}{\frac{|\mathfrak{C}_{d,n}|}{|\mathfrak{A}_{d,n}|}} = 1.
\]
If $d$ and $n$ satisfy one of the conditions in (\ref{dnres}), then
\[
\lim_{p \to \infty}{\frac{|\mathfrak{C}_{d,n}|}{|\mathfrak{A}_{d,n}|}} = 1
\]
\end{cor}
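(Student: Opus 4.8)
The plan is to read off all three limits from the explicit estimate of Theorem~\ref{limit1thm}. That theorem gives, whenever $n\ge 3$ and $d\ge 6$ or $n\ge 10$ and $d\ge 5$,
\[
1 \;\le\; \frac{|\mathfrak{A}_{d,n}|}{|\mathfrak{C}_{d,n}|} \;\le\; 1 + E_{p,d,n}, \qquad E_{p,d,n} := C(p^{15/16})\,C(p)^{n-2}D(p)^{n-2}\,p^{\,d_{n-1}-d_n/4+d^2-11/16}.
\]
Hence $|\mathfrak{C}_{d,n}|/|\mathfrak{A}_{d,n}|\in(0,1]$ and $0\le 1-|\mathfrak{C}_{d,n}|/|\mathfrak{A}_{d,n}|\le |\mathfrak{A}_{d,n}|/|\mathfrak{C}_{d,n}|-1\le E_{p,d,n}$, so it suffices to show $E_{p,d,n}\to 0$ in each of the three regimes. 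The value $n=2$ is outside the range of Theorem~\ref{limit1thm}, but there $\mathcal{A}_{d,2}$ and $\mathcal{C}_{d,2}$ are literally the same set (normal subgroups of $F/F_3$ lying in $F_2/F_3$), so $\mathfrak{A}_{d,2}=\mathfrak{C}_{d,2}$ and the ratio is exactly $1$; this settles the $n=2$ instances of the first and third statements, and from now on $n\ge 3$.

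The two ingredients I would assemble are: (1) elementary properties of $C$ and $D$ from~(\ref{cdeqns}) --- both are decreasing on $(1,\infty)$ with limit $1$ at $\infty$, so $1\le C(p^{15/16}),C(p),D(p)\le c$ for an absolute constant $c$ and all primes $p$; and (2) the estimates on the dimensions $d_i=\dim(F_i/F_{i+1})$ supplied by Chapter~\ref{c_free} (Corollary~\ref{expcor}). From (2) the facts I need are that, for fixed $n$, $d_n$ is a polynomial in $d$ of degree $n$ with positive leading coefficient while $d_{n-1}=O(d^{n-1})$; that for fixed $p$ the quotient dimensions grow geometrically with $d_n/d_{n-1}\to d$ as $n\to\infty$; and that once $p$ is large the $d_i$ stabilize to the exact values dictated by the identification of $F_n/F_{n+1}$ with part of the free Lie algebra. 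The common theme is that the sign and growth of the exponent $\varepsilon_{d,n}:=d_{n-1}-d_n/4+d^2-11/16$ are governed by the $-d_n/4$ term.

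With these in hand the three limits are short. (i) $d\to\infty$, fixed $n\ge3$ (eventually $d\ge 6$, so the bound applies): the prefactor $C(p^{15/16})C(p)^{n-2}D(p)^{n-2}\le c^{\,n-1}$ is constant in $d$, while $\varepsilon_{d,n}\to-\infty$ because $d_n/4\sim d^n/(4n)$ dominates $d_{n-1}+d^2$ for $n\ge 3$; hence $E_{p,d,n}\to0$. (ii) $n\to\infty$, fixed $d\ge 5$ (the bound is available for all sufficiently large $n$): take logarithms, $\log E_{p,d,n}=\log C(p^{15/16})+(n-2)\log(C(p)D(p))+\varepsilon_{d,n}\log p$; since $d_n/d_{n-1}\to d\ge 5>4$, the contribution $-\tfrac14 d_n\log p$ grows geometrically in $n$ and swamps the linear term $(n-2)\log(C(p)D(p))$, so $\log E_{p,d,n}\to-\infty$. (iii) $p\to\infty$, with $(d,n)$ subject to one of the conditions in~(\ref{dnres}) (each of which, for $n\ge3$, implies the hypotheses of Theorem~\ref{limit1thm}): the prefactor tends to $1$, so I only need $\varepsilon_{d,n}<0$ once the $d_i$ have reached their large-$p$ values, equivalently $d_n>4d_{n-1}+4d^2-\tfrac{11}{4}$; granting this, $p^{\varepsilon_{d,n}}\to0$ and $E_{p,d,n}\to0$.

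The main obstacle is step (iii): the inequality $d_n>4d_{n-1}+4d^2-\tfrac{11}{4}$ is sharp and must be checked by hand at the boundary cases --- $(d,n)=(17,3),(8,4),(6,5),(5,10)$ --- using the Witt-formula values of the free-Lie dimensions, and then one must verify that it propagates to all larger $d$ (resp.\ larger $n$) in each range, which follows from monotonicity of $d_n$ in $d$ and from $d_n/d_{n-1}\to d>4$. (Indeed, the conditions in~(\ref{dnres}) appear to be precisely the pairs $(d,n)$ for which this inequality holds while staying inside the range where Theorem~\ref{limit1thm} is available.) Limits (i) and (ii) are by comparison ``soft'', needing only orders of magnitude; the one subtlety there is that the $d_i$ depend on $p$, so I would phrase those two arguments using a uniform-in-$p$ lower bound for $d_n$ and a matching geometric bound for $d_{n-1}$, reserving the exact free-Lie values for the $p\to\infty$ case.
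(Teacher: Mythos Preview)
Your approach is correct and essentially identical to the paper's: handle $n=2$ by the equality $\mathfrak{A}_{d,2}=\mathfrak{C}_{d,2}$, and otherwise show the error term $E_{p,d,n}$ from Theorem~\ref{limit1thm} tends to zero by controlling the exponent $d_{n-1}-d_n/4+d^2-11/16$ (the paper does this via Lemmas~\ref{dnasymptotics1} and~\ref{dnasymptotics2}, in particular inequality~(\ref{dninequality3}) for the $p\to\infty$ case, which is exactly your boundary-plus-monotonicity check). One correction: the $d_i$ are \emph{independent} of $p$ --- the relevant result is Corollary~\ref{dimcor} (not Corollary~\ref{expcor}), which gives $d_n=\sum_{i=1}^n\frac{1}{i}\sum_{j\mid i}\mu(i/j)\,d^j$, a function of $d$ and $n$ alone --- so your ``uniform-in-$p$'' worry and the remark about the $d_i$ reaching their ``large-$p$ values'' are unnecessary.
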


Theorem~\ref{normalthm} gives an upper bound on the number of normal subgroups of a finite $p$-group, and the proof of Theorem~\ref{limit1thm} applies this theorem to the quotient $F/F_{n+1}$.  Corollary~\ref{limit1cor} will follow using bounds on $d_n$ from Lemma~\ref{dnasymptotics2}, showing that $|\mathfrak{C}_{d,n}|/|\mathfrak{A}_{d,n}|$ satisfies the desired limiting statements.

\renewcommand{\arraystretch}{1.2}
\begin{thm} \label{limit2thm}
Fix a prime $p$ and integers $d$ and $n$ so that either $n = 2$ and $d \ge 10$ or $n \ge 3$ and $d \ge 3$.  Let $F$ be the free group of rank $d$ and let $d_i$ be the dimension of $F_i/F_{i+1}$ for $i = 1, \dots, n$.  Let
\[
c_1 = \left\{
  \begin{array}{r@{\quad:\quad}l}
    C(p)^5 D(p)^4 p^{17/4} & \textrm{$n = 2$ and $d \ge 10$} \\
    C(p)^2 D(p) p^{3/4} & n \ge 3.
  \end{array}
  \right.
\]
Let
\[
c_2 = \left\{
  \begin{array}{r@{\quad:\quad}l}
    -d & n = 2 \\
    d^2-d_n/2 & n \ge 3.
  \end{array}
  \right.
\]
Then
\begin{enumerate}
\renewcommand{\labelenumi}{\emph{(\alph{enumi})}}
\renewcommand{\theenumi}{\ref{est2thm}(\alph{enumi})}
\item
\[
1 \le \frac{|\mathfrak{C}_{d, n}| \cdot |\GL(d,\F_p)|}{|\mathcal{C}_{d, n}|} \le 1 + c_1 p^{c_2}.
\]
\item
\[
1 \le \frac{|\mathfrak{C}_{d, n}|}{|\mathfrak{D}_{d, n}|} \le \frac{1+ c_1 p^{c_2}}{1-c_1 p^{c_2}}.
\]
\end{enumerate}
\end{thm}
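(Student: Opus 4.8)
The plan is to express both ratios through orbit-counting and reduce everything to a single estimate on the number of subspaces of $V := F_n/F_{n+1}$ that are fixed by some non-identity element of $\GL(d,\F_p)$. Recall that $\mathcal{C}_{d,n}$ is exactly the set of $\F_p$-subspaces of the $d_n$-dimensional space $V$ (every such subspace is normal in $F/F_{n+1}$ because $F_n/F_{n+1}$ is central there), so $|\mathcal{C}_{d,n}| = \mathcal{G}_{d_n}(p)$, while $\mathfrak{C}_{d,n}$ and $\mathfrak{D}_{d,n}$ are the sets of $\GL(d,\F_p)$-orbits, respectively regular $\GL(d,\F_p)$-orbits, on $\mathcal{C}_{d,n}$. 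Writing $\mathrm{Fix}(g) = \{W \in \mathcal{C}_{d,n} : gW = W\}$, the set of $\F_p[g]$-submodules of $V$, I would start from the Cauchy--Frobenius formula for the $\GL(d,\F_p)$-action on $\mathcal{C}_{d,n}$:
\[
|\mathfrak{C}_{d,n}| \cdot |\GL(d,\F_p)| \;=\; \sum_{g \in \GL(d,\F_p)} |\mathrm{Fix}(g)| \;=\; |\mathcal{C}_{d,n}| + \sum_{g \ne 1} |\mathrm{Fix}(g)| .
\]
Non-negativity of the summands gives the lower bound in part~(a), and its upper bound is equivalent to the key estimate
\begin{equation}
\sum_{g \ne 1} |\mathrm{Fix}(g)| \;\le\; c_1 p^{c_2}\,\mathcal{G}_{d_n}(p). \tag{$\star$}
\end{equation}

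To prove $(\star)$ I would first bound a single $|\mathrm{Fix}(g)|$ for $g \ne 1$. Decomposing $V$ by the rational canonical form of $g$, the number of $\F_p[g]$-submodules of $V$ is a product, over the $g$-primary components of $V$, of submodule counts for finite modules over discrete valuation rings with finite residue fields; Theorem~\ref{submodulesthm} bounds each such factor via Hall polynomials in terms of the elementary-divisor type, and the resulting bound on $|\mathrm{Fix}(g)|$ is governed by $\rank\,((g-1)|_V)$. Here the description of $F_n/F_{n+1}$ from Chapter~\ref{c_free} enters: as an $\F_p\GL(d,\F_p)$-module, $F_n/F_{n+1}$ is a prescribed piece of the free Lie algebra over $\F_p$ on $F_1/F_2$, and $\GL(d,\F_p)$ acts faithfully on it, so $g \ne 1$ already forces $\rank\,((g-1)|_V) \ge 1$; for small $n$ one obtains something stronger from the explicit module structure, e.g.\ the extension $0 \to \Lambda^2(F_1/F_2) \to F_2/F_3 \to F_1/F_2 \to 0$.

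The crux is the sum over $g$. Bounding $\sum_{g\ne1}|\mathrm{Fix}(g)| \le (|\GL(d,\F_p)|-1)\cdot \max_{g\ne1}|\mathrm{Fix}(g)|$ (refined, where sharper constants are needed, by a conjugacy-class decomposition with each class size bounded by $|\GL(d,\F_p)|$), and then substituting the bound for $|\mathrm{Fix}(g)|$, the order formula $|\GL(d,\F_p)| = p^{\binom{d}{2}}\prod_{i=1}^d(p^i-1)$, the asymptotics for $d_n$ from Corollary~\ref{expcor}, and the numerical estimates on Gaussian coefficients and Galois numbers from Appendix~\ref{a_estimates}, one arrives at $(\star)$ with the stated $c_1,c_2$. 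The cases $n=2$ and $n\ge3$ must be handled separately — for $n=2$ the module $F_2/F_3$ is small and treated directly, while for $n\ge3$ one exploits the growth of $d_n$ — which is the origin of the two-branch definitions of $c_1$ and $c_2$. Keeping all the error terms uniformly under control in $p$, $d$, and $n$ is the main obstacle.

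Part~(b) then follows formally from $(\star)$. The inequality $|\mathfrak{C}_{d,n}|/|\mathfrak{D}_{d,n}| \ge 1$ is immediate from $\mathfrak{D}_{d,n} \subseteq \mathfrak{C}_{d,n}$. For the other direction, partition $\mathcal{C}_{d,n}$ into $\GL(d,\F_p)$-orbits; each regular orbit has size $|\GL(d,\F_p)|$, so
\[
|\mathcal{C}_{d,n}| \;=\; |\mathfrak{D}_{d,n}| \cdot |\GL(d,\F_p)| + \sum_{O \notin \mathfrak{D}_{d,n}} |O| ,
\]
and since every subspace lying in a non-regular orbit has non-trivial stabilizer,
\[
\sum_{O \notin \mathfrak{D}_{d,n}} |O| \;=\; \#\{W \in \mathcal{C}_{d,n} : \mathrm{Stab}_{\GL(d,\F_p)}(W) \ne 1\} \;\le\; \sum_{g \ne 1} |\mathrm{Fix}(g)| \;\le\; c_1 p^{c_2}\,|\mathcal{C}_{d,n}|
\]
by $(\star)$. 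Hence $|\mathfrak{D}_{d,n}| \cdot |\GL(d,\F_p)| \ge (1 - c_1 p^{c_2})\,|\mathcal{C}_{d,n}|$, and combining this with $|\mathfrak{C}_{d,n}| \cdot |\GL(d,\F_p)| \le (1 + c_1 p^{c_2})\,|\mathcal{C}_{d,n}|$ from part~(a) and dividing — which requires $c_1 p^{c_2} < 1$ — yields $|\mathfrak{C}_{d,n}|/|\mathfrak{D}_{d,n}| \le (1 + c_1 p^{c_2})/(1 - c_1 p^{c_2})$.
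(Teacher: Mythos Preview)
Your outline for part~(a) is essentially the paper's proof: Cauchy--Frobenius, then a uniform bound on $|\mathrm{Fix}(g)|$ for $g\ne 1$ coming from the Hall-polynomial count of submodules (this is packaged as Theorems~\ref{upperbound} and~\ref{strongerbound}, which indeed rest on Theorem~\ref{submodulesthm}), combined with the lower bound $|\mathcal{C}_{d,n}|=\mathcal{G}_{d_n}(p)\ge D(p)\,p^{d_n^2/4-1/4}$ from Lemma~\ref{qests}. One point to sharpen: the relevant dichotomy is not ``$\rank((g-1)|_V)\ge 1$'' but \emph{scalar versus non-scalar action}. If $g$ acts on $V$ as a non-identity scalar then $g-1$ has full rank yet \emph{every} subspace is fixed, so rank alone cannot govern $|\mathrm{Fix}(g)|$. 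The paper's key observation (using Corollary~\ref{wedgecor}) is that the submodule of $F_n/F_{n+1}$ which is an extension of $V\wedge V$ by $V$ forces any scalar $c$ with $g$ acting as $c$ to satisfy $c=c^2$, hence $c=1$; this is what lets Theorem~\ref{upperbound} (or~\ref{strongerbound} for $n=2$) apply to every $g\ne 1$. Also, no asymptotics for $d_n$ are needed here --- they enter only in Corollary~\ref{limit2cor} --- and Corollary~\ref{expcor} is unrelated (it concerns expansion of subgroups, not the size of $d_n$).

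For part~(b) your argument is correct but differs from the paper's. You bound the number of subspaces with non-trivial stabiliser by $\sum_{g\ne 1}|\mathrm{Fix}(g)|$ and feed $(\star)$ back in; the paper instead uses that every non-regular orbit has size at most $|\GL(d,\F_p)|/2$, giving
\[
|\mathcal{C}_{d,n}| \le |\GL(d,\F_p)|\cdot|\mathfrak{D}_{d,n}| + \tfrac{1}{2}|\GL(d,\F_p)|\,(|\mathfrak{C}_{d,n}|-|\mathfrak{D}_{d,n}|),
\]
and then manipulates using part~(a). Both routes land on the same bound $(1+c_1p^{c_2})/(1-c_1p^{c_2})$; yours recycles $(\star)$ directly, while the paper's avoids re-invoking the fixed-point sum at the cost of the (trivial) orbit-size observation.
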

\renewcommand{\arraystretch}{1}

\begin{cor} \label{limit2cor}
If $n \ge 2$, then
\[
\lim_{d \to \infty}{\frac{|\mathfrak{D}_{d,n}|}{|\mathfrak{C}_{d,n}|}} = 1.
\]
If $d \ge 3$, then
\[
\lim_{n \to \infty}{\frac{|\mathfrak{D}_{d,n}|}{|\mathfrak{C}_{d,n}|}} = 1.
\]
If $n = 2$ and $d \ge 10$, or $n \ge 3$ and $d \ge 5$, or $n \ge 4$ and $d \ge 3$, then
\[
\lim_{p \to \infty}{\frac{|\mathfrak{D}_{d,n}|}{|\mathfrak{C}_{d,n}|}} = 1
\]
\end{cor}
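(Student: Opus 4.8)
The plan is to read off Corollary~\ref{limit2cor} from part~(b) of Theorem~\ref{limit2thm}, which gives
\[
1 \;\le\; \frac{|\mathfrak{C}_{d,n}|}{|\mathfrak{D}_{d,n}|} \;\le\; \frac{1 + c_1 p^{c_2}}{1 - c_1 p^{c_2}} .
\]
Provided $c_1 p^{c_2} < 1$, this is equivalent to $\tfrac{1 - c_1 p^{c_2}}{1 + c_1 p^{c_2}} \le |\mathfrak{D}_{d,n}|/|\mathfrak{C}_{d,n}| \le 1$, so it is enough to show that in each of the three limiting regimes of the corollary the quantity $c_1 p^{c_2}$ tends to $0$; the squeeze then forces $|\mathfrak{D}_{d,n}|/|\mathfrak{C}_{d,n}| \to 1$. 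Two facts will be used throughout. First, from the definitions in (\ref{cdeqns}) the functions $C(x)$ and $D(x)$ are decreasing on $(1,\infty)$ and tend to $1$ as $x \to \infty$, so $1 \le C(p) \le C(2)$ and $1 \le D(p) \le D(2)$ for all primes $p$. Second, from the analysis of the lower $p$-series via the free Lie algebra (Chapter~\ref{c_free}, Corollary~\ref{expcor}) together with the estimates of Lemma~\ref{dnasymptotics2}, the dimension $d_n$ of $F_n/F_{n+1}$ satisfies $d_n \ge \ell_n(d)$, where $\ell_w(d) = \tfrac{1}{w}\sum_{e \mid w}\mu(e)\,d^{w/e}$ is the $w$-th Witt number (a polynomial in $d$ of degree $w$ with leading coefficient $1/w$), and moreover for fixed $d,n$ and all sufficiently large $p$ one has $d_n = \sum_{w=1}^{n}\ell_w(d)$, the terms with $w < n$ coming from the iterated $p$-th power map raising $F_w/F_{w+1}$ into $F_n/F_{n+1}$.

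For $n = 2$ we have $c_1 = C(p)^5 D(p)^4 p^{17/4}$ and $c_2 = -d$, so $c_1 p^{c_2} \le C(2)^5 D(2)^4 \, p^{17/4 - d}$; this tends to $0$ as $d \to \infty$ with $p$ fixed, and, when $d \ge 10$, also as $p \to \infty$, while the regime $n \to \infty$ is vacuous (and $d$ is eventually $\ge 10$, so part~(b) of Theorem~\ref{limit2thm} legitimately applies). For $n \ge 3$ we have $c_1 = C(p)^2 D(p) p^{3/4}$ and $c_2 = d^2 - d_n/2$, hence
\[
c_1 p^{c_2} \;=\; C(p)^2 D(p)\, p^{\,3/4 + d^2 - d_n/2} .
\]
If $n$ is fixed and $d \to \infty$, then $d_n \ge \ell_n(d)$ has degree $n \ge 3$ in $d$, so the exponent $3/4 + d^2 - d_n/2 \to -\infty$ while $C(p)^2 D(p)$ is constant, giving $c_1 p^{c_2} \to 0$. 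If $d \ge 3$ is fixed and $n \to \infty$, then $d_n \ge \ell_n(d) \to \infty$ (as $\ell_n(d)\to\infty$ for $d \ge 2$), so again the exponent $\to -\infty$ and $c_1 p^{c_2} \to 0$.

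The one case needing real care is $p \to \infty$ with $d,n$ fixed, $n \ge 3$. Here $C(p)^2 D(p) \to 1$, so $c_1 p^{c_2} \to 0$ precisely when the exponent $3/4 + d^2 - d_n/2$ is negative, i.e.\ when $d_n > 2d^2 + \tfrac{3}{2}$; for large $p$ we replace $d_n$ by $\sum_{w=1}^{n}\ell_w(d)$. Since this sum is nondecreasing in both $d$ and $n$ while $2d^2 + \tfrac{3}{2}$ depends only on $d$, it suffices to check the two minimal admissible pairs together with the growth in $d$: at $(d,n)=(5,3)$, $\ell_1(5)+\ell_2(5)+\ell_3(5) = 5+10+40 = 55 > 51.5$, and a short computation gives $\ell_1(d)+\ell_2(d)+\ell_3(d) = \tfrac{d^3}{3}+\tfrac{d^2}{2}+\tfrac{d}{6} > 2d^2+\tfrac{3}{2}$ for all $d \ge 5$; at $(d,n)=(3,4)$, $\ell_1(3)+\ell_2(3)+\ell_3(3)+\ell_4(3) = 3+3+8+18 = 32 > 19.5$, and similarly $\sum_{w=1}^{4}\ell_w(d) > 2d^2+\tfrac{3}{2}$ for all $d \ge 3$. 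Together with monotonicity in $n$, these cover exactly the pairs $(d,n)$ in the $p\to\infty$ clause of the corollary.

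In every regime $c_1 p^{c_2} \to 0$, so part~(b) of Theorem~\ref{limit2thm} gives $|\mathfrak{C}_{d,n}|/|\mathfrak{D}_{d,n}| \to 1$ and hence $|\mathfrak{D}_{d,n}|/|\mathfrak{C}_{d,n}| \to 1$, which is Corollary~\ref{limit2cor}. The main obstacle is not in this deduction but upstream: all the real content lives in Theorem~\ref{limit2thm} (built on the submodule count of Theorem~\ref{submodulesthm}) and in the estimates on $d_n$. Within the present argument, the only delicate point is matching the precise list of admissible $(d,n)$ in the $p\to\infty$ clause, where the crude bound $d_n \ge \ell_n(d)$ does not suffice — it already fails at $(d,n)=(5,3)$, since $\ell_3(5)=40<52$ — so one must use the finer fact that $F_n/F_{n+1}$ contains a copy of the weight-$w$ homogeneous component of the free Lie algebra on $\F_p^d$ for every $w \le n$, not only for $w=n$; this is exactly what rules out borderline pairs such as $(3,3)$ and $(4,3)$.
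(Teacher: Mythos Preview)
Your proof is correct and follows essentially the same route as the paper: invoke Theorem~\ref{limit2thm}(b) and show that $c_1 p^{c_2} \to 0$ in each regime. The only substantive difference is in the $p \to \infty$ case for $n \ge 3$: the paper reduces the inequality $d^2 - d_n/2 < -3/4$ to Lemma~\ref{dnasymptotics2} (Equation~\ref{dninequality3}) plus a finite computer check, whereas you verify $d_n > 2d^2 + \tfrac{3}{2}$ directly from the Witt-sum formula $d_n = \sum_{w=1}^n \ell_w(d)$ and monotonicity. Your route is arguably cleaner and more self-contained.

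One clarification: $d_n$ does not depend on $p$ at all. Corollary~\ref{dimcor} (not Corollary~\ref{expcor}, which concerns expansion of subgroups) gives $d_n = \sum_{i=1}^n \ell_i(d)$ for every prime $p$, so your hedging ``for fixed $d,n$ and all sufficiently large $p$'' and ``for large $p$ we replace $d_n$ by $\sum_{w=1}^{n}\ell_w(d)$'' is unnecessary---the equality is exact and unconditional. This does not affect the argument, but you should drop that phrasing.
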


In proving Theorem~\ref{limit2thm}, we use the Cauchy-Frobenius Lemma to estimate $|\mathfrak{C}_{d,n}|$ by analyzing the submodule structure of $F_n/F_{n+1}$ as an $\F_p \GL(d,\F_p)$-module.  As part of this analysis, we use the theory of Hall polynomials to count the number of submodules of fixed type of a finite module over a discrete valuation ring. Corollary~\ref{limit2cor} will follow using bounds on $d_n$ from Lemma~\ref{dnasymptotics2}, showing that $|\mathfrak{D}_{d,n}|/|\mathfrak{C}_{d,n}|$ satisfies the desired limiting statements.

In stating Theorems~\ref{limit1thm} and~\ref{limit2thm}, we have judged it more satisfactory to give explicit numerical bounds, even though the proof of Theorem~\ref{mainthm} requires only asymptotic bounds.  However, since we have no expectation that our proof method gives bounds that are sharp, we have opted for (relatively) clean explicit bounds rather than the best possible.

Appendix~\ref{a_estimates} contains combinatorial estimates, including bounds on Gaussian coefficients, that are needed in Chapters~\ref{c_normal} and~\ref{c_submodules}.

\section{Concluding Remarks on the Main Theorem}
\label{summarysec}

In this section, after formally proving Theorem~\ref{mainthm} by citing results from the previous section, we will state some minor variants and consequences.

\begin{thmx}[Theorem~\ref{mainthm}]
Let $r_{p, d, n}$ be the proportion of $p$-groups $G$ with lower $p$-length at most $n$ and $d(G) = d$ whose automorphism group is a $p$-group.  If $n \ge 2$, then
\[
\lim_{d \to \infty}{r_{p, d, n}} = 1.
\]
If $d \ge 5$, then
\[
\lim_{n \to \infty}{r_{p, d, n}} = 1.
\]
If one of the following conditions is satisfied:
\begin{eqnarray}
\bullet && n=2, \nonumber \\
\bullet && n \ge 3 \textrm{ and } d \ge 17, \nonumber \\
\bullet && n \ge 4 \textrm{ and } d \ge 8, \nonumber \\
\bullet && n \ge 5 \textrm{ and } d \ge 6, \textrm{ or } \nonumber \\
\bullet && n \ge 10 \textrm{ and } d \ge 5, \nonumber
\end{eqnarray}
then
\[
\lim_{p \to \infty}{r_{p, d, n}} = 1.
\]
\end{thmx}

\begin{proof}
This follows directly from Corollaries~\ref{bijcor},~\ref{limit1cor}, and~\ref{limit2cor}.
\end{proof}

\begin{cor}
\label{hpcor}
Let $s_{p, d, n}$ be the proportion of $p$-groups $G$ with lower $p$-length at most $n$ and $d(G) \le d$ whose automorphism group is a $p$-group.  If $n \ge 2$, then
\[
\lim_{d \to \infty}{s_{p, d, n}} = 1.
\]
\end{cor}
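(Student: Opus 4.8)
The plan is to realize $s_{p,d,n}$ as a weighted average of the proportions $r_{p,d',n}$ over $1 \le d' \le d$ and then apply the elementary observation that such an average inherits the limit $1$ from the sequence $r_{p,d',n}$, provided the total weight tends to infinity.

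Fix the prime $p$ and an integer $n \ge 2$. For each integer $d' \ge 1$ let $N_{d'}$ be the number of isomorphism classes of $p$-groups $G$ with lower $p$-length at most $n$ and $d(G) = d'$, and let $M_{d'}$ be the number of these whose automorphism group is a $p$-group. The set counted by $N_{d'}$ is the union, over $1 \le k \le n$, of the sets of $p$-groups of lower $p$-length exactly $k$ with $d(G) = d'$, each of which is finite by the remarks preceding Theorem~\ref{mainthm} (and which are encoded bijectively by Theorem~\ref{bijthm}); hence $N_{d'}$ is finite. It is also nonzero, since $C_p^{d'}$ has lower $p$-length $1 \le n$ and $d(C_p^{d'}) = d'$. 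Thus $N_{d'} \ge 1$ and $r_{p,d',n} = M_{d'}/N_{d'}$ is defined.

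Every $p$-group $G$ with lower $p$-length at most $n$ and $d(G) \le d$ is nontrivial, so $d(G) = d'$ for a unique $d'$ with $1 \le d' \le d$. Summing the counts over $d'$ gives
\[
s_{p,d,n} = \frac{\sum_{d'=1}^{d} M_{d'}}{\sum_{d'=1}^{d} N_{d'}} = \frac{\sum_{d'=1}^{d} r_{p,d',n}\, N_{d'}}{\sum_{d'=1}^{d} N_{d'}},
\]
which exhibits $s_{p,d,n}$ as a convex combination of $r_{p,1,n}, \dots, r_{p,d,n}$. Now fix $\varepsilon > 0$. By Theorem~\ref{mainthm} (the first assertion, valid because $n \ge 2$) there is a $D$ with $r_{p,d',n} > 1 - \varepsilon$ for all $d' > D$. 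Discarding the nonnegative terms with $d' \le D$ from the numerator and using $r_{p,d',n} > 1-\varepsilon$ for $d' > D$ yields, for $d > D$,
\[
s_{p,d,n} \ge \frac{(1-\varepsilon)\sum_{d'=1}^{d} N_{d'} - \sum_{d'=1}^{D} N_{d'}}{\sum_{d'=1}^{d} N_{d'}} = (1-\varepsilon) - \frac{\sum_{d'=1}^{D} N_{d'}}{\sum_{d'=1}^{d} N_{d'}}.
\]
Since $\sum_{d'=1}^{d} N_{d'} \ge d$ (as $N_{d'} \ge 1$ for every $d'$) tends to infinity while $\sum_{d'=1}^{D} N_{d'}$ is a constant, the subtracted fraction tends to $0$, so $\liminf_{d \to \infty} s_{p,d,n} \ge 1 - \varepsilon$; as $\varepsilon$ was arbitrary and $s_{p,d,n} \le 1$, we conclude $\lim_{d \to \infty} s_{p,d,n} = 1$.

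There is no real obstacle here: the corollary is a direct consequence of Theorem~\ref{mainthm} together with the counting identity above. The only input beyond bookkeeping is that the weights $N_{d'}$ do not concentrate on small $d'$, which is guaranteed by the trivial bound $N_{d'} \ge 1$ for every $d'$.
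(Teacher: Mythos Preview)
Your proof is correct and follows essentially the same approach as the paper's: both deduce the corollary from Theorem~\ref{mainthm} together with the observation that the total count $\sum_{d'=1}^d N_{d'}$ tends to infinity (the paper phrases this as ``the number of $p$-groups with lower $p$-length at most $n$ is infinite''). You have simply written out the weighted-average argument in full where the paper leaves it implicit.
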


\begin{proof}
This follows directly from Theorem~\ref{mainthm} and the trivial observation that while the number of $p$-groups generated by at most $d$ elements and with lower $p$-length at most $n$ is finite, the number of $p$-groups with lower $p$-length at most $n$ is infinite.
\end{proof}

\begin{cor} \label{maincor}
Let $t_{p,d,n}$ be the proportion of $p$-groups $G$ with lower $p$-length $n$ and $d(G) = d$ whose automorphism group is a $p$-group.  If $n \ge 2$, then
\[
\lim_{d \to \infty}{t_{p, d, n}} = 1.
\]
If $d \ge 5$, then
\[
\lim_{n \to \infty}{t_{p, d, n}} = 1.
\]
If $d$ and $n$ satisfy one of the conditions in~\ref{dnres}, then
\[
\lim_{p \to \infty}{t_{p, d, n}} = 1.
\]
\end{cor}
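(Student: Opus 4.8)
The plan is to sandwich $t_{p,d,n}$ between $1$ and a ratio that the proof of Theorem~\ref{mainthm} already shows tends to $1$; everything rests on the bijections of Theorem~\ref{bijthm}. Identifying $\mathfrak{B}_{d,n}$ with the (finite, hence making $t_{p,d,n}$ well-defined) set of $p$-groups $H$ of lower $p$-length $n$ with $d(H)=d$, we have
\[
t_{p,d,n} = \frac{\bigl|\{H \in \mathfrak{B}_{d,n} : \Aut(H)\text{ is a }p\text{-group}\}\bigr|}{|\mathfrak{B}_{d,n}|}.
\]

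First I would verify that $\pi_{d,n}$ maps $\mathfrak{D}_{d,n}$ injectively into the set of $H \in \mathfrak{B}_{d,n}$ with $\Aut(H)$ a $p$-group. By Theorem~\ref{bijthm}, $\pi_{d,n}$ identifies $\mathfrak{D}_{d,n}$ with those $H=F/L$, $L/F_{n+1}$ lying in a regular $\GL(d,\F_p)$-orbit of $\mathcal{C}_{d,n}$, for which $A(H)=1$; and $A(H)=1$ means $\Aut(H)=\Aut_f(H)$, which is a $p$-group. Since such an $L$ lies between $F_{n+1}$ and $F_n\le F_2$, the quotient $H=F/L$ has $d(H)=d$. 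It remains to see that $H$ has lower $p$-length exactly $n$, i.e.\ that $L\subsetneq F_n$. Unwinding the definition of the lower $p$-series of $F/L$, the only subgroup in $\mathcal{C}_{d,n}$ whose image under $\pi_{d,n}$ has lower $p$-length $n-1$ rather than $n$ is $L=F_n$ itself; but the $\GL(d,\F_p)$-orbit of $F_n/F_{n+1}$ (the whole ambient space) is the singleton $\{F_n/F_{n+1}\}$, which is not regular since $|\GL(d,\F_p)|>1$. Hence every $H\in\pi_{d,n}(\mathfrak{D}_{d,n})$ lies in $\mathfrak{B}_{d,n}$, and consequently $|\mathfrak{D}_{d,n}|\le t_{p,d,n}\,|\mathfrak{B}_{d,n}|$.

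Next I would feed in the estimates already proved. Combining Corollaries~\ref{limit1cor} and~\ref{limit2cor} (exactly as in the derivation of Theorem~\ref{mainthm} via Corollary~\ref{bijcor}) gives $|\mathfrak{D}_{d,n}|/|\mathfrak{A}_{d,n}|\to 1$ in each of the three regimes. Since $|\mathfrak{D}_{d,n}|\le|\mathfrak{B}_{d,n}|\le|\mathfrak{A}_{d,n}|$, the ratio $|\mathfrak{B}_{d,n}|/|\mathfrak{A}_{d,n}|$ also tends to $1$, so
\[
1 \ge t_{p,d,n} \ge \frac{|\mathfrak{D}_{d,n}|}{|\mathfrak{B}_{d,n}|} = \frac{|\mathfrak{D}_{d,n}|/|\mathfrak{A}_{d,n}|}{|\mathfrak{B}_{d,n}|/|\mathfrak{A}_{d,n}|} \longrightarrow 1
\]
in the corresponding limit, and a squeeze finishes the proof.

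The argument is short, and the only step that needs any thought is the claim in the second paragraph that a regular $\GL(d,\F_p)$-orbit in $\mathcal{C}_{d,n}$ consists of proper subgroups of $F_n/F_{n+1}$ --- equivalently, that passing from ``lower $p$-length at most $n$'' to ``lower $p$-length exactly $n$'' discards, at the level of $\mathfrak{C}_{d,n}$, only the single group $F/F_n$, which is in any case absent from $\mathfrak{D}_{d,n}$. Once that observation is recorded, Corollary~\ref{maincor} follows formally from Theorem~\ref{bijthm} and Corollaries~\ref{limit1cor}--\ref{limit2cor}, and I would not expect any genuine obstacle.
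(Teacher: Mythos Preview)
Your proof is correct and follows essentially the same route as the paper's: both derive $t_{p,d,n}\ge |\mathfrak{D}_{d,n}|/|\mathfrak{B}_{d,n}|$ from Theorem~\ref{bijthm} and then squeeze using $|\mathfrak{D}_{d,n}|\le|\mathfrak{B}_{d,n}|\le|\mathfrak{A}_{d,n}|$ together with $|\mathfrak{D}_{d,n}|/|\mathfrak{A}_{d,n}|\to 1$. Your observation that the orbit of $F_n/F_{n+1}$ is a singleton (hence not regular, since $d\ge 2$) gives $\mathfrak{D}_{d,n}\subseteq\mathfrak{B}_{d,n}$ outright, which is slightly cleaner than the paper's version, where only $\mathfrak{D}_{d,n}\subseteq\mathfrak{B}_{d,n}\cup\{F_n/F_{n+1}\}$ is recorded and the stray ``$+1$'' is disposed of via $|\mathfrak{D}_{d,n}|\to\infty$.
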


\begin{proof}
The number of $p$-groups $G$ with lower $p$-length $n$ and $d(G) = d$ is $|\mathfrak{B}_{d,n}|$, so $t_{p,d,n} \ge |\mathfrak{D}_{d,n}|/|\mathfrak{B}_{d,n}|$.  As $\mathfrak{D}_{d,n} \subseteq \mathfrak{B}_{d,n} \cup \{F_n/F_{n+1}\} \subseteq \mathfrak{A}_{d,n}$, it follows from Theorem~\ref{mainthm} that $(|\mathfrak{B}_{d,n}|+1)/|\mathfrak{D}_{d,n}| \to 1$ for each of the limits (with corresponding conditions on $d$ and/or $n$) in question.
Since $|\mathfrak{A}_{d,n}| \to \infty$ in each case, Theorem~\ref{mainthm} implies that $|\mathfrak{D}_{d,n}| \to \infty$ as well.  This proves that $|\mathfrak{B}_{d,n}|/|\mathfrak{D}_{d,n}| \to 1$ for each of the limits in question.
\end{proof}

\begin{cor}
Let $u_{p, d, n}$ be the proportion of $p$-groups $G$ with lower $p$-length $n$ and $d(G) \le d$ whose automorphism group is a $p$-group.  If $n \ge 2$, then
\[
\lim_{d \to \infty}{u_{p, d, n}} = 1.
\]
\end{cor}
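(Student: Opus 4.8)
The plan is to deduce this corollary from Corollary~\ref{maincor} in exactly the way Corollary~\ref{hpcor} was deduced from Theorem~\ref{mainthm}: one passes from counting $p$-groups with $d(G)$ \emph{equal to} $d$ to counting those with $d(G)$ \emph{at most} $d$ by partitioning according to the value of $d(G)$, then uses that the ``good'' proportion $t_{p,d',n}$ is already near $1$ for every large $d'$ while only boundedly many groups have small $d(G)$.

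Concretely, I would fix $n \ge 2$ and the prime $p$, and for each $d' \ge 1$ write $\mathcal{S}_{d'}$ for the (finite, of size $|\mathfrak{B}_{d',n}|$) set of $p$-groups with lower $p$-length $n$ and $d(G) = d'$. The $p$-groups with lower $p$-length $n$ and $d(G) \le d$ form the disjoint union $\bigsqcup_{d'=1}^d \mathcal{S}_{d'}$; let $N(d)$ be its size and $B(d)$ the number of its members whose automorphism group is not a $p$-group, so that $u_{p,d,n} = 1 - B(d)/N(d)$ and $B(d) = \sum_{d'=1}^d |\mathcal{S}_{d'}|\,(1 - t_{p,d',n})$. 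The two facts needed are: first, $N(d) \to \infty$ as $d \to \infty$, which is immediate since for every $d'$ the group $C_{p^n}^{d'}$ has lower $p$-length exactly $n$ and $d(C_{p^n}^{d'}) = d'$, giving $N(d) \ge d$; second, $t_{p,d',n} \to 1$ as $d' \to \infty$ by Corollary~\ref{maincor}. Given $\varepsilon > 0$, pick $D_0$ with $1 - t_{p,d',n} < \varepsilon$ for $d' \ge D_0$ and set $C = \sum_{d'=1}^{D_0-1} |\mathcal{S}_{d'}|$ (a constant independent of $d$); then for $d \ge D_0$,
\[
B(d) \le C + \varepsilon \sum_{d'=D_0}^{d} |\mathcal{S}_{d'}| \le C + \varepsilon\, N(d),
\]
whence $1 - u_{p,d,n} \le C/N(d) + \varepsilon$. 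Since $N(d) \to \infty$ this forces $\limsup_{d\to\infty}(1 - u_{p,d,n}) \le \varepsilon$, and $\varepsilon$ being arbitrary gives $\lim_{d\to\infty} u_{p,d,n} = 1$.

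There is no genuine obstacle here: the argument is pure bookkeeping once Corollary~\ref{maincor} is available. The only point meriting any care is ensuring the denominator $N(d)$ really tends to infinity, so that the fixed contribution of the finitely many low-rank groups washes out; the explicit family $C_{p^n}^{d'}$ takes care of this and keeps the step self-contained (alternatively one could quote $|\mathfrak{B}_{d,n}| \to \infty$, which is established in the proof of Corollary~\ref{maincor}).
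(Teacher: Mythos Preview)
Your proposal is correct and follows exactly the approach the paper intends: the paper's own proof simply says this corollary follows from Corollary~\ref{maincor} just as Corollary~\ref{hpcor} follows from Theorem~\ref{mainthm}, and you have faithfully carried out that deduction by partitioning according to $d(G)=d'$, invoking $t_{p,d',n}\to 1$, and using $N(d)\to\infty$. Your explicit witness $C_{p^n}^{d'}$ for the divergence of $N(d)$ is a clean way to make self-contained what the paper leaves as a ``trivial observation.''
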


\begin{proof}
This corollary follows from Corollary~\ref{maincor} just as Corollary~\ref{hpcor} follows from Corollary~\ref{mainthm}.
\end{proof}

Using Corollary~\ref{hpcor}, Henn and Priddy~\cite{hp} prove the following theorem.

\begin{thm}[Henn and Priddy \cite{hp}] \label{hpthm}
Let $v_{p,d,n}$ be the proportion of $p$-groups $P$ with lower $p$-length at most $n$ and $d(P) \le d$ that satisfy the following property: if $G$ is a finite group with Sylow $p$-subgroup $P$, then $G$ has a normal $p$-complement.  If $n \ge 2$, then $\lim_{d \to \infty}{v_{p,d,n}} = 1$.
\end{thm}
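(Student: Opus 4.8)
The plan is to reduce the assertion, by a classical $p$-nilpotence criterion, to a density statement that is essentially Corollary~\ref{hpcor}. Call a finite $p$-group $P$ \emph{$p$-nilpotence forcing} if every finite group with Sylow $p$-subgroup isomorphic to $P$ has a normal $p$-complement. First I would isolate a sufficient condition for $P$ to be $p$-nilpotence forcing, phrased purely in terms of $\Aut(P)$ and the automorphism groups of subgroups of $P$; then I would show that a proportion tending to $1$ of the $p$-groups $P$ with $d(P)\le d$ and lower $p$-length at most $n$ meet that condition as $d\to\infty$, which is enough because $v_{p,d,n}$ is at least this proportion.

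For the reduction, fix a finite group $G$ with Sylow $p$-subgroup $P$. By Frobenius's normal $p$-complement theorem, $G$ has a normal $p$-complement if and only if $N_G(Q)/C_G(Q)$ is a $p$-group for every $p$-subgroup $Q\le G$; equivalently, the $p$-fusion system $\mathcal{F}_P(G)$ equals the nilpotent system $\mathcal{F}_P(P)$. By Alperin's fusion theorem it is enough to understand the automizer $N_G(P)/C_G(P)$ and the automizers of the \emph{essential} subgroups $Q<P$, which are necessarily $P$-centric --- so that $Z(P)\le Q$ and $C_P(Q)=Z(Q)$ --- and whose outer automizers $N_G(Q)/QC_G(Q)$ contain a strongly $p$-embedded subgroup and hence are not $p$-groups; in particular $\Aut(Q)$ is then not a $p$-group. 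Two observations complete the reduction. If $\Aut(P)$ is a $p$-group, then $N_G(P)/C_G(P)$ is a $p$-subgroup of $\Aut(P)$ in which $\Inn(P)\cong PC_G(P)/C_G(P)$ is a Sylow $p$-subgroup (because $P$ is a Sylow $p$-subgroup of $N_G(P)$), whence $N_G(P)/C_G(P)=\Inn(P)$ and no fusion enters at the top. And an essential subgroup would be a proper $P$-centric subgroup of $P$ with non-$p$-group automorphism group. Hence $P$ is $p$-nilpotence forcing whenever $\Aut(P)$ is a $p$-group and no proper $P$-centric subgroup of $P$ has a non-$p$-group automorphism group --- for instance, when $\Aut(P)$ is a $p$-group and $P$ has no proper $P$-centric subgroup at all.

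It remains to prove that the proportion of $P$ with $d(P)\le d$ and lower $p$-length at most $n$ satisfying this condition tends to $1$ as $d\to\infty$. That $\Aut(P)$ is a $p$-group for almost every such $P$ is exactly Corollary~\ref{hpcor}. For the subgroup part I would use that every subgroup $Q\le P$ again has lower $p$-length at most $n$ and needs at most $\log_p|P|$ generators, a bound depending only on $d$ and $n$, so that all subgroups in play are governed by the bijective description of Theorem~\ref{bijthm} and the submodule estimates behind Theorems~\ref{limit1thm} and~\ref{limit2thm}; the aim is to show that, for all but a vanishing proportion of $P$ in the family, every proper $P$-centric subgroup $Q$ lands in the regular-orbit ``good'' set, so that $A(Q)=1$ and $\Aut(Q)$ is a $p$-group. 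This is the main obstacle, and it cannot be deduced from Corollary~\ref{hpcor} alone: the dihedral group $D_8$ has $\Aut(D_8)\cong D_8$ a $2$-group, yet its $P$-centric subgroup $C_2\times C_2$ has automorphism group $S_3$, and indeed $S_4$ has Sylow $2$-subgroup $D_8$ and no normal $2$-complement. So one must exploit the structure of a generic $P$ in the family --- through the enumeration machinery of the main theorem --- to show that for a proportion $\to 1$ of such $P$ either there is no proper $P$-centric subgroup (the centre being large relative to the Frattini quotient, so that every proper subgroup is centralized by an outside element) or each proper $P$-centric subgroup is itself forced into the good set. Granting this, the proportion of $P$ satisfying the sufficient condition tends to $1$, and therefore $\lim_{d\to\infty}v_{p,d,n}=1$.
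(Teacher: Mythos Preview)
The paper does not prove Theorem~\ref{hpthm}; it merely states the result and attributes the proof to Henn and Priddy~\cite{hp}, noting only that their argument uses Corollary~\ref{hpcor}. So there is no ``paper's own proof'' to compare against here.

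That said, your reduction is sound and your self-diagnosis is accurate. The Frobenius/Alperin step is correct, and your $D_8$ example is exactly the right cautionary case: it shows that ``$\Aut(P)$ is a $p$-group'' is \emph{not} by itself sufficient for $P$ to be $p$-nilpotence forcing, so Corollary~\ref{hpcor} cannot be invoked as a black box. The genuine gap is precisely the one you flag with ``Granting this'': you have not shown that for a proportion tending to~$1$ of $P$ in the family, every proper $P$-centric subgroup $Q$ has $\Aut(Q)$ a $p$-group (or that no such $Q$ exists). Your sketch for closing this gap is not convincing as written. The subgroups $Q$ that arise can have $d(Q)$ as large as $\log_p|P|$, which for generic $P$ in $\pi_{d,n}(\mathfrak{D}_{d,n})$ is of order $d_1+\cdots+d_n$, far larger than $d$; so they do not sit in the family whose limit in $d$ you are taking, and the regular-orbit machinery of Theorems~\ref{bijthm}--\ref{limit2thm} gives you no direct control over them. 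Nor is it clear that a generic $P$ has no proper centric subgroup: for $P=F/L$ with $L$ generic in $F_n/F_{n+1}$ one expects $Z(P)=F_n/L$, so $P/Z(P)\cong F/F_n$ is a fixed nontrivial group independent of $L$, and centric subgroups above $Z(P)$ are governed by the fixed structure of $F/F_n$ rather than by anything that improves as $d\to\infty$.

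In short, your outline reduces the theorem to a statement that is plausible but that you have not proved and that does not follow from the estimates in this paper. Henn and Priddy's own argument proceeds through stable homotopy theory (indecomposability of $BP$ and transfer/idempotent arguments in the double Burnside ring), which sidesteps the need to control $\Aut(Q)$ for all centric $Q$ directly; if you want a purely group-theoretic proof along your lines, the centric-subgroup step needs a genuinely new idea.
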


As mentioned earlier in this chapter, the following question remains unanswered.

\begin{question}
Let $w_{p,n}$ be the proportion of $p$-groups with order at most $p^n$ whose automorphism group is a $p$-group.  Is it true that $\lim_{n \to \infty}{w_{p,n}} = 1$?
\end{question}

\chapter{The Lower $p$-Series and the Enumeration of $p$-Groups}
\label{c_series}
\chaptermark{The Lower \lowercase{$p$}-Series}

If we hope to prove that the automorphism group of almost every $p$-group is itself a $p$-group, there are two initial questions to answer: what do we mean by ``almost always'', and how do we relate the set of finite $p$-groups (and their automorphism groups) to something we can actually work with?  As mentioned in Chapter~\ref{c_main}, the answer to both of those questions starts with the lower $p$-series, a central series defined for all groups.  This chapter begins with an introduction to the lower $p$-series and its basic properties.  It follows with the connection between the lower $p$-series and automorphisms, and it closes with theorems on the correspondence between $p$-groups in a variety and orbits of subgroups of a free group.

\section{The Lower \lowercase{$p$}-Series}
\label{lowerpsec}

The lower $p$-series of a group was introduced independently by Skopin~\cite{sko} and Lazard~\cite{laz}.  It is described in detail by Huppert and Blackburn~\cite[Chapter VIII]{hb} (under the name $\lambda$-series) and by Bryant and Kov\'{a}cs~\cite{bk}.  It has also been called the lower central $p$-series, the lower exponent-$p$ central series, or the Frattini series.

The lower $p$-series is particularly suited to computer analysis of finite $p$-groups and forms the basis of the $p$-group generation algorithm of Newman~\cite{new}.  This algorithm is described in greater detail in O'Brien~\cite{obr}.  It was modified in~\cite{obr2} and~\cite{elo} to construct automorphism groups of finite $p$-groups.  It should also be mentioned that results on the lower $p$-series have appeared in~\cite{elo} and~\cite{obr}, while the link between the lower $p$-series and automorphisms described in Section~\ref{autgroupsec} is an extension of results that Higman~\cite{hig} and Sims~\cite{sim} used to count finite $p$-groups.

\begin{defn}
Fix a prime $p$.  For any group $G$, the \emph{lower $p$-series} $G = G_1 \ge G_2 \ge \cdots$ of $G$ is defined by $G_{i+1} = G_i^p [G_i, G]$ for $i \ge 1$.  $G$ is said to have \emph{lower $p$-length} $n$ if $G_n$ is the last non-identity term in the lower $p$-series.
\end{defn}

For an example of the lower $p$-series, suppose that $G$ is a finite abelian $p$-group.  Then all commutators of elements in $G$ are trivial, so $G_{i+1} = G_i^p$.  We can say precisely what each subgroup $G_i$ is.  Recall that a partition $\lambda$ of $n$ is a sequence of integers $\lambda_1 \ge \lambda_2 \ge \cdots \ge \lambda_k \ge 0$ such that $\sum_{j=1}^k{\lambda_j} = n$.  A finite abelian $p$-group of order $p^n$ has \emph{type} $\lambda$ if it is isomorphic to
\[
C_{p^{\lambda_1}} \times C_{p^{\lambda_2}} \times \cdots \times C_{p^{\lambda_k}}.
\]
So suppose that $G$ has type $\lambda$.  For each positive integer $i$, define a new partition $\lambda^{(i)}$ by $\lambda^{(i)}_j = \max{\{\lambda_j-i+1, 0\}}$ for $j = 1, 2, \dots, k$.  Then $G_i$ is a finite abelian $p$-group of type $\lambda^{(i)}$.  In particular, $G_i$ is non-trivial if and only if $i \le \lambda_1$, and so the lower $p$-length of $G$ is $\lambda_1$.

For a second example of the lower $p$-series, let $G$ be the group of $(n+1) \times (n+1)$ upper triangular matrices with entries in $\F_p$ and ones on the diagonal; this is a Sylow $p$-subgroup of $\GL(n+1, \F_p)$.  Then $G_i$ consists of all matrices in $G$ whose entry in position $(j,k)$ is 0 if $0 < k-j < i$.

Before we list some basic facts about the lower $p$-series, recall that a subgroup is \emph{fully invariant} if every endomorphism of the group restricts to an endomorphism of the subgroup.  For any group $G$, we write $G = \gamma_1(G) \ge \gamma_2(G) \ge \cdots$ to denote the \emph{lower central series} of $G$, where $\gamma_{i+1}(G) = [\gamma_i(G), G]$.  The following proposition states five fundamental properties of the lower $p$-series; the first four facts are proved in Huppert and Blackburn~\cite[Chapter VIII, Theorem 1.5 and Corollary 1.6]{hb} and the fifth fact is obvious by induction.

\begin{prop}
\label{fratprops}
For any group $G$ and for all positive integers $i$ and $j$,
\begin{enumerate}
\item $[G_i, G_j] \le G_{i+j}$.
\item $G_i^{p^j} \le G_{i+j}$.
\item $G_i = \gamma_1(G)^{p^{i-1}} \gamma_2(G)^{p^{i-2}} \cdots \gamma_i(G)$.
\item $G_{i+1}$ is the smallest normal subgroup of $G$ lying in $G_i$ such that $G_i/G_{i+1}$ is an elementary abelian $p$-group and is central in $G/G_{i+1}$.
\item $G_i$ is fully invariant in $G$.
\end{enumerate}
\end{prop}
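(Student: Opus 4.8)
The plan is to prove the five assertions in the order listed, with commutator-collection identities doing the work for (1)--(3) and parts (4)--(5) then following in a few lines.

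First I would assemble the standard tools: the Hall--Witt identity and the resulting ``three subgroups'' inclusion $[[A,B],C]\le [[B,C],A]\,[[C,A],B]$, valid whenever $A,B,C$ are normal in the ambient group; the power--commutator expansion $[x,y^{p}]=\prod_{k=0}^{p-1}[x,y]^{y^{k}}$ together with $x^{y}=x[x,y]$; and the Hall--Petrescu collection formula $(xy)^{p}=x^{p}y^{p}c_{2}^{\binom{p}{2}}\cdots c_{p}$ with $c_{k}\in\gamma_{k}(\langle x,y\rangle)$. I would also note at once, by an immediate induction from $G_{i+1}=G_{i}^{p}[G_{i},G]$, that every $G_{i}$ is normal in $G$ (conjugation permutes $p$-th powers and permutes commutators), so that all the quotients appearing below make sense, and that $G_{m}^{p}\le G_{m+1}$ and $[G_{m},G]\le G_{m+1}$ hold by the very definition.

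The technical core is (1) $[G_{i},G_{j}]\le G_{i+j}$: I would induct on $j$, simultaneously over all $i$, the case $j=1$ being $[G_{i},G]\le G_{i+1}$. For the step one writes $G_{j+1}=G_{j}^{p}[G_{j},G]$; the three subgroups inclusion bounds $[G_{i},[G_{j},G]]$ by $[G_{i+1},G_{j}]\,[[G_{i},G_{j}],G]$, both inside $G_{i+j+1}$ by the inductive hypothesis, and the power--commutator expansion shows $[G_{i},G_{j}^{p}]\equiv [G_{i},G_{j}]^{p}$ modulo $G_{i+j+1}$, which again lands in $G_{i+j+1}$. Part (2) $G_{i}^{p^{j}}\le G_{i+j}$ then follows by an easy induction on $j$ from $G_{m}^{p}\le G_{m+1}$. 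An easy induction gives $\gamma_{k}(G)\le G_{k}$, so that the inclusion $\gamma_{1}(G)^{p^{i-1}}\cdots\gamma_{i}(G)\le G_{i}$ in (3) is immediate from (1) and (2); the reverse inclusion is an induction on $i$: writing $R_{i}$ for the right-hand side, one has $G_{i+1}=G_{i}^{p}[G_{i},G]\le R_{i}^{p}[R_{i},G]$, and one expands $R_{i}^{p}$ (by Hall--Petrescu) and $[R_{i},G]$ term by term to see each factor lies in $R_{i+1}$.

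The last two parts are quick. For (4): $G_{i}/G_{i+1}$ has exponent dividing $p$ since $G_{i}^{p}\le G_{i+1}$, is abelian since $[G_{i},G_{i}]\le G_{2i}\le G_{i+1}$ by (1), and is central in $G/G_{i+1}$ since $[G_{i},G]\le G_{i+1}$; conversely, any normal $N\le G_{i}$ with $G_{i}/N$ elementary abelian and central in $G/N$ satisfies $G_{i}^{p}\le N$ and $[G_{i},G]\le N$, hence $G_{i+1}\le N$. For (5), if $\varphi$ is an endomorphism of $G$ with $\varphi(G_{i})\le G_{i}$, then $\varphi(G_{i+1})=\varphi(G_{i})^{p}[\varphi(G_{i}),\varphi(G)]\le G_{i}^{p}[G_{i},G]=G_{i+1}$, so full invariance follows by induction from $\varphi(G_{1})\le G_{1}$. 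The one genuine obstacle is the bookkeeping in (1) and especially in the reverse inclusion of (3): each use of the power expansion or of Hall--Petrescu produces a bouquet of commutators of mixed weight and $p$-power height, and one must verify every one of them has the correct ``total weight''. This is exactly Huppert--Blackburn~\cite[Chapter VIII, Theorem 1.5 and Corollary 1.6]{hb}, whose collection argument I would reproduce or cite.
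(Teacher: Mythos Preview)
The paper does not give a proof of this proposition at all: it simply cites Huppert--Blackburn \cite[Chapter VIII, Theorem 1.5 and Corollary 1.6]{hb} for parts (1)--(4) and remarks that (5) ``is obvious by induction.'' Your proposal is a correct and reasonably detailed outline of precisely the Huppert--Blackburn collection argument (as you yourself note in your final sentence), together with the one-line induction for (5), so your approach agrees with the paper's, only more explicitly.
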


As we will see, the fact that $G_i/G_{i+1}$ is elementary abelian, and therefore an $\F_p$-vector space, is a key reason we are able to prove the main theorem.  It is also important that the lower $p$-length has a special significance for finite groups.

\begin{prop}
Let $G$ be a finite group.  Then $G$ is a $p$-group if and only if $G$ has finite lower $p$-length.
\end{prop}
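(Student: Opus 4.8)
\emph{Setup.} The plan is to prove the two implications separately; essentially all the content is in one direction. For the ``if'' direction, suppose $G$ has finite lower $p$-length $n$, so that $G = G_1 \ge G_2 \ge \cdots \ge G_n > G_{n+1} = 1$. Since $G$ is finite, each $G_i$ is finite, and by Proposition~\ref{fratprops}(4) each quotient $G_i/G_{i+1}$ is a finite elementary abelian $p$-group, hence has order a power of $p$. Therefore $|G| = \prod_{i=1}^{n} |G_i/G_{i+1}|$ is a power of $p$, so $G$ is a $p$-group. This uses only the definition of the lower $p$-series and the structure of its factors, both already established.

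\emph{The substantive direction.} Now suppose $G$ is a finite $p$-group, say $|G| = p^m$; the only thing to verify is that the descending chain $G_1 \ge G_2 \ge \cdots$ actually reaches the trivial subgroup. I would invoke Proposition~\ref{fratprops}(3), which gives $G_i = \gamma_1(G)^{p^{i-1}} \gamma_2(G)^{p^{i-2}} \cdots \gamma_i(G)$. A finite $p$-group is nilpotent, so $\gamma_j(G) = 1$ for every $j$ exceeding the nilpotency class $c$; moreover every subgroup of $G$ has exponent dividing $p^m$, so $\gamma_j(G)^{p^{i-j}} = 1$ as soon as $i - j \ge m$. Taking $i \ge m + c$ makes every factor in the product trivial, whence $G_i = 1$, so the lower $p$-length of $G$ is at most $m + c - 1$, which is finite. (Alternatively, one argues directly that $G_{i+1} \subsetneq G_i$ whenever $G_i \ne 1$: nilpotence of $G$ forces $[G,G_i] \subsetneq G_i$, so $G_i/[G,G_i]$ is a nontrivial finite abelian $p$-group, and $G_{i+1}/[G,G_i]$ is its subgroup of $p$-th powers, which is proper since $x \mapsto x^p$ cannot be surjective on a nontrivial finite $p$-group; finiteness of $G$ then forces the strictly descending chain to terminate within $m$ steps.)

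\emph{Main obstacle.} There is no genuine obstacle here: given Proposition~\ref{fratprops}, the argument is a short computation. The only external input is the standard fact that a finite $p$-group is nilpotent, equivalently that $[G,N] \subsetneq N$ for every nontrivial normal subgroup $N$, which is precisely what guarantees that the lower central terms, and hence the lower $p$-series, eventually vanish.
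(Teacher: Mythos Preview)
Your proof is correct. The paper's argument is precisely your parenthetical alternative: it shows that whenever $G_i \neq 1$ one has $[G,G_i] \subsetneq G_i$ by nilpotence, so $G_i/[G,G_i]$ is a nontrivial abelian $p$-group whose $p$-th powers form a proper subgroup, forcing $G_{i+1} = G_i^p[G,G_i] \subsetneq G_i$; finiteness then terminates the chain. Your primary route via Proposition~\ref{fratprops}(3) is a genuinely different (though equally short) argument: rather than proving strict descent step by step, you kill $G_i$ all at once by bounding each factor $\gamma_j(G)^{p^{i-j}}$ using the nilpotency class and the exponent. This has the minor bonus of yielding an explicit bound (lower $p$-length $\le m + c - 1$), whereas the paper's descent argument gives only the bound $m$ coming from $|G| = p^m$; in fact the descent argument gives the sharper bound, so each approach has a small advantage.
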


\begin{proof}
Since the order of $G_i/G_{i+1}$ is a power of $p$ for all $i$, it is clear that if $G$ is not a $p$-group, then $G$ has infinite lower $p$-length.  In the other direction, suppose $G$ is a $p$-group.  It suffices to show that if $G_i$ is non-trivial, then $G_{i+1} < G_i$.  Since $G$ is nilpotent, $[G_i, G] < G_i$ (see Kurzweil and Stellmacher~\cite[Lemma 5.1.6]{ks}).  Then $G_i/[G_i, G]$ is a non-trivial abelian $p$-group.  Hence
\[
G_i/[G_i, G] > (G_i/[G_i, G])^p = G_i^p[G_i, G]/[G_i, G],
\]
and so $G_i > G_i^p[G_i, G] = G_{i+1}$.
\end{proof}

Note that if $G$ is a finite $p$-group, then $G_2 = \Phi(G)$, the Frattini subgroup of $G$ (see, for example, Kurzweil and Stellmacher~\cite[Lemma 5.2.8]{ks}).  As a consequence, the Burnside Basis Theorem says that the smallest cardinality $d(G)$ of a generating set of $G$ equals $\dim(G/G_2)$, and that any lift to $G$ of a generating set of $G/G_2$ generates $G$.

We are actually interested in the lower $p$-series of free groups of finite rank as well as that of finite $p$-groups.  The reason is that the lower $p$-series of a finite $p$-group is related to the lower $p$-series of a free group in the following way.  Let $G$ be a finite $p$-group, and let $F$ be the free group of rank $d(G)$.  Then $G$ is isomorphic to $F/U$ for some normal subgroup $U$ of $F$.  It is easy to see by induction that $G_i \cong F_iU/U$ for all $i$; namely, if $G_i \cong F_iU/U$, then
\begin{eqnarray*}
G_{i+1} &\cong& (F_iU/U)^p [F_iU/U, F/U] \\
&\cong& F_i^p [F_i, F] U/U \\
&\cong& F_{i+1}U/U.
\end{eqnarray*}
It follows that the lower $p$-length of $G$ is $n$, where $F_{n+1}$ is the first term in the lower $p$-series of $F$ that is contained in $U$.  The lower $p$-series of $F$ will be discussed in detail in Chapter~\ref{c_free}, but we mention here that the groups $F/F_n$ are finite $p$-groups for all $n$.

\section{The Lower \lowercase{$p$}-Series and Automorphisms}
\label{autgroupsec}

In this section we collect some necessary facts linking the lower $p$-series and automorphisms.  The first proposition is fundamental to our overall proof strategy, while the remaining propositions are easy technical lemmas that will be used in Section~\ref{enumvarsec}.  To begin, suppose that $G$ is a finite $p$-group, and let $d = d(G)$.  Any automorphism of $G$ induces an element of $\Aut(G/G_2) \cong \GL(d, \F_p)$.  Thus we obtain a map from $\Aut(G)$ to $\GL(d, \F_p)$ and an exact sequence
\[
1 \to K(G) \to \Aut(G) \to A(G) \to 1,
\]
where $A(G)$ is a subgroup of $\GL(d, \F_p)$.  The group $K(G)$ acts trivially on $G/G_2$, and hence on each factor $G_i/G_{i+1}$ (see Huppert and Blackburn~\cite[Chapter VIII, Theorem 1.7]{hb}).  As $\Aut(G)$ acts on each $G_i/G_{i+1}$ and the kernel of the action contains $K(G)$, we obtain an action of $A(G)$ on each $G_i/G_{i+1}$.  The following key proposition is due to P. Hall~\cite[Section 1.3]{hal}.

\begin{prop}
If $G$ is a finite $p$-group, then so is $K(G)$.
\end{prop}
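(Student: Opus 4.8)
The plan is to show that $K(G)$ is a $p$-group by exhibiting a filtration of $K(G)$ whose successive quotients embed into abelian $p$-groups, so that $K(G)$ is a finite $p$-group (it is finite because $\Aut(G)$ is). For each $i \ge 1$, let $K_i(G)$ be the subgroup of $\Aut(G)$ consisting of all automorphisms $\alpha$ such that $\alpha(x)x^{-1} \in G_{i+1}$ for every $x \in G$; equivalently, $\alpha$ acts trivially on $G/G_{i+1}$. Then $K_1(G) = K(G)$ (since $G_2 = \Phi(G)$ and acting trivially on $G/G_2$ is the defining property of $K(G)$), and $K_1(G) \ge K_2(G) \ge \cdots$. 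Since $G$ has finite lower $p$-length $n$, we have $G_{n+1} = 1$, so $K_{n}(G) = 1$ and this chain terminates.

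First I would verify that each $K_i(G)$ is a subgroup (closure under composition and inverses is a routine check using that $G_{i+1}$ is normal in $G$). Next, the key step: for each $i$, construct a homomorphism from $K_i(G)$ into $\Hom(G/G_2, G_{i+1}/G_{i+2})$ whose kernel is exactly $K_{i+1}(G)$. Given $\alpha \in K_i(G)$, the map $x \mapsto \alpha(x)x^{-1}$ sends $G$ into $G_{i+1}$; one checks it factors through $G/G_2$ (because $G$ is generated mod $G_2$ by any lift of a basis, and the "commutator-type" corrections land deeper in the lower $p$-series by Proposition~\ref{fratprops}(1),(2)) and, composed with the projection $G_{i+1} \to G_{i+1}/G_{i+2}$, yields a group homomorphism $G/G_2 \to G_{i+1}/G_{i+2}$. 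The assignment $\alpha \mapsto \overline{(x \mapsto \alpha(x)x^{-1})}$ is then a homomorphism $K_i(G) \to \Hom(G/G_2, G_{i+1}/G_{i+2})$ with kernel $K_{i+1}(G)$: the multiplicativity reduces to the identity $\beta\alpha(x)x^{-1} = \beta(\alpha(x)x^{-1})\cdot(\beta(x)x^{-1})$ modulo $G_{i+2}$, where the first factor equals $\alpha(x)x^{-1}$ mod $G_{i+2}$ because $\beta$ acts trivially on $G_{i+1}/G_{i+2}$.

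Since $G/G_2$ and $G_{i+1}/G_{i+2}$ are both elementary abelian $p$-groups, $\Hom(G/G_2, G_{i+1}/G_{i+2})$ is an elementary abelian $p$-group, so $K_i(G)/K_{i+1}(G)$ embeds into a finite $p$-group and hence is a finite $p$-group. Walking down the chain $K(G) = K_1(G) \ge K_2(G) \ge \cdots \ge K_{n}(G) = 1$, each quotient is a finite $p$-group, so $K(G)$ itself is a finite $p$-group. I expect the main obstacle to be the bookkeeping in the second paragraph — namely checking carefully that $x \mapsto \alpha(x)x^{-1}$ induces a well-defined \emph{homomorphism} $G/G_2 \to G_{i+1}/G_{i+2}$ (the additivity in $x$ requires that the failure of $\alpha(xy)x^{-1}y^{-1}$ to equal $\alpha(x)x^{-1}\cdot \alpha(y)y^{-1}$ involves commutators of elements of $G$ with elements of $G_{i+1}$, which lie in $G_{i+2}$ by Proposition~\ref{fratprops}(1)), and that the kernel is precisely $K_{i+1}(G)$ rather than something larger. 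Everything else is standard commutator-collection manipulation inside the lower $p$-series.
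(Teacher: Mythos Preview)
Your proof is correct, but it takes a genuinely different route from the paper's. The paper gives Hall's original orbit-counting argument: take $\sigma \in K(G)$ of prime order $q \neq p$; since $\sigma$ fixes each coset of $G_2$ setwise and each such coset has $p$-power size while $\sigma$-orbits have size $1$ or $q$, every coset contains a $\sigma$-fixed point; these fixed points generate $G$ by the Burnside basis theorem, forcing $\sigma = 1$. This is a three-line argument with no filtration and no commutator bookkeeping.

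Your approach instead builds the standard ``stability filtration'' $K(G) = K_1(G) \ge K_2(G) \ge \cdots \ge K_n(G) = 1$ and embeds each successive quotient into $\Hom(G/G_2, G_{i+1}/G_{i+2})$. Two remarks on your write-up: (i) the cleanest reason the map factors through $G/G_2$ is simply that the target $G_{i+1}/G_{i+2}$ is elementary abelian, so any homomorphism from $G$ kills $G^p[G,G] = G_2$ automatically---no need to talk about lifts of a basis; (ii) your step ``$\beta$ acts trivially on $G_{i+1}/G_{i+2}$'' is exactly the Huppert--Blackburn fact (cited just before the proposition in the paper) that $K(G)$ acts trivially on every section $G_j/G_{j+1}$, so you are implicitly using that. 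What your approach buys over Hall's is extra structure: you get an explicit normal series for $K(G)$ with elementary abelian sections of known dimension, hence an upper bound $|K(G)| \le p^{d(d_2 + \cdots + d_n)}$ where $d_i = \dim G_i/G_{i+1}$. Hall's argument is quicker but gives only the qualitative conclusion.
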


\begin{proof}
Suppose $\sigma \in K(G)$ has order $q$, where $q$ is a prime not equal to $p$ or $q = 1$.  Any coset $xG_2$ of $G_2$ in $G$ is fixed by $\sigma$, since $\sigma$ acts trivially on $G/G_2$.  The orbit of an element of $xG_2$ under $\sigma$ has size $1$ or $q$, and $|xG_2|$ is a power of $p$, so some element of $xG_2$ is fixed by $\sigma$.  Every coset of $G_2$ contains an element fixed by $\sigma$, and since $G_2$ is the Frattini subgroup of $G$, these coset representatives generate $G$.  Thus $\sigma$ fixes $G$ and $q = 1$.  Hence $K(G)$ is a $p$-group.
\end{proof}

\begin{prop}
\label{fratautprop}
If $G$ is a finite $p$-group and $\sigma$ is an endomorphism of $G$ that induces an automorphism on $G/G_2$, then $\sigma$ is an automorphism of $G$.
\end{prop}

\begin{proof}
The image of $G$ under $\sigma$ contains coset representatives for each coset of $G/G_2$.  These coset representatives generate $G$, so the image of $G$ under $\sigma$ is all of $G$.  Hence $\sigma$ is an automorphism.
\end{proof}

Let $F$ be the free group of rank $d$ with free generating set $y_1, y_2, \dots, y_d$.

\begin{prop}
\label{f2maxlprop}
If $U$ is a proper fully invariant subgroup of $F$, then $U \le F_2$ and $d(F/U)$, the smallest cardinality of a generating set of $F/U$, equals $d$.
\end{prop}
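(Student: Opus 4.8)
The plan is to transport the problem into a finite $p$-group quotient of $F$, where $F_2$ becomes the Frattini subgroup and so carries the non-generator property that it lacks in $F$ itself. Since $F/U$ is a $p$-group, there is an $n$ with $F_{n+1}\le U$; fix such an $n$ and set $\bar F=F/F_{n+1}$, a finite $p$-group, and $\bar U=U/F_{n+1}$. The first step is to check that $\bar U$ is a proper fully invariant subgroup of $\bar F$. Properness is immediate from $U\lneq F$. For full invariance, note that since $F$ is free and $F_{n+1}$ is fully invariant in $F$ (Proposition~\ref{fratprops}), every endomorphism of $\bar F$ is induced by one of $F$ — lift the image of each generator $y_iF_{n+1}$ to an element of $F$ and extend multiplicatively — and any endomorphism of $F$ carries $U$ into $U$ by hypothesis, hence descends to one carrying $\bar U$ into $\bar U$.

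Next I would examine the image $W$ of $\bar U$ in the $\F_p$-vector space $\bar F/\Phi(\bar F)$, noting that $\Phi(\bar F)=\bar F_2=F_2/F_{n+1}$ and $\bar F/\Phi(\bar F)\cong F/F_2\cong\F_p^d$. By the previous paragraph every element of $M_d(\F_p)=\mathrm{End}_{\F_p}(\F_p^d)$ is induced by an endomorphism of $\bar F$: given a matrix, send each $y_iF_{n+1}$ to the monomial in the generators with the prescribed exponents. Each such map sends $\bar U$ into $\bar U$ and hence $W$ into $W$, so $W$ is a subspace of $\F_p^d$ invariant under all of $M_d(\F_p)$. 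Since for any nonzero $v$ every vector of $\F_p^d$ lies in $M_d(\F_p)v$, this forces $W=0$ or $W=\F_p^d$. In the second case $\bar U$ maps onto $\bar F/\Phi(\bar F)$, so $\bar U$ contains a lift of a generating set of $\bar F/\Phi(\bar F)$; by the Burnside Basis Theorem such a lift generates $\bar F$, and since $\bar U$ is a subgroup this gives $\bar U=\bar F$, contradicting properness. Hence $W=0$, that is, $\bar U\le F_2/F_{n+1}$, which unwinds to $U\le F_2$.

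The second assertion is then short. On one hand $F/U$ is generated by the images of $y_1,\dots,y_d$, so $d(F/U)\le d$. On the other hand $F/U$ is a finite $p$-group, so by the Burnside Basis Theorem $d(F/U)=\dim_{\F_p}\bigl((F/U)/(F/U)_2\bigr)$; and $(F/U)_2=F_2U/U=F_2/U$ because $U\le F_2$, so $(F/U)/(F/U)_2\cong F/F_2\cong\F_p^d$ and $d(F/U)=d$.

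The step I expect to be the real content is the reduction in the first paragraph: one must descend to the finite $p$-group $\bar F$, because the concluding argument uses that the Frattini subgroup of a finite $p$-group consists of non-generators — so that a subgroup projecting onto $\bar F/\Phi(\bar F)$ is already all of $\bar F$ — and this has no counterpart for $F$, where $F_2$ is merely a proper subgroup. The remaining ingredients are routine: lifting endomorphisms through the fully invariant subgroup $F_{n+1}$, writing down enough monomial endomorphisms to realize every matrix in $M_d(\F_p)$, and the elementary fact that $M_d(\F_p)v=\F_p^d$ for every nonzero $v$.
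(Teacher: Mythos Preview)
Your opening move ``Since $F/U$ is a $p$-group, there is an $n$ with $F_{n+1}\le U$'' is not justified by the hypotheses: the proposition only assumes $U$ is a proper fully invariant subgroup of the free group $F$, and $F/U$ need not be a $p$-group at all (take $U=[F,F]$, giving $F/U\cong\Z^d$), let alone one of finite lower $p$-length. In fact the first conclusion $U\le F_2$ is \emph{false} without some such extra hypothesis: for any prime $q\ne p$, the verbal subgroup $U$ generated by the word $x^q$ is proper and fully invariant, yet $y_1^q\in U\setminus F_2$ since its image $qe_1$ in $F/[F,F]\cong\Z^d$ does not lie in $p\Z^d$. The paper's own argument has the matching gap, at exactly the point you diagnosed: the step ``the coset representatives generate $F$'' is a Frattini-type lifting statement that fails in $F$ itself --- for instance $y_1^{1+p},y_2,\dots,y_d$ represent the cosets $y_1F_2,\dots,y_dF_2$ but generate a proper subgroup of $F$.

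Both arguments become correct once one adds the hypothesis that $F/U$ is a finite $p$-group, and this is the only setting in which the paper needs the conclusion $U\le F_2$ (in Theorems~\ref{isoenum1} and~\ref{isoenum2} the relatively free group $G=F/U$ is explicitly assumed to be a finite $p$-group). Under that hypothesis your proof is correct and arguably cleaner than the paper's: the passage to $\bar F=F/F_{n+1}$ is now legitimate, $M_d(\F_p)$-invariance forces the image of $\bar U$ in $\bar F/\Phi(\bar F)$ to be $0$ or everything, and the Burnside Basis Theorem rules out the latter. The second conclusion $d(F/U)=d$ does hold in full generality, but your derivation of it also leans on the finite-$p$-group assumption; a hypothesis-free argument observes that any nontrivial variety contains a nontrivial cyclic group $C$, and $F/U$ surjects onto $C^d$.
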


\begin{proof}
Suppose $U \not\le F_2$ is a fully invariant subgroup of $F$.  The elements $y_1^{a_1} \cdots y_d^{a_d}$, with $0 \le a_i < p$, form a complete set of coset representatives for the cosets of $F_2$ in $F$, so $U$ contains an element $y$ in a coset $y_1^{a_1} \cdots y_d^{a_d} F_2$ with some $a_i$ nonzero.  Then the endomorphism of $F$ that sends $y_j$ to 1 for $j \neq i$ and sends $y_i$ to $y_k^{a_i^{-1}}$ for some $k = 1, \dots, d$ sends $y$ into the coset $y_k F_2$.  Since $k$ was arbitrary, this shows that $U$ contains coset representatives of $y_k F_2$ for all $k = 1, \dots, d$.  These cosets generate $F/F_2$, and so the coset representatives generate $F$.  Hence $U = F$.

Finally, since $d(F) = d(F/F_2)$, any normal subgroup $U$ of $F$ contained in $F_2$ satisfies $d(U) = d$.
\end{proof}

\begin{prop}
\label{f2liftprop}
Let $U$ be a fully invariant subgroup of $F$ contained in $F_2$ and suppose that $G = F/U$ is a finite $p$-group.  Then any automorphism $\sigma$ of $F/F_2$ lifts to an automorphism of $G$.
\end{prop}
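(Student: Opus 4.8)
The plan is to lift $\sigma$ through the free group: choose an honest endomorphism of $F$ that reduces to $\sigma$ modulo $F_2$, push it down to $G$ using full invariance of $U$, and then upgrade the resulting endomorphism of $G$ to an automorphism via Proposition~\ref{fratautprop}.

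In detail, I would first note that since $F/F_2$ is elementary abelian of dimension $d$ with basis the images of the free generators $y_1, \dots, y_d$, the automorphism $\sigma$ is given by a matrix $(a_{ij}) \in \GL(d, \F_p)$, so that $\sigma(y_i F_2) = y_1^{a_{i1}} \cdots y_d^{a_{id}} F_2$ for suitable integers $0 \le a_{ij} < p$. Because $F$ is free on $y_1, \dots, y_d$, the assignment $y_i \mapsto y_1^{a_{i1}} \cdots y_d^{a_{id}}$ extends to an endomorphism $\tilde{\sigma}$ of $F$, and by construction $\tilde{\sigma}$ induces $\sigma$ on $F/F_2$. Next, since $U$ is fully invariant in $F$ we have $\tilde{\sigma}(U) \le U$, so $\tilde{\sigma}$ descends to an endomorphism $\bar{\sigma}$ of $G = F/U$. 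Finally, recall from the previous section that $G_i = F_i U/U$ for all $i$; since $U \le F_2$ this gives $G_2 = F_2 U/U = F_2/U$, so the natural isomorphism $G/G_2 \cong F/F_2$ identifies the endomorphism of $G/G_2$ induced by $\bar{\sigma}$ with $\sigma$ itself. As $\sigma$ is an automorphism of $F/F_2 \cong G/G_2$, Proposition~\ref{fratautprop} shows that $\bar{\sigma}$ is an automorphism of $G$, and this $\bar{\sigma}$ is the desired lift.

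There is no real obstacle here; the argument is a chain of routine observations, and the hypothesis that $G$ is finite is used only to legitimately invoke Proposition~\ref{fratautprop}. The two points meriting a little care are the compatibility of the identification $G/G_2 \cong F/F_2$ with the actions of $\bar{\sigma}$ and $\sigma$ (so that ``$\bar{\sigma}$ lifts $\sigma$'' carries the intended meaning), and the observation that full invariance of $U$ is precisely what allows $\tilde{\sigma}$ to pass to the quotient $F/U$; both are immediate from the definitions together with the description of $G_i$ in terms of $F_i U$.
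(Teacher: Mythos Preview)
Your argument is correct and follows essentially the same route as the paper's proof: lift $\sigma$ to an endomorphism of $F$ by freeness, pass to $G$ using full invariance of $U$, and invoke Proposition~\ref{fratautprop} to upgrade the induced endomorphism of $G$ to an automorphism. The only cosmetic difference is that you make the matrix description of $\sigma$ explicit, whereas the paper simply picks any coset representatives $\sigma'(y_i)\in\sigma(y_iF_2)$.
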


\begin{proof}
Since $F$ is free, there is an endomorphism $\sigma'$ of $F$ such that $\sigma'(y_i) \in \sigma(y_i F_2)$ for all $i = 1, \dots, d$.  Therefore $\sigma'(y) \in \sigma(y F_2)$ for all $y \in F$.  Then $\sigma'$ induces $\sigma$ on $F/F_2$, and since $U$ is fully invariant, maps $U$ to itself.  So $\sigma'$ induces an endomorphism $\sigma''$ of $G$.  But $\sigma''$ induces $\sigma$, an automorphism of $F/F_2 \cong (F/U)/(F_2/U) \cong G/G_2$.  By Proposition~\ref{fratautprop}, $\sigma''$ is an automorphism of $G$.  Thus $\sigma$ lifts to an automorphism $\sigma''$ of $G$.
\end{proof}

\section{Enumerating Groups in a Variety}
\label{enumvarsec}

In this section, we develop a general strategy for enumerating certain sets of $p$-groups and apply this strategy to prove Theorem~\ref{bijthm}.  The key idea to use the theory of varieties of groups.  Our exposition follows Neumann~\cite[Sections 1.2--1.4]{neu}.

Let $X_{\infty}$ be the free group freely generated by $X = \{x_1, x_2, \dots\}$.  A word $w$ is an element of $X_{\infty}$.  A word $w$ is a \emph{law} for a group $G$ if $\alpha(w) = 1$ for every $\alpha \in \Hom(X_{\infty}, G)$.  Each subset $W$ of $X_{\infty}$ defines a \emph{variety of groups} $V$ consisting of all groups for which each word in $W$ is a law.  For example, the class of abelian groups forms the variety $V$ defined by the singleton set $W = \{x_1 x_2 x_1^{-1} x_2^{-1}\}$.  More relevant to our investigations is the variety of $p$-groups of lower $p$-length at most $n$.  This variety is defined by (for example) the set $W = (X_{\infty})_{n+1}$.

For each positive integer $d$, the variety $V$ contains a \emph{relatively free group of rank $d$}.  This is the group $F/U$, where $F$ is the free group of rank $d$ and $U$ is the (fully invariant) subgroup of $F$ generated by the values $\alpha(w)$ for all $\alpha \in \Hom(X_{\infty}, F)$.  In the variety of abelian groups, the relatively free group of rank $d$ is (isomorphic to) $\Z^d$.  In the variety of $p$-groups of lower $p$-length at most $n$, the relatively free group of rank $d$ is $F/F_{n+1}$.  By Proposition~\ref{f2maxlprop}, the smallest cardinality of a generating set of $F/U$ is $d(F/U) = d$.  The relatively free group of rank $d$ has a generating set of cardinality $d$, called a \emph{set of free generators}, such that every mapping of this generating set into the group can be extended to an endomorphism.

When the relatively free group $G$ of rank $d$ in a variety $V$ is a finite non-trivial $p$-group, we can describe $K(G)$ and $A(G)$ precisely.  In particular, by taking $V$ to be the variety of $p$-groups of lower $p$-length at most $n$ and $G = F/F_{n+1}$, we can find $K(F/F_{n+1})$ and $A(F/F_{n+1})$.  Furthermore, questions about groups in $V$ and their automorphism groups can be translated into questions about certain orbits of subgroups of $G$.  This is the content of Theorems~\ref{isoenum1} and~\ref{isoenum2}, from which Theorem~\ref{bijthm} follows by specializing to the variety of $p$-groups of lower $p$-length at most $n$.

\begin{thm} \label{isoenum1}
Suppose that $G$ is the relatively free group of rank $d$ in a variety of groups $V$, and suppose that $G$ is a finite $p$-group.  Let $\mathcal{A}$ be the set of normal subgroups of $G$ lying in $G_2$, and let $\mathfrak{A}$ be the $\Aut(G)$-orbits in $\mathcal{A}$.  Then
\begin{eqnarray*}
\pi : \mathfrak{A} &\to& \{H \in V \; : \; d(H) = d\} \\
L &\mapsto& G/L,
\end{eqnarray*}
where $L \in \mathcal{A}$, is a well-defined bijection.

Fix $L \in \mathcal{A}$ and let $H = G/L$.  Write $N_{\Aut(G)}(L)$ for the normalizer of $L$ in $\Aut(G)$ and $B(L)$ for the normal subgroup of $N_{\Aut(G)}(L)$ that acts trivially on $H$.  Then,
\[
1 \to B(L) \to N_{\Aut(G)}(L) \to \Aut(H) \to 1
\]
is exact.  The subgroup $B(L)$ is isomorphic to the direct product of $d$ copies of $L$.  If $L = G_2$, then $\Aut(G) = N_{\Aut(G)}(G_2)$, $K(G) = B(G_2)$, and $A(G) = \Aut(G/G_2) \cong \GL(d,\F_p)$.
\end{thm}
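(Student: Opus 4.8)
The plan is to lean on two features of the relatively free group $G$ with free generating set $g_1,\dots,g_d$: its universal property (any function from $\{g_1,\dots,g_d\}$ into a group of $V$ extends to a homomorphism, and into $G$ to an endomorphism), and the fact that $G_2=\Phi(G)$ consists of non-generators, so that $ML=G$ forces $M=G$ whenever $L\le G_2$. Combined with the finiteness of $G$ (a surjective endomorphism of a finite group is an automorphism) and Proposition~\ref{fratautprop}, these let me build automorphisms of $G$ from homomorphisms prescribed on the $g_i$; this is the single device behind every claim below.

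\emph{The map $\pi$.} Varieties are closed under quotients, so $G/L\in V$ for $L\in\mathcal A$; moreover $(G/L)_2=G_2L/L$ for every $L\trianglelefteq G$, so when $L\le G_2$ one gets $(G/L)/(G/L)_2\cong G/G_2$ and hence $d(G/L)=\dim(G/G_2)=d$ by the Burnside Basis Theorem. Since $\rho(L)$ ($\rho\in\Aut(G)$) yields an isomorphic quotient, $\pi$ descends to orbits. For surjectivity, given $H\in V$ with $d(H)=d$, pick generators $h_1,\dots,h_d$ of $H$, extend $g_i\mapsto h_i$ to an epimorphism $\phi\colon G\to H$, and put $L=\ker\phi$; then $G/L\cong H$, so $d(G/L)=d$ and $\dim(G/G_2L)=d=\dim(G/G_2)$, forcing $G_2L=G_2$, i.e. $L\in\mathcal A$ and $\pi([L])=H$. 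For injectivity, given an isomorphism $\psi\colon G/L\to G/L'$ with quotient maps $q,q'$, lift $\psi q$ to an endomorphism $\sigma$ of $G$ with $q'\sigma=\psi q$ (send each $g_i$ into $(q')^{-1}(\psi(q(g_i)))$); then $\sigma(G)L'=G$ gives $\sigma(G)=G$, so $\sigma\in\Aut(G)$, while $\sigma^{-1}(L')=\ker(\psi q)=L$, i.e. $\sigma(L)=L'$ and $[L]=[L']$.

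\emph{The exact sequence and the structure of $B(L)$.} Restricting automorphisms along $G\to H=G/L$ gives a homomorphism $N_{\Aut(G)}(L)\to\Aut(H)$, $\sigma\mapsto\bar\sigma$, whose kernel is by definition $B(L)$ (so $B(L)\trianglelefteq N_{\Aut(G)}(L)$ automatically); it is onto by the same lifting trick — for $\tau\in\Aut(H)$, lift $\tau q$ to $\sigma\in\Aut(G)$ with $q\sigma=\tau q$, observe $\sigma^{-1}(L)=\ker(\tau q)=L$ so $\sigma\in N_{\Aut(G)}(L)$, and $\bar\sigma=\tau$. To describe $B(L)$ I would use the evaluation map $\Psi\colon B(L)\to L^{d}$, $\sigma\mapsto(g_1^{-1}\sigma(g_1),\dots,g_d^{-1}\sigma(g_d))$: the entries lie in $L$ because $\sigma$ is trivial on $H$; $\Psi$ is injective since $\sigma$ is determined by its values on the $g_i$; and $\Psi$ is onto because, for $(\ell_1,\dots,\ell_d)\in L^{d}$, the rule $g_i\mapsto g_i\ell_i$ extends to an endomorphism that induces the identity on $G/G_2$ (as $\ell_i\in L\le G_2$), hence is an automorphism by Proposition~\ref{fratautprop}, fixes $L$, and is trivial on $H$, so lies in $B(L)$ and maps to $(\ell_i)$. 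This yields a bijection $B(L)\leftrightarrow L^{d}$, hence $|B(L)|=|L|^{d}$; promoting it to the asserted group isomorphism is a bookkeeping computation with composites of such automorphisms.

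\emph{The case $L=G_2$, and the main obstacle.} Since $G_2$ is fully invariant (Proposition~\ref{fratprops}), $N_{\Aut(G)}(G_2)=\Aut(G)$, and the exact sequence becomes $1\to B(G_2)\to\Aut(G)\to\Aut(G/G_2)\to1$: this identifies $K(G)$ — the kernel of $\Aut(G)\to A(G)$ — with $B(G_2)$ and shows $A(G)=\Aut(G/G_2)$, which is $\GL(d,\F_p)$ because $G/G_2$ is an $\F_p$-space of dimension $d$. I expect the only genuinely delicate ingredient to be the lifting step used throughout: one must check with care that $L\le G_2=\Phi(G)$ really does collapse $\sigma(G)L=G$ to $\sigma(G)=G$, and that an endomorphism defined on the $g_i$ induces the intended maps on $G/L$ and on $G/G_2$. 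The remaining points (the identity $(G/L)_2=G_2L/L$ and orbit-invariance of $\pi$) are routine.
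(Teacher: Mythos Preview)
Your proof is correct and follows essentially the same approach as the paper's. The only minor differences are that the paper cites Neumann~\cite[Theorem 44.21]{neu} for the lifting step in the injectivity argument where you construct the lift directly from the free generators, and you explicitly flag the passage from the bijection $B(L)\leftrightarrow L^d$ to a group isomorphism as needing a word of justification, which the paper also leaves implicit.
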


\begin{proof}
By Proposition~\ref{f2liftprop}, any automorphism of $F/F_2 \cong G/G_2$ lifts to an automorphism of $G$.  Thus $A(G)$ is isomorphic to the full automorphism group of $G/G_2$, namely $\GL(d,\F_p)$.

Up to isomorphism, $G/L$ depends only on the orbit of $L$, so $\pi$ is well-defined on $\mathfrak{A}$.  To prove that $\pi$ is surjective, consider any group $H \in V$ with $d(H) = d$.  Evidently $H$ is isomorphic to $G/L$ for some normal subgroup $L$ of $G$.  If $L$ were not contained in $G_2$, then we could choose $h_1 \in L \setminus G_2$ and extend $\{h_1\}$ to a generating set $\{h_1, h_2, \dots, h_d\}$ of $G$.  But then $H \cong G/L$ would be generated by the images of $h_2, \dots, h_d$, contradicting $d(H) = d$.  So $L$ is contained in $G_2$, and $H$ is in the image of the map $\pi$.

To prove that $\pi$ is injective, suppose that $L$ and $M$ are in $\mathcal{A}$ and $G/L \cong G/M$.  Let $\beta : G/L \to G/M$ be an isomorphism.  By~\cite[Theorem 44.21]{neu}, there is an endomorphism $\gamma : G \to G$ so that the diagram in Figure 1 commutes.  Then $\gamma$ induces $\beta$, and $\beta$ induces an automorphism on $G/G_2$, since $(G/L)/(G_2/L)$ and $(G/M)/(G_2/M)$ are canonically isomorphic to $G/G_2$.  It follows from Proposition~\ref{fratautprop} that $\gamma$ is an automorphism of $G$.  From Figure 1, it is also clear that $\gamma(L) \le M$.  Thus $\gamma(L) = M$, and $L$ and $M$ are in the same $\Aut(G)$-orbit.

\[
\begin{array}{c}
\xymatrix{
G \ar@{.>}[d]_{\gamma} \ar[r] & G/L \ar[d]^{\beta} \\
G \ar[r] & G/M
} \\
\textrm{Figure 1}
\end{array}
\]

If we take $L = M$, we find that any automorphism of $H = G/L$ is induced by an automorphism of $G$, so that $\Aut(H) \cong N_{\Aut(G)}(L)/B(L)$.

Let $g_1, g_2, \dots, g_d$ be a set of free generators for $G$, and let $\ell_1, \ell_2, \dots, \ell_d$ be any elements of $L$.  Since $G$ is relatively free, the map $\sigma$ that sends $g_i$ to $g_i \ell_i$ for all $i = 1, \dots, d$ extends to an endomorphism of $G$.  Then $\sigma$ acts trivially on $G/L$, and hence acts trivially on $G/G_2$, so $\sigma$ is an automorphism by Proposition~\ref{fratautprop}.  Conversely, any automorphism of $G$ that acts trivially on $G/L$ must act on each $g_i$ as multiplication by an element of $L$.  Thus $B(L)$ is isomorphic to the direct product of $d$ copies of $L$.  The specialized statements for $L = G_2$ follow directly from the definition of $K(G)$ and the fact that $G/G_2 \cong (C_p)^d$.
\end{proof}

\begin{thm} \label{isoenum2}
Suppose that $G$ is the relatively free group of rank $d$ in a variety of groups $V$, and suppose that $G$ is a finite $p$-group with lower $p$-length $n \ge 2$.  Let $\mathcal{C}$ be the set of normal subgroups of $G$ lying in $G_n$, and let $\mathfrak{C}$ be the $\Aut(G)$-orbits in $\mathcal{C}$.  Then the map $\pi$ defined in Theorem~\ref{isoenum2} restricts to a well-defined bijection
\begin{eqnarray*}
\pi |_{\mathfrak{C}} : \mathfrak{C} &\to& \{H \in V \; : \; d(H) = d \textrm{ and } H/H_n \cong G/G_n \} \\
L &\mapsto& G/L,
\end{eqnarray*}
where $L \in \mathcal{C}$.

The subgroup $K(G)$ of $\Aut(G)$ acts trivially on $\mathcal{C}$, so $A(G) \cong \Aut(G)/K(G) \cong \GL(d, \F_p)$ acts on $\mathcal{C}$, and the $\Aut(G)-$ and $\GL(d,\F_p)-$orbits on $\mathcal{C}$ are identical.

Fix $L \in \mathcal{C}$ and let $H = G/L$.  Write $N_{\Aut(G)}(L)$ for the normalizer of $L$ in $\Aut(G)$ and $B(L)$ for the normal subgroup of $N_{\Aut(G)}(L)$ that acts trivially on $H$.  There is a natural isomorphism $K(G)/B(L) \cong K(H)$, and this extends to an exact sequence
\[
1 \to K(G)/B(L) \to \Aut(H) \to N_{\GL(d,\F_p)}(L) \to 1.
\]
In particular, $A(H) \cong N_{\GL(d,\F_p)}(L)$.
\end{thm}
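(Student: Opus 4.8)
The plan is to build on Theorem~\ref{isoenum1} throughout, since $G_n \le G_2$ forces $\mathcal{C} \subseteq \mathcal{A}$, so $\pi|_{\mathfrak{C}}$ inherits well-definedness and injectivity for free. The real work is to pin down the image and to analyze the automorphism-theoretic data. First I would establish that $\pi$ maps $\mathfrak{C}$ onto exactly the groups $H \in V$ with $d(H) = d$ and $H/H_n \cong G/G_n$. One inclusion is immediate: if $L \in \mathcal{C}$, then using $G_i \cong F_iU/U$-style induction (or directly, since $G$ is relatively free in $V$) one checks $H_i = G_i/L$ for all $i$, so $H_n = G_n/L$ and hence $H/H_n \cong G/G_n$. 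For the reverse inclusion, take $H \in V$ with $d(H)=d$ and $H/H_n \cong G/G_n$; by surjectivity of $\pi$ on $\mathfrak{A}$ write $H \cong G/L$ with $L \in \mathcal{A}$, so $L \le G_2$. I then need $L \le G_n$. The condition $H/H_n \cong G/G_n$ together with $H_n = G_nL/L$ (from $L \le G_2$, using the description of $H_i$ as $G_iL/L$) gives $G/G_nL \cong (G/L)/(G_nL/L) = H/H_n \cong G/G_n$; comparing orders forces $G_nL = G_n$, i.e. $L \le G_n$. (One must be slightly careful that the isomorphism $H/H_n \cong G/G_n$ is compatible with the identification, but since both are relatively free quotients generated by the images of $d$ free generators, the argument via orders is clean.)

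Next I would handle the statement that $K(G)$ acts trivially on $\mathcal{C}$. This is essentially the observation recorded just before Proposition~3 (P. Hall's): $K(G)$ acts trivially on every factor $G_i/G_{i+1}$, and in particular on $G_n/G_{n+1} = G_n$ (since $G$ has lower $p$-length $n$, so $G_{n+1} = 1$). Hence any $\sigma \in K(G)$ fixes $G_n$ pointwise, and so fixes every subgroup $L \le G_n$; thus $K(G)$ lies in the kernel of the $\Aut(G)$-action on $\mathcal{C}$, and the action factors through $A(G) = \Aut(G)/K(G) \cong \GL(d,\F_p)$ by Theorem~\ref{isoenum1}. That the $\Aut(G)$- and $\GL(d,\F_p)$-orbits coincide is then a formality.

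Finally, for the exact sequence: by Theorem~\ref{isoenum1} applied with this $L$, we have $1 \to B(L) \to N_{\Aut(G)}(L) \to \Aut(H) \to 1$ with $B(L)$ a direct product of $d$ copies of $L$, and also $\Aut(H) \cong N_{\Aut(G)}(L)/B(L)$. The plan is to compare the $K(-)$-filtrations: $B(L) \le K(G)$ because $B(L)$ acts trivially on $H = G/L$ hence on $G/G_2$ (as $L \le G_2$), so $B(L)$ is a normal subgroup of $K(G)$ sitting inside $N_{\Aut(G)}(L)$. I would identify $K(G)/B(L)$ with $K(H)$: an element of $N_{\Aut(G)}(L)$ lies in $K(G)$ iff it induces the identity on $G/G_2 \cong H/H_2$, i.e. iff its image in $\Aut(H)$ lies in $K(H)$; so under $N_{\Aut(G)}(L)/B(L) \cong \Aut(H)$ the subgroup $K(G)/B(L)$ corresponds exactly to $K(H)$. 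Quotienting the sequence $1 \to K(G)/B(L) \to \Aut(H) \to A(H) \to 1$ and noting $A(H) \le \GL(d,\F_p)$ consists of those $g \in \GL(d,\F_p)$ that lift to automorphisms of $H = G/L$; by Theorem~\ref{isoenum1} every such $g$ lifts to $\Aut(G)$, and the lift normalizes $L$ iff $g \in N_{\GL(d,\F_p)}(L)$, giving $A(H) = N_{\GL(d,\F_p)}(L)$. Splicing yields the claimed four-term sequence and the final identification $A(H) \cong N_{\GL(d,\F_p)}(L)$.

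The main obstacle I anticipate is the compatibility bookkeeping in the surjectivity-onto-the-image step: one must ensure the abstract isomorphism $H/H_n \cong G/G_n$ really does translate, via the chosen presentation $H \cong G/L$, into the subgroup equality $G_nL = G_n$ rather than merely an abstract isomorphism of quotients. The clean way around this is to work entirely with orders — $|G/G_nL| = |G/G_n|$ already forces $G_nL = G_n$ since $G_n \le G_nL$ — so no delicate diagram-chase is needed, only the identification $H_n = G_nL/L$, which follows from $L \le G_2$ and the inductive description of the lower $p$-series of the quotient.
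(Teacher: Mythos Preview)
Your proposal is correct and follows essentially the same approach as the paper's proof: both reduce the bijection to the equivalence $L \le G_n \Leftrightarrow G/G_nL \cong G/G_n$ (via orders), invoke the fact that $K(G)$ acts trivially on $G_n = G_n/G_{n+1}$ to factor the action through $A(G) \cong \GL(d,\F_p)$, and derive the exact sequence by comparing $K(G)/B(L)$ with $K(H)$ inside $\Aut(H) \cong N_{\Aut(G)}(L)/B(L)$. The paper organizes the last step by first writing down $1 \to K(G) \to N_{\Aut(G)}(L) \to N_{\GL(d,\F_p)}(L) \to 1$ and then quotienting by $B(L)$, whereas you work inside $\Aut(H)$ directly, but this is a cosmetic difference.
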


\begin{proof}
Let $L \in \mathcal{A}$ and write $H = G/L$.  Then $H/H_n \cong G/G_nL$ is isomorphic to $G/G_n$ if and only if $L \le G_n$.  This shows that $\pi |_{\mathfrak{C}} $ is a well-defined bijection.  As noted in Section~\ref{autgroupsec}, $K(G)$ acts trivially on $G_n \cong G_n/G_{n+1}$, and by Theorem~\ref{isoenum1}, $A(G) \cong \GL(d,\F_p)$.  Thus $A(G) \cong \GL(d, \F_p)$ acts on $\mathcal{C}$, and the $\Aut(G)-$orbits and $\GL(d,\F_p)-$orbits on $\mathcal{C}$ are identical.

Now suppose $L \in \mathcal{C}$.  Then
\[
1 \to K(G) \to N_{\Aut(G)}(L) \to N_{\GL(d,\F_p)}(L) \to 1
\]
is exact.  By the exact sequence in Theorem~\ref{isoenum1}, every automorphism in $K(H)$ is induced by an automorphism in $N_{\Aut(G)}(L)$.  Since $G/G_2 \cong H/H_2$, the automorphism in $N_{\Aut(G)}(L)$ must act trivally on $G/G_2$, that is, it must be in $K(G)$.  Therefore $K(G)$ surjects onto $K(H)$.  The kernel of this map is $B(L)$, so $K(G)/B(L) \cong K(H)$.  The above exact sequence induces the exact sequence
\[
1 \to K(G)/B(L) \to N_{\Aut(G)}(L)/B(L) \to N_{\GL(d,\F_p)}(L) \to 1.
\]
By the second exact sequence in Theorem~\ref{isoenum1}, it follows that
\[
1 \to K(G)/B(L) \to \Aut(H) \to N_{\GL(d,\F_p)}(L) \to 1
\]
is exact.
\end{proof}

We can specialize Theorems~\ref{isoenum1} and~\ref{isoenum2} to prove Theorem~\ref{bijthm}, restated here for convenience.

\begin{thmx}[Theorem~\ref{bijthm}]
Fix a prime $p$ and integers $d, n \ge 2$.  Let $F$ be the free group of rank $d$ and define the following sets:
\begin{eqnarray*}
\mathcal{A}_{d,n} &=& \{\textrm{normal subgroups of $F/F_{n+1}$ lying in $F_2/F_{n+1}$}\} \\
\mathcal{B}_{d,n} &=& \{\textrm{normal subgroups of $F/F_{n+1}$ lying in $F_2/F_{n+1}$} \\
&& \qquad \textrm{and not containing $F_n/F_{n+1}$}\} \\
\mathcal{C}_{d,n} &=& \{\textrm{normal subgroups of $F/F_{n+1}$ lying in $F_n/F_{n+1}$}\} \\
\mathcal{D}_{d,n} &=& \{\textrm{normal subgroups of $F/F_{n+1}$ contained in the} \\
&& \qquad \textrm{regular $\GL(d,\F_p)$-orbits in $\mathfrak{C}_{d,n}$}\} \\
\\
\mathfrak{A}_{d,n} &=& \{\textrm{$\Aut(F/F_{n+1})$-orbits in $\mathcal{A}_{d,n}$}\} \\
\mathfrak{B}_{d,n} &=& \{\textrm{$\Aut(F/F_{n+1})$-orbits in $\mathcal{B}_{d,n}$}\} \\
\mathfrak{C}_{d,n} &=& \{\textrm{$\Aut(F/F_{n+1})$-orbits in $\mathcal{C}_{d,n}$}\} = \{\textrm{$\GL(d,\F_p)$-orbits in $\mathcal{C}_{d,n}$}\} \\
\mathfrak{D}_{d,n} &=& \{\textrm{regular $\GL(d,\F_p)$-orbits in $\mathcal{C}_{d,n}$}\}.
\end{eqnarray*}
Then there is a well-defined map $\pi_{d,n}: \mathfrak{A}_{d,n} \to \{\textrm{finite $p$-groups}\}$ given by $L/F_{n+1} \mapsto F/L$, where $L/F_{n+1} \in \mathcal{A}_{d,n}$.  Furthermore $\pi_{d,n}$ induces bijections
\begin{eqnarray*}
\mathfrak{A}_{d,n} &\leftrightarrow& \{\textrm{$p$-groups $H$ of lower $p$-length at most $n$ with $d(H) = d$}\} \\
\mathfrak{B}_{d,n} &\leftrightarrow& \{\textrm{$p$-groups $H$ of lower $p$-length $n$ with $d(H) = d$}\} \\
\mathfrak{D}_{d,n} &\leftrightarrow& \{\textrm{$p$-groups $H$ in $\pi_{d,n}(\mathfrak{C}_{d,n})$ with $A(H) = 1$}\}.
\end{eqnarray*}
\end{thmx}

\begin{proof}
Take $V$ to be the variety of $p$-groups of lower $p$-length at most $n$.  Applying Theorems~\ref{isoenum1} and~\ref{isoenum2} with $G = F/F_{n+1}$, $\mathcal{A} = \mathcal{A}_{d,n}$, $\mathfrak{A} = \mathfrak{A}_{d,n}$, $\mathcal{C} = \mathcal{C}_{d,n}$, $\mathfrak{C} = \mathfrak{C}_{d,n}$, and $\pi = \pi_{d,n}$ proves all but the statements about $\mathfrak{D}_{d,n}$.  As for those, a subgroup $L/F_{n+1} \in \mathcal{C}_{d,n}$ is in a regular $\GL(d,\F_p)$-orbit if and only if $N_{\GL(d,\F_p)}(L/F_{n+1}) = 1$.  By Theorem~\ref{isoenum2}, this occurs precisely when $A(F/L) = 1$.  Thus the bijection for $\mathfrak{D}_{d,n}$ is proved.
\end{proof}

\chapter{The Lower $p$-Series of a Free Group}
\label{c_free}
\chaptermark{Free Groups}

Let $F$ be the free group of rank $d$ with free generating set $y_1, y_2, \dots, y_d$.  As explained in Chapter~\ref{c_series}, there is an intimate connection between the lower $p$-series of finite $p$-groups and the lower $p$-series of $F$.  As a result, Chapters~\ref{c_normal} and~\ref{c_submodules} rely on a detailed understanding of the quotients $F_n/F_{n+1}$.  In this chapter, we analyze the $\F_p \GL(d, \F_p)$-module structure of $F_n/F_{n+1}$ and power and commutator maps from $F_n/F_{n+1}$ to $F_{n+1}/F_{n+2}$.  Our main tool will be the connection between the lower $p$-series of $F$ and the free Lie algebra described in Theorem~\ref{structurethm}.  The results of Theorem~\ref{structurethm} appear several times in the literature with varying degrees of correctness and detail.  The best references are Bryant and Kov\'{a}cs~\cite{bk} and Huppert and Blackburn~\cite[Chapter VIII]{hb}.  The proof given below seems to be the first time that a complete proof has been written down.

\section{The Free Lie Algebra}

We will say just enough about free Lie algebras for our purposes.  More information about free Lie algebras can be found in Garsia~\cite{gar} and Reutenauer~\cite{reu}.  Our discussion follows Bryant and Kov\'{a}cs~\cite{bk}.

Let $K$ be any field and let $A = \{x_1, \dots, x_d\}$ be an alphabet on $d$ letters.  Write $A^{\ast}$ for the set of all $A$-words and $A^n$ for the set of all $A$-words of length $n$.  Let $K[A^{\ast}]$ denote the free associative $K$-algebra on the generators $x_1, x_2, \dots, x_d$; equivalently, $K[A^{\ast}]$ is the non-commutative algebra of polynomials
\[
f = \sum_{w \in A^{\ast}}{f_w w}
\]
with coefficients $f_w \in K$.  The algebra $K[A^{\ast}]$ is graded by degree; let $K[A^n]$ denote the homogeneous component of degree $n$.  Also, $K[A^{\ast}]$ is a Lie algebra under the Lie bracket $[f,g] = fg-gf$.  Let $K[\Lambda^{\ast}]$ denote the Lie subalgebra of $K[A^{\ast}]$ generated by $x_1, \dots, x_d$ and the Lie bracket.  Then $K[\Lambda^{\ast}]$ is the \emph{free Lie algebra} over $K$ on $x_1, \dots, x_d$.  It is also graded by degree; let $K[\Lambda^n]$ be the homogeneous component of $K[\Lambda^{\ast}]$ of degree $n$.

The group $\GL(d, K)$ acts as the group of $K$-automorphisms on the $K$-vector space $K[A^1]$.  This action extends to the $K$-vector spaces $K[A^n]$ and $K[\Lambda^n]$, and so these may be regarded as $K \GL(d,K)$-modules.

It will be convenient to specify a basis of $K[\Lambda^n]$.  Lexicographically order the set $A^{\ast}$, where $x_1 < x_2 < \cdots < x_d$.  A word $w$ is a \emph{Lyndon word} if it is smaller than all of its proper non-trivial tails.  Let $L$ be the set of Lyndon words, and let $L_n$ be the set of Lyndon words of length $n$.  Inductively define the \emph{right standard bracketing} $b[w]$ of $w \in L$ by
\[
b[w] = w
\]
if $w \in A$ and otherwise by
\[
b[w] = \left[b\left[w_1\right], b\left[w_2\right]\right],
\]
where $w = w_1w_2$ and $w_2$ is the longest proper tail of $w$ that is a Lyndon word.

\begin{thm}[{Reutenauer~\cite[Proof of Theorem 5.1]{reu}}]
\label{basisthm}
If $w \in L$, then
\[
b[w] = w + \sum_{w < v}{f_v v}
\]
for some $f_v \in K$.  The set $\{b[w] : w \in L_n\}$ forms a basis for $K[\Lambda^n]$.
\end{thm}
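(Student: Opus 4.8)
The plan is to prove both assertions together by induction on the word length $n$, with the \emph{standard (right) factorization} of Lyndon words as the only nontrivial combinatorial input. Recall that if $w \in L$ has length at least $2$ and $w = w_1 w_2$ with $w_2$ the longest proper tail of $w$ that is a Lyndon word, then $w_1$ is itself a Lyndon word; this (together with the defining property $w < w_2$, which is immediate since $w_2$ is a proper tail of the Lyndon word $w$) is all I need about Lyndon words, and the fact about $w_1$ can be quoted as a standard result (see Reutenauer~\cite{reu}).

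First I would establish the triangularity claim: for $w \in L_n$, $b[w] = w + \sum_{v > w} f_v v$, where every word $v$ occurring has length $n$ (the bracketing is homogeneous, so this is automatic). The base case $n = 1$ is trivial since $b[w] = w$. For $n \ge 2$, take the standard factorization $w = w_1 w_2$, so $b[w] = [b[w_1], b[w_2]] = b[w_1] b[w_2] - b[w_2] b[w_1]$, and apply the inductive hypothesis to $b[w_1]$ and $b[w_2]$ (whose lengths are smaller). In $b[w_1] b[w_2]$ every monomial is a concatenation $u u'$ with $u \ge w_1$, $u' \ge w_2$, $|u| = |w_1|$, $|u'| = |w_2|$; comparing prefixes shows $u u' \ge w_1 w_2 = w$, with equality only for the single term $w_1 \cdot w_2$, which has coefficient $1$. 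In $b[w_2] b[w_1]$ every monomial is a concatenation $u' u$ with $u' \ge w_2 > w$ and $|u'| = |w_2| < n$; since a word of length $< n$ that is lexicographically larger than $w$ cannot be a prefix of $w$, the first position of disagreement occurs within $u'$ and forces $u' u > w$. Subtracting, $w$ survives with coefficient $1$ and all other monomials are strictly larger, which proves the claim.

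Linear independence of $\{b[w] : w \in L_n\}$ in $K[A^n]$, hence in $K[\Lambda^n]$, is then immediate: in a vanishing linear combination, the coefficient of the lexicographically least Lyndon word with nonzero coefficient must itself vanish, since that word appears in no $b[w]$ with $w$ strictly larger. It remains to show these $|L_n|$ elements span $K[\Lambda^n]$, i.e. that $|L_n| = \dim_K K[\Lambda^n]$. Both equal $\tfrac1n \sum_{e \mid n} \mu(e)\, d^{n/e}$: the number of Lyndon words of length $n$ over a $d$-letter alphabet follows by M\"obius inversion from $d^n = \sum_{e \mid n} e \cdot |L_e|$ (each length-$n$ word is a power of a unique Lyndon word), and Witt's formula gives the same expression for $\dim_K K[\Lambda^n]$. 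Hence the linearly independent family $\{b[w] : w \in L_n\}$ has the correct cardinality and is a basis.

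The main obstacle is the bookkeeping in the order comparisons of the inductive step — above all, ruling out that a monomial $u' u$ arising from $b[w_2] b[w_1]$ equals or undercuts $w$ — so I would isolate the properties of the standard factorization ($w_1$ Lyndon, $w < w_2$) as a cited lemma to keep the argument clean. If one prefers not to invoke Witt's formula, the spanning step can instead be obtained by a rewriting argument showing directly that every left-normed Lie monomial is a $\Z$-linear combination of right standard bracketings of Lyndon words, but the dimension count is shorter and is the route I would take.
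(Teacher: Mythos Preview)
The paper does not give its own proof of this theorem; it simply cites Reutenauer~\cite[Proof of Theorem~5.1]{reu}. Your argument is correct, and the triangularity step (including the key observation that in $b[w_2]b[w_1]$ each monomial begins with a word $u' \ge w_2 > w$ of length strictly less than $|w|$, hence cannot be a prefix of $w$ and must disagree with $w$ at a position inside $u'$) is essentially Reutenauer's.

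For the spanning step, Reutenauer proceeds by a direct rewriting argument showing that any Lie bracket reduces to a $\mathbb{Z}$-linear combination of standard bracketings of Lyndon words, whereas you take the shortcut of matching $|L_n|$ with $\dim_K K[\Lambda^n]$ via Witt's formula. Both are valid. Your route is shorter but depends on Witt's formula being established independently of the Lyndon basis (e.g., via the PBW identification of the universal enveloping algebra with $K[A^*]$); since the paper itself later invokes Witt's formula as an external fact from Reutenauer's Appendix in Corollary~\ref{dimcor}, there is no circularity in this context. Your closing remark acknowledging the rewriting alternative is appropriate and accurately describes the trade-off.
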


\section{The Free Lie Algebra and $F_{\textrm{\lowercase{$n$}}}/F_{\textrm{\lowercase{$n+1$}}}$}

The connections between the free Lie algebra and $F_n/F_{n+1}$ given in Theorems~\ref{structurethm} and~\ref{mapsthm} rely on several theorems and lemmas in the literature.  We begin with the connection between $F_n/F_{n+1}$ and the lower central series of $F$.  For each positive integer $n$, let $S_n = \gamma_n(F)/\gamma_n(F)^p \gamma_{n+1}(F)$.  If $s_n \in \gamma_n(F)$, let $\overline{s}_n$ denote the image of $s_n$ in $S_n$.

\begin{thm}[{Huppert and Blackburn~\cite[Chapter VIII, Theorem 1.9(b) and (c)]{hb}}]
\label{hbthm}
For each positive integer $n$, there is a bijection
\begin{eqnarray*}
\sigma_n : S_1 \times S_2 \times \cdots \times S_n &\to& F_n/F_{n+1} \\
(\overline{s}_1, \overline{s}_2, \dots, \overline{s}_n) &\mapsto& s_1^{p^{n-1}} s_2^{p^{n-2}} \cdots s_n F_{n+1}.
\end{eqnarray*}
When $p$ is odd or $p = 2$ and $n = 1$, this map is an isomorphism.  When $p = 2$ and $n \ge 2$, this map restricts to an isomorphism
\[
S_2 \times \cdots \times S_n \to (F_n \cap \gamma_2(F))F_{n+1}/F_{n+1}.
\]
\end{thm}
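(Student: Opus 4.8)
The plan is to prove the statement one ``layer'' at a time, built on the factorization $F_n = \gamma_1(F)^{p^{n-1}}\gamma_2(F)^{p^{n-2}}\cdots\gamma_n(F)$ of Proposition~\ref{fratprops}(3). For $1 \le j \le n$ let $V_j$ be the image in $F_n/F_{n+1}$ of $\gamma_j(F)^{p^{n-j}}\gamma_{j+1}(F)^{p^{n-j-1}}\cdots\gamma_n(F)$ and set $V_{n+1}=1$, so that $F_n/F_{n+1} = V_1 \ge V_2 \ge \cdots \ge V_{n+1} = 1$; recalling that $F_n/F_{n+1}$ is elementary abelian, $\sigma_n(\overline{s}_1,\dots,\overline{s}_n) = \theta_1(s_1)\theta_2(s_2)\cdots\theta_n(s_n)$, where $\theta_j$ denotes the ``power map'' $s \mapsto s^{p^{n-j}}F_{n+1}$ on $\gamma_j(F)$. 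The goal is to show that each $\theta_j$ carries $\gamma_j(F)$ into $V_j$ and induces an isomorphism $\overline{\theta}_j \colon S_j \to V_j/V_{j+1}$; from this the theorem drops out by a routine filtration argument, the homomorphism assertions coming from the homomorphism property of the $\theta_j$ together with the fact that all cross-commutators $[s_i^{p^{n-i}},s_j^{p^{n-j}}]$ lie in $[F_n,F_n]\le F_{2n}\le F_{n+1}$.

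That $\theta_j$ is well defined, lands in $V_j$, and descends to $S_j$ is commutator calculus anchored to Proposition~\ref{fratprops}(1)--(3). The heart of it uses the Hall--Petrescu collection formula: for $s,t\in\gamma_j(F)$ one has $(st)^m = s^m t^m c_2^{\binom{m}{2}} c_3^{\binom{m}{3}}\cdots$ with $c_i\in\gamma_{ij}(F)$, and taking $m=p^{n-j}$, Kummer's theorem gives $v_p\bigl(\binom{m}{i}\bigr) = (n-j)-v_p(i)$, so by Proposition~\ref{fratprops}(2) the factor $c_i^{\binom{m}{i}}$ lies in $\gamma_{ij}(F)^{p^{(n-j)-v_p(i)}}\le F_{ij+(n-j)-v_p(i)}$, which is contained in $F_{n+1}$ exactly when $j(i-1)\ge v_p(i)+1$. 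This holds for every $i\ge 2$ when $p$ is odd, and for every $i\ge 2$ when $p=2$ and $j\ge 2$; the one exception is $p=2$, $j=1$, $i=2$ (with $n\ge2$), where a direct computation shows $\theta_1$ is still well defined as a map of \emph{sets} but is not additive. Outside this exception $\theta_j$ is a homomorphism; in every case $\theta_j(\gamma_j(F)^p\gamma_{j+1}(F))\subseteq V_{j+1}$, so $\overline{\theta}_j$ is defined, and Proposition~\ref{fratprops}(3) (together with, for $p=2$ and $j=1$, the observation that the non-additivity error of $\theta_1$ falls into the next layer) shows $\sigma_n$ is surjective; in particular $|F_n/F_{n+1}|\le\prod_{j=1}^n|S_j|$.

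The real content is the reverse inequality, which by surjectivity forces $\sigma_n$ to be a bijection and each $\overline{\theta}_j$ an isomorphism. Here I would invoke the Magnus--Witt description of the lower central series of a free group: $\gamma_j(F)/\gamma_{j+1}(F)$ is free abelian of rank equal to the dimension of the degree-$j$ part of the free Lie ring on $d$ generators, so $|S_j| = |(\gamma_j(F)/\gamma_{j+1}(F))\otimes\F_p|$ is that power of $p$, and comparing $\prod_{j=1}^n|S_j|$ with the order of $F_n/F_{n+1}$ --- equivalently with the Hilbert series of the free restricted Lie algebra over $\F_p$, which is assembled from exactly these degree-$1$ through degree-$n$ pieces --- yields $|F_n/F_{n+1}| = \prod_{j=1}^n|S_j|$. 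With this in hand: for $p$ odd (or $p=2$, $n=1$) all $\theta_j$ are homomorphisms and a short check shows $\sigma_n$ intertwines the componentwise operation on $S_1\times\cdots\times S_n$ with multiplication in $F_n/F_{n+1}$, so it is an isomorphism; for $p=2$ and $n\ge2$ the same argument over the truncated chain $V_2\ge V_3\ge\cdots$ shows $\sigma_n$ restricts to an isomorphism $S_2\times\cdots\times S_n\to V_2$, and one identifies $V_2$ with $(F_n\cap\gamma_2(F))F_{n+1}/F_{n+1}$ --- the inclusion $\subseteq$ is immediate from $\gamma_j(F)\le\gamma_2(F)$ for $j\ge2$ and the modular law, and equality follows by comparing orders (using $|S_1| = |F/F_2| = p^d$).

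The only genuinely non-elementary ingredient --- and the step I expect to be the main obstacle --- is the order identity $|F_n/F_{n+1}| = \prod_{j=1}^n|S_j|$, which has to be imported from Magnus--Witt theory (or from Quillen's identification of the associated graded of $\F_p F$ with the restricted enveloping algebra of the free restricted Lie algebra) rather than derived from scratch here. The remaining nuisance is the $p=2$, $j=1$ anomaly: there $\sigma_n$ is a bijection without being a homomorphism, so one must take care both that the bijection survives and that the image of the higher layers is pinned down precisely as $(F_n\cap\gamma_2(F))F_{n+1}/F_{n+1}$.
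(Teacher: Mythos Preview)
The paper does not prove this theorem: it is stated with attribution to Huppert and Blackburn~\cite[Chapter VIII, Theorem 1.9(b) and (c)]{hb} and no proof is given, so there is no argument in the paper to compare your proposal against.

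That said, your outline is a reasonable reconstruction of how such a proof goes, and your identification of the two delicate points is accurate. The Hall--Petrescu computation is correct: the formula $v_p\binom{p^{n-j}}{i}=(n-j)-v_p(i)$ for $0<i\le p^{n-j}$ gives exactly the condition $j(i-1)\ge v_p(i)+1$, and the single failure at $p=2$, $j=1$, $i=2$ is precisely what forces $\sigma_n$ to be merely a bijection (not a homomorphism) in that case. Your index computation pinning down $V_2=(F_n\cap\gamma_2(F))F_{n+1}/F_{n+1}$ via the abelianization $F_n\gamma_2/F_{n+1}\gamma_2\cong(\Z/p\Z)^d$ and the modular law is also sound.

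You are right that the order identity $|F_n/F_{n+1}|=\prod_j|S_j|$ is the genuine input, and you are right to flag the circularity risk: in this paper that identity (Corollary~\ref{dimcor}) is \emph{derived from} Theorem~\ref{hbthm} via Theorem~\ref{structurethm}, so you cannot quote it. Importing it independently from Magnus--Witt plus Quillen's theorem on the restricted enveloping algebra is legitimate, but note that Huppert--Blackburn themselves avoid this: their proof establishes injectivity of each $\overline\theta_j$ directly, using that $\gamma_j(F)/\gamma_{j+1}(F)$ is torsion-free (a consequence of Magnus--Witt already available as Theorem~\ref{mwthm} here) together with a careful analysis of when $s^{p^{n-j}}\in\gamma_{j+1}(F)^{p^{n-j-1}}\cdots\gamma_n(F)\cdot F_{n+1}$ forces $s\in\gamma_j(F)^p\gamma_{j+1}(F)$. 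That route sidesteps the need to know $|F_n/F_{n+1}|$ in advance.
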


The next theorem connects the lower central series of $F$ and the free Lie algebra.

\begin{thm}[{Magnus, see Reutenauer~\cite[Corollary 6.16]{reu}}]
\label{mwthm}
For each positive integer $n$, there is a canonical isomorphism
\[
\alpha_n : \gamma_n(F)/\gamma_{n+1}(F) \to \Z[\Lambda^n]
\]
satisfying
\[
\alpha_1 : y_i \gamma_2(F) \mapsto x_i
\]
for $i = 1,\dots,d$ and
\[
\alpha_n : [z_j, z_k] \gamma_{n+1}(F) \mapsto [\alpha_j(z_j \gamma_{j+1}(F)), \alpha_k(z_k \gamma_{k+1}(F))]
\]
for all $z_j \in \gamma_j(F)$ and $z_k \in \gamma_k(F)$ such that $j + k = n$.
\end{thm}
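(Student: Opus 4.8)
The plan is to realize $\gamma_n(F)/\gamma_{n+1}(F)$ inside the degree-$n$ component $\Z[A^n]$ of the free associative ring via the Magnus map, identify the image as the free Lie ring $\Z[\Lambda^n]$ (a free $\Z$-module), and then force injectivity by a rank count. Work in the completed free associative ring $R = \Z\langle\langle x_1, \dots, x_d\rangle\rangle$, with augmentation ideal $\hat I$ generated by the $x_i$, and let $\mu \colon F \to R^{\times}$ be the homomorphism determined by $\mu(y_i) = 1 + x_i$ (so $\mu(y_i^{-1}) = 1 - x_i + x_i^2 - \cdots$). The first step is an elementary degree estimate: by induction on $n$, using the identity in $R$ that if $\mu(a) = 1 + \alpha$ with $\alpha \in \hat I^{\,j}$ and $\mu(b) = 1 + \beta$ with $\beta \in \hat I^{\,k}$, then $\mu([a,b]) = 1 + \gamma$ with $\gamma \in \hat I^{\,j+k}$ whose lowest homogeneous part (in degree $j+k$) is the Lie bracket $[\alpha_j, \beta_k]$ of the leading homogeneous parts of $\alpha$ and $\beta$, one obtains $\mu(\gamma_n(F)) \subseteq 1 + \hat I^{\,n}$. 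Hence $z \mapsto (\text{degree-}n\text{ part of } \mu(z) - 1)$ descends to a well-defined map $\alpha_n \colon \gamma_n(F)/\gamma_{n+1}(F) \to \Z[A^n]$, and it is a homomorphism of abelian groups because $\mu(zz') - 1 = (\mu(z)-1) + (\mu(z')-1) + (\mu(z)-1)(\mu(z')-1)$, the last term lying in $\hat I^{\,2n} \subseteq \hat I^{\,n+1}$.

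The second step pins down the image. An induction on $n$ using $\gamma_n(F) = [\gamma_{n-1}(F), F]$, the commutator identities, and the leading-term formula above shows that $\alpha_n$ maps into the free Lie component $\Z[\Lambda^n]$ and sends each basic commutator of weight $n$ to the corresponding iterated Lie bracket of the $x_i$. For surjectivity I would cite two classical facts: by the collection process, $\gamma_n(F)/\gamma_{n+1}(F)$ is generated by the basic commutators of weight $n$ (Huppert and Blackburn~\cite[Chapter VIII]{hb}); and the corresponding iterated brackets form a $\Z$-basis of the free $\Z$-module $\Z[\Lambda^n]$ (the integral analogue of Theorem~\ref{basisthm}; see Reutenauer~\cite{reu}). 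Together these give that $\alpha_n$ maps onto $\Z[\Lambda^n]$.

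The proof then closes with a rank count. The number of basic commutators of weight $n$ on $d$ letters equals the rank of $\Z[\Lambda^n]$, the value given by Witt's dimension formula; call it $W(n)$. Thus $\gamma_n(F)/\gamma_{n+1}(F)$ is generated by $W(n)$ elements whose $\alpha_n$-images form a $\Z$-basis of a free module of rank $W(n)$, so any $\Z$-relation among the generators maps to a relation among basis elements and is therefore trivial; consequently $\gamma_n(F)/\gamma_{n+1}(F)$ is itself free abelian of rank $W(n)$ and $\alpha_n$ is an isomorphism. The normalization $\alpha_1(y_i \gamma_2(F)) = x_i$ is immediate from $\mu(y_i) - 1 = x_i$, and for $z_j \in \gamma_j(F)$, $z_k \in \gamma_k(F)$ with $j + k = n$ the identity $\alpha_n([z_j,z_k]\gamma_{n+1}(F)) = [\alpha_j(z_j \gamma_{j+1}(F)), \alpha_k(z_k \gamma_{k+1}(F))]$ is exactly the leading-term formula from the first step (using $[\gamma_j(F),\gamma_k(F)] \subseteq \gamma_n(F)$); since these relations already determine $\alpha_n$ on a generating set of the source and $\Z[\Lambda^n]$ is generated by brackets of the $x_i$, the isomorphisms $\alpha_n$ are canonical.

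I expect the main obstacle to be lining up the surjectivity/generation step: one needs the collection process and Witt's counting formula to match exactly, so that ``generated by $W(n)$ elements and mapping onto a free module of rank $W(n)$'' forces an isomorphism — in a bare-hands treatment this is the substance of the Magnus--Witt theorem, and here it is precisely what makes injectivity of $\alpha_n$ come for free. The other point requiring care, though routine, is the commutator bookkeeping in $R$: one must verify that all homogeneous parts of $\mu([a,b]) - 1$ below degree $j+k$ cancel and that the degree-$(j+k)$ part is exactly $[\alpha_j, \beta_k]$, since this single computation underlies both the well-definedness of $\alpha_n$ and the bracket-compatibility normalization.
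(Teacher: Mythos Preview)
The paper does not prove this theorem; it is stated as a citation to Reutenauer~\cite[Corollary 6.16]{reu} and used as a black box. Your outline is the classical Magnus--Witt argument and is correct: the Magnus embedding into the completed free associative ring gives the well-defined homomorphisms $\alpha_n$ with the bracket compatibility, Hall's collection process supplies generators of $\gamma_n(F)/\gamma_{n+1}(F)$ by basic commutators of weight $n$, and matching their number against the rank of the free $\Z$-module $\Z[\Lambda^n]$ forces the surjection to be an isomorphism. The only caution is to keep the ingredients logically independent so the argument is not circular---in particular, one should use a proof that Hall (or Lyndon) commutators form a $\Z$-basis of $\Z[\Lambda^n]$ that does not already presuppose the isomorphism with $\gamma_n(F)/\gamma_{n+1}(F)$; Reutenauer's treatment of the Lyndon basis (as in the paper's Theorem~\ref{basisthm}) does this purely inside the associative algebra, so the rank-count closure is legitimate.
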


\begin{cor}
\label{mwcor}
For each positive integer $n$, there is a canonical isomorphism
\[
\beta_n : S_n \to \F_p[\Lambda^n]
\]
induced by $\alpha_n$.  Furthermore,
\[
\beta = \beta_1 \times \beta_2 \times \cdots \times \beta_n : S_1 \times S_2 \times \cdots \times S_n \to \F_p[\Lambda^1] \oplus \F_p[\Lambda^2] \oplus \cdots \oplus \F_p[\Lambda^n]
\]
is an isomorphism.
\end{cor}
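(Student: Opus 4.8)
The plan is to exhibit $S_n$ as the reduction modulo $p$ of the lower central quotient $\gamma_n(F)/\gamma_{n+1}(F)$, and then push the isomorphism $\alpha_n$ of Theorem~\ref{mwthm} through that reduction, after using Theorem~\ref{basisthm} to see that $\F_p[\Lambda^n]$ is itself the reduction modulo $p$ of $\Z[\Lambda^n]$.

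First I would note that because $\gamma_n(F)/\gamma_{n+1}(F)$ is abelian, the image of $\gamma_n(F)^p$ in it is precisely $p\cdot\bigl(\gamma_n(F)/\gamma_{n+1}(F)\bigr)$, so that
\[
S_n = \gamma_n(F)/\bigl(\gamma_n(F)^p\gamma_{n+1}(F)\bigr) \;\cong\; \bigl(\gamma_n(F)/\gamma_{n+1}(F)\bigr)\otimes_{\Z}\F_p .
\]
Since $\alpha_n$ is an isomorphism of abelian groups it carries $p\cdot\bigl(\gamma_n(F)/\gamma_{n+1}(F)\bigr)$ onto $p\,\Z[\Lambda^n]$, hence descends to an isomorphism $S_n \xrightarrow{\sim} \Z[\Lambda^n]/p\,\Z[\Lambda^n]$. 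It then remains to identify the right-hand side with $\F_p[\Lambda^n]$. The coefficientwise reduction map $\Z[A^{\ast}]\to\F_p[A^{\ast}]$ is an algebra homomorphism sending $x_i$ to $x_i$, so it restricts to a Lie homomorphism $\Z[\Lambda^{\ast}]\to\F_p[\Lambda^{\ast}]$ and induces a surjection $\Z[\Lambda^n]/p\,\Z[\Lambda^n]\twoheadrightarrow\F_p[\Lambda^n]$. By Theorem~\ref{basisthm} the target is an $\F_p$-space of dimension $|L_n|$, while the source has the same dimension because $\Z[\Lambda^n]\cong\gamma_n(F)/\gamma_{n+1}(F)$ is free abelian of rank $|L_n|$ (on the basic commutators of weight $n$); a surjection of equidimensional finite-dimensional spaces is bijective. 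Composing the three isomorphisms defines $\beta_n\colon S_n\xrightarrow{\sim}\F_p[\Lambda^n]$, and it is canonical because $\alpha_n$ is and the reduction map is.

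For the product statement, $\beta=\beta_1\times\cdots\times\beta_n$ is a finite product of group isomorphisms, hence an isomorphism onto $\F_p[\Lambda^1]\times\cdots\times\F_p[\Lambda^n]$, which for abelian groups coincides with the direct sum $\F_p[\Lambda^1]\oplus\cdots\oplus\F_p[\Lambda^n]$. The one step that is not pure bookkeeping --- and which I expect to be the main obstacle --- is the identification $\Z[\Lambda^n]\otimes_{\Z}\F_p\cong\F_p[\Lambda^n]$, i.e.\ that forming the free Lie algebra commutes with the base change $\Z\to\F_p$ in degree $n$ (no hidden $p$-torsion). I would handle this via the dimension count above, invoking the integral refinement of Theorem~\ref{basisthm}, equivalently the classical Magnus--Witt fact that $\gamma_n(F)/\gamma_{n+1}(F)$ is free abelian on the weight-$n$ basic commutators, which match the Lyndon basis used over $\F_p$. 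Everything else follows formally from Theorems~\ref{mwthm} and~\ref{basisthm}.
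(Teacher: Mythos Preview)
Your proposal is correct and is precisely the argument the paper has in mind: the corollary is stated without proof, as an immediate consequence of Theorem~\ref{mwthm} together with the freeness of $\Z[\Lambda^n]$ (equivalently, the Lyndon/basic-commutator basis), and you have simply written out those details. The one nontrivial point you flag---that $\Z[\Lambda^n]\otimes_{\Z}\F_p\cong\F_p[\Lambda^n]$---is exactly what makes the word ``induced'' in the statement do work, and your dimension count via Theorem~\ref{basisthm} (together with the classical fact that $\gamma_n(F)/\gamma_{n+1}(F)$ is free abelian of rank $|L_n|$) is the standard way to justify it.
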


We also need some results from commutator calculus.

\begin{thm}[{P. Hall, adapted from Leedham-Green and McKay~\cite[Theorem 1.1.30]{lm}}]
\label{collectionthm}
Let $a$ and $b$ be elements of a group $G$.  Then for all positive integers $m$,
\[
(ab)^{p^m} = a^{p^m} b^{p^m} [b,a]^{\binom{p^m}{2}} \prod_{i=3}^{\infty}{\prod_j{c_{i,j}^{e_{i,j}}}}
\]
for some elements $c_{i,j} \in \gamma_i(G)$ and some integers $e_{i,j}$.  Each integer $e_{i,j}$ is a $\Z$-linear combination of $\binom{p^m}{1}, \binom{p^m}{2}, \dots, \binom{p^m}{i}$.
\end{thm}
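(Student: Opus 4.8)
The plan is to run P.\ Hall's collection process (\cite[Section~1.3]{hal}) and then to isolate the single combinatorial step that does the real work. Since every expression in sight is a word in $a$, $b$, and iterated commutators, and since any group containing two elements $a$, $b$ is a quotient of the free group $E$ on the two symbols $a,b$ (sending the free generators to $a$, $b$), it suffices to prove the identity in $E$, with each $c_{i,j}$ a fixed basic commutator in $a,b$ of weight $i$ and each $e_{i,j}$ an integer depending only on $p^m$, $i$, $j$; a homomorphism then transports the identity to an arbitrary $G$. One should first make sense of the product $\prod_{i=3}^{\infty}$, which is only meaningful in a nilpotent quotient. I would read the assertion as: for every $N \ge 3$,
\[
(ab)^{p^m} \equiv a^{p^m} b^{p^m} [b,a]^{\binom{p^m}{2}} \prod_{i=3}^{N-1}\prod_j c_{i,j}^{e_{i,j}} \pmod{\gamma_N(G)},
\]
with one choice of data $(c_{i,j},e_{i,j})$ serving all $N$, and prove this by working in $E/\gamma_N(E)$; the choices are consistent as $N$ grows because collection modulo $\gamma_{N+1}(E)$ projects onto collection modulo $\gamma_N(E)$.

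Next I would recall the normal form used by the collection process. The group $E/\gamma_N(E)$ is free nilpotent of rank $2$ and has a basis of \emph{basic commutators} $c_1 = a$, $c_2 = b$, $c_3 = [b,a]$, $c_4, \dots, c_M$, listed in nondecreasing weight, with every element uniquely of the form $c_1^{e_1}c_2^{e_2}\cdots c_M^{e_M}$ (Hall's basis theorem; see \cite[Chapter~VIII]{hb}). Collection rewrites any word in $a,b$ into this form by repeatedly replacing an occurrence $c_jc_i$ with $i<j$ by $c_i c_j [c_j,c_i]$ and re-expressing each new commutator through the $c_k$; modulo $\gamma_N$ this terminates.

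I would then apply collection to $(ab)^{p^m}=abab\cdots ab$. Moving every copy of $c_1 = a$ to the far left leaves the block $a^{p^m}$ in front and, modulo $\gamma_3(E)$, inserts one copy of $[b,a]$ for each pair consisting of an $a$ and a $b$ lying to its left; the $k$-th $a$ contributes $k-1$ such pairs, for a total of $\sum_{k=1}^{p^m}(k-1)=\binom{p^m}{2}$ copies of $[b,a]$ (all other insertions have weight $\ge 3$). Collecting the copies of $c_2 = b$ next produces $b^{p^m}$ together with commutators of weight $\ge 3$ only, and no later stage reintroduces $c_1$, $c_2$ or $c_3$. Hence, reducing the collected form of $(ab)^{p^m}$ modulo $\gamma_3(E)$ and using uniqueness of the normal form, the exponents of $c_1, c_2, c_3$ are exactly $p^m$, $p^m$, $\binom{p^m}{2}$, and every remaining factor is a basic commutator of weight $\ge 3$, hence lies in $\gamma_3(E)$.

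The main obstacle is to pin down the exponent $e_{i,j}$ of a basic commutator of weight $i \ge 3$, and for this I would invoke (and, if a self-contained account is wanted, reproduce) the \emph{collection formula}: for a basic commutator $c$ of weight $w$, the exponent $f_c(n)$ of $c$ in the collected form of $(ab)^n$ is an integer-valued polynomial in $n$ of degree at most $w$ vanishing at $n=0$, equivalently $\sum_{k=1}^{w}\lambda_{c,k}\binom{n}{k}$ with $\lambda_{c,k}\in\Z$. The proof proceeds by induction on $w$: comparing the collected forms of $(ab)^{n+1}=(ab)^n\cdot ab$ and of $(ab)^n$, the increment $f_c(n+1)-f_c(n)$ is a fixed polynomial expression in the exponents of basic commutators of weight $<w$ in $(ab)^n$, which by the inductive hypothesis is an integer-valued polynomial in $n$ of degree at most $w-1$; summing in $n$ and using $f_c(0)=0$ gives the claim. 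Verifying the degree bound on the increment is the genuinely delicate point, and is carried out in \cite[Theorem~1.1.30]{lm}. Specializing $n = p^m$ then gives $e_{i,j}=\sum_{k=1}^{i}\lambda_{c_{i,j},k}\binom{p^m}{k}$, a $\Z$-linear combination of $\binom{p^m}{1},\dots,\binom{p^m}{i}$, which completes the proof.
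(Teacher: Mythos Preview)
The paper does not give its own proof of this theorem: it is stated as a known result of P.\ Hall, with a citation to Leedham-Green and McKay~\cite[Theorem~1.1.30]{lm}, and the text proceeds directly to Corollary~\ref{powercor}. Your proposal is therefore not competing with an argument in the paper but rather supplying one where the paper defers to the literature.

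That said, your outline is correct and is precisely the standard argument behind the cited reference. Reducing to the free group on two generators, running Hall's collection process against the basic-commutator basis of $E/\gamma_N(E)$, and reading off the exponents of $a$, $b$, and $[b,a]$ is exactly how the leading terms are obtained; your count $\sum_{k=1}^{p^m}(k-1)=\binom{p^m}{2}$ for the weight-$2$ commutator is the right one. The substantive content is, as you identify, the claim that the exponent $f_c(n)$ of a basic commutator $c$ of weight $w$ in the collected form of $(ab)^n$ is an integer-valued polynomial in $n$ of degree at most $w$ with $f_c(0)=0$, hence a $\Z$-linear combination of $\binom{n}{1},\dots,\binom{n}{w}$. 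Your inductive sketch for this---comparing $(ab)^{n+1}$ with $(ab)^n$ and bounding the degree of the increment---is the standard one, and you are right that the careful bookkeeping (tracking how many new commutators of each weight are produced when collecting the trailing $ab$ through the existing word) is where the work lies; this is exactly what \cite[Theorem~1.1.30]{lm} carries out. So your proposal is sound and matches the approach in the source the paper cites.
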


\begin{cor}
\label{powercor}
Let $a$ and $b$ be elements of a group $G$.  Then for all positive integers $m$,
\[
(ab)^{p^m} \equiv
\left\{
\begin{array}{r@{\quad:\quad}l}
a^{2^m} b^{2^m} [a,b]^{2^{m-1}} \mod{ \gamma_2(G)^{2^m} \gamma_3(G)^{2^{m-1}} \prod_{r=2}^m{\gamma_{2^r}(G)^{2^{m-r}}}} & p = 2 \\
a^{p^m} b^{p^m} \mod{\gamma_2(G)^{p^m} \prod_{r=1}^m{ \gamma_{p^r}(G)^{p^{m-r}} }} & p > 2.
\end{array}
\right.
\]
Furthermore,
\[
(ab)^{p^m} \equiv
\left\{
\begin{array}{r@{\quad:\quad}l}
a^{2^m} b^{2^m} [a,b]^{2^{m-1}} \mod{G_{m+2}} & p = 2 \\
a^{p^m} b^{p^m} \mod{G_{m+2}} & p > 2.
\end{array}
\right.
\]
\end{cor}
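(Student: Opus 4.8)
The plan is to read the corollary off P.\ Hall's collection formula (Theorem~\ref{collectionthm}), the only extra ingredient being the standard $p$-adic valuation $v_p\!\left(\binom{p^m}{k}\right) = m - v_p(k)$ for $1 \le k \le p^m$ (and $\binom{p^m}{k}=0$ for $k>p^m$), which follows from $k\binom{p^m}{k}=p^m\binom{p^m-1}{k-1}$ together with the fact that $\binom{p^m-1}{k-1}$ is prime to $p$; for our purposes only the lower bound $v_p\!\left(\binom{p^m}{k}\right)\ge m-\lfloor\log_p k\rfloor$ and the exact value at $k=2$ are needed. First I would record that, since each exponent $e_{i,j}$ of Theorem~\ref{collectionthm} is a $\Z$-linear combination of $\binom{p^m}{1},\dots,\binom{p^m}{i}$, one has $v_p(e_{i,j})\ge m-s_i$ where $s_i:=\min\{\lfloor\log_p i\rfloor,\,m\}$, hence $c_{i,j}^{e_{i,j}}\in\gamma_i(G)^{p^{\,m-s_i}}$. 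The leftover factor $[b,a]^{\binom{p^m}{2}}$ is handled by the same valuation: for $p$ odd, $v_p\!\left(\binom{p^m}{2}\right)=m$, so it lies in $\gamma_2(G)^{p^m}$; for $p=2$, $\binom{2^m}{2}=2^{m-1}(2^m-1)$, so $[b,a]^{\binom{2^m}{2}}=\big([b,a]^{2^{m-1}}\big)^{2^m}\,[a,b]^{2^{m-1}}$, whose first factor lies in $\gamma_2(G)^{2^m}$.

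Next, write $N$ for the modulus appearing in the first displayed congruence; it is normal in $G$, being a product of characteristic subgroups. I would check that every piece above lies in $N$ by a short case analysis on $s_i$ (recall $i\ge 3$ throughout the product). In the borderline cases — $s_i=0$ when $p$ is odd, or $s_i=1$ when $p=2$ — the containments $\gamma_i(G)\le\gamma_3(G)\le\gamma_2(G)$ give $c_{i,j}^{e_{i,j}}\in\gamma_2(G)^{p^m}\le N$ in the first case and $c_{i,j}^{e_{i,j}}\in\gamma_3(G)^{2^{m-1}}\le N$ in the second; otherwise $p^{s_i}\le i$ yields $\gamma_i(G)\le\gamma_{p^{s_i}}(G)$, so $c_{i,j}^{e_{i,j}}\in\gamma_{p^{s_i}}(G)^{p^{\,m-s_i}}$, which is exactly the factor of $N$ indexed by $r=s_i$. (This is precisely why the $p=2$ modulus must list $\gamma_3(G)^{2^{m-1}}$ separately: it receives the term at $i=3$, where $\lfloor\log_2 3\rfloor=1$.) Since $N\supseteq\gamma_{p^m}(G)$, all contributions with $i\ge p^m$ are absorbed automatically, so only finitely many terms need checking and the a priori infinite product of Theorem~\ref{collectionthm} causes no difficulty. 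Substituting into $(ab)^{p^m}=a^{p^m}b^{p^m}[b,a]^{\binom{p^m}{2}}\prod_{i,j}c_{i,j}^{e_{i,j}}$ and using normality of $N$ to collect the factors lying in $N$ to one side, we get $(ab)^{p^m}\equiv a^{p^m}b^{p^m}\pmod N$ when $p>2$ and $(ab)^{p^m}\equiv a^{2^m}b^{2^m}[a,b]^{2^{m-1}}\pmod N$ when $p=2$, which is the first assertion.

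For the ``furthermore'' clause it suffices to show $N\le G_{m+2}$. By Proposition~\ref{fratprops}(3), $G_{m+2}=\prod_{i=1}^{m+2}\gamma_i(G)^{p^{\,m+2-i}}$. The factor $\gamma_2(G)^{p^m}$ of $N$ is the $i=2$ term of $G_{m+2}$, and for $p=2$ the factor $\gamma_3(G)^{2^{m-1}}$ is the $i=3$ term. For a remaining factor $\gamma_{p^r}(G)^{p^{\,m-r}}$ of $N$ (with $1\le r\le m$ if $p$ is odd, $2\le r\le m$ if $p=2$): if $p^r\le m+2$, the elementary inequality $p^r\ge r+2$ — valid for $r\ge 1$ when $p\ge 3$ and for $r\ge 2$ when $p=2$ — gives $m+2-p^r\le m-r$, hence $\gamma_{p^r}(G)^{p^{\,m-r}}\le\gamma_{p^r}(G)^{p^{\,m+2-p^r}}\le G_{m+2}$; if $p^r>m+2$, then $\gamma_{p^r}(G)\le\gamma_{m+2}(G)\le G_{m+2}$, so the factor lies in $G_{m+2}$ trivially. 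Thus $N\le G_{m+2}$, and the congruences modulo $N$ descend to congruences modulo $G_{m+2}$.

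The main obstacle is the $p=2$ bookkeeping: cleanly peeling the honest contribution $[a,b]^{2^{m-1}}$ off $[b,a]^{\binom{2^m}{2}}$, and noticing that the index $i=3$ contributes the extra modulus factor $\gamma_3(G)^{2^{m-1}}$ (because $\lfloor\log_2 3\rfloor=1$, not $0$), which is also why the inequality $p^r\ge r+2$ is only invoked from $r=2$ onward in the $p=2$ case. Everything else is routine once one observes that non-commutativity is harmless, since every modulus in sight is a product of characteristic — hence normal — subgroups, so the ``error'' terms may be freely transported to one side.
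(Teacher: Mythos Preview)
Your proposal is correct and follows essentially the same route as the paper: both arguments apply Hall's collection formula, bound $v_p(e_{i,j})$ via $v_p\binom{p^m}{k}\ge m-\lfloor\log_p k\rfloor$, place each $c_{i,j}^{e_{i,j}}$ into the appropriate factor $\gamma_{p^s}(G)^{p^{m-s}}$ of the modulus, handle $[b,a]^{\binom{2^m}{2}}$ by the explicit factorisation $2^{2m-1}-2^{m-1}$, and then pass to $G_{m+2}$ using Proposition~\ref{fratprops}. Your write-up is simply more explicit than the paper's (the case analysis on $s_i$, the normality remark, and the inequality $p^r\ge r+2$ are spelled out rather than left implicit), but there is no substantive difference in strategy.
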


\begin{proof}
Given a positive integer $j$, write $j = kp^r$ with $r \ge 0$ and $k$ relatively prime to $p$.  Then  $\binom{p^m}{j}$ is divisible by $p^{m-r}$.  It follows that any $\Z$-linear combination $e$ of $\binom{p^m}{1}, \binom{p^m}{2}, \dots, \binom{p^m}{i}$ is divisible by $m-s$, where $p^s$ is the largest power of $p$ less than or equal to $i$.  This shows that each $c_{i,j}^{e_{i,j}}$ from Theorem~\ref{collectionthm} is in $\gamma_{i}(G)^{p^{m-s}}$.  Then Theorem~\ref{collectionthm} implies that
\[
(ab)^{p^m} \equiv
\left\{
\begin{array}{r@{\quad:\quad}l}
a^{2^m} b^{2^m} [b,a]^{\binom{2^m}{2}} \mod{ \gamma_3(G)^{2^{m-1}} \prod_{r=2}^m{\gamma_{2^r}(G)^{2^{m-r}}}} & p = 2 \\
a^{p^m} b^{p^m} \mod{\gamma_2(G)^{p^m} \prod_{r=1}^m{ \gamma_{p^r}(G)^{p^{m-r}} }} & p > 2.
\end{array}
\right.
\]
Next, we must show that $[b,a]^{\binom{2^m}{2}} \equiv [a,b]^{2^{m-1}} \mod{\gamma_2(G)^{2^m}}$ when $p = 2$.  But
\begin{eqnarray*}
[b,a]^{\binom{2^m}{2}}
&=& [b,a]^{2^{m-1}(2^m-1)} \\
&=& [b,a]^{2^{2m-1}-2^{m-1}} \\
&=& [b,a]^{2^{2m-1}} [a,b]^{2^{m-1}},
\end{eqnarray*}
and $[b,a]^{2^{2m-1}} \in \gamma_2(G)^{2^m}$, proving the claim.  Finally, the congruences modulo $G_{m+2}$ follow directly from the previous congruences and the fact that $\gamma_i(G)^{p^{m-i+2}} \in G_{m+2}$ by Proposition~\ref{fratprops}.
\end{proof}

\begin{cor}
\label{comcor}
Let $a$ and $b$ be elements of a group $G$.  Let $i$ be a positive integer and suppose $a \in \gamma_i(G)$.  Then for all positive integers $m$,
\[
[a^{p^m}, b] \equiv
\left\{
\begin{array}{r@{\quad:\quad}l}
{[a,b]^{2^m} [a,[a,b]]^{2^{m-1}} \mod{G_{m+i+2}}} & p = 2 \\
{[a,b]^{p^m} \mod{G_{m+i+2}}} & p > 2.
\end{array}
\right.
\]
\end{cor}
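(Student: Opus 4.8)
The plan is to turn the commutator power $[a^{p^m},b]$ into an ordinary $p^m$-th power and then invoke Corollary~\ref{powercor}. Using the identity $b^{-1}ab=a[a,b]$ we get $b^{-1}a^{p^m}b=(a[a,b])^{p^m}$, and hence
\[
[a^{p^m},b]=a^{-p^m}\,(a[a,b])^{p^m}.
\]
So the whole problem reduces to expanding $(a[a,b])^{p^m}$ and cancelling its leading factor $a^{p^m}$ against $a^{-p^m}$.

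I would apply Corollary~\ref{powercor} inside the subgroup $H:=\langle a,[a,b]\rangle$, with the two group elements taken to be $a$ and $[a,b]$, and in its \emph{unreduced} form: the congruence whose modulus is $\gamma_2(H)^{p^m}\prod_{r=1}^m\gamma_{p^r}(H)^{p^{m-r}}$ when $p>2$, and $\gamma_2(H)^{2^m}\gamma_3(H)^{2^{m-1}}\prod_{r=2}^m\gamma_{2^r}(H)^{2^{m-r}}$ when $p=2$. (The fully reduced ``$\bmod H_{m+2}$'' form is too weak, since $H_{m+2}$ contains $\gamma_1(H)^{p^{m+1}}=H^{p^{m+1}}$, which need not lie in $G_{m+i+2}$.) Each such modulus is normal in $H$, so left-multiplying the resulting congruence by $a^{-p^m}\in H$ is legitimate and cancels the factors $a^{\pm p^m}$; this leaves exactly $[a,b]^{p^m}$ when $p>2$ and $[a,b]^{2^m}[a,[a,b]]^{2^{m-1}}$ when $p=2$, modulo that same normal subgroup of $H$.

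The main work is to show that the error modulus lies in $G_{m+i+2}$. The key intermediate claim is that $\gamma_k(H)\le\gamma_{ki+1}(G)$ for all $k\ge2$. Indeed, $\gamma_2(H)=[H,H]$ is generated as a normal subgroup of $H$ by $[a,[a,b]]$, which lies in $[\gamma_i(G),\gamma_{i+1}(G)]\le\gamma_{2i+1}(G)$ (using $[a,b]\in\gamma_{i+1}(G)$), and $\gamma_{2i+1}(G)$ is normal in $G$; then $\gamma_{k+1}(H)=[\gamma_k(H),H]\le[\gamma_{ki+1}(G),\gamma_i(G)]\le\gamma_{(k+1)i+1}(G)$ by induction. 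Combining $\gamma_k(H)\le\gamma_{ki+1}(G)\le G_{ki+1}$ with Proposition~\ref{fratprops}(2),(3) gives $\gamma_k(H)^{p^{m-r}}\le G_{ki+1+m-r}$, and it remains only to verify the elementary inequalities that govern the individual modulus terms — $2i+1+m\ge m+i+2$ for the $\gamma_2(H)$-term, $3i+m\ge m+i+2$ for the $\gamma_3(H)$-term (when $p=2$), and $i(p^r-1)\ge r+1$ for the generic $\gamma_{p^r}(H)^{p^{m-r}}$-term — all of which hold for $i\ge1$. (The separate treatment of $\gamma_2$ and $\gamma_3$ in the $p=2$ modulus is precisely what makes the small exponents there work out; and for $p=2,\ i=1$ the surviving term $[a,[a,b]]^{2^{m-1}}$ lies in $G_{m+2}$ but not necessarily $G_{m+3}$, which is why it is retained in the statement.)

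The main obstacle is the modulus bookkeeping in the last step: one must notice that the coarse ``$\bmod H_{m+2}$'' form of Corollary~\ref{powercor} discards too much, pass instead to the intermediate form, and establish the sharpened containment $\gamma_k(H)\le\gamma_{ki+1}(G)$ (the naive bound $\gamma_k(H)\le\gamma_{ki}(G)$ is already insufficient for $i=1$, $k=3$, $p\in\{2,3\}$).
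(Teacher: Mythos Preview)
Your proposal is correct and follows essentially the same route as the paper: rewrite $[a^{p^m},b]=a^{-p^m}(a[a,b])^{p^m}$, apply the unreduced form of Corollary~\ref{powercor} inside $H=\langle a,[a,b]\rangle$, use the sharpened containment $\gamma_k(H)\le\gamma_{ki+1}(G)$ for $k\ge 2$ (the paper cites Huppert for the base case that $\gamma_2(H)$ is the normal closure of $[a,[a,b]]$), and check that each factor of the error modulus lands in $G_{m+i+2}$. Your explicit remarks on why the coarse $H_{m+2}$ form is inadequate and why the naive bound $\gamma_k(H)\le\gamma_{ki}(G)$ fails are good diagnostic observations that the paper leaves implicit.
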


\begin{proof}
Let $H = \left< a, [a,b] \right>$.  Then $\gamma_2(H)$ is the normal closure of $[a,[a,b]]$ in $H$ by Huppert~\cite[Chapter III, Lemma 1.11]{hup}.  So $\gamma_2(H) \le \gamma{2i+1}(G)$.  Furthermore, $\gamma_j(H) \le \gamma_{ji+1}(G)$ for all $j \ge 2$.  Thus for $p = 2$,
\begin{eqnarray*}
\gamma_2(H)^{2^m} \gamma_3(H)^{2^{m-1}} \prod_{r=2}^m{\gamma_{2^r}(H)^{2^{m-r}}}
&\le& \gamma_{2i+1}(G)^{2^m} \gamma_{3i+1}(G)^{2^{m-1}} \prod_{r=2}^m{\gamma_{i 2^r + 1}(G)^{2^{m-r}}} \\
&\le& G_{m+i+2},
\end{eqnarray*}
and for $p > 2$,
\begin{eqnarray*}
\gamma_2(H)^{p^m} \prod_{r=1}^m{ \gamma_{p^r}(H)^{p^{m-r}} }
&\le& \gamma_{2i+1}(G)^{p^m} \prod_{r=1}^m{\gamma_{i p^r + 1}(G)^{p^{m-r}}} \\
&\le& G_{m+i+2}.
\end{eqnarray*}
By Corollary~\ref{powercor},
\begin{eqnarray*}
[a^{p^m}, b]
&=& a^{-p^m} b^{-1} a^{p^m} b \\
&=& a^{-p^m} (b^{-1} a b)^{p^m} \\
&=& a^{-p^m} (a[a,b])^{p^m} \\
&\equiv& \left\{
\begin{array}{r@{\quad:\quad}l}
{[a,b]^{2^m} [a,[a,b]]^{2^{m-1}} \mod{G_{m+i+2}}} & p = 2 \\
{[a,b]^{p^m} \mod{G_{m+i+2}}} & p > 2.
\end{array}
\right.
\end{eqnarray*}
\end{proof}

These preliminaries and some extra work lead to the following two theorems.

\begin{thm}
\label{structurethm}
Let $n$ be a positive integer.  If $p$ is odd or $p = 2$ and $n = 1$, then there is an $\F_p\GL(d, \F_p)$-module isomorphism
\[
\begin{array}{rccc}
\qemb_n :& F_n/F_{n+1} &\to& \F_p[\Lambda^1] \oplus \cdots \oplus \F_p[\Lambda^n] \\
& s_1^{p^{n-1}} s_2^{p^{n-2}} \cdots s_n F_{n+1} &\mapsto& \beta(\overline{s}_1, \overline{s}_2, \dots, \overline{s}_n),
\end{array}
\]
where $\beta$ is the isomorphism from Corollary $\ref{mwcor}$ and $s_i \in \gamma_i(F)$ for $i = 1, \dots, n$.  If $p = 2$ and $n \ge 2$, then there is an $\F_2\GL(d, \F_2)$-module isomorphism
\[
\begin{array}{rccc}
\qemb_n :& F_n/F_{n+1} &\to& E \oplus \F_2[\Lambda^3] \oplus \cdots \oplus \F_2[\Lambda^n] \\
& s_1^{2^{n-1}} s_2^{2^{n-2}} \cdots s_n F_{n+1} &\mapsto& \beta(\overline{s}_1, \overline{s}_2, \dots, \overline{s}_n) + \beta_1(\overline{s}_1)^2,
\end{array}
\]
where $s_i \in \gamma_i(F)$ for $i = 1, \dots, n$ and $E \subset \F_2[A^1] \oplus \F_2[A^2]$ is an extension of $\F_2[\Lambda^2]$ by $\F_2[\Lambda^1]$.
\end{thm}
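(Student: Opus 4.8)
The plan is to realize $\qemb_n$ as a composite of maps already constructed in this section and then verify the three things needed for an $\F_p\GL(d,\F_p)$-module isomorphism: that the map is a well-defined bijection, that it is additive, and that it is $\GL(d,\F_p)$-equivariant. Throughout, recall from Section~\ref{autgroupsec} that $\GL(d,\F_p)$ acts on each $F_i/F_{i+1}$ via lifts to $\Aut(F)$, the action being independent of the choice of lift.

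When $p$ is odd, or $p=2$ and $n=1$, the argument is immediate: Theorem~\ref{hbthm} gives a group isomorphism $\sigma_n:S_1\times\cdots\times S_n\to F_n/F_{n+1}$, Corollary~\ref{mwcor} gives an isomorphism $\beta=\beta_1\times\cdots\times\beta_n$ onto $\F_p[\Lambda^1]\oplus\cdots\oplus\F_p[\Lambda^n]$, so $\qemb_n:=\beta\circ\sigma_n^{-1}$ is an isomorphism of $\F_p$-vector spaces given by the stated formula. Equivariance is a naturality check: the Hall collecting construction behind $\sigma_n$ and the Magnus map $\alpha_n$ (hence $\beta_n$, via Theorem~\ref{mwthm} and Corollary~\ref{mwcor}) are natural for the $\Aut(F)$-action, and on both sides this action factors through $\GL(d,\F_p)$.

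The case $p=2$, $n\ge 2$ is the substantive one, since $\sigma_n$ is then only a bijection of sets. Theorem~\ref{hbthm} still guarantees that the tuple $(\overline{s}_1,\dots,\overline{s}_n)$ attached to an element of $F_n/F_{n+1}$ is well-defined (so the formula for $\qemb_n$ is unambiguous) and that $\sigma_n$ restricts to a group isomorphism from $S_2\times\cdots\times S_n$ onto $(F_n\cap\gamma_2(F))F_{n+1}/F_{n+1}$. Since $\sigma_n(\overline{s}_1,\overline{s}_2,\dots,\overline{s}_n)=s_1^{2^{n-1}}\,\sigma_n(1,\overline{s}_2,\dots,\overline{s}_n)$, the only obstruction to additivity of $\sigma_n^{-1}$ is how the leading factor $s_1^{2^{n-1}}$ interacts with a product: moving $t_1^{2^{n-1}}$ past the $\gamma_2(F)$-part of the first element introduces only commutators of elements of $F_n$, which lie in $[F_n,F_n]\le F_{2n}\le F_{n+1}$ by Proposition~\ref{fratprops} and so vanish, while Corollary~\ref{powercor} gives $s_1^{2^{n-1}}t_1^{2^{n-1}}\equiv(s_1t_1)^{2^{n-1}}[s_1,t_1]^{-2^{n-2}}\pmod{F_{n+1}}$, whose extra factor $[s_1,t_1]^{-2^{n-2}}\in\gamma_2(F)^{2^{n-2}}\subseteq F_n$ has $S_2$-coordinate $\overline{[s_1,t_1]}$ (negation being trivial over $\F_2$). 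Thus $\sigma_n^{-1}$ is additive up to a shift of the $S_2$-coordinate by $\overline{[s_1,t_1]}$, and this is precisely what the term $\beta_1(\overline{s}_1)^2\in\F_2[A^2]$ in the definition of $\qemb_n$ must cancel. It does: over $\F_2$,
\[
(\beta_1(\overline{s}_1)+\beta_1(\overline{t}_1))^2=\beta_1(\overline{s}_1)^2+\beta_1(\overline{t}_1)^2+[\beta_1(\overline{s}_1),\beta_1(\overline{t}_1)],
\]
and $[\beta_1(\overline{s}_1),\beta_1(\overline{t}_1)]=\beta_2(\overline{[s_1,t_1]})$ by the defining property of $\alpha_n$ in Theorem~\ref{mwthm} (reduced mod $p$ in Corollary~\ref{mwcor}), so the extra class cancels and $\qemb_n$ is additive. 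It is injective, since the $\F_2[A^1]$-component recovers $\overline{s}_1$ and subtracting $\beta_1(\overline{s}_1)^2$ recovers the remaining coordinates, and its image is $E\oplus\F_2[\Lambda^3]\oplus\cdots\oplus\F_2[\Lambda^n]$ with $E=\{(u,w+u^2):u\in\F_2[A^1],\ w\in\F_2[\Lambda^2]\}$ (recalling $\F_2[\Lambda^1]=\F_2[A^1]$).

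It remains to check that $E$ is an $\F_2\GL(d,\F_2)$-submodule of $\F_2[A^1]\oplus\F_2[A^2]$ with the stated extension structure and that $\qemb_n$ is equivariant. The set $E$ is closed under addition because $u^2+u'^2=(u+u')^2+[u,u']$ with $[u,u']\in\F_2[\Lambda^2]$, and is $\GL(d,\F_2)$-stable because $g$ acts on $\F_2[A^2]$ by an algebra automorphism, so $g(u^2)=(gu)^2$, while $\F_2[\Lambda^2]$ is a submodule; projecting to the first coordinate gives a surjection $E\to\F_2[A^1]=\F_2[\Lambda^1]$ with kernel $0\oplus\F_2[\Lambda^2]$, exhibiting $E$ as an extension of $\F_2[\Lambda^2]$ by $\F_2[\Lambda^1]$. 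Equivariance of $\qemb_n$ follows, as in the odd case, from naturality of $\sigma_n$ and $\alpha_n$ for the $\Aut(F)$-action together with the fact that $u\mapsto u^2$ is a $\GL(d,\F_2)$-equivariant map $\F_2[A^1]\to\F_2[A^2]$. I expect the main obstacle to be the collection bookkeeping in this last case — being certain that modulo $F_{n+1}$ exactly one correction term survives, so that the single summand $\beta_1(\overline{s}_1)^2$ suffices, and confirming that the image is precisely $E$ (rather than splitting off a copy of $\F_2[\Lambda^1]$).
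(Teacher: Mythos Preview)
Your proposal is correct and follows essentially the same approach as the paper: in the easy cases you set $\qemb_n=\beta\circ\sigma_n^{-1}$, and in the $p=2$, $n\ge 2$ case you verify additivity by the same commutator-collection computation (Corollary~\ref{powercor} producing the single correction $[t_1,s_1]^{2^{n-2}}$, cancelled by the cross-term in $(\beta_1(\overline{s}_1)+\beta_1(\overline{t}_1))^2$), with injectivity and the extension structure of $E$ handled just as the paper does. Your treatment is slightly more explicit than the paper's in two places---you give the concrete description $E=\{(u,w+u^2):u\in\F_2[A^1],\,w\in\F_2[\Lambda^2]\}$ and you spell out the $\GL(d,\F_2)$-equivariance via $g(u^2)=(gu)^2$---but these are presentational differences, not a different argument.
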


\begin{proof}
If $p$ is odd or $p = 2$ and $n = 1$, then
\[
\qemb_n = \beta \circ \sigma_n^{-1},
\]
and hence $\qemb_n$ is an isomorphism by Theorem~\ref{hbthm} and Corollary~\ref{mwcor}.  In all cases, induction on $n$ immediately shows that the action of $\F_p\GL(d,\F_p)$ commutes with $\qemb_n$, so that if $\qemb_n$ is an isomorphism, then it is an $\F_p\GL(d,\F_p)$-module isomorphism.

If $p = 2$ and $n \ge 2$, then $\qemb_n$ is injective since each $\beta_i$ is injective.  Let
\[
E = \mathrm{im}(\qemb_n) \cap (\F_2[A^1] \oplus \F_2[A^2]).
\]
Clearly $\qemb_n$ is surjective.  The map $\beta_1$ is surjective, so $E + \F_2[A^2] = \F_2[A^1] \oplus \F_2[A^2]$.  The map $\beta_2$ is surjective, so $E \cap \F_2[A^2] = \F_2[\Lambda^2]$.  It follows that $E$ is an extension of $\F_2[\Lambda^2]$ by $\F_2[\Lambda^1]$.

It remains to show that $\qemb_n$ is a homomorphism when $p = 2$ and $n \ge 2$.  Let $s,t \in F_n/F_{n+1}$.  Write
\begin{eqnarray*}
s &=& s_1^{2^{n-1}} s_2^{2^{n-2}} \cdots s_n F_{n+1} \textrm{ and } \\
t &=& t_1^{2^{n-1}} t_2^{2^{n-2}} \cdots t_n F_{n+1}
\end{eqnarray*}
with $s_i, t_i \in \gamma_i(F)$ for $i = 1,\dots,n$.  We know
\begin{eqnarray*}
\qemb_n(s) + \qemb_n(t) &=& \beta(\overline{s}_1, \overline{s}_2, \dots, \overline{s}_n) + \beta_1(\overline{s}_1)^2 + \beta(\overline{t}_1, \overline{t}_2, \dots, \overline{t}_n) + \beta_1(\overline{t}_1)^2,
\end{eqnarray*}
and we must show that this equals $\qemb_n(st)$.

Note that $[t_i, s_i]^{2^{n-i-1}} \in F_{n+i-1}$ for all $i$.  From Corollary~\ref{powercor} and the fact that $F_n/F_{n+1}$ is abelian, we find
\begin{eqnarray*}
st &=& s_1^{2^{n-1}} s_2^{2^{n-2}} \cdots s_n t_1^{2^{n-1}} t_2^{2^{n-2}} \cdots t_n F_{n+1} \\
&=& \left( \prod_{i=1}^n{s_i^{2^{n-i}} t_i^{2^{n-i}}} \right) F_{n+1} \\
&=& \left( \prod_{i=1}^n{(s_i t_i)^{2^{n-i}} [t_i, s_i]^{2^{n-i-1}}} \right) F_{n+1} \\
&=& (s_1 t_1)^{2^{n-1}} [t_1,s_1]^{2^{n-2}} (s_2 t_2)^{2^{n-2}} \cdots (s_n t_n) F_{n+1} \\
&=& (s_1 t_1)^{2^{n-1}} ([t_1,s_1] s_2 t_2)^{2^{n-2}} \cdots (s_n t_n) F_{n+1} \\
\qemb_n(st) &=& \beta(\overline{s}_1, \overline{s}_2, \dots, \overline{s}_n) + \beta(\overline{t}_1, \overline{t}_2, \dots, \overline{t}_n) \\
&& + (\beta_1(\overline{s}_1) + \beta_1(\overline{t}_1))^2 + \beta_2([\overline{t}_1, \overline{s}_1]) \\
&=& \beta(\overline{s}_1, \overline{s}_2, \dots, \overline{s}_n) + \beta_1(\overline{s}_1)^2 + \beta(\overline{t}_1, \overline{t}_2, \dots, \overline{t}_n) + \beta_1(\overline{t}_1)^2 \\
&=& \qemb_n(s) + \qemb_n(t).
\end{eqnarray*}
Thus $\qemb_n$ is a homomorphism when $p = 2$ and $n \ge 2$, completing the proof.
\end{proof}

\begin{thm}
\label{mapsthm}
For each positive integer $n$ and $j = 1, \dots, d$, define the following maps:
\[
\begin{array}{rcrcl}
\pow_n &:& F_n/F_{n+1} &\to& F_{n+1}/F_{n+2} \\
&& sF_{n+1} &\mapsto& s^pF_{n+2} \\
&&&& \\
\Fcom_{j,n} &:& F_n/F_{n+1} &\to& F_{n+1}/F_{n+2} \\
&& sF_{n+1} &\mapsto& [s,y_j] F_{n+2} \\
&&&& \\
\com_j &:& \F_p[A^{\ast}] &\to& \F_p[A^{\ast}] \\
&& f &\mapsto& [f, x_j]
\end{array}
\]
Unless $p = 2$ and $n = 1$, the diagram below on the left commutes and $\pow_n$ is an injective homomorphism.  The diagram below on the right commutes and $\Fcom_{j,n}$ is a homomorphism.
\[
\begin{array}{c}
\xymatrix{
F_n/F_{n+1} \ar[dr]_{\pow_n} \ar[rr]^{\qemb_n} &&  \F_p[A^{\ast}] \\
& F_{n+1}/F_{n+2} \ar[ur]_{\qemb_{n+1}}
}
\end{array} \qquad
\begin{array}{c}
\xymatrix{
F_n/F_{n+1} \ar[d]_{\Fcom_{j,n}} \ar[r]^{\qemb_n} & \F_p[A^{\ast}] \ar[d]^{\com_j} \\
F_{n+1}/F_{n+2} \ar[r]^{\qemb_{n+1}} &  \F_p[A^{\ast}]
}
\end{array}
\]
\end{thm}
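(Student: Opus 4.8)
The plan is to check both diagrams by a single uniform computation: write an arbitrary coset $sF_{n+1}\in F_n/F_{n+1}$ in the canonical form $s=s_1^{p^{n-1}}s_2^{p^{n-2}}\cdots s_n$ with $s_i\in\gamma_i(F)$ (possible since $\sigma_n$ of Theorem~\ref{hbthm} is a bijection), compute $s^p$ (respectively $[s,y_j]$) modulo $F_{n+2}$ using the commutator calculus of Corollaries~\ref{powercor} and~\ref{comcor}, rewrite the result in the canonical form appropriate to $F_{n+1}/F_{n+2}$, and read off its image under $\qemb_{n+1}$ from Theorem~\ref{structurethm}. Every congruence needed is an instance of Proposition~\ref{fratprops}: a term lying in $[\gamma_a(F),\gamma_b(F)]$ or in $\gamma_a(F)^{p^b}$ gets absorbed into $F_{n+2}$ because, for $a,b\in F_n$, one has $[F_n,F_n]\le F_{2n}\le F_{n+2}$ (when $n\ge 2$) and $[F_{n+1},F_n]\le F_{2n+1}\le F_{n+2}$ (when $n\ge 1$), together with the analogous estimates on $p$-th powers.

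For the power map the key identity is $s^p\equiv s_1^{p^n}s_2^{p^{n-1}}\cdots s_n^p\pmod{F_{n+2}}$. When $n\ge 2$ this holds because $F_n/F_{n+2}$ is abelian (its commutator subgroup lies in $F_{2n}\le F_{n+2}$), so $(ab)^p\equiv a^pb^p$ for $a,b\in F_n$; when $n=1$ it is trivial since $s=s_1$, and for $p$ odd the map $\pow_1$ is moreover a homomorphism since Corollary~\ref{powercor} gives $(ab)^p\equiv a^pb^p\pmod{F_3}$. The right-hand side is the canonical form of an element of $F_{n+1}/F_{n+2}$ with data $(s_1,\dots,s_n,1)$, so Theorem~\ref{structurethm} gives $\qemb_{n+1}(\pow_n(sF_{n+1}))=\beta(\overline{s}_1,\dots,\overline{s}_n,\overline{1})$ (plus the summand $\beta_1(\overline{s}_1)^2$ when $p=2$), which is precisely $\qemb_n(sF_{n+1})$. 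The exclusion of $p=2,n=1$ is visible here: in that case $\qemb_2$ carries the extra summand $\beta_1(\overline{s}_1)^2\ne 0$ that $\qemb_1$ does not, and correspondingly the collection term $[t,s]$ survives modulo $F_3$ so $\pow_1$ fails to be a homomorphism. Once the square commutes, $\pow_n=\qemb_{n+1}^{-1}\circ\qemb_n$ is a composite of module homomorphisms, hence a homomorphism, and it is injective because $\qemb_{n+1}\circ\pow_n=\qemb_n$ is.

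For the commutator map I first expand $[s,y_j]\equiv\prod_{i=1}^n[s_i^{p^{n-i}},y_j]\pmod{F_{n+2}}$ using $[ab,c]=[a,c]^b[b,c]$ together with $[[a,c],b]\in[F_{n+1},F_n]\le F_{n+2}$ for $a,b\in F_n$; then Corollary~\ref{comcor} (applied with $a=s_i\in\gamma_i(F)$ and $m=n-i$) replaces each factor by $[s_i,y_j]^{p^{n-i}}$ modulo $F_{n+2}$ when $p$ is odd. Since $[s_i,y_j]\in\gamma_{i+1}(F)$, this exhibits $[s,y_j]$ in the canonical form of $F_{n+1}/F_{n+2}$ with data $(1,[s_1,y_j],\dots,[s_n,y_j])$, so Theorems~\ref{structurethm} and~\ref{mwthm} (via the defining property $\alpha_k([z,y_j]\gamma_{k+1}(F))=[\alpha_{k-1}(z\gamma_k(F)),x_j]$) turn $\qemb_{n+1}([s,y_j]F_{n+2})$ into $\sum_i[\beta_i(\overline{s}_i),x_j]=[\,\sum_i\beta_i(\overline{s}_i),x_j]=\com_j(\qemb_n(sF_{n+1}))$. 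The homomorphism property of $\Fcom_{j,n}$ follows either formally, from $\Fcom_{j,n}=\qemb_{n+1}^{-1}\circ\com_j\circ\qemb_n$, or directly from $[st,y_j]=[s,y_j]^t[t,y_j]\equiv[s,y_j][t,y_j]\pmod{F_{n+2}}$.

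I expect the main obstacle to be the case $p=2$ in the commutator diagram, where two ``parasitic'' terms appear and must be shown to cancel. For $i=1$, Corollary~\ref{comcor} contributes an extra factor $[s_1,[s_1,y_j]]^{2^{n-2}}\in\gamma_3(F)^{2^{n-2}}$ which only lies in $F_{n+1}$, not in $F_{n+2}$, so it genuinely enters $\qemb_{n+1}([s,y_j]F_{n+2})$ as the summand $\beta_3(\overline{[s_1,[s_1,y_j]]})=[\beta_1(\overline{s}_1),[\beta_1(\overline{s}_1),x_j]]$ (for $i\ge 2$ the analogous term lands in $F_{n+i}\le F_{n+2}$ and disappears). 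On the other side, $\qemb_n$ carries its own extra summand $\beta_1(\overline{s}_1)^2$, and applying $\com_j$ to it gives $[\beta_1(\overline{s}_1)^2,x_j]$, which equals $[\beta_1(\overline{s}_1),[\beta_1(\overline{s}_1),x_j]]$ precisely because $[u^2,v]=[u,[u,v]]$ in characteristic $2$. Thus the two parasitic contributions coincide and the diagram commutes; tracking exactly which graded piece of the canonical form each term occupies, and handling the boundary case $n=1$ separately (where no power is collected at all), is the one genuinely delicate point of the argument.
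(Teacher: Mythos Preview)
Your proposal is correct and follows essentially the same route as the paper's proof: write $s$ in the canonical form coming from $\sigma_n$, compute $s^p$ and $[s,y_j]$ modulo $F_{n+2}$ via Corollaries~\ref{powercor} and~\ref{comcor}, and read off the result through $\qemb_{n+1}$. Your handling of the $p=2$ commutator case, including the identification of the parasitic term $[s_1,[s_1,y_j]]^{2^{n-2}}$ with $[\beta_1(\overline{s}_1)^2,x_j]$ via the characteristic-$2$ identity $[u^2,v]=[u,[u,v]]$, matches the paper's computation exactly; the only cosmetic difference is that for the power map with $n\ge 2$ you invoke the abelianness of $F_n/F_{n+2}$ directly, whereas the paper phrases the same step as an application of Corollary~\ref{powercor} with $G=F_n$.
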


\begin{proof}
Let $s \in F_n/F_{n+1}$.  Write $s = s_1^{p^{n-1}} s_2^{p^{n-2}} \cdots s_n F_{n+1}$ with $s_i \in \gamma_i(F)$ for $i = 1,\dots,n$.  Of course, $s_i^{p^{n-i}} \in F_n$ for $i = 1,\dots,n$.  Using Corollary~\ref{powercor} with $G = F_n$, if $p > 2$ or $n \ge 2$,
\begin{eqnarray*}
(s_1^{p^{n-1}} s_2^{p^{n-2}} \cdots s_n)^p
&\equiv& s_1^{p^n} s_2^{p^{n-2}} \cdots s_n^p \mod{F_{n+2}}.
\end{eqnarray*}
So $\pow_n(s) = s_1^{p^n} s_2^{p^{n-1}} \cdots s_n^p F_{n+2}$.  It is clear that $\qemb_n(s) = \qemb_{n+1}(\pow_n(s))$.  Thus $\pow_n = \qemb^{-1}_{n+1} \circ \qemb_n$ is an injective homomorphism unless $p = 2$ and $n = 1$.

The map $\com_j$ is an (additive) homomorphism by the linearity of the Lie bracket.  The commutator identity $[ab,c] = [a,c]^b[b,c]$ and the fact that $F_{n+1}/F_{n+2}$ is central in $F/F_{n+2}$ show that
\begin{eqnarray*}
[s_1^{p^{n-1}} s_2^{p^{n-2}} \cdots s_n, y_j]
&=& [s_1^{p^{n-1}}, y_j]^{s_2^{p^{n-2}} \cdots s_n} [s_2^{p^{n-2}}, y_j]^{s_3^{p^{n-3}} \cdots s_n} \cdots [s_n, y_j] \\
&\equiv& [s_1^{p^{n-1}}, y_j] [s_2^{p^{n-2}}, y_j] \cdots [s_n, y_j] \mod{F_{n+2}}.
\end{eqnarray*}
If $p > 2$, then Corollary~\ref{comcor} shows that
\begin{eqnarray*}
[s_1^{p^{n-1}} s_2^{p^{n-2}} \cdots s_n, y_j]
&\equiv& [s_1, y_j]^{p^{n-1}} [s_2, y_j]^{p^{n-2}} \cdots [s_n, y_j] \mod{F_{n+2}},
\end{eqnarray*}
and clearly $\qemb_{n+1}(\Fcom_j(s)) = \com_j(\qemb_n(s)$.

If $p = 2$, note that $[s_i, [s_i, y_j]]^{2^{n-i-1}} \in F_{n+i}$ for all $i$.  Thus by Corollaries~\ref{powercor} and~\ref{comcor},
\begin{eqnarray*}
[s_1^{2^{n-1}} s_2^{2^{n-2}} \cdots s_n, y_j]
&\equiv& [s_1, y_j]^{2^{n-1}} [s_1, [s_1, y_j]]^{2^{n-2}} [s_2, y_j]^{2^{n-2}} \cdots [s_n, y_j] \hspace{-.05in} \mod{F_{n+2}} \\
&\equiv& [s_1, y_j]^{2^{n-1}} ([s_1, [s_1, y_j]] [s_2, y_j])^{2^{n-2}} \cdots [s_n, y_j] \mod{F_{n+2}} \\
\qemb_{n+1}(\Fcom_j(s)) &=& [\beta(\overline{s}_1, \overline{s}_2, \dots, \overline{s}_n), x_j] + [\beta_1(\overline{s}_1), [\beta_1(\overline{s}_1), x_j]] \\
&=& [\beta(\overline{s}_1, \overline{s}_2, \dots, \overline{s}_n), x_j] + [\beta_1(\overline{s}_1)^2, x_j] \\
&=& \com_j(\qemb_n(s)).
\end{eqnarray*}
Thus in either case, $\Fcom_j = \qemb_{n+1}^{-1} \circ \com_j \circ \qemb_n$ is a homomorphism.
\end{proof}

We conclude this section with two corollaries of Theorem~\ref{structurethm}.  First, the dimension of $K[\Lambda^i]$ is given by Witt's formula:
\[
\dim(K[\Lambda^i]) = \frac{1}{i} \sum_{j | i}{\mu(i/j) \cdot d^j},
\]
where $\mu$ is the M\"{o}bius function (see Reutenauer~\cite[Appendix 0.4.2]{reu}).  Thus Theorem~\ref{structurethm} tells us the dimension of $F_n/F_{n+1}$.

\begin{cor}
\label{dimcor}
Let $n$ be a positive integer.  The dimension of $F_n/F_{n+1}$ is
\[
d_n = \sum_{i=1}^n{\frac{1}{i} \sum_{j | i}{\mu(i/j) \cdot d^j}}.
\]
\end{cor}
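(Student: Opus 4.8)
The plan is to read the dimension directly off the module isomorphism in Theorem~\ref{structurethm}, using Witt's formula to evaluate $\dim(\F_p[\Lambda^i])$ for each $i$ from $1$ to $n$. There is nothing deep to do once those two ingredients are available; the corollary is purely a dimension count.

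First I would treat the generic case where $p$ is odd or $p = 2$ and $n = 1$. Here Theorem~\ref{structurethm} provides an $\F_p\GL(d,\F_p)$-module isomorphism $\qemb_n \colon F_n/F_{n+1} \to \F_p[\Lambda^1] \oplus \cdots \oplus \F_p[\Lambda^n]$, and in particular an isomorphism of $\F_p$-vector spaces. Taking $\F_p$-dimensions of both sides gives $d_n = \dim(F_n/F_{n+1}) = \sum_{i=1}^n \dim(\F_p[\Lambda^i])$. Substituting Witt's formula $\dim(\F_p[\Lambda^i]) = \frac{1}{i}\sum_{j \mid i} \mu(i/j)\, d^j$ term by term yields exactly the claimed expression for $d_n$.

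It then remains to handle the case $p = 2$ with $n \ge 2$, where Theorem~\ref{structurethm} instead gives $F_n/F_{n+1} \cong E \oplus \F_2[\Lambda^3] \oplus \cdots \oplus \F_2[\Lambda^n]$ with $E \subset \F_2[A^1] \oplus \F_2[A^2]$ an extension of $\F_2[\Lambda^2]$ by $\F_2[\Lambda^1]$. Since dimension is additive along short exact sequences of finite-dimensional vector spaces, $\dim E = \dim(\F_2[\Lambda^1]) + \dim(\F_2[\Lambda^2])$, so once more $d_n = \sum_{i=1}^n \dim(\F_2[\Lambda^i])$, and Witt's formula finishes the count just as before.

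I do not expect any real obstacle: the one point requiring a word of care is that the anomalous $p = 2$ summand $E$ contributes the same dimension as $\F_2[\Lambda^1] \oplus \F_2[\Lambda^2]$ would, which is immediate from additivity of dimension on the defining extension. Everything else is the substitution of Witt's formula and simplification.
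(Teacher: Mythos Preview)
Your proposal is correct and matches the paper's approach exactly: the paper states Witt's formula for $\dim(\F_p[\Lambda^i])$ just before the corollary and then invokes Theorem~\ref{structurethm}, leaving the $p=2$ dimension-of-extension observation implicit. Your write-up simply makes explicit the case split and the additivity of dimension for the extension $E$, which is precisely what the paper intends.
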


We also need to know some numerical bounds on $d_n$, but these are computed in Lemmas~\ref{dnasymptotics1} and~\ref{dnasymptotics2}.  For the second corollary, let $V$ be the natural $\F_p \GL(d,\F_p)$-module.  Then $\F_p[\Lambda^1] \cong V$ and $\F_p[\Lambda^2] \cong V \wedge V$ as $\F_p \GL(d,\F_p)$-modules.  Therefore Theorem~\ref{structurethm} tells us the following fact.

\begin{cor}
\label{wedgecor}
Let $n \ge 2$.  Then $F_n/F_{n+1}$ contains a $\F_p \GL(d,\F_p)$-submodule isomorphic to an extension of $V \wedge V$ by $V$, where $V$ is the natural $\F_p \GL(d,\F_p)$-module.
\end{cor}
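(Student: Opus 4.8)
The plan is to read the statement off directly from Theorem~\ref{structurethm}, using the two $\F_p\GL(d,\F_p)$-module identifications $\F_p[\Lambda^1] \cong V$ and $\F_p[\Lambda^2] \cong V \wedge V$ recalled just above. First I would check that these identifications really are $\GL(d,\F_p)$-equivariant: $\F_p[\Lambda^1] = \F_p[A^1]$ carries the tautological action of $\GL(d,\F_p)$, so it is the natural module $V$; and by Theorem~\ref{basisthm} the Lyndon words of length $2$ are exactly the $x_i x_j$ with $i < j$, so $\{[x_i,x_j] : i < j\}$ is a basis of $\F_p[\Lambda^2]$ on which $\GL(d,\F_p)$ acts precisely as it acts on the wedges $x_i \wedge x_j$, whence $\F_p[\Lambda^2] \cong V \wedge V$.

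Next I would split into the two cases of Theorem~\ref{structurethm}; since $n \ge 2$, the case $p = 2$, $n = 1$ does not arise. If $p$ is odd, $\qemb_n$ is an $\F_p\GL(d,\F_p)$-module isomorphism $F_n/F_{n+1} \cong \F_p[\Lambda^1] \oplus \cdots \oplus \F_p[\Lambda^n]$, and the first two summands form a submodule isomorphic to $V \oplus (V \wedge V)$, which is a (split) extension of $V \wedge V$ by $V$. If $p = 2$ and $n \ge 2$, $\qemb_n$ identifies $F_n/F_{n+1}$ with $E \oplus \F_2[\Lambda^3] \oplus \cdots \oplus \F_2[\Lambda^n]$, and Theorem~\ref{structurethm} already tells us that the summand $E$ is an extension of $\F_2[\Lambda^2] \cong V \wedge V$ by $\F_2[\Lambda^1] \cong V$; this $E$ is the required submodule.

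There is essentially no obstacle here, since all the content sits in Theorem~\ref{structurethm}: the corollary is a bookkeeping consequence. The only points deserving a moment's care are the equivariance of $\F_p[\Lambda^2] \cong V \wedge V$, which is immediate from the explicit Lyndon-word basis of Theorem~\ref{basisthm}, and the fact that when $p = 2$ the extension $E$ may fail to split. The latter is harmless, because the statement only asserts the existence of \emph{some} extension of $V \wedge V$ by $V$ inside $F_n/F_{n+1}$, not a split one.
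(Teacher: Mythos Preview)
Your proposal is correct and matches the paper's approach exactly: the paper does not give a separate proof but simply remarks, just before stating the corollary, that $\F_p[\Lambda^1]\cong V$ and $\F_p[\Lambda^2]\cong V\wedge V$ as $\F_p\GL(d,\F_p)$-modules and that the statement then follows from Theorem~\ref{structurethm}. Your write-up supplies the (routine) details the paper leaves implicit, in particular the case split on $p$ and the verification of equivariance via the Lyndon basis.
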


This will be needed to apply Theorem~\ref{strongerbound} to groups of lower $p$-length 2.

\section{The Expansion of Subgroups of $F/F_{\textrm{\lowercase{$n+1$}}}$}

The remainder of this chapter is devoted to proving the following theorem and corollary.  Corollary~\ref{expcor} will be combined with Theorem~\ref{normalthm} to prove Corollary~\ref{fnormalcor}, which gives upper bounds for the number of normal subgroups of $F/F_{n+1}$ with certain properties.

\begin{thm}
\label{expthm}
Fix a prime $p$ and integers $d \ge 3$ and $i \ge 2$.  Suppose that $U$ is a normal subgroup of $F$ lying in $F_2$.  Let
\begin{eqnarray*}
Q &=& (U \cap F_i)F_{i+1}/F_{i+1} \\
R &=& (U_2 \cap F_{i+1})F_{i+2}/F_{i+2} \\
S &=& (U^p[U,F] \cap F_{i+1})F_{i+2}/F_{i+2}.
\end{eqnarray*}
Viewing $Q$, $R$, and $S$ as $\F_p$-vector spaces, their dimensions satisfy $\dim(R) \ge \dim(Q)$ and $\dim(S) \ge (3/2) \; \dim(Q)$.
\end{thm}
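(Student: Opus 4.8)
The plan is to make $Q$, $R$, $S$ explicit through the power and commutator maps $\pow$ and $\Fcom$ of this chapter and then transfer the question to the free Lie algebra via $\qemb$. First, the easy half. If $u\in U\cap F_i$ then $u^p\in U^p\subseteq U_2$ and $u^p\in F_i^p\subseteq F_{i+1}$ by Proposition~\ref{fratprops}, so $\pow_i$ carries $Q$ into $R$; since $i\ge 2$, Theorem~\ref{mapsthm} says $\pow_i$ is an injective homomorphism, hence $\dim R\ge\dim\pow_i(Q)=\dim Q$. Likewise $u^p\in U^p[U,F]\cap F_{i+1}$, and for each free generator $y_j$ we have $[u,y_j]\in[U,F]\cap[F_i,F]\subseteq U^p[U,F]\cap F_{i+1}$, so $S$ contains $\pow_i(Q)$ and every $\Fcom_{j,i}(Q)$, giving $\dim S\ge\dim\bigl(\pow_i(Q)+\sum_{j=1}^d\Fcom_{j,i}(Q)\bigr)$.

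Now apply $\qemb_i$ and $\qemb_{i+1}$. By the commuting squares of Theorem~\ref{mapsthm}, $\qemb_{i+1}\circ\pow_i=\qemb_i$ and $\qemb_{i+1}\circ\Fcom_{j,i}=\com_j\circ\qemb_i$, so writing $\bar Q=\qemb_i(Q)$, $L_1=\F_p[\Lambda^1]$, and $[\bar Q,L_1]=\sum_j\com_j(\bar Q)$, the bound $\dim S\ge\tfrac32\dim Q$ reduces to
\[
\dim\bigl(\bar Q+[\bar Q,L_1]\bigr)\ \ge\ \tfrac32\dim\bar Q,\qquad \bar Q\subseteq\F_p[\Lambda^1]\oplus\cdots\oplus\F_p[\Lambda^i]
\]
(for $p=2$ one has instead $\bar Q\subseteq E\oplus\F_2[\Lambda^3]\oplus\cdots\oplus\F_2[\Lambda^i]$ with $E$ squeezed between $\F_2[\Lambda^1]$ and $\F_2[\Lambda^2]$; this is a minor variant handled the same way once the bottom two degrees are treated jointly). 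I would prove the displayed inequality by induction on the top degree $m$ of $\bar Q$. If $m=1$ then $\bar Q\subseteq V:=\F_p[\Lambda^1]$ and $[\bar Q,L_1]=\bar Q\wedge V$ has dimension $\tfrac12 r(2d-r-1)\ge\tfrac12 r$ where $r=\dim\bar Q\le d$, so since the two summands lie in different degrees, $\dim(\bar Q+[\bar Q,L_1])=r+\dim(\bar Q\wedge V)\ge\tfrac32 r$. If $m\ge2$, let $\bar P\subseteq\bar Q$ be the subspace of elements with vanishing degree-$m$ component and $T=\pi_m(\bar Q)\subseteq\F_p[\Lambda^m]$ the degree-$m$ projection, so $\dim\bar Q=\dim\bar P+\dim T$ and $\bar P$ has top degree $\le m-1$. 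Projecting $\bar Q+[\bar Q,L_1]$ onto $\F_p[\Lambda^{m+1}]$ has image exactly $[T,L_1]$ and kernel containing $\bar P+[\bar P,L_1]$, so by the inductive hypothesis
\[
\dim\bigl(\bar Q+[\bar Q,L_1]\bigr)\ \ge\ \dim\bigl(\bar P+[\bar P,L_1]\bigr)+\dim[T,L_1]\ \ge\ \tfrac32\dim\bar P+\dim[T,L_1],
\]
and the induction closes once we prove the key lemma: $\dim[T,L_1]\ge\tfrac32\dim T$ for every subspace $T$ of $\F_p[\Lambda^m]$ with $m\ge2$ and $d\ge3$.

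For the key lemma I would use the right-standard bracketings of Theorem~\ref{basisthm}. Tracking least monomials: $[b[w],x_1]$ has least monomial $x_1w$ for every Lyndon word $w$ of length $m\ge2$, so each $[\,\cdot\,,x_j]$ is injective on $\F_p[\Lambda^m]$; and for $j\ge2$ the least monomial of $[b[w],x_j]$ is $wx_j$ or $x_jw$ according to whether the first letter of $w$ is below or at least $x_j$. Taking a (reduced) echelon basis of $T$ for the Lyndon order, whose set of pivots $\mathrm{supp}(T)$ has $\dim T$ elements, a monomial comparison shows that every nonzero element of $[T,x_1]\cap[T,x_j]$ has least monomial $x_1w$ with $w\in\mathrm{supp}(T)$ ending in $x_j$; hence $\dim([T,x_1]\cap[T,x_j])\le\#\{w\in\mathrm{supp}(T):w\text{ ends in }x_j\}$. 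Since a Lyndon word of length $\ge2$ never ends in $x_1$, these counts over $j=2,\dots,d$ sum to $\dim T$, so some $j$ gives a count $\le\dim T/(d-1)$, and for that $j$,
\[
\dim[T,L_1]\ \ge\ \dim\bigl([T,x_1]+[T,x_j]\bigr)\ =\ 2\dim T-\dim\bigl([T,x_1]\cap[T,x_j]\bigr)\ \ge\ \tfrac{2d-3}{d-1}\,\dim T\ \ge\ \tfrac32\dim T,
\]
the last step being exactly where $d\ge3$ enters. The main obstacle is this lemma — in particular the monomial bookkeeping for the $[b[w],x_j]$ and the check that the echelon structure survives bracketing by $x_j$ — together with the $p=2$ case, where $E$ is not graded and the base of the induction must be set up by hand; Witt's formula and $\F_p[\Lambda^{k+1}]=[\F_p[\Lambda^k],L_1]$ are convenient for the $p=2$ base step and for cross-checks.
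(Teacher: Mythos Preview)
Your proposal is correct and follows the same architecture as the paper: transfer $Q$ to $\bar Q=\qemb_i(Q)$ in the free Lie algebra, reduce to showing $\dim(\bar Q+[\bar Q,L_1])\ge\tfrac32\dim\bar Q$, and prove this by induction on the top degree of $\bar Q$, the inductive step resting on the single-degree key lemma $\dim[T,L_1]\ge\tfrac32\dim T$ for $T\subseteq\F_p[\Lambda^m]$, $m\ge2$. (The paper first replaces $U$ by $(U\cap F_i)F_{i+1}$ to obtain equalities rather than your containments, but the containments suffice.)

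The one substantive difference is how the key lemma is proved. The paper (Lemma~\ref{explem}) argues structurally: it shows that whenever $[f,x_i]=[g,x_j]$ with $f\ne g$, both $f$ and $g$ must lie in the two-generator sub-Lie-algebra $\F_p[\Lambda^m]_{ij}$; since these subalgebras are pairwise disjoint for distinct pairs $\{i,j\}$, a pigeonhole over the $\binom{d}{2}\ge 3$ pairs locates one with $\dim(T\cap\F_p[\Lambda^m]_{ij})\le\tfrac12\dim T$, and then $\dim([T,x_i]+[T,x_j])\ge\tfrac32\dim T$. Your route instead bounds $\dim([T,x_1]\cap[T,x_j])$ by the number of Lyndon pivots of $T$ ending in $x_j$ via least-monomial tracking, and pigeonholes over $j\in\{2,\dots,d\}$. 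Both are valid; the paper's structural claim in principle yields a sharper constant $2-2/\binom{d}{2}$ versus your $2-1/(d-1)$, but only $3/2$ is needed. The $p=2$ base case you flag is exactly the content of the paper's Lemma~\ref{elem}, handled just as you sketch by splitting $W\subseteq E$ into $W\cap\F_2[\Lambda^2]$ and a complement.
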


The third isomorphism theorem lets us replace $F$ by $F/F_n$, giving the following corollary.

\begin{cor}
\label{expcor}
Fix a prime $p$ and integers $d \ge 3$, $n \ge 3$, and $2 \le i < n$.  Let $G = F/F_{n+1}$.  Suppose that $U$ is a normal subgroup of $G$ lying in $G_2$.  Let
\begin{eqnarray*}
Q &=& (U \cap G_i)G_{i+1}/G_{i+1} \\
R &=& (U_2 \cap G_{i+1})G_{i+2}/G_{i+2} \\
S &=& (U^p[U,G] \cap G_{i+1})G_{i+2}/G_{i+2}.
\end{eqnarray*}
Then $\dim(R) \ge \dim(Q)$ and $\dim(S) \ge (3/2) \; \dim(Q)$.
\end{cor}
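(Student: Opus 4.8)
The plan is to prove Theorem~\ref{expthm} by transporting the groups $Q$, $R$, $S$ into the free Lie algebra through the isomorphism $\qemb$ of Theorem~\ref{structurethm} and the commuting diagrams of Theorem~\ref{mapsthm}, thereby turning both dimension inequalities into statements about how the bracketings $f\mapsto[f,x_j]$ enlarge a subspace of the free Lie algebra. Corollary~\ref{expcor} is then immediate from Theorem~\ref{expthm} by the third isomorphism theorem, lifting $U\unlhd F/F_{n+1}$ to a normal subgroup of $F$ lying between $F_{n+1}$ and $F_2$; the hypothesis $i<n$ ensures $F_{n+1}\subseteq F_{i+2}$, so the relevant subgroups correspond.

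The inequality $\dim(R)\ge\dim(Q)$ is immediate. By Proposition~\ref{fratprops} we have $(U\cap F_i)^p\subseteq U^p\subseteq U_2$ and $(U\cap F_i)^p\subseteq F_i^p\subseteq F_{i+1}$, so $R$ contains the image in $F_{i+1}/F_{i+2}$ of $(U\cap F_i)^p$, which is exactly $\pow_i(Q)$ in the notation of Theorem~\ref{mapsthm}. Since $i\ge2$, that theorem says $\pow_i$ is an injective homomorphism, so $\dim(R)\ge\dim\pow_i(Q)=\dim(Q)$.

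For $\dim(S)\ge\tfrac32\dim(Q)$ the key observation is that $[U\cap F_i,y_j]\subseteq[U,F]\cap F_{i+1}\subseteq U^p[U,F]\cap F_{i+1}$ for each $j$, so $S$ contains the image of $(U\cap F_i)^p$ together with the images of the $[U\cap F_i,y_j]$; that is, $S\supseteq\pow_i(Q)+\sum_{j=1}^{d}\Fcom_{j,i}(Q)$. Applying the isomorphism $\qemb_{i+1}$ and using the two commuting diagrams of Theorem~\ref{mapsthm}, this becomes $\qemb_{i+1}(S)\supseteq W+[W,L_1]$, where $W:=\qemb_i(Q)$ sits in $L^{(i)}:=\F_p[\Lambda^1]\oplus\cdots\oplus\F_p[\Lambda^i]$, $L=\bigoplus_{k}\F_p[\Lambda^k]$ is the free Lie algebra on $x_1,\dots,x_d$, $L_1=\F_p[\Lambda^1]$, and $[W,L_1]:=\mathrm{span}\{[w,x_j]:w\in W,\ 1\le j\le d\}$. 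Since $\qemb_{i+1}$ is an isomorphism, it suffices to prove the purely Lie-theoretic estimate
\[
\dim\bigl(W+[W,L_1]\bigr)\ \ge\ \tfrac32\,\dim W
\]
for every subspace $W\subseteq L^{(i)}$, with $d\ge3$ and $i\ge2$.

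To prove this estimate I would first pass to the associated graded: filtering $W$ by total degree and letting $\overline W=\bigoplus_m\overline W_m\subseteq L$ be the associated graded subspace (so $\dim\overline W=\dim W$ and $\overline W_m\subseteq\F_p[\Lambda^m]$), one checks that $\dim(W+[W,L_1])\ge\sum_m\dim(\overline W_m+[\overline W_{m-1},L_1])$, which reduces the estimate to the case of graded $W$, namely $\sum_m\dim(\overline W_m+[\overline W_{m-1},L_1])\ge\tfrac32\sum_m\dim\overline W_m$. Inducting on the number of nonzero homogeneous components, the crux becomes a local statement about a single component: for $V\subseteq\F_p[\Lambda^m]$ one needs a lower bound on $\dim[V,L_1]$, and in the extremal configuration of the induction this must be $\dim[V,L_1]\ge\tfrac32\dim V$. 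When $\dim V$ is not too small relative to $\dim\F_p[\Lambda^{m+1}]$ this follows from a dimension count using the surjectivity of the multiplication $\F_p[\Lambda^m]\otimes L_1\to\F_p[\Lambda^{m+1}]$ together with Witt's formula for $\dim\F_p[\Lambda^k]$; when $\dim V$ is small one must instead track leading terms in the Lyndon basis of Theorem~\ref{basisthm}, where for a Lyndon word $w$ the bracket $[x_1,b[w]]$ has a well-understood Lyndon leading word of length $|w|+1$ and a Lyndon word not involving the least generator admits a second productive bracket. It is the amortization of these ``one or two new basis elements per dimension'' over an echelon basis of $V$ — combined with the fact that $L$ has no nonzero ideal contained in finitely many degrees — that produces the constant $\tfrac32$, and this Lyndon-word bookkeeping for the sharp constant rather than merely a positive one is the main obstacle; the hypotheses $d\ge3$ and $i\ge2$ enter exactly here. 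Finally, the case $p=2$ requires redoing the translation with the squaring corrections to $\qemb$ and $\com_j$ recorded in Theorems~\ref{structurethm} and~\ref{mapsthm}, but these corrections are of lower degree and do not disturb the leading-term analysis.
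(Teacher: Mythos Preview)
Your overall strategy matches the paper's: reduce Corollary~\ref{expcor} to Theorem~\ref{expthm} via the third isomorphism theorem, transport $Q$, $R$, $S$ into the free Lie algebra through $\qemb$, obtain $\dim(R)\ge\dim(Q)$ from the injectivity of $\pow_i$, and reduce $\dim(S)\ge\tfrac32\dim(Q)$ to the Lie-theoretic estimate $\dim(W+[W,L_1])\ge\tfrac32\dim W$. Your associated-graded reduction to homogeneous pieces is essentially the paper's inductive Lemma~\ref{dimlem}, which peels off the top degree and pushes everything down to a single homogeneous component.

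The genuine gap is your treatment of that homogeneous component: for $V\subseteq\F_p[\Lambda^m]$ with $m\ge2$, you need $\dim[V,L_1]\ge\tfrac32\dim V$, and neither of your proposed mechanisms is a proof. The surjectivity of $\F_p[\Lambda^m]\otimes L_1\to\F_p[\Lambda^{m+1}]$ together with Witt's formula gives no useful lower bound on $\dim[V,L_1]$ when $\dim V$ is small. Your Lyndon amortization idea (``one or two new basis elements per dimension'') is only a heuristic: the injectivity of each $\com_{j,m}$ gives $\dim[V,x_j]=\dim V$, but it does not control the collisions between $[v,x_i]$ and $[v',x_j]$ for $i\neq j$ across an echelon basis of $V$, and you yourself flag this bookkeeping as ``the main obstacle'' without resolving it.

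The paper's argument for this step (Lemma~\ref{explem}) supplies exactly the missing idea, and it is short. One first shows, by a Lyndon-word argument, that if $f,g\in\F_p[\Lambda^m]$ satisfy $[f,x_i]=[g,x_j]$ with $i\neq j$, then both $f$ and $g$ already lie in the two-variable sub-Lie-algebra $\F_p[\Lambda^m]_{ij}$ generated by $x_i$ and $x_j$. Since $m\ge2$ these two-variable subspaces intersect pairwise trivially, so with $d\ge3$ a pigeonhole gives a pair $(i,j)$ with $\dim(V\cap\F_p[\Lambda^m]_{ij})\le\tfrac12\dim V$. On a complement $X$ to this intersection in $V$, the maps $v\mapsto[v,x_i]$ and $v\mapsto[v,x_j]$ are each injective and their images meet trivially by the claim, so $\dim([X,x_i]+[X,x_j])=2\dim X$; combined with the single injective bracket on $V\cap\F_p[\Lambda^m]_{ij}$ this yields $\dim[V,L_1]\ge\tfrac32\dim V$. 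This two-variable-subalgebra pigeonhole, not Lyndon bookkeeping over an echelon basis, is what produces the constant $\tfrac32$ cleanly and is what your sketch is missing.
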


To prove Theorem~\ref{expthm}, we will build up to an analogous result for the free Lie algebra (Lemma~\ref{dimlem}) and then apply Theorem~\ref{structurethm}.  Informally, the result for the free Lie algebra says that if we start with a finite-dimensional subspace $W$ of $\F_p[A^{\ast}]$ and add to it the subspace generated by $\{[W,x_i] \; : \; i = 1, \dots, d\}$, we get a new subspace whose dimension is at least $(3/2) \dim(W)$.  It seems reasonable to describe this as investigating the ``expansion of a subspace when taking commutators'', hence the title of this section.

We need to define three more maps:
\[
\begin{array}{rcrcl}
\com &:& \{\textrm{subspaces of $\F_p[A^{\ast}]$}\} &\to& \{\textrm{subspaces of $\F_p[A^{\ast}]$}\} \\
&& W &\mapsto& [W, \F_p[\Lambda^1]]
\end{array}
\]
\[
\begin{array}{rcrcl}
\com_{j,n} &:& \F_p[A^n] &\to& \F_p[A^{n+1}] \\
&& \com_{j,n} &=& \com_j|_{\F_p[A^n]} \\
&&&& \\
\proj_n &:& \F_p[A^{\ast}] &\to& \F_p[A^n] \\
&&&& \textrm{the projection map onto $\F_p[A^n]$}
\end{array}
\]

\begin{lem}
\label{injlem}
The following diagram commutes:
\[
\begin{array}{c}
\xymatrix{
\F_p[A^{\ast}] \ar[d]_{\proj_n} \ar[r]^{\com_j} & \F_p[A^{\ast}] \ar[d]^{\proj_{n+1}} \\
\F_p[A^n] \ar[r]_{\com_{j,n}} & \F_p[A^{n+1}]
}
\end{array}
\]
If $n = 1$, then the kernel of $\com_{j,n}$ is spanned by $x_j$.  If $n > 1$, then $\com_{j,n}$ is injective.
\end{lem}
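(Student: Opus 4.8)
The commutativity of the square is a purely formal matter: both $\com_j$ and $\proj_n$ respect the grading on $\F_p[A^{\ast}]$, since $\com_j(f) = fx_j - x_jf$ raises the degree of a homogeneous element by exactly one. So for any $f$, writing $f = \sum_m \proj_m(f)$ as its decomposition into homogeneous components, $\com_j(f) = \sum_m \com_j(\proj_m(f))$ with $\com_j(\proj_m(f)) \in \F_p[A^{m+1}]$; applying $\proj_{n+1}$ to both sides picks out the single term $\com_{j,n}(\proj_n(f))$, which is exactly what going the other way around the square produces. I would spend one or two sentences on this and no more.

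The case $n = 1$ is immediate: on $\F_p[A^1]$, which has basis $x_1, \dots, x_d$, we have $\com_{j,1}(x_i) = x_i x_j - x_j x_i$, and these elements for $i \neq j$ are linearly independent words of length two (the word $x_i x_j$ appears in $\com_{j,1}(x_i)$ and in no other $\com_{j,1}(x_k)$), while $\com_{j,1}(x_j) = 0$. Hence the kernel is exactly $\F_p x_j$.

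The substantive claim is injectivity of $\com_{j,n}$ for $n > 1$, and this is where the real work lies. The plan is to exhibit a one-sided inverse, or more precisely a ``word-deletion'' operator that recovers $f$ from $\com_j(f)$ on the relevant component. Given a word $w = a_1 a_2 \cdots a_{n+1} \in A^{n+1}$, consider the behavior of $\com_{j,n}$ on monomials: for $w' = a_1 \cdots a_n \in A^n$, $\com_{j,n}(w') = w' x_j - x_j w'$, so $w' x_j$ appears with coefficient $+1$ and $x_j w'$ with coefficient $-1$. The idea is that for a general $f = \sum_{w' \in A^n} f_{w'} w'$, the coefficient of a word $w = a_1 \cdots a_n x_j$ ending in $x_j$ in $\com_{j,n}(f)$ is $f_{a_1 \cdots a_n} - [\,a_1 = x_j\,] f_{a_2 \cdots a_n x_j}$ (the second term coming from the $-x_j w'$ part when $w'= a_2 \cdots a_n x_j$). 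So from the coefficients of $\com_{j,n}(f)$ on words ending in $x_j$ one can solve for the $f_{w'}$ by a triangular recursion: order the length-$n$ words $w'$ by the length of their maximal suffix of the form $x_j \cdots x_j$ (equivalently, induct on how many leading letters of $w'$ equal $x_j$), and at each stage the coefficient of $w' x_j$ in $\com_{j,n}(f)$ determines $f_{w'}$ in terms of already-recovered coefficients. The one place this triangular scheme needs care is the ``all $x_j$'' word $w' = x_j^n$: here $\com_{j,n}(x_j^n) = x_j^n x_j - x_j x_j^n = 0$, so $x_j^n$ lies in the kernel on $\F_p[A^n]$ — but this is precisely why we need $n > 1$ is \emph{not} enough to kill it, so I must instead argue that the contribution of $x_j^n$ is detected elsewhere, namely via a word not ending in $x_j$. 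Concretely, pick any index $k \neq j$ (possible since $d \geq 2$); then the coefficient of the word $x_j^n x_k$... wait, that is zero. The correct fix: $x_j^{n}$ with $n>1$ means $w'$ has a letter, and one looks at $\com_{j,n}$ applied to $f$ and reads off the coefficient of words of the shape $x_j^{a} x_i (\cdots)$ with $i \ne j$; I would organize the recursion to recover $f_{w'}$ for $w'$ \emph{not} a power of $x_j$ first (triangularly, as above, using words ending in $x_j$), and then recover $f_{x_j^n}$ last, using that $\com_{j,n}(x_j^n)=0$ forces nothing — rather, the point is that $\com_{j,n}(\sum f_{w'} w') = 0$ together with all $f_{w'}=0$ for $w' \ne x_j^n$ already gives $f_{x_j^n} x_j^{n}x_j - f_{x_j^n}x_j^{n+1}=0$ identically, so I have NOT yet ruled out $f_{x_j^n}\neq 0$. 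The resolution must use $n>1$ genuinely: I would instead recover $f_{x_j^n}$ by looking at the coefficient in $\com_{j,n}(f)$ of a word that only $x_j^n$ can produce under the \emph{left} multiplication, e.g. noting $-x_j \cdot x_j^n = -x_j^{n+1}$ and that $x_j^{n+1}$ also equals $x_j^n \cdot x_j$, so these cancel — confirming once more the obstruction. The honest statement is that $\ker \com_{j,n}$ on all of $\F_p[A^n]$ contains $x_j^n$, so injectivity can only hold on a complement; I expect the intended reading is that $\com_{j,n}$ is injective because the ambient spaces here are the \emph{Lie} components or because the claim is that the only relation is killed — so the main obstacle is pinning down exactly this, and the fix will be the word-deletion / triangularity argument above, carried out so that the single problematic monomial $x_j^n$ is handled by a separate direct check (using $d \geq 2$ to find a word outside the $x_j$-subalgebra against which to pair). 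I expect the triangular recovery argument, once set up with the right ordering of words, to close the lemma cleanly; writing down that ordering and verifying the base and inductive cases is the only real content.
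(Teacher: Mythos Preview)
Your instinct is correct, and in fact you have uncovered a genuine misstatement in the lemma. The map $\com_{j,n}$ as written has domain $\F_p[A^n]$, and on that domain it is \emph{not} injective for any $n$: the element $x_j^n$ satisfies $[x_j^n, x_j] = x_j^{n+1} - x_j^{n+1} = 0$, exactly as you found. Your attempts to rescue the triangular word-deletion argument could not succeed because there is nothing to rescue --- the kernel on $\F_p[A^n]$ is nontrivial.

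What the paper actually proves, and what the downstream lemmas actually need, is injectivity of $\com_{j,n}$ restricted to the Lie component $\F_p[\Lambda^n] \subset \F_p[A^n]$. (Note that $x_j^n \notin \F_p[\Lambda^n]$ once $n > 1$.) The paper's argument is quite different from your word-combinatorics approach and relies entirely on the Lyndon basis of Theorem~\ref{basisthm}. After reducing to $j = 1$ by symmetry, one observes that if $w \in L_n$ is a Lyndon word of length $n > 1$ then $x_1 w$ is again a Lyndon word (it begins with the smallest letter and $w$ itself is strictly larger than $x_1 w$), and moreover $w$ is the longest proper Lyndon suffix of $x_1 w$, so by the recursive definition of the standard bracketing one has $b[x_1 w] = [b[x_1], b[w]] = -[b[w], x_1] = -\com_{1,n}(b[w])$. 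Thus $\com_{1,n}$ sends distinct basis elements of $\F_p[\Lambda^n]$ to distinct (negated) basis elements of $\F_p[\Lambda^{n+1}]$, giving injectivity on $\F_p[\Lambda^n]$. For $n = 1$ the only Lyndon word not handled is $w = x_1$ itself, which accounts for the one-dimensional kernel.

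So your diagnosis of the obstacle was exactly right, and your guess that ``the intended reading is that $\com_{j,n}$ is injective because the ambient spaces here are the Lie components'' is precisely what is going on; but the proof mechanism is the Lyndon-word machinery already set up in the chapter, not a monomial-by-monomial recovery argument.
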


\begin{proof}
The only statements requiring proof are those about the kernel and injectivity of $\com_{j,n}$.  Without loss of generality, we may assume that $j = 1$.  Suppose that $w \in L_n$.  Unless $n = 1$ and $w = x_1$, we see that $x_1 w$ is smaller than $w$, and hence smaller than all of its proper non-trivial tails.  So $x_1 w \in L_{n+1}$.  Furthermore, $w$ is the longest tail of $x_1 w$ that is a Lyndon word, so $b[x_1 w] = -[b[w], x_1]$.  Thus the image of $b[w]$ under $\com_{1,n}$ is the negative of a basis element in $L_{n+1}$, unique for each $w$.  It follows that the kernel of $\com_{1,1}$ is spanned by $x_1$ and $\com_{1,n}$ is injective for $n > 1$.
\end{proof}

\begin{lem}\label{explem}
Fix $d \ge 3$ and $n \ge 2$.  Suppose that $W$ is a subspace of $\F_p[\Lambda^n]$.  Then $\dim(\com(W)) \ge (3/2)\dim(W)$.
\end{lem}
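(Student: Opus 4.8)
The plan is to turn the statement into a counting problem about \emph{leading words} in the Lyndon basis, and then to exploit the $d$ injective maps $\com_1,\dots,\com_d$ simultaneously. Throughout, order the words of each $\F_p[A^k]$ lexicographically, and for a nonzero $f\in\F_p[\Lambda^k]$ call the smallest word occurring in $f$ with nonzero coefficient the \emph{leading word} of $f$; writing $f$ in the Lyndon basis and using Theorem~\ref{basisthm}, this leading word is always a Lyndon word and the leading coefficient is nonzero. The basic reduction is that elements of $\com(W)$ with pairwise distinct leading words are linearly independent, so it suffices to produce at least $\tfrac32\dim W$ elements of $\com(W)$ with pairwise distinct leading words.

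The first step I would carry out is the key local computation: if $f\in\F_p[\Lambda^n]$ is nonzero with leading word $w$ whose first letter is $x_i$, then for each $j\in\{1,\dots,d\}$ the leading word of $\com_j(f)=[f,x_j]=fx_j-x_jf$ is $x_jw$ when $j\le i$ and $wx_j$ when $j>i$, and in either case this is itself a Lyndon word. Since every word of $f$ is $\ge w$ and hence begins with a letter $\ge x_i$, the words of $fx_j$ all begin with a letter $\ge x_i$ and those of $x_jf$ all begin with $x_j$, so the candidate smallest word is $wx_j$ (when $x_i<x_j$) or $x_jw$ (when $x_i\ge x_j$); one then checks the relevant term does not cancel. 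The only delicate point is the diagonal case $j=i$, where the non-cancellation uses that $w$ is Lyndon of length $n\ge2$, hence $w\ne x_i^{\,n}$. (For $j=1$ one may instead quote $\com_1(b[w])=-b[x_1w]$ from the proof of Lemma~\ref{injlem}.)

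Next I would put $W$ in row-echelon form with respect to the ordered Lyndon basis of $\F_p[\Lambda^n]$, obtaining a basis $\{\,g_w:w\in P\,\}$ of $W$ indexed by a set $P\subseteq L_n$ with $|P|=\dim W$, where $g_w=b[w]+(\text{a combination of }b[v],\ v\in L_n,\ v>w)$. For $w\in P$ and $j\in\{1,\dots,d\}$ the element $\com_j(g_w)$ lies in $\com(W)$, is nonzero since $\com_{j,n}$ is injective for $n\ge2$ (Lemma~\ref{injlem}), and by the computation above its leading word is
\[
\ell_j(w)\;=\;
\begin{cases}
x_jw, & j\le i(w),\\
w\,x_j, & j> i(w),
\end{cases}
\]
where $x_{i(w)}$ is the first letter of $w$ (the leading term of $\com_j(g_w)$ is that of $\com_j(b[w])$, because $\ell_j(v)>\ell_j(w)$ whenever $v>w$). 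For fixed $w$ these $d$ words are pairwise distinct: $x_jw=x_{j'}w$ or $wx_j=wx_{j'}$ forces $j=j'$, while $x_jw=w\,x_{j'}$ would force $w=x_j^{\,n}$, impossible since $w$ is Lyndon of length $n\ge2$. Thus the pairs $(w,j)\in P\times\{1,\dots,d\}$ produce $d\,|P|$ Lyndon words $\ell_j(w)$ of length $n+1$, counted with multiplicity.

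Finally comes the main point, a multiplicity bound: a given Lyndon word $u$ of length $n+1$ can occur as $\ell_j(w)$ for at most two pairs $(w,j)$ — at most once in ``append form'' $u=w\,x_j$, which forces $w$ to be the word formed by the first $n$ letters of $u$ and $j$ the index of the last letter of $u$, and at most once in ``prepend form'' $u=x_jw$, which forces $w$ to be the word formed by the last $n$ letters of $u$ and $j$ the index of the first letter of $u$. Hence the number of \emph{distinct} words among the $\ell_j(w)$ is at least $d\,|P|/2\ge\tfrac32\,|P|$, using $d\ge3$. Choosing, for each distinct value of $\ell_j(w)$, one element $\com_j(g_w)$ realizing it, we obtain at least $\tfrac32\,|P|$ elements of $\com(W)$ with pairwise distinct leading words, hence linearly independent, so $\dim\com(W)\ge\tfrac32\dim W$. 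Here $d\ge3$ is used only in this last inequality, and $n\ge2$ is used only to guarantee $\com_{j,n}$ injective and that length-$n$ Lyndon words are not single letters. The only real computation is the leading-word statement of the second step (in particular the diagonal non-cancellation); everything else is bookkeeping, the crucial idea being that the multiplicity bound $2$ converts the $d$ injective maps $\com_j$ into the gain factor $d/2\ge3/2$.
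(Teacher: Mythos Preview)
Your proof is correct and takes a genuinely different route from the paper's. The paper fixes a single pair of indices $\{i,j\}$ chosen so that $W\cap\F_p[\Lambda^n]_{ij}$ is small, proves the structural claim that $[f,x_i]=[g,x_j]$ forces $f,g\in\F_p[\Lambda^n]_{ij}$, and then shows that on a complement $X$ of $W\cap\F_p[\Lambda^n]_{ij}$ the two maps $\com_i,\com_j$ have disjoint images; this yields $\dim\com_{ij}(W)\ge\dim(W\cap\F_p[\Lambda^n]_{ij})+2\dim X\ge(3/2)\dim W$. You instead use all $d$ maps $\com_1,\dots,\com_d$ at once and control overlaps purely combinatorially via leading Lyndon words: for each pivot word $w$ you produce $d$ elements of $\com(W)$ with leading words $\ell_j(w)$, and your key observation is that any length-$(n{+}1)$ word arises as some $\ell_j(w)$ in at most two ways (once as a prepend, once as an append), giving at least $d|P|/2$ distinct leading words. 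Your argument is more elementary --- it avoids the two-variable subalgebra claim entirely --- and it actually proves the stronger bound $\dim\com(W)\ge(d/2)\dim W$, which is notable in light of the paper's closing remark that ``intuitively a factor of about $d$ seems right, but this appears to be much harder to prove.'' The paper's method, pushed to its limit, caps out near a factor of $2$, so your approach genuinely buys more.
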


\begin{proof}
Let $\F_p[\Lambda^{\ast}]_{ij}$ denote the free Lie algebra on two generators $x_i$ and $x_j$; there is a natural embedding of $\F_p[\Lambda^{\ast}]_{ij}$ into $\F_p[\Lambda^{\ast}]$.  Let $\F_p[\Lambda^n]_{ij}$ be the homogeneous component of degree $n$ in $\F_p[\Lambda^{\ast}]_{ij}$.

First, we claim that if $f$ and $g$ are distinct elements of $\F_p[\Lambda^n]$ and $[f,x_i] = [g,x_j]$, then in fact $f, g \in \F_p[\Lambda^n]_{ij}$.  We may assume that $i,j > 1$.  Suppose that $f \notin \F_p[\Lambda^n]_{ij}$.  Then writing
\[
f = \sum_{w \in L_n}{f_w b[w]},
\]
there must be some word $w \in L_n$ where $f_w \neq 0$ and $w$ contains a letter other than $x_i$ and $x_j$.  We may assume that $w$ contains the letter $x_1$, and since $w \in L_n$, it must be that $w$ starts with $x_1$.  In that case, by Theorem~\ref{basisthm}, there is a word beginning with $x_1$ that appears in $f$ with non-zero coefficient.  Thus there is a word beginning with $x_1$ and ending with $x_i$ that appears in $[f, x_i]$ with non-zero coefficient.  No such word can appear in $[g,x_j]$, contradicting the fact that $[f,x_i] = [g,x_j]$.  Hence $f \in \F_p[\Lambda^n]_{ij}$ and similarly $g \in \F_p[\Lambda^n]_{ij}$.

Note that $\F_p[\Lambda^n]_{ij} \cap \F_p[\Lambda^n]_{kl} = 0$ if $\{i,j\} \neq \{k,l\}$ (the letters $x_i$ and $x_j$ appear in every element of $\F_p[\Lambda^n]_{ij}$ since $n > 1$).  Choose $i$ and $j$ so that $\dim(W \cap \F_p[\Lambda^n]_{ij})$ is as small as possible; in particular this intersection has dimension at most $(1/2) \dim(W)$.  Let $X$ be a complement to $W \cap \F_p[\Lambda^n]_{ij}$ in $W$.

We can define a more restrictive commutator map on subspaces by $\com_{ij}: \bullet \mapsto [\bullet, \F_p[\Lambda^1]_{ij}]$.  Obviously $\com_{ij}(W) \subseteq \com(W)$.  Using Lemma~\ref{injlem} and the above claim,
\begin{eqnarray*}
\dim(\com_{ij}(W)) &=& \dim(\com_{ij}(W \cap \F_p[\Lambda^n]_{ij})) + \dim(\com_{ij}(X)) \\
&\ge& \dim(W \cap \F_p[\Lambda^n]_{ij}) + 2 \dim(X) \\
&\ge& (3/2) \dim{W}.
\end{eqnarray*}
\end{proof}

\begin{lem}\label{onelem}
Fix $d \ge 2$.  Suppose that $W$ is a subspace of $\F_p[\Lambda^1]$.  Then
$\dim(W + \com(W)) \ge (3/2)\dim(W)$.
\end{lem}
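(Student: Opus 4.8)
The plan is to reduce the claim to a short computation about exterior squares. First I would record the easy observation that, since $\com(W) = [W,\F_p[\Lambda^1]] \subseteq \F_p[\Lambda^2]$ while $W \subseteq \F_p[\Lambda^1]$, and these are distinct homogeneous components of $\F_p[A^{\ast}]$, the sum is direct: $\dim(W + \com(W)) = \dim(W) + \dim(\com(W))$. So it suffices to prove $\dim(\com(W)) \ge \tfrac12 \dim(W)$.

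Next I would use the identification recorded just before Corollary~\ref{wedgecor}: with $V = \F_p[\Lambda^1]$ the natural $\F_p\GL(d,\F_p)$-module, the Lie bracket exhibits $\F_p[\Lambda^2]$ as $V \wedge V$ via $[x_i,x_j] \mapsto x_i \wedge x_j$, and under this identification $\com(W)$ is the subspace $W \wedge V = \langle\, w \wedge v : w \in W,\ v \in V\,\rangle$ of $V \wedge V$. I would then pick a basis $w_1,\dots,w_k$ of $W$ and extend it to a basis $w_1,\dots,w_d$ of $V$; the vectors $w_a \wedge w_b$ with $1 \le a < b \le d$ form a basis of $V \wedge V$. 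Since $W \wedge V$ is spanned by the $w_i \wedge w_j$ with $1 \le i \le k$ and $1 \le j \le d$, and each nonzero such vector equals $\pm\, w_a \wedge w_b$ with $a < b$ and $a = \min(i,j) \le k$, I would conclude that $\{\, w_a \wedge w_b : 1 \le a < b \le d,\ a \le k \,\}$ is a basis of $\com(W)$. Counting the excluded pairs (those with both indices exceeding $k$) gives $\dim(\com(W)) = \binom{d}{2} - \binom{d-k}{2} = \tfrac12\, k(2d - k - 1)$.

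Finally I would close with the elementary inequality: $W \subseteq V$ forces $k \le d$, and $d \ge 2$ gives $d \le 2d - 2$, so $2d - k - 1 \ge d - 1 \ge 1$; hence $\dim(\com(W)) = \tfrac12 k(2d-k-1) \ge \tfrac12 k$, and therefore $\dim(W + \com(W)) = k + \dim(\com(W)) \ge \tfrac32 k = \tfrac32 \dim(W)$. I do not expect any real obstacle here: the degree-one case sidesteps the two-generator-subalgebra argument that made Lemma~\ref{explem} delicate, and the only mild care needed is the bookkeeping that reduces the spanning set of $W \wedge V$ to the stated linearly independent family.
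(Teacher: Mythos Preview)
Your proof is correct, but it takes a genuinely different route from the paper's. The paper's argument is essentially two lines: it invokes Lemma~\ref{injlem} to observe that $\com_{1,1}$ has a one-dimensional kernel on $\F_p[\Lambda^1]$, so $\dim(\com(W)) \ge \dim(\com_{1,1}(W)) \ge \dim(W)-1$ whenever $\dim(W) \ge 2$, and then uses the disjointness of $W$ and $\com(W)$ to conclude $\dim(W+\com(W)) \ge 2\dim(W)-1 \ge \tfrac32\dim(W)$; the case $\dim(W)=1$ is treated separately. You instead identify $\F_p[\Lambda^2]$ with $V \wedge V$ and compute $\dim(\com(W)) = \dim(W \wedge V)$ \emph{exactly} via an explicit basis, obtaining $\tfrac12 k(2d-k-1)$. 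Your approach is self-contained (it does not rely on Lemma~\ref{injlem}), avoids the case split on $\dim(W)$, and yields a sharper quantitative statement; the paper's approach is shorter because it reuses machinery already in place for the higher-degree lemmas.
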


\begin{proof}
Recalling Lemma~\ref{injlem}, this is clear if $\dim(W) = 1$, and otherwise
\[
\dim(\com_{1,1}(W)) \ge \dim(W)-1,
\]
implying the result since $W$ and $\com(W)$ are disjoint.
\end{proof}

\begin{lem}\label{elem}
Let $p = 2$.  Suppose that $W$ is a subspace of $E$, where $E$ is defined in Theorem~\ref{structurethm}.  Then
$\dim(W + \com(W)) \ge (3/2)\dim(W)$.
\end{lem}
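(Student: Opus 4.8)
The plan is to reduce to the two expansion lemmas already proved, Lemma~\ref{onelem} and Lemma~\ref{explem}, by splitting $W$ along the extension structure of $E$. Recall from Theorem~\ref{structurethm} and its proof that $E\subseteq \F_2[A^1]\oplus\F_2[A^2]$ fits into an exact sequence $0\to\F_2[\Lambda^2]\to E\xrightarrow{\proj_1}\F_2[\Lambda^1]\to 0$, where the subspace is $\F_2[\Lambda^2]=E\cap\F_2[A^2]$ and $\proj_1$ carries $E$ onto $\F_2[A^1]=\F_2[\Lambda^1]$. Accordingly I would set $W_1=W\cap\F_2[\Lambda^2]$ and $\bar W=\proj_1(W)\subseteq\F_2[\Lambda^1]$; since $\proj_1$ has kernel $W\cap\F_2[A^2]=W_1$ on $W$, this gives $\dim W=\dim W_1+\dim\bar W$.

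Next I would separate $W+\com(W)$ by degree. Since $W$ is concentrated in degrees $1$ and $2$ and bracketing with a generator shifts degree by one, $\com(W)\subseteq\F_2[A^2]\oplus\F_2[A^3]$, so $W+\com(W)$ lives in degrees $1,2,3$; let $\Pi$ be the projection onto the degree-$\le 2$ part. For $w\in W$ the degree-$\le 2$ part of $[w,x_j]$ is $[\proj_1(w),x_j]$, so $\Pi(\com(W))=[\bar W,\F_2[\Lambda^1]]=\com(\bar W)\subseteq\F_2[\Lambda^2]$ and hence $\Pi(W+\com(W))=W+\com(\bar W)$; because $W\cap\F_2[\Lambda^2]=W_1$, this space has dimension at least $\dim W+\dim\com(\bar W)-\dim W_1=\dim\bar W+\dim\com(\bar W)$. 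On the other hand $\ker\Pi\cap(W+\com(W))$ contains $\com(W_1)$, since $W_1\subseteq\F_2[\Lambda^2]$ forces $\com(W_1)=[W_1,\F_2[\Lambda^1]]\subseteq\F_2[\Lambda^3]$, which is pure degree $3$ and still lies in $\com(W)$. Using $\dim U=\dim\Pi(U)+\dim(U\cap\ker\Pi)$ for $U=W+\com(W)$, together with Lemma~\ref{onelem} applied to $\bar W$ (here $\bar W$ and $\com(\bar W)$ sit in different degrees, so $\dim\bar W+\dim\com(\bar W)=\dim(\bar W+\com(\bar W))\ge(3/2)\dim\bar W$) and Lemma~\ref{explem} with $n=2$ applied to $W_1\subseteq\F_2[\Lambda^2]$ (giving $\dim\com(W_1)\ge(3/2)\dim W_1$), I obtain
\begin{align*}
\dim(W+\com(W)) &\ge \big(\dim\bar W+\dim\com(\bar W)\big)+\dim\com(W_1)\\
&\ge \tfrac32\dim\bar W+\tfrac32\dim W_1=\tfrac32\dim W.
\end{align*}

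Everything here is degree bookkeeping; the substantive $3/2$-expansion is borrowed wholesale from Lemmas~\ref{onelem} and~\ref{explem}, so there is no real obstacle. The one point that will need care is the pair of claims $\Pi(W+\com(W))=W+\com(\bar W)$ and $\com(W_1)\subseteq\ker\Pi\cap(W+\com(W))$ — that is, that the degree-$1$ part of every vector of $W$ genuinely lands in $\bar W$, that the $W_1$-contribution to $\com$ reappears undisturbed in degree $3$, and that no cross-degree cancellation is being quietly discarded — together with remembering the hypothesis $d\ge 3$ required by Lemma~\ref{explem}.
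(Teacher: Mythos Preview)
Your proof is correct and follows essentially the same approach as the paper: split $W$ along the extension $\F_2[\Lambda^2]\hookrightarrow E\twoheadrightarrow\F_2[\Lambda^1]$, apply Lemma~\ref{explem} to the degree-$2$ part and Lemma~\ref{onelem} to the degree-$1$ image, and separate the contributions by degree. The paper phrases the splitting via a complement $Y$ to $X=W\cap\F_2[\Lambda^2]$ inside $W$ rather than via $\Pi$ and $\proj_1$, but since $\proj_1(Y)=\proj_1(W)=\bar W$ and $X=W_1$, the two arguments are the same up to bookkeeping.
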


\begin{proof}
Let $X = W \cap \F_2[\Lambda^2]$ and let $Y$ be a complement to $X$ in $W$.  Note that $\dim(Y) = \dim(\proj_1(Y))$.  By Lemma~\ref{onelem},
\[
\dim(\proj_1(Y) + \com(\proj_1(Y))) \ge (3/2)\dim(\proj_1(Y)).
\]
By the commutative diagram in Lemma~\ref{injlem}, it follows that $Y + \com(Y)$ contains a subspace of dimension at least $(3/2)\dim(Y)$ that has trivial intersection with $\F_2[\Lambda^3]$.  By Lemma~\ref{explem}, $\com(X) \le \F_2[\Lambda^3]$ contains a subspace of dimension at least $(3/2) \dim(X)$.
Then
\begin{eqnarray*}
\dim(W + \com(W)) \ge (3/2)\dim(X) + (3/2)\dim(Y) = (3/2)\dim(W).
\end{eqnarray*}
\end{proof}

\begin{lem}\label{dimlem}
Fix $d \ge 3$.  Let
\[
U_n = \left\{
\begin{array}{r@{\quad:\quad}l}
\F_p[\Lambda^1] \oplus \cdots \oplus \F_p[\Lambda^n] & \textrm{$p$ is odd or $n = 1$} \\
E \oplus \F_2[\Lambda^3] \oplus \cdots \oplus \F_2[\Lambda^n] & \textrm{$p = 2$ and $n \ge 2$}.
\end{array} \right.
\]
Suppose that $W$ is a subspace of $\F_p[A^{\ast}]$ contained in $U_n$.  Then $\dim(W + \com(W)) \ge (3/2) \dim(W)$.
\end{lem}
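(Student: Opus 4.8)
The plan is to exploit that everything lives inside the graded algebra $\F_p[A^{\ast}] = \bigoplus_i \F_p[A^i]$ and that $\com$ raises degree by exactly one, and to work with the ``leading terms'' of $W$ in each degree. Concretely, for $i \ge 1$ put $W_{(i)} = \{w \in W : \proj_k(w) = 0 \textrm{ for all } k < i\}$, a descending filtration of $W$ with $W_{(1)} = W$, and let $\widehat{W}_i = \proj_i(W_{(i)}) \subseteq \F_p[A^i]$. Since $\proj_i$ is injective on $W_{(i)}/W_{(i+1)}$, we have $\dim W = \sum_{i=1}^{n}\dim\widehat{W}_i$; applying the same construction to $W' := W + \com(W)$, which is contained in $\bigoplus_{i=1}^{n+1}\F_p[A^i]$, gives $\dim W' = \sum_{i=1}^{n+1}\dim\widehat{W'}_i$.

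Two facts then do the work. First, each $\widehat{W}_i$ lies in the free Lie component $\F_p[\Lambda^i]$: this is clear for $i \ge 3$ from the shape of $U_n$, automatic for $i = 1$ since $\F_p[A^1] = \F_p[\Lambda^1]$, and for $i = 2$ it holds because any $w \in W$ with $\proj_1(w) = 0$ has its component in $E$ (in the case $p = 2$, $n \ge 2$) lying in $E \cap \F_2[A^2] = \F_2[\Lambda^2]$, a relation established in the proof of Theorem~\ref{structurethm}. Second, $\widehat{W'}_i \supseteq \widehat{W}_i + \com(\widehat{W}_{i-1})$ for all $i$ (with the convention $\widehat{W}_0 = 0$): the inclusion $\widehat{W'}_i \supseteq \widehat{W}_i$ is immediate since $W_{(i)} \subseteq W'_{(i)}$, and for the remaining summand, if $\eta = \proj_{i-1}(w)$ with $w \in W_{(i-1)}$ then each $\com_j(w) \in \com(W) \subseteq W'$ has all components in degrees $\ge i$ and satisfies $\proj_i(\com_j(w)) = \com_{j,i-1}(\eta)$ by the commuting square of Lemma~\ref{injlem}; summing over $j$ shows $\com(\widehat{W}_{i-1}) \subseteq \widehat{W'}_i$.

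It then remains to count dimensions. From the second fact, $\dim\widehat{W'}_1 \ge \dim\widehat{W}_1$ and $\dim\widehat{W'}_i \ge \dim\com(\widehat{W}_{i-1})$ for $i \ge 2$. By the first fact, Lemma~\ref{onelem} applies to $\widehat{W}_1 \subseteq \F_p[\Lambda^1]$, giving $\dim\widehat{W}_1 + \dim\com(\widehat{W}_1) \ge \tfrac32\dim\widehat{W}_1$, and for $i \ge 3$ Lemma~\ref{explem} applies to $\widehat{W}_{i-1} \subseteq \F_p[\Lambda^{i-1}]$ (legitimate since $i-1\ge 2$ and $d\ge 3$), giving $\dim\com(\widehat{W}_{i-1}) \ge \tfrac32\dim\widehat{W}_{i-1}$. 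Summing these bounds,
\[
\dim W' \ge \bigl(\dim\widehat{W}_1 + \dim\com(\widehat{W}_1)\bigr) + \sum_{i=3}^{n+1}\dim\com(\widehat{W}_{i-1}) \ge \tfrac32\dim\widehat{W}_1 + \tfrac32\sum_{k=2}^{n}\dim\widehat{W}_k = \tfrac32\dim W .
\]

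I expect the main obstacle to be the degree bookkeeping behind the second fact — checking that the commuting square of Lemma~\ref{injlem} is invoked in the correct degree and that the degree shift really deposits all of $\com(\widehat{W}_{i-1})$ into the degree-$i$ leading-term space $\widehat{W'}_i$ — together with the mildly delicate point that $\widehat{W}_2 \subseteq \F_p[\Lambda^2]$ when $p = 2$, where $U_2 = E$ is only filtered, not graded. An alternative would be to induct on $n$, peeling off the top homogeneous summand $\F_p[\Lambda^n]$ of $U_n$ (handled by Lemma~\ref{explem}) and applying the inductive hypothesis to the image of $W$ in $U_{n-1}$, with Lemmas~\ref{onelem} and~\ref{elem} supplying the base cases $U_1 = \F_p[\Lambda^1]$ and $U_2 = E$; that route replaces the leading-term bookkeeping by care about the projection $U_n \to U_{n-1}$ and its interaction with $\com$.
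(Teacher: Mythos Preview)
Your argument is correct. The paper proves the lemma by downward induction on $n$, taking $X = W \cap U_{n-1}$ and a complement $Y$, applying the inductive hypothesis to $X$ and Lemma~\ref{explem} to $\proj_n(Y)$, with Lemmas~\ref{onelem} and~\ref{elem} as base cases. Your approach instead passes to the associated graded $\bigoplus_i \widehat{W}_i$ of $W$ with respect to the degree filtration and applies Lemma~\ref{explem} (or Lemma~\ref{onelem} in degree $1$) to each graded piece directly; this is the ``unrolled'' version of the paper's induction. One pleasant feature of your route is that it never needs Lemma~\ref{elem}: the awkward $p=2$, $n\ge 2$ case is absorbed by the observation $E \cap \F_2[A^2] = \F_2[\Lambda^2]$, so $\widehat{W}_2$ lands in $\F_2[\Lambda^2]$ and Lemma~\ref{explem} applies uniformly. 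The paper's inductive organization, on the other hand, keeps the degree-shift bookkeeping of Lemma~\ref{injlem} confined to a single step rather than threaded through every~$i$. You already identified the inductive alternative in your closing paragraph; that is exactly what the paper does.
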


\begin{proof}
The proof will be by induction on $n$.  When $p$ is odd and $n = 1$, Lemma~\ref{onelem} gives the result.  When $p = 2$ and $n = 2$, Lemma~\ref{elem} gives the result.  So assume that $p$ is odd and $n > 1$ or that $p = 2$ and $n > 2$.  Assume the result holds for $n-1$.  Let $X = W \cap U_{n-1}$.  By the inductive hypothesis,
\[
\dim(X + \com(X)) \ge (3/2)\dim(X).
\]
Furthermore, $X + \com(X) \le U_n$.  Let $Y$ be a complement to $X$ in $W$.  By the commutative diagram in Lemma~\ref{injlem}, $\com(\proj_n(Y)) = \proj_{n+1}(\com(Y))$.  By the definition of $X$ and $Y$, $\dim(\proj_n(Y)) = \dim(Y)$.  By Lemma~\ref{explem},
\[
\dim(\proj_{n+1}(\com(Y))) \ge (3/2) \dim(\proj_n(Y)).
\]
Thus $\com(Y)$ contains a subspace of dimension at least $(3/2) \dim(\proj_n(Y))$ that has trivial intersection with $U_n$.  Therefore
\[
\dim(W + \com(W)) \ge (3/2) \dim(X) + (3/2) \dim(Y) = (3/2) \dim(W).
\]
\end{proof}

\begin{proof}[Proof of Theorem~\ref{expthm}]
Replacing $U$ by $(U \cap F_n)F_{n+1}$ does not change $Q$, $R$, or $S$, so we may assume that $F_{n+1} \le U \le F_n$.  Recall that by Corollary~\ref{mapsthm}, $\pow_n$ is injective.   Since $\pow_n(Q) = R$, it follows that $\dim(R) \ge \dim(Q)$.

Also by Corollary~\ref{mapsthm},
\[
S = \qemb_{n+1}^{-1}(\qemb_n(U) + (\com \circ \qemb_n)(U)).
\]
Since $\qemb_n$ is injective, and
\[
\dim(\qemb_n(U) + (\com \circ \qemb_n)(U)) \ge (3/2) \dim(\qemb_n(U))
\]
by Lemma~\ref{dimlem}, it follows that $\dim(S) \ge (3/2) \; \dim(Q)$.
\end{proof}

It is reasonable to wonder if the factor of $3/2$ in Lemma~\ref{dimlem} (and hence in Theorem~\ref{expthm}) is the best possible.  It almost certainly is not; intuitively a factor of about $d$ seems right, but this appears to be much harder to prove and is an interesting question in its own right.  Fortunately, $3/2$ suffices for our purposes.

\chapter{Counting Normal Subgroups of Finite $p$-Groups}
\label{c_normal}
\chaptermark{Counting Normal Subgroups}

The goal of this chapter is to prove Theorem~\ref{limit1thm}.  Recall from Chapter~\ref{c_main} that if
\[
\begin{array}{rcl}
\mathcal{A}_{d,n} &=& \{\textrm{normal subgroups of $F/F_{n+1}$ lying in $F_2/F_{n+1}$}\}, \\
\mathfrak{A}_{d,n} &=& \{\textrm{$\Aut(F/F_{n+1})$-orbits in $\mathcal{A}_{d,n}$}\}, \\
\mathcal{C}_{d,n} &=& \{\textrm{normal subgroups of $F/F_{n+1}$ lying in $F_n/F_{n+1}$}\}, \textrm{ and } \\
\mathfrak{C}_{d,n} &=& \{\textrm{$\Aut(F/F_{n+1})$-orbits in $\mathcal{C}_{d,n}$}\} = \{\textrm{$\GL(d,\F_p)$-orbits in $\mathcal{C}_{d,n}$}\},
\end{array}
\]
then Theorem~\ref{limit1thm} essentially shows that the number of $\GL(d, \F_p)$-orbits in $\mathfrak{C}_{d,n}$ is large relative to the number of orbits in $\mathfrak{A}_{d,n}$.  This is nominally a result about counting orbits, but we can actually show that the number of subgroups in $\mathcal{C}_{d,n}$ is so large that even if every orbit in $\mathfrak{C}_{d,n}$ was regular (that is, if $\mathfrak{C}_{d,n}$ was as small as possible) and if every orbit in $\mathfrak{A}_{d,n} \setminus \mathfrak{C}_{d,n}$ was trivial (that is, if $\mathfrak{A}_{d,n} \setminus \mathfrak{C}_{d,n}$ was as big as possible), Theorem~\ref{limit1thm} would still hold.

\section{The Number of Normal Subgroups of a Finite \lowercase{$p$}-Group}
\sectionmark{An Upper Bound}

Given a finite $p$-group $G$ of lower $p$-length $n$ and non-negative integers $u_1, \dots, u_n$, we can form a set $S(G, u_1, \dots, u_n)$ of normal subgroups of $G$ by
\[
S(G, u_1, \dots, u_n) = \{U \lhd G \; : \; \dim((U \cap G_i)G_{i+1}/G_{i+1}) = u_i \textrm{ for } i=1,\dots,d \}.
\]
Our goal in Theorem~\ref{normalthm} is to give an upper bound on the size of $S(G, u_1, \dots, u_n)$.  Specializing this bound to $F/F_{n+1}$ and summing over certain choices of $u_1, \dots, u_n$ will give us an upper bound on the size of $\mathcal{A}_{d,n} \setminus \mathcal{C}_{d,n}$, allowing us to prove Theorem~\ref{limit1thm}.  The bound in Theorem~\ref{normalthm} depends on certain parameters of the group which are difficult to work out in general, but were calculated for $F/F_{n+1}$ in Corollary~\ref{expcor}.

An alternate way to view $S(G, u_1, \dots, u_n)$ is to note that
\[
(U \cap G_i)G_{i+1}/G_{i+1} \cong (U \cap G_i)/(U \cap G_{i+1})
\]
by the Second Group Isomorphism Theorem.  Then
\[
U \cap G_1 \ge U \cap G_2 \ge \cdots \ge U \cap G_{n+1}
\]
is a central series of $U$ with elementary abelian quotients of order $p^{u_1}, p^{u_2}, \dots, p^{u_n}$ (in that order).  The set $S(G, u_1, \dots, u_n)$ consists of all normal subgroups $U$ of $G$ whose associated series has specified quotients.

Furthermore, each subgroup $U \in S(G, u_1, \dots, u_n)$ determines the following data, which we will denote collectively by $\Theta(U)$:
\begin{enumerate}
\item A subgroup $J$ of $G_n$, given by $U \cap G_n$;
\item A normal subgroup $K$ of $H$, given by $UG_n/G_n$; and
\item A complement to $G_n/J$ in $UG_n/J$, given by $U/J$.
\end{enumerate}
In fact, the data $\Theta(U)$ uniquely determines $U \in S(G, u_1, \dots, u_n)$.  Given $\Theta(U)$, the subgroup $UG_n$ is uniquely determined as the inverse image of $K$ in $G$.  Then the complement to $G_n/J$ in $UG_n/J$ given by $\Theta(U)$ is $V/J$ for a unique normal subgroup $V$ of $G$.  Thus $U = V$ and $\Theta(U)$ uniquely determines $U$.

There does not seem to be any prior literature on the number of normal subgroups of an arbitrary finite $p$-group.  Birkhoff~\cite{bir} gave an exact formula for the number of subgroups of a finite abelian $p$-group, but Theorem~\ref{normalthm} is apparently unrelated to that result.  It is unclear how good the upper bound in Theorem~\ref{normalthm} is in general; it is simply sufficient for our needs.

\begin{thm}
\label{normalthm}
Suppose $G$ is a finite $p$-group with lower $p$-length $n$.  Let $g_i = \dim{(G_i/G_{i+1})}$ for all $i = 1, \dots, n$.  Suppose $u_1, u_2, \dots, u_n$ are integers satisfying $0 \le u_i \le g_i$ for all $i = 1, \dots, n$, and define $S(G, u_1, \dots, u_n)$ as above.  Suppose that for each $U \in S(G, u_1, \dots, u_n)$ and $1 \le i \le n$,
\[
\dim((U_2 \cap G_i)G_{i+1}/G_{i+1}) \ge v_i
\]
and
\[
\dim((U^p[U, G] \cap G_i)G_{i+1}/G_{i+1}) \ge w_i.
\]
Then
\[
|S(G, u_1, \dots, u_n)| \le \sbinom{g_1}{u_1}_p \prod_{i=2}^n{\sbinom{g_i-w_i}{u_i-w_i}_p p^{(g_i-u_i)(u_1 + \cdots + u_{i-1}- v_1 - \cdots - v_{i-1})}}.
\]
\end{thm}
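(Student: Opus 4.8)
The plan is to prove the bound by induction on $n$, using the bijection $U \leftrightarrow \Theta(U) = (J,K,C)$ set up just above the statement, where $J = U \cap G_n$, $K = UG_n/G_n$, and $C = U/J$ is a complement to $G_n/J$ in $UG_n/J$. For the base case $n = 1$, the group $G = G_1$ is elementary abelian of dimension $g_1$, $S(G,u_1)$ is exactly the set of $u_1$-dimensional subspaces, so $|S(G,u_1)| = \sbinom{g_1}{u_1}_p$, which is the asserted bound (the product over $i$ being empty).

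For the inductive step, put $\bar G = G/G_n$, a finite $p$-group of lower $p$-length $n-1$ with $\dim(\bar G_i/\bar G_{i+1}) = g_i$ for $i \le n-1$. The map $U \mapsto K = UG_n/G_n$ carries $S(G,u_1,\dots,u_n)$ into $S(\bar G,u_1,\dots,u_{n-1})$: a short computation with Dedekind's modular law (using $G_n \le G_i$ for $i \le n$) identifies $(K \cap \bar G_i)\bar G_{i+1}/\bar G_{i+1}$ with $(U \cap G_i)G_{i+1}/G_{i+1}$ for $i \le n-1$. Hence $|S(G,u_1,\dots,u_n)| \le |S(\bar G,u_1,\dots,u_{n-1})|\cdot M$, where $M$ is the maximal fibre of this map. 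I would first check that the hypotheses of the theorem descend to $\bar G$ with the \emph{same} constants $v_i,w_i$ for $i \le n-1$: since $G_{n+1} = 1$ forces $G_n$ to be central in $G$ and $G_n^p = 1$, one finds $\tilde K^p = U^p$, $[\tilde K,\tilde K] = [U,U]$, and $[\tilde K,G] = [U,G]$ whenever $UG_n = \tilde K$, so that $K_2 = U_2 G_n/G_n$ and $K^p[K,\bar G] = U^p[U,G]\,G_n/G_n$, and the assumed lower bounds for $G$ push forward. By the inductive hypothesis the first factor is then at most $\sbinom{g_1}{u_1}_p\prod_{i=2}^{n-1}\sbinom{g_i-w_i}{u_i-w_i}_p\,p^{(g_i-u_i)(u_1+\cdots+u_{i-1}-v_1-\cdots-v_{i-1})}$, so it remains only to bound $M$ by the missing $i=n$ factor.

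To bound a fibre, fix $K \in S(\bar G,u_1,\dots,u_{n-1})$ with preimage $\tilde K$ in $G$; every $U$ over $K$ satisfies $U \le \tilde K$ and $UG_n = \tilde K$, and is recovered from $J = U \cap G_n$ together with the complement $C = U/J$ to $G_n/J$ in $\tilde K/J$. Now $U^p[U,G] = \tilde K^p[\tilde K,G]$ depends only on $K$, so $L := \tilde K^p[\tilde K,G] \cap G_n$ is a \emph{fixed} subspace of the elementary abelian group $G_n$, of dimension at least $w_n$ by hypothesis, and $L \le J$; thus $J$ ranges over $u_n$-dimensional subspaces of $G_n$ containing $L$, of which there are $\sbinom{g_n-\dim L}{u_n-\dim L}_p \le \sbinom{g_n-w_n}{u_n-w_n}_p$ (using that $\sbinom{b+c}{b}_p$ increases with $b$, one of the estimates of Appendix~\ref{a_estimates}). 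Given $J$, the complements to the central elementary abelian subgroup $G_n/J$ of $\tilde K/J$ form, when nonempty, a torsor under $Z^1(K,G_n/J) = \Hom(K/K_2,G_n/J)$, so there are at most $p^{\dim(K/K_2)(g_n-u_n)}$ of them, independently of $J$. Finally $K_2 \cong U_2/(U_2 \cap G_n)$ has dimension $\sum_{i=1}^{n-1}\dim((U_2 \cap G_i)G_{i+1}/G_{i+1}) \ge v_1+\cdots+v_{n-1}$, whence $\dim(K/K_2) \le (u_1+\cdots+u_{n-1})-(v_1+\cdots+v_{n-1})$; multiplying the two fibre contributions gives $M \le \sbinom{g_n-w_n}{u_n-w_n}_p\,p^{(g_n-u_n)(u_1+\cdots+u_{n-1}-v_1-\cdots-v_{n-1})}$, and the induction closes.

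The step I expect to be the main obstacle is the inductive reduction: verifying carefully that the hypotheses transfer to $\bar G = G/G_n$ with the same $v_i,w_i$ — equivalently, that the data of the normal subgroups one is trying to count is faithfully recorded by passing to this quotient — and keeping the exponents of $p$ in the fibre count in exact agreement with the $i=n$ term of the product. The remaining ingredients are routine: the monotonicity of Gaussian binomials under decreasing top and bottom by equal amounts, and the standard $Z^1$-torsor description of the complements of a central elementary abelian normal subgroup.
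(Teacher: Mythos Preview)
Your proof is correct and follows essentially the same route as the paper's: induction on $n$ via the $\Theta(U)=(J,K,C)$ decomposition, bounding the number of $K$'s by the inductive hypothesis applied to $G/G_n$, the number of $J$'s (given $K$) by the Gaussian coefficient $\sbinom{g_n-w_n}{u_n-w_n}_p$, and the number of complements by $|\Hom(K/K_2,G_n/J)|$. Your explicit verification that $\tilde K^p=U^p$, $[\tilde K,\tilde K]=[U,U]$, and $[\tilde K,G]=[U,G]$ (so that $L$ and $K_2$ depend only on $K$, and the hypotheses descend) is a useful clarification that the paper leaves implicit.
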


\begin{proof}
The proof will be by induction on $n$.  If $n = 1$, then $G$ is elementary abelian of dimension $g_1$.  In this case, $|S(G, u_1)|$ counts the number of subgroups of $G$ of dimension $u_1$, and this number is $\sbinom{g_1}{u_1}_p$.

For our inductive hypothesis, suppose that $n \ge 2$ and that the result holds in $H = G/G_n$, a $p$-group of lower $p$-length $n-1$.  For $1 \le i \le n-1$, it is clear that $H_i = G_i/G_n$ and $g_i = \dim(H_i/H_{i+1})$.  Thus
\[
|S(H, u_1, \dots, u_{n-1})| \le \sbinom{g_1}{u_1}_p \prod_{i=2}^{n-1}{\sbinom{g_i-w_i}{u_i-w_i}_p p^{(g_i-u_i)(u_1 + \cdots + u_{i-1}- v_1 - \cdots - v_{i-1})}}.
\]
Since there is a bijective correspondence between subgroups $U \in S(G, u_1, \dots, u_n)$ and data $\Theta(U)$, we can give an upper bound for $|S(G, u_1, \dots, u_n)|$ by giving an upper bound for the number of possibilities for $\Theta(U)$.  First, $J = U \cap G_n$ is a subspace of $G_n$ of dimension $u_n$.  Furthermore, $J$ must contain $U^p [U,G] \cap G_n$, which by assumption has dimension at least $w_n$.  Thus the number of choices for $J$ is at most $\sbinom{g_n-w_n}{u_n-w_n}_p$.

Next, we can show that $K = UG_n/G_n \in S(H, u_1, \dots, u_{n-1})$.  Namely, for each $i = 1, \dots, n-1$,
\begin{eqnarray}
(K \cap H_i)H_{i+1}/H_{i+1} &=& (UG_n/G_n \cap G_i/G_n)(G_{i+1}/G_n)/(G_{i+1}/G_n) \nonumber \\
&\cong& (UG_n \cap G_i)G_{i+1}/G_{i+1} \label{congeqn} \\
&\cong& (U \cap G_i)G_{i+1}/G_{i+1}, \nonumber
\end{eqnarray}
and so $\dim{(K \cap H_i)H_{i+1}/H_{i+1}} = u_i$.  It follows that $K \in S(H, u_1, \dots, u_{n-1})$.  So there are at most $|S(H, u_1, \dots, u_{n-1})|$ choices for $K$.

Finally, we must bound the number of complements $U/J$ to $G_n/J$ in $UG_n/J$.  Note that $G_n/J$ is central in $UG_n/J$ since $G_n$ is central in $G$.  It follows from (say) Lubotzky and Segal~\cite[Lemma 1.3.1]{ls} that the number of complements to $G_n/J$ in $UG_n/J$ is
\begin{eqnarray*}
|\Hom((UG_n/J)/(G_n/J), G_n/J)|
&=&
|\Hom(UG_n/G_n, G_n/J)| \\
&=& |\Hom(K, G_n/J)| \\
&=& |\Hom(K/K_2, G_n/J)|.
\end{eqnarray*}
The dimension of $G_n/J$ is $h_n - u_n$.  Also,
\begin{eqnarray*}
\dim(K/K_2)
&=& \dim(K) - \dim(K_2) \\
&=& \sum_{i=1}^{n-1}{\dim((K \cap H_i)H_{i+1}/H_{i+1})} - \sum_{i = 1}^{n-1}{\dim((K_2 \cap H_i)H_{i+1}/H_{i+1})}.
\end{eqnarray*}
Note that $K_2 = U_2G_n/G_n$, and a similar calculation to Equation~\ref{congeqn} shows that
\[
(K_2 \cap H_i)H_{i+1}/H_{i+1} \cong (U_2 \cap G_i)G_{i+1}/G_{i+1},
\]
which by hypothesis has dimension at least $v_i$.  Thus
\[
\dim(K/K_2) \le u_1 + \cdots + u_{n-1} - (v_1 + \cdots + v_{n-1})
\]
and
\[
|\Hom(K/K_2, G_n/J)| \le p^{(g_n - u_n)(u_1 + \cdots + u_{n-1} - v_1 - \cdots - v_{n-1})}.
\]
Using the inductive hypothesis gives
\begin{eqnarray*}
|S(G, u_1, \dots, u_n)| &\le& |S(H, u_1, \dots, u_{n-1})| \\
&& \qquad \qquad \cdot \sbinom{g_n - w_n}{u_n - w_n}_p \cdot p^{(g_n - u_n)(u_1 + \cdots + u_{n-1} - v_1 - \cdots - v_{n-1})} \\
&\le& \sbinom{g_1}{u_1}_p \prod_{i=2}^n{\sbinom{g_i-w_i}{u_i-w_i}_p p^{(g_i-u_i)(u_1 + \cdots + u_{i-1}- v_1 - \cdots - v_{i-1})}}.
\end{eqnarray*}
\end{proof}

\begin{cor}
\label{fnormalcor}
Fix $d \ge 3$ and $n \ge 3$.  Let $F$ be the free group of rank $d$ and let $d_i = \dim(F_i/F_{i+1})$ for each $i \ge 1$.  Then
\[
|S(F/F_{n+1}, 0, u_2, \dots, u_n)|
\le D(p)^{n-1} \prod_{i=2}^n{p^{(u_i-u_{i-1}/2)(d_i-u_i)}},
\]
where
\[
D(p) = \prod_{j=1}^{\infty}{\frac{1}{1-p^{-j}}}.
\]
\end{cor}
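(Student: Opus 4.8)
The plan is to derive Corollary~\ref{fnormalcor} by applying Theorem~\ref{normalthm} to the group $G = F/F_{n+1}$, which has lower $p$-length $n$ and satisfies $g_i = \dim(G_i/G_{i+1}) = d_i$ for $i = 1, \dots, n$. Since $u_1 = 0$, every $U \in S(G, 0, u_2, \dots, u_n)$ lies in $G_2$, so Corollary~\ref{expcor} applies to it. First I would feed Corollary~\ref{expcor} (with this $G$ and each $i$ in the range $2 \le i < n$) into the hypotheses of Theorem~\ref{normalthm}: it gives $\dim((U_2 \cap G_{i+1})G_{i+2}/G_{i+2}) \ge u_i$ and $\dim((U^p[U,G] \cap G_{i+1})G_{i+2}/G_{i+2}) \ge (3/2)u_i$ for all such $U$, so after the shift $j = i+1$ we may take $v_j = u_{j-1}$ and $w_j = \lceil (3/2)u_{j-1} \rceil$ for $j = 3, \dots, n$, together with $v_1 = v_2 = w_1 = w_2 = 0$ (legitimate since the relevant dimensions are non-negative). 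We may also assume $0 \le w_i \le u_i \le d_i$ for all $i$, since otherwise $S(G,0,u_2,\dots,u_n)$ is empty and the claimed bound is trivial.

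Next I would substitute into the bound of Theorem~\ref{normalthm}. With $u_1 = 0$ the factor $\sbinom{g_1}{u_1}_p = 1$ drops out. The key simplification is the telescoping identity
\[
u_1 + \cdots + u_{i-1} - v_1 - \cdots - v_{i-1} = u_{i-1} \qquad (2 \le i \le n),
\]
valid because $v_1 = v_2 = 0$ and $v_k = u_{k-1}$ for $3 \le k \le i-1$, so all intermediate $u_k$ cancel. Hence Theorem~\ref{normalthm} yields
\[
|S(F/F_{n+1}, 0, u_2, \dots, u_n)| \le \prod_{i=2}^n \sbinom{d_i - w_i}{u_i - w_i}_p \, p^{(d_i - u_i)u_{i-1}},
\]
where $w_2 = 0$ and $w_i \ge (3/2)u_{i-1}$ for $i \ge 3$.

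Finally I would apply the elementary estimate $\sbinom{a}{b}_p \le D(p)\, p^{b(a-b)}$, which follows directly from the product formula for Gaussian coefficients (and is recorded among the estimates in Appendix~\ref{a_estimates}), to each factor, giving $\sbinom{d_i - w_i}{u_i - w_i}_p \le D(p)\, p^{(u_i - w_i)(d_i - u_i)}$. Multiplying the $n-1$ factors produces the prefactor $D(p)^{n-1}$ and a total $p$-exponent $\sum_{i=2}^n (d_i - u_i)(u_i - w_i + u_{i-1})$. For $i = 2$ this term equals $(d_2-u_2)u_2 = (d_2 - u_2)(u_2 - u_1/2)$, while for $i \ge 3$, since $d_i - u_i \ge 0$ and $w_i \ge (3/2)u_{i-1}$, one has $u_i - w_i + u_{i-1} \le u_i - u_{i-1}/2$, hence $(d_i - u_i)(u_i - w_i + u_{i-1}) \le (d_i-u_i)(u_i - u_{i-1}/2)$. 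Summing and re-exponentiating gives exactly $D(p)^{n-1}\prod_{i=2}^n p^{(u_i - u_{i-1}/2)(d_i - u_i)}$, as required.

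I do not expect a genuine obstacle: all the real content is already packaged in Theorem~\ref{normalthm} and Corollary~\ref{expcor}, and what remains is bookkeeping. The only points needing a little care are tracking the index shift $i \mapsto i+1$ from Corollary~\ref{expcor} into the $v_i, w_i$ of Theorem~\ref{normalthm}, and verifying that the factor-$3/2$ gain is precisely what converts $u_{i-1} - w_i$ into $-u_{i-1}/2$ in the final exponent (multiplied against the non-negative quantity $d_i - u_i$).
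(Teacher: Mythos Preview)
Your proposal is correct and follows essentially the same approach as the paper: apply Theorem~\ref{normalthm} to $G=F/F_{n+1}$, take $v_1=v_2=w_1=w_2=0$ and, via Corollary~\ref{expcor}, $v_i=u_{i-1}$ and $w_i=\lceil (3/2)u_{i-1}\rceil$ for $i\ge 3$, then bound each Gaussian coefficient by $D(p)\,p^{(u_i-w_i)(d_i-u_i)}$ and simplify. Your explicit telescoping of $\sum_{k\le i-1}(u_k-v_k)$ to $u_{i-1}$ and your separate treatment of $i=2$ make the bookkeeping clearer than the paper's terse derivation, but the argument is the same.
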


\begin{proof}
Letting $G = F/F_{n+1}$ in Theorem~\ref{normalthm}, it is clear that $g_i = d_i$.  Since $u_1 = 0$, each $U \in S(F/F_{n+1}, 0, u_2, \dots, u_n)$ is contained in $G_2$, and so $G_3$ contains $U_2$ and $U^p[U,G]$.  Thus we can choose $v_1 = v_2 = w_1 = w_2 = 0$.  By Corollary~\ref{expcor}, for $3 \le i \le n$, we can choose $v_i = u_{i-1}$ and $w_i = \lceil (3/2) u_{i-1} \rceil$.
Finally, Lemma~\ref{qests} Equation~\ref{coefbounds} gives an upper bound for the Gaussian coefficient $\sbinom{g_i-w_i}{u_i-w_i}_p$.  The formula from Theorem~\ref{normalthm} becomes
\begin{eqnarray*}
|S(F/F_{n+1}, 0, u_2, \dots, u_n)|
&\le& D(p)^{n-1} \prod_{i=2}^n{p^{(u_i-w_i)(d_i-u_i) + (d_i-u_i) u_{i-1}}} \\
&\le& D(p)^{n-1} \prod_{i=2}^n{p^{(u_i-(3/2)u_{i-1})(d_i-u_i) + (d_i-u_i) u_{i-1}}} \\
&\le& D(p)^{n-1} \prod_{i=2}^n{p^{(u_i-u_{i-1}/2)(d_i-u_i)}}.
\end{eqnarray*}
\end{proof}

\section{A Proof of Theorem~\ref{limit1thm}}

We can now prove Theorem~\ref{limit1thm} and Corollary~\ref{limit1cor}, restated here for convenience.

\begin{thmx}[Theorem~\ref{limit1thm}]
Fix a prime $p$ and integers $d$ and $n$ so that either $n \ge 3$ and $d \ge 6$ or $n \ge 10$ and $d \ge 5$.  Let $F$ be the free group of rank $d$ and let $d_i$ be the dimension of $F_i/F_{i+1}$ for $i = 1, \dots, n$.  Then
\[
1 \le \frac{|\mathfrak{A}_{d,n}|}{|\mathfrak{C}_{d,n}|} \le 1 + C(p^{15/16}) C(p)^{n-2} D(p)^{n-2} p^{d_{n-1} - d_n/4 + d^2 - 11/16}.
\]
\end{thmx}

\begin{proof}
Note that a normal subgroup $U$ of $F/F_{n+1}$ lies in $F_2/F_{n+1}$ if and only if $U \in S(F/F_{n+1}, 0, u_2, \dots, u_n)$ for some integers $u_2, \dots, u_n$.  Also, $U$ lies in $F_n/F_{n+1}$ if and only if $u_n = \cdots = u_{n-1} = 0$.  If $U$ does not lie in $F_n/F_{n+1}$, then $1 \le u_{n-1} < w_n \le u_n$, so $u_n \ge 2$.  Thus, using Corollary~\ref{fnormalcor}, we find
\[
\mathcal{A}_{d,n} = \mathcal{C}_{d,n} \cup \bigcup_{u_2, \dots, u_n}{S(F/F_{n+1}, 0, u_2, \dots, u_n)}
\]
and
\[
|\mathcal{A}_{d,n}| \le |\mathcal{C}_{d, n}| + \sum_{u_2, \dots, u_n}{D(p)^{n-1} \prod_{i=2}^n{p^{(d_i-u_{i-1}/2)(d_i-u_i)}}},
\]
where the sums are over
\[
\begin{array}{rcl}
0 \le &u_i& \le d_j \textrm{ for } i = 2, \dots, n-2, \\
1 \le &u_{n-1}& \le d_{n-1}, \textrm{ and} \\
2 \le &u_n& \le d_n.
\end{array}
\]
The above sum is precisely the quantity $D(p)^{n-1} A_1(0)$ from the statement of Lemma~\ref{gaussprods}.  Hence
\[
|\mathcal{A}_{d,n}| \le |\mathcal{C}_{d, n}| +  C(p^{15/16}) C(p)^{n-2} D(p)^{n-1} p^{d_n^2/4 - 15/16 - d_n/4 + d_{n-1}}.
\]
Since $|\mathcal{C}_{d, n}|$ is the number of subspaces of a $d_n$-dimensional $\F_p$-vector space, denoted $\mathcal{G}_{d_n}(p)$ in Appendix~\ref{a_estimates}, it follows from Lemma~\ref{qests} and the fact that $2 - 9p^{(1-d_n)/2}/2 > 1$ that
\begin{eqnarray*}
|\mathcal{A}_{d,n}|/|\mathcal{C}_{d,n}| &\le& 1 + C(p^{15/16}) C(p)^{n-2} D(p)^{n-1} p^{d_n^2-15/16-d_n/4+d_{n-1}} / \mathcal{G}_{d_n}(p) \\
&\le& 1 + C(p^{15/16}) C(p)^{n-2} D(p)^{n-2} p^{d_{n-1} - d_n/4 - 11/16}.
\end{eqnarray*}
Now $\mathfrak{A}_{d,n} \setminus \mathfrak{C}_{d,n}$ are the $\Aut(F/F_{n+1})$-orbits on $\mathcal{A}_{d, n} \setminus \mathcal{C}_{d,n}$, so
\[
0 \le |\mathfrak{A}_{d,n}| - |\mathfrak{C}_{d,n}| \le |\mathcal{A}_{d,n}| - |\mathcal{C}_{d,n}|.
\]
Also $|\mathcal{C}_{d,n}| \le |\mathfrak{C}_{d,n}| \cdot |\GL(d,\F_p)|$, since $\mathcal{C}_{d,n}$ falls into $|\mathfrak{C}_{d,n}|$ orbits, each of size at most $|\GL(d,\F_p)|$.  Then
\begin{eqnarray*}
0 &\le& \frac{|\mathfrak{A}_{d,n}|}{|\mathfrak{C}_{d,n}|} - 1 \\
&=& \frac{|\mathcal{C}_{d,n}|}{|\mathfrak{C}_{d,n}|} \left( \frac{|\mathfrak{A}_{d,n}| - |\mathfrak{C}_{d,n}|}{|\mathcal{C}_{d,n}|} \right) \\
&\le& |\GL(d,\F_p)| \left( \frac{|\mathcal{A}_{d,n}| - |\mathcal{C}_{d,n}|}{|\mathcal{C}_{d,n}|} \right) \\
&\le& C(p^{15/16}) C(p)^{n-2} D(p)^{n-2} p^{d_{n-1} - d_n/4 + d^2 - 11/16}.
\end{eqnarray*}
Therefore
\[
1 \le \frac{|\mathfrak{A}_{d,n}|}{|\mathfrak{C}_{d,n}|} \le 1 + C(p^{15/16}) C(p)^{n-2} D(p)^{n-2} p^{d_{n-1} - d_n/4 + d^2 - 11/16}.
\]
\end{proof}

\begin{cor}
If $n \ge 2$, then
\[
\lim_{d \to \infty}{\frac{|\mathfrak{C}_{d,n}|}{|\mathfrak{A}_{d,n}|}} = 1.
\]
If $d \ge 5$, then
\[
\lim_{n \to \infty}{\frac{|\mathfrak{C}_{d,n}|}{|\mathfrak{A}_{d,n}|}} = 1.
\]
If $d$ and $n$ satisfy one of the conditions in (\ref{dnres}), then
\[
\lim_{p \to \infty}{\frac{|\mathfrak{C}_{d,n}|}{|\mathfrak{A}_{d,n}|}} = 1
\]
\end{cor}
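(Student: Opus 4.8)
The plan is to read off all three limiting statements from the explicit estimate of Theorem~\ref{limit1thm}, after disposing of the case $n = 2$ by hand. When $n = 2$ the sets $\mathcal{A}_{d,2}$ and $\mathcal{C}_{d,2}$ are literally equal --- both consist of the normal subgroups of $F/F_3$ lying in $F_2/F_3$, and here $F_2 = F_n$ --- so $\mathfrak{A}_{d,2} = \mathfrak{C}_{d,2}$ and the ratio is identically $1$. This settles the $n = 2$ instances of the first and third limits.

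For $n \ge 3$, Theorem~\ref{limit1thm} reduces everything to showing that the quantity
\[
E(p,d,n) = C(p^{15/16})\,C(p)^{n-2}\,D(p)^{n-2}\,p^{\,d_{n-1} - d_n/4 + d^2 - 11/16}
\]
tends to $0$ in each regime. I would first record that $C(x)$ and $D(x)$ are decreasing on $(1,\infty)$ and tend to $1$ as $x \to \infty$; in particular, for fixed $p$ they are bounded above by $C(2)$ and $D(2)$, while as $p \to \infty$ all three analytic factors converge to $1$. Hence the behaviour of $E$ is governed by the exponent $d_{n-1} - d_n/4 + d^2 - 11/16$, whose sign and magnitude are controlled by the exact formula of Corollary~\ref{dimcor} and the bounds on $d_n$ in Lemma~\ref{dnasymptotics2}.

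For $d \to \infty$ with $n \ge 3$ fixed, $d_n$ grows like $d^n/n$ while $d_{n-1}$ and $d^2$ are of strictly smaller order, so the exponent tends to $-\infty$ and, the constant prefactor being fixed, $E \to 0$. For $n \to \infty$ with $d \ge 5$ fixed, the prefactor $C(p)^{n-2} D(p)^{n-2}$ grows only geometrically in $n$ --- it contributes $(n-2)\log_p(C(p)D(p))$ to the exponent --- whereas Lemma~\ref{dnasymptotics2} gives $d_n \ge c\,d^{\,n}$ for some $c > 0$, so $d_n/4$ eventually swamps $d_{n-1} + d^2 + (n-2)\log_p(C(p)D(p))$ and again $E \to 0$; in both cases Theorem~\ref{limit1thm} applies once $d$, respectively $n$, is large enough, which is all a limit requires. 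For $p \to \infty$ with $(d,n)$ fixed and satisfying one of the conditions in~(\ref{dnres}), the prefactor tends to $1$, so $E \to 0$ precisely when the exponent is negative, i.e.\ when $d_n > 4\,d_{n-1} + 4d^2 - \tfrac{11}{4}$. I would verify this inequality for each of the four nontrivial conditions in~(\ref{dnres}) at its boundary value of $d$ using Corollary~\ref{dimcor}, and then confirm it persists for all larger $d$ and all larger $n$, using that $d_n - 4 d_{n-1} = \dim(\F_p[\Lambda^n]) - 3 d_{n-1}$ grows rapidly in $n$.

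The main obstacle is this last, $p \to \infty$, case: the thresholds in~(\ref{dnres}) are sharp --- for $n = 3$ one needs $d \ge 17$, where $d_3 = 1785$ only barely exceeds $4 d_2 + 4 d^2 - \tfrac{11}{4} = 1765.25$ --- so the verification genuinely requires the precise Witt-formula values near the boundary together with a clean monotonicity argument, rather than a soft asymptotic estimate. By contrast the $d \to \infty$ and $n \to \infty$ limits are routine once the growth rate of $d_n$ from Lemma~\ref{dnasymptotics2} is in hand.
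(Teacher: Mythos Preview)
Your proposal is correct and follows essentially the same route as the paper: the $n=2$ case is disposed of because $\mathfrak{A}_{d,2}=\mathfrak{C}_{d,2}$, and all remaining limits are read off Theorem~\ref{limit1thm} by showing the exponent $d_{n-1}-d_n/4+d^2-11/16$ is eventually negative (or tends to $-\infty$), using the growth bounds on $d_n$ from Corollary~\ref{dimcor} and Lemma~\ref{dnasymptotics2}. Your boundary-plus-monotonicity verification for the $p\to\infty$ case is exactly the content of Lemma~\ref{dnasymptotics2}, Equation~\ref{dninequality3}, which the paper simply cites.
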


\begin{proof}
When $n = 2$, the sets $\mathfrak{A}_{d,n}$ and $\mathfrak{C}_{d,n}$ are the same, so trivially
\[
\lim_{d \to \infty}{\frac{|\mathfrak{C}_{d,n}|}{|\mathfrak{A}_{d,n}|}}
= \lim_{p \to \infty}{\frac{|\mathfrak{C}_{d,n}|}{|\mathfrak{A}_{d,n}|}}
= 1.
\]
For all other cases, we will use Theorem~\ref{limit1thm}.  We have the inequality
\begin{eqnarray}
d_{n-1} - d_n/4 + d^2 - 11/16
&=& -\frac{1}{4} \left( d_n - 4d_{n-1} - 4d^2 + \frac{11}{4} \right) \nonumber \\
&\le& -\frac{1}{4} \left( \frac{d^n}{n} - \frac{30d^{n-1}}{7(n-1)} - 4d^2 + \frac{11}{4} \right). \label{upperboundexpression}
\end{eqnarray}
When $n \ge 3$ and $d \to \infty$, the quantity (\ref{upperboundexpression}) has limit $-\infty$.  Combined with Theorem~\ref{limit1thm}, this shows that
\[
\lim_{d \to \infty}{\frac{|\mathfrak{C}_{d,n}|}{|\mathfrak{A}_{d,n}|}} = 1.
\]
When $d \ge 5$ and $n \to \infty$, the quantity (\ref{upperboundexpression}) is asymptotically $-d^n/4n$, which shows that
\[
\lim_{n \to \infty}{\frac{|\mathfrak{C}_{d,n}|}{|\mathfrak{A}_{d,n}|}} = 1.
\]
Finally, by Lemma~\ref{dnasymptotics2} Equation~\ref{dninequality3},
\[
d_{n-1} - d_n/4 + d^2 - 11/16 \le 0
\]
for all values of $d$ and $n$ satisfying one of the conditions in (\ref{dnres}) (except the condition $n = 2$, which we have already dealt with).  This, combined with Theorem~\ref{limit1thm} and the fact that $C(p)$ and $D(p)$ go to 1 as $p \to \infty$, implies that
\[
\lim_{p \to \infty}{\frac{|\mathfrak{C}_{d,n}|}{|\mathfrak{A}_{d,n}|}} = 1.
\]
\end{proof}

\chapter{Counting Submodules}
\label{c_submodules}

In this chapter we shall prove Theorem~\ref{limit2thm}.  This depends on estimating $|\mathfrak{C}_{d,n}|$, the number of $\GL(d,\F_p)$-orbits on subspaces of $F_n/F_{n+1}$, via the Cauchy-Frobenius Lemma.  To do this, we obtain in Theorem~\ref{upperbound} an upper bound for the number of submodules of an $\F_p \left< g \right>$-module, where $g \in \GL(d, \F_p)$.  Theorem~\ref{strongerbound} strengthens this bound in a special case to deal with $F_2/F_3$.  Both theorems draw heavily on the theory of Hall polynomials, which count the number of submodules of fixed type and cotype of a finite module over a discrete valuation ring.

\section{The Number of Submodules of a Module}
\label{submodulesec}

Suppose $M$ is an $\F_p \GL(d, \F_p)$-module.  Let $g \in \GL(d, \F_p)$.  We want to count the number of subspaces of $M$ (viewed as an $\F_p$-vector space) fixed by $g$, which is the number of submodules of $M$ as a $\F_p \left< g \right>$-module.  We note that when $M$ is the natural $\F_p \GL(d, \F_p)$-module, Eick and O'Brien~\cite{eo} give an explicit formula for this number.  The following preliminaries are based on Macdonald~\cite[Chapter IV, Section 2]{mac}.

Any $\F_p \left< g \right>$-module $M$ can be viewed as an $\F_p[t]$-module, where $t.v = gv$ for all $v \in M$.  Furthermore, the number of $\F_p \left< g \right>$-submodules of $M$ equals the number of $\F_p[t]$-submodules of $M$.  Let $\Phi$ be the set of all polynomials in $\F_p[t]$ which are irreducible over $\F_p$, and let $P$ be the set of all partitions of non-negative integers.  Let $U$ be the set of all functions $\mu: \Phi \to P$.  Since $\F_p[t]$ is a principal ideal domain, $M$ has a unique decomposition of the form
\[
M \cong \bigoplus_{f \in \Phi}{\bigoplus_i{\frac{\F_p[t]}{(f)^{\mu_i(f)}}}},
\]
for some $\mu \in U$.  Here, $\mu_i(f)$ is the $i$-th part of $\mu(f)$.  Let
\[
M_f = \bigoplus_i{\frac{\F_p[t]}{(f)^{\mu_i(f)}}}.
\]
For each $f \in \Phi$, let $\F_p[t]_f$ denote the localization of $\F_p[t]$ at the prime ideal $(f)$.  Then $\F_p[t]_f$ is a discrete valuation ring with residue field of order $q = p^{\deg(f)}$, and $M_f$ is a finite $\F_p[t]_f$-module.  We call $\mu(f)$ the \emph{type} of $M_f$.

Any submodule $N$ of $M$ can be written $N = \oplus_{f \in \Phi}{N_f}$ with $N_f \subseteq M_f$ for each $f \in \Phi$.  That is, every submodule of $M$ is the direct sum of submodules of the summands $M_f$.  By Macdonald~\cite[Chapter II, Lemma 3.1]{mac} the type $\lambda$ of any $\F_p[t]$-submodule or quotient module of $M_f$ satisfies $\lambda \subseteq \mu(f)$.

Both Theorems~\ref{upperbound} and \ref{strongerbound} depend on Theorem~\ref{submodulesthm}, where we calculate the number of submodules of fixed type in a module of fixed type over a discrete valuation ring.  This generalizes Birkhoff's formula for the number of subgroups of a finite abelian $p$-group (see~\cite{bir}); to recover Birkhoff's result, let $\mathfrak{a}$ be the ring of $p$-adic integers.  The reliance of Theorem~\ref{submodulesthm} (and its proof) on the theory of Hall polynomials is hidden in the citation of results from Macdonald~\cite[Chapter II]{mac}.  While we will not pursue this connection, it should be noted that the quantity $S(\alpha', \beta', q)$ appearing in Theorem~\ref{submodulesthm} is equal to $\sum_{\mu \in P}{g_{\mu \nu}^{\lambda}}(q)$, where $g_{\mu \nu}^{\lambda}(q)$ is the Hall polynomial corresponding to $\lambda$, $\mu$, and $\nu$, and so Theorem~\ref{submodulesthm} can also be phrased as a result about a sum of Hall polynomials.

\begin{thm} \label{submodulesthm}
Let $\mathfrak{a}$ be a discrete valuation ring with maximal ideal $\mathfrak{p}$ and let $\mathfrak{k} = \mathfrak{a}/\mathfrak{p}$ be the residue field of order $q$.  Let $\alpha = (\alpha_1, \alpha_2, \dots, \alpha_r)$ and $\beta = (\beta_1, \beta_2, \dots, \beta_s)$ be partitions with $\beta \subseteq \alpha$ and let $M$ be a finite $\mathfrak{a}$-module of type $\alpha'$.  Then the number of submodules of $M$ of type $\beta'$ is
\[
S(\alpha', \beta', q) = \prod_{i=1}^s{\sbinom{\alpha_i - \beta_{i+1}}{\beta_i - \beta_{i+1}}_q q^{\beta_{i+1} (\alpha_i - \beta_i)}},
\]
where $\beta_{s+1}$ is taken to be $0$.
\end{thm}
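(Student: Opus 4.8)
The plan is to prove the formula by induction on the exponent $e$ of $M$ --- equivalently, on the largest part $\alpha'_1$ of $\alpha'$, or on the number of parts of $\alpha$. By the structure theory of finite modules over a discrete valuation ring (Macdonald, Chapter~II), write $M \cong \bigoplus_j \mathfrak{a}/\mathfrak{p}^{\alpha'_j}$, and record the dictionary between partition data and intrinsic invariants: $\dim_{\mathfrak{k}} M[\mathfrak{p}] = \dim_{\mathfrak{k}} M/\mathfrak{p}M = \alpha_1$; the submodule $\mathfrak{p}M \cong \bigoplus_j \mathfrak{a}/\mathfrak{p}^{\alpha'_j - 1}$ has type $(\alpha_2, \alpha_3, \dots)'$ and exponent $e-1$; and if $N \le M$ has type $\beta'$ then $\dim_{\mathfrak k} N[\mathfrak p] = \beta_1$ and $\mathfrak p N$ has type $(\beta_2, \beta_3, \dots)'$. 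Note that $\beta \subseteq \alpha$ is equivalent to $\beta' \subseteq \alpha'$, which is precisely what makes every Gaussian coefficient in the asserted product well-defined, and that one may harmlessly extend the product to run over all $i \ge 1$ (the factors with $i > s$ equal $\sbinom{\alpha_i}{0}_q = 1$). In the base case $e = 1$, $M$ is a vector space over $\mathfrak k$ of dimension $\alpha_1$, a submodule of type $\beta' = (1^{\beta_1})$ is a $\beta_1$-dimensional subspace, and the count $\sbinom{\alpha_1}{\beta_1}_q$ matches the formula.

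For the inductive step I would consider the map $N \mapsto \mathfrak p N$ from the set of type-$\beta'$ submodules of $M$ to the set of type-$(\beta_2, \beta_3, \dots)'$ submodules of $\mathfrak pM$. Since $\mathfrak pM$ has type $(\alpha_2, \alpha_3, \dots)'$ and smaller exponent, the inductive hypothesis says the target set has size $\prod_{i \ge 2} \sbinom{\alpha_i - \beta_{i+1}}{\beta_i - \beta_{i+1}}_q q^{\beta_{i+1}(\alpha_i - \beta_i)}$, which is exactly the asserted product with its $i=1$ factor stripped off. So it suffices to show that for each submodule $P$ of $\mathfrak pM$ of type $(\beta_2, \beta_3, \dots)'$, the fiber $\{N \le M : \mathfrak pN = P,\ N \text{ of type } \beta'\}$ has exactly $\sbinom{\alpha_1 - \beta_2}{\beta_1 - \beta_2}_q q^{\beta_2(\alpha_1 - \beta_1)}$ elements; summing over $P$ and telescoping then completes the induction (and, incidentally, shows the map is onto, since every fiber is nonempty).

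Fix such a $P$. If $\mathfrak p N = P$ then $P \subseteq N \subseteq \mathfrak p^{-1}(P) := \{m \in M : \mathfrak p m \subseteq P\}$, so every member of the fiber lies between $P$ and $\mathfrak p^{-1}(P)$. The quotient $W := \mathfrak p^{-1}(P)/P$ is annihilated by $\mathfrak p$, hence a vector space over $\mathfrak k$, and multiplication by a uniformizer of $\mathfrak a$ induces a surjection $\phi : W \to P/\mathfrak p P$; a short computation identifies $\ker\phi$ with $(P + M[\mathfrak p])/P$, giving $\dim\ker\phi = \alpha_1 - \beta_2$, $\dim(P/\mathfrak p P) = \beta_2$, and hence $\dim W = \alpha_1$. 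One checks that a submodule $N$ with $P \subseteq N \subseteq \mathfrak p^{-1}(P)$ satisfies $\mathfrak p N = P$ if and only if $\phi(N/P) = P/\mathfrak p P$, and that when moreover $\dim(N/P) = \beta_1$ the submodule $N$ is automatically of type $\beta'$ (its number of cyclic summands is forced to equal $\beta_1$ and $\mathfrak p N$ has the prescribed type, which together pin down the full type). Thus the fiber count equals the purely linear-algebraic quantity: the number of $\beta_1$-dimensional subspaces $U$ of the $\alpha_1$-dimensional space $W$ with $\phi(U) = P/\mathfrak p P$. Choosing $U \cap \ker\phi$ first --- a $(\beta_1 - \beta_2)$-dimensional subspace of the $(\alpha_1 - \beta_2)$-dimensional space $\ker\phi$, contributing $\sbinom{\alpha_1 - \beta_2}{\beta_1 - \beta_2}_q$ --- and then counting the extensions of it to such a $U$ (which biject with sections of $\phi$ over the resulting quotient, of which there are $q^{\beta_2(\alpha_1 - \beta_1)}$) yields exactly $\sbinom{\alpha_1 - \beta_2}{\beta_1 - \beta_2}_q q^{\beta_2(\alpha_1 - \beta_1)}$, independent of $P$ as required.

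The step I expect to be the crux is this fiber computation: one must set up $W$ and $\phi$ correctly, verify that ``$\mathfrak p N = P$'' really is the \emph{surjectivity} condition $\phi(N/P) = P/\mathfrak p P$ (not merely an inclusion), confirm that this together with the dimension constraint forces the type of $N$, and carry out the bookkeeping $\dim\ker\phi = \alpha_1 - \beta_2$ correctly, since this is where the invariants $\alpha_1,\beta_1,\beta_2$ of the statement enter. The remaining ingredients --- the induction itself and the standard count of $k$-dimensional subspaces surjecting onto a fixed quotient --- are routine. As an alternative route one could instead write $S(\alpha', \beta', q) = \sum_{\mu} g^{\alpha'}_{\mu\beta'}(q)$ and extract the closed form from the recursion satisfied by the Hall polynomials $g^{\lambda}_{\mu\nu}(q)$ in Macdonald, Chapter~II, but the direct argument above appears shorter and more transparent.
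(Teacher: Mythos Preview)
Your proof is correct and takes a genuinely different route from the paper's. The paper inducts on $\beta_1$: letting $t$ be the smallest part of $\beta'$, it double-counts pairs $(N',x)$ where $N'$ is a submodule of type $\overline{\beta}'$ (with $\overline{\beta}$ obtained from $\beta$ by decreasing the first $t$ parts by one) and $x$ is an element of order exactly $|\mathfrak{a}/\mathfrak{p}^t|$ with $\mathfrak{a}x \cap N' = 0$; formulas from Macdonald~\cite[Chapter II]{mac} supply both the number of admissible $x$ for a given $N'$ and the number of pairs assembling to a fixed $L$ of type $\beta'$, and the resulting recursion $S(\alpha',\beta',q) = S(\alpha',\overline{\beta}',q)\cdot(\text{ratio})$ is then verified against the closed form by a somewhat lengthy manipulation of $q$-products. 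Your argument instead inducts on the exponent of $M$ via the map $N \mapsto \mathfrak{p}N$, and the entire fiber computation collapses to a linear-algebra count inside the $\mathfrak{k}$-vector space $\mathfrak{p}^{-1}(P)/P$, with no appeal to Macdonald beyond the structure theorem. Your route is more self-contained and makes the factor $\sbinom{\alpha_1-\beta_2}{\beta_1-\beta_2}_q q^{\beta_2(\alpha_1-\beta_1)}$ appear transparently as the $i=1$ term; the paper's route stays closer to the Hall-polynomial machinery it later invokes, and trades a conceptual fiber argument for citations to ready-made counting formulas.
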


\begin{proof}
The proof is by induction on $\beta_1$.  If $\beta_1 = 0$, then $S(\alpha', \beta', q) = 1$ and the result holds.  Suppose $\beta_1 > 0$, and let the smallest part of $\beta'$ be $t$, so that either $\beta_1 = \cdots = \beta_t > \beta_{t+1}$ and $t < s$, or $\beta_1 = \cdots = \beta_s$ and $t = s$.  Write
\[
\overline{\beta} = (\beta_1 - 1, \beta_2 - 1, \dots, \beta_t - 1, \beta_{t+1}, \dots, \beta_s).
\]
Let $N$ be any submodule of $M$ of type $\overline{\beta}'$, and let $x$ be any element of $M$ with $\mathfrak{p}^t x = 0$, $\mathfrak{p}^{t-1} x \neq 0$, and $\mathfrak{a} x \cap N = 0$.  Then $\left< N, x \right>$ has type $\beta'$.  There are $S(\alpha', \overline{\beta}', q)$ choices for $N$, and for each $N$ it follows from~\cite[Chapter II, Equation 1.8]{mac} that the number of choices for $x$ is just
\begin{equation}
q^{\alpha_1 + \cdots + \alpha_t} ( 1 - q^{\beta_t - \alpha_t - 1}).
\end{equation}
On the other hand, fix a submodule $L$ of $M$ of type $\beta'$; we can count the number of choices of $N$ and $x$ so that $L = \left< N, x \right>$.  Here $N$ is a submodule of $L$ of type $\overline{\beta}'$ whose quotient has type $(t)$, and by~\cite[Chapter II, Equation 4.13]{mac}, the number of choices for $N$ is
\begin{eqnarray*}
&& \frac{1 - q^{\beta_{t+1} - \beta_t}}{1 - q^{-1}} \; q^{\sum_{i=1}^s{\binom{\beta_i}{2}} - \sum_{i=1}^s{\binom{\overline{\beta}_i}{2}}} \\
&=& \frac{1 - q^{\beta_{t+1} - \beta_t}}{1 - q^{-1}} \; q^{t (\beta_t - 1)}.
\end{eqnarray*}
Given $N$, it follows from~\cite[Chapter II, Equation 1.8]{mac} that there are
\[
q^{\beta_1 + \cdots + \beta_t}(1-q^{-1})
\]
choices for $x$.  Thus any submodule $L$ of $M$ of type $\beta'$ arises as $\left< N, x \right>$ in
\[
q^{\beta_1 + \cdots + \beta_t + t(\beta_t - 1)} (1 - q^{\beta_{t+1} - \beta_t})
\]
ways.  The total number of submodules $L$ of $M$ of type $\beta'$ is then
\begin{eqnarray}
S(\alpha', \beta', q) &=& \frac{S(\alpha', \overline{\beta}', q) q^{\alpha_1 + \cdots + \alpha_t} ( 1 - q^{\beta_t - \alpha_t - 1})}{
q^{\beta_1 + \cdots + \beta_t + t(\beta_t - 1)} (1 - q^{\beta_{t+1} - \beta_t})} \nonumber \\
&=& \frac{S(\alpha', \overline{\beta}', q)
q^{\alpha_1 + \cdots + \alpha_t} ( 1 - q^{\beta_t - \alpha_t - 1})}{
q^{2t \beta_t - t} (1 - q^{\beta_{t+1} - \beta_t})}, \label{indexp}
\end{eqnarray}
where the second inequality uses $\beta_1 = \cdots = \beta_t$.  By induction, we know that
\begin{eqnarray*}
S(\alpha', \overline{\beta}', q) &=& \prod_{i=1}^s{\sbinom{\alpha_i - \overline{\beta}_{i+1}}{\overline{\beta}_i - \overline{\beta}_{i+1}}_q q^{\overline{\beta}_{i+1} (\alpha_i - \overline{\beta}_i)}} \\
&=& \prod_{i=1}^{t-1}{\sbinom{\alpha_i - \beta_{i+1} + 1}{\beta_i - \beta_{i+1}}_q q^{(\beta_{i+1} - 1)(\alpha_i - \beta_i + 1)}} \\
&& \qquad \cdot  \sbinom{\alpha_t - \beta_{t+1}}{\beta_t - \beta_{t+1} - 1}_q q^{\beta_{t+1} (\alpha_t - \beta_t + 1)} \\
&& \qquad \cdot \prod_{i=t+1}^s{\sbinom{\alpha_i - \beta_{i+1}}{\beta_i - \beta_{i+1}}_q q^{\beta_{i+1} (\alpha_i - \beta_i)}} \\
&=& \prod_{i=1}^s{\sbinom{\alpha_i - \beta_{i+1}}{\beta_i - \beta_{i+1}}_q q^{\beta_{i+1} (\alpha_i - \beta_i)}} \\
&& \qquad \cdot \prod_{i=1}^{t-1}{ \frac{q^{\alpha_i-\beta_{i+1}+1}-1}{q^{\alpha_i-\beta_i+1}-1} q^{\beta_{i+1} + \beta_i - \alpha_i - 1}} \cdot \frac{q^{\beta_t-\beta_{t+1}}-1}{q^{\alpha_t-\beta_t+1}-1} q^{\beta_{t+1}} \\
&=& \prod_{i=1}^s{\sbinom{\alpha_i - \beta_{i+1}}{\beta_i - \beta_{i+1}}_q q^{\beta_{i+1} (\alpha_i - \beta_i)}} \\
&& \qquad \cdot q^{2(t-1)\beta_t - \alpha_1 - \cdots - \alpha_{t-1} - (t-1)} \cdot \frac{q^{\beta_t-\beta_{t+1}}-1}{q^{\alpha_t-\beta_t+1}-1} q^{\beta_{t+1}} \\
&=& \prod_{i=1}^s{\sbinom{\alpha_i - \beta_{i+1}}{\beta_i - \beta_{i+1}}_q q^{\beta_{i+1} (\alpha_i - \beta_i)}} \cdot \frac{q^{2t \beta_t}}{q^{\alpha_1 + \cdots + \alpha_t + t}} \cdot \frac{1 - q^{\beta_{t+1} - \beta_t}}{1 - q^{\beta_t - \alpha_t - 1}}.
\end{eqnarray*}
Substituting this expression into Equation~\ref{indexp} gives the result.
\end{proof}

Using Theorem~\ref{submodulesthm}, the techniques of Appendix~\ref{a_estimates}, and the definitions of $C(x)$ and $D(x)$ from Equation~\ref{cdeqns}, we can give an upper bound for the total number of submodules of a finite $\F_p \left< g \right>$-module $M$.  Note that every subspace of $M$ is a $\F_p \left< g \right>$-module if and only if $g$ acts as a scalar on $M$, that is, as multiplication by an element of $\F_p$.

\begin{thm} \label{upperbound}
Fix $d \ge 2$ and $g \in \GL(d, \F_p)$.  Suppose that $M$ is an $\F_p \left< g \right>$-module.  Let $m = \dim_{\F_p}(M)$ and let $S_M$ be the number of submodules of $M$.  Then either $g$ acts as a scalar on $M$ and $S_M = \mathcal{G}_m(p)$, or $g$ does not act as a scalar and
\[
\log_p{S_M} \le (m^2-2m+2)/4 + 2\ep,
\]
where $\ep = \log_p(C(p)D(p))$.
\end{thm}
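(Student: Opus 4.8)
The plan is to reduce to the primary components of $M$, dispose of the scalar case by hand, and bound the number of submodules of each primary component by the Hall‑polynomial count of Theorem~\ref{submodulesthm}. Viewing $M$ as an $\F_p[t]$-module with $t$ acting as $g$, write $M = \bigoplus_{f\in\Phi} M_f$ with $M_f$ a finite module over the discrete valuation ring $\F_p[t]_f$ of residue field order $q_f = p^{\deg f}$ and type $\mu(f)$; since every submodule of $M$ is a direct sum of submodules of the $M_f$, we have $S_M = \prod_f S_{M_f}$. First I would note that $g$ acts as a scalar on $M$ precisely when a single $f$ occurs, it is linear, and $\mu(f)=(1^m)$; in that case every $\F_p$-subspace is a submodule, so $S_M = \mathcal{G}_m(p)$. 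So assume $M$ is not scalar, which forces at least one of: (a) two or more $f$ occur; (b) a single $f$ with $\deg f\ge 2$; (c) a single linear $f$ but $\mu(f)\neq(1^m)$.

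The core is a per-component estimate. Fix $f$, put $q=q_f$, $e=\deg f$, $\lambda=\mu(f)$, $\ell=|\lambda|$ (so $\dim_{\F_p}M_f = e\ell$), and let $\alpha=\lambda'$. A submodule of $M_f$ has type $\beta'$ for some partition $\beta\subseteq\alpha$, and Theorem~\ref{submodulesthm} gives the exact count $S(\alpha',\beta',q)$. Expanding the Gaussian coefficients there, the power of $q$ that occurs is, up to factors absorbed into $D(q)$, equal to $\sum_i \beta_i(\alpha_i-\beta_i)\le \tfrac14\sum_i\alpha_i^2 = \tfrac14(2n(\lambda)+\ell)$, where $n(\lambda)=\sum_i\binom{\alpha_i}{2}$. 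Summing over all $\beta\subseteq\alpha$ and applying the combinatorial estimates of Appendix~\ref{a_estimates} — the bounds on Gaussian coefficients and on sums of products of them of the kind collected in Lemmas~\ref{qests} and~\ref{gaussprods}, which also supply the controlled (theta-like) corrections from the summation over $\beta$ — I would obtain a bound of the shape
\[
\log_q S_{M_f} \le \tfrac14\bigl(2n(\lambda)+\ell\bigr) + O(1)\log_q\!\bigl(C(q)D(q)\bigr)
\]
uniformly in $\lambda$, hence $\log_p S_{M_f} \le \tfrac14 e\bigl(2n(\lambda)+\ell\bigr) + O(\ep)$ since $C(q)\le C(p)$ and $D(q)\le D(p)$.

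It remains to sum over $f$ and compare with the target. The main exponent is controlled by the elementary inequality $2n(\lambda)+|\lambda| = \sum_i(\lambda'_i)^2 \le |\lambda|^2$, with the sharper bound $\sum_i(\lambda'_i)^2 \le |\lambda|^2 - 2|\lambda| + 2$ whenever $\lambda\neq(1^{|\lambda|})$ (because $\sum\mu_i^2$ over partitions $\mu$ of a fixed size is largest on a single row, next largest on $(\text{size}-1,1)$). Combined with $\sum_f e_f\ell_f = m$ and a short case check over (a), (b), (c) — the extremal configurations being a single non-semisimple linear component, or an $\F_p$-semisimple component of dimension $m-1$ together with a $1$-dimensional one — this yields $\sum_f e_f\bigl(2n(\lambda_f)+\ell_f\bigr) \le m^2-2m+2$, so $\log_p S_M \le (m^2-2m+2)/4 + (\text{accumulated }C/D\text{ corrections})$. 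The final point is that these corrections never exceed $2\ep$: in case (c) there is a single component and the Appendix bound is calibrated to give this directly, while in cases (a) and (b) the main exponent carries strict slack (of order $\tfrac14\bigl(2m-2-\sum_f e_f(2n(\lambda_f)+\ell_f)\bigr)$) that dominates the extra $C/D$ factors from the few relevant components, with a routine finite check handling very small $m$ (where $2\ep$ alone suffices) and very large $p$ (where $C(p),D(p)\to 1$ and the polynomial slack in $m$ suffices).

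The step I expect to be the main obstacle is precisely this uniform per-component bound: controlling $\sum_{\beta\subseteq\alpha}S(\alpha',\beta',q)$ by $C(q)^{O(1)}D(q)^{O(1)}q^{\frac14\sum_i\alpha_i^2}$ \emph{with constants independent of the number of parts of $\alpha$}. A naive termwise use of $\sbinom{a}{b}_q \le D(q)\,q^{b(a-b)}$ loses one factor $D(q)$ per part of $\alpha$, and summing each coordinate of $\beta$ independently would similarly lose a factor $C(q)$ per part; the constraints tying the $\beta_i$ into a single partition $\beta\subseteq\alpha$ are what keep the true sum bounded, and arranging the telescoping and the sum over $\beta$ so that only boundedly many $D(q)$'s and a single $C(q)$-type theta-sum survive is exactly what the Appendix estimates accomplish and where essentially all of the real computation lies.
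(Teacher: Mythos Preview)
Your overall structure---primary decomposition, identifying the scalar case, and bounding each $S_{M_f}$ via Theorem~\ref{submodulesthm}---matches the paper's proof. But the ``main obstacle'' you flag is a phantom, and worrying about it has led you away from the actual key step.

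You do \emph{not} need a per-component bound with $O(1)$ factors of $C(q)D(q)$ independent of the number $r$ of parts of $\alpha$. The paper's proof accepts the naive loss: it bounds each Gaussian coefficient by $D(q)q^{(\beta_i-\beta_{i+1})(\alpha_i-\beta_i)}$, sums each $\beta_i$ independently picking up one $C(q)$ apiece, and arrives at $\log_p S_{M_f}\le \tfrac{u}{4}\sum_i\alpha_i^2 + r\ep$ (this is exactly Equation~\eqref{logbound}). The $r\ep$ term is then absorbed by the quadratic slack in $\sum_i\alpha_i^2$ via Lemma~\ref{quadbound}: for $r\ge 2$ and $m\ge 4\ep+1$ one has $\sum_i(u\alpha_i)^2 + 4r\ep \le (m-1)^2+1+8\ep$, while for $m<4\ep+1$ the trivial bound $\log_p S_M\le m^2/4$ already suffices. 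So the telescoping you hope the Appendix supplies is unnecessary; the correct reference is Lemma~\ref{quadbound}, not Lemma~\ref{gaussprods} (which concerns a different sum arising in Chapter~\ref{c_normal}).

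Two smaller points. In your case~(a) (at least two primary components) the paper avoids Hall polynomials entirely: it simply bounds $S_M\le \mathcal{G}_{m_1}(p)\,\mathcal{G}_{m-m_1}(p)$ and applies Lemma~\ref{qests}, which already gives $(m^2-2m+2)/4+2\ep$ since $m_1^2+(m-m_1)^2\le (m-1)^2+1$. And in your case~(b) (single $f$ with $\deg f\ge 2$ and $\alpha$ a single part) the paper again uses the crude Galois-number bound over $\F_q$, exploiting $m^2/4u\le (m^2-2m+2)/4$ for $u\ge 2$. Only the single-component case with $r\ge 2$ parts actually needs Theorem~\ref{submodulesthm}.
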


\begin{proof}
Write $M = \oplus_{i=1}^k{M_i}$, where for each $i$, $M_i = M_{f_i}$ for some $f_i \in \Phi$ and $\dim_{\F_p}{M_i} = m_i$.

\hspace{1in}

\noindent
\emph{Case 1: $k \ge 2$}.

Each submodule of $M$ is a direct sum of submodules of the summands $M_i$, so $S_M = \prod_{i=1}^k{S_{M_i}} \le \mathcal{G}_{m_1}(p) \mathcal{G}_{m-m_1}(p)$.  Then by Lemma~\ref{qests},
\[
S_M \le C(p)^2 D(p)^2 p^{m_1^2/4 + (m-m_1)^2/4} \le C(p)^2 D(p)^2 p^{(m^2-2m+2)/4},
\]
since $0 < m_1 < m$.

\hspace{1in}

\noindent
\emph{Case 2: $k = 1$}.

In this case, $M = M_f$ for some $f \in \Phi$.  Let $u = \deg(f)$ and $q = p^u$, and let $M$ have type $\alpha'$ as a $\F_p[t]_f$-module, where $\alpha = (\alpha_1, \dots, \alpha_r)$.

\hspace{1in}

\noindent
\emph{Subcase 2.1: $\alpha$ has at least two parts}.

If $\beta = (\beta_1, \dots, \beta_s)$ and $\beta \subseteq \alpha$, then by Theorem~\ref{submodulesthm} and Lemma~\ref{qests} Equation~\ref{coefbounds}, the number of submodules of $M$ of type $\beta'$ is
\begin{eqnarray*}
S(\alpha', \beta', q) &\le& \prod_{i=1}^s{D(q) q^{(\beta_i - \beta_{i+1})(\alpha_i - \beta_i) + \beta_{i+1}(\alpha_i - \beta_i)}} \\
&=& D(q)^s \prod_{i=1}^s{q^{\beta_i (\alpha_i - \beta_i)}}.
\end{eqnarray*}
Thus
\begin{eqnarray*}
S_M &=& \sum_{\beta' \subseteq \alpha'}{S(\alpha', \beta', q)} \\
&\le& D(q)^r \sum_{\beta' \subseteq \alpha'}{\prod_{i=1}^r{q^{\beta_i (\alpha_i - \beta_i)}}} \\
&\le& D(q)^r \prod_{i=1}^r{\sum_{\beta_i=0}^{\alpha_i}{q^{\beta_i(\alpha_i-\beta_i)}}} \\
&\le& D(q)^r C(q)^r \prod_{i=1}^r{q^{\alpha_i^2/4}},
\end{eqnarray*}
where the last inequality follows from Lemma~\ref{polybound}.  Now $D(q) \le D(p)$ and $C(q) \le C(p)$ so, remembering that $u(\alpha_1 + \cdots + \alpha_r) = m$ and using Lemma~\ref{quadbound},
\begin{eqnarray}
\log_p{S_M} &\le& u(\alpha_1^2 + \cdots + \alpha_r^2)/4 + r\ep \label{logbound} \\
&\le& ((u\alpha_1)^2 + \cdots + (u\alpha_r)^2 + 4r \ep)/4\nonumber \\
&\le& ((m-1)^2 + 1 + 8 \ep)/4\nonumber \\
&\le& (m^2-2m+2)/4 + 2\ep, \nonumber
\end{eqnarray}
if $m \ge 4\ep + 1$.  For $m < 4\ep + 1$,
\begin{eqnarray*}
\log_p{S_M} &\le& m^2/4 \\
&\le& (m^2-2m+2)/4 + (m-1)/2 \\
&\le& (m^2-2m+2)/4 + 2\ep.
\end{eqnarray*}

\hspace{1in}

\noindent
\emph{Subcase 2.2: $\alpha$ has one part}.

In this case, $\alpha_1 = m/u$.  If $u \ge 2$, then by Lemma~\ref{qests} Equation~\ref{coefbounds},
\begin{eqnarray*}
S_M &=& \sum_{0 \le \beta_1 \le \alpha_1}{\sbinom{\alpha_1}{\beta_1}_q} \\
&\le& C(q) D(q) q^{m^2/4u^2} \\
&\le& C(p)^2 D(p)^2 p^{m^2/4u} \\
&\le& C(p)^2 D(p)^2 p^{(m^2-2m+2)/4},
\end{eqnarray*}
since $u \ge 2$.  On the other hand, if $u = 1$, then $f = t-c$ for some $c \in \F_p$ and $M \cong \oplus^m \{\F_p[t]/(f)\}$ so that $g$ acts as the scalar $c$ on $M$ and $S_M = \mathcal{G}_m(p)$.
\end{proof}

The next theorem strengthens the preceding result when the module structure is known more precisely and will be needed to deal with groups of lower $p$-length 2.

\begin{thm} \label{strongerbound}
Fix $d \ge 2$ and $g \in \GL(d, \F_p)$ with $g \neq 1$.  Suppose that $V$ is an $\F_p \left< g \right>$-module on which $g$ acts non-trivially and that $M$ is an $\F_p \left< g \right>$-module extension of $V \wedge V$ by $V$.  Let $v = \dim_{\F_p}(V)$, let $m = \dim_{\F_p}(M) = v(v+1)/2$, and let $S_M$ be the number of submodules of $M$.  Then
\[
\log_p{S_M} \le (m-4)^2/4 + C,
\]
where $\ep = \log_p{(C(p)D(p))}$ and
\[
C = \left\{
  \begin{array}{c@{\quad:\quad}l}
    \ep + 2m-4 & m \le 45 \\
    5\ep + 4 & m > 45.
  \end{array}
    \right.
\]
\end{thm}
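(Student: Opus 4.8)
The plan is to regard $M$ as a module over $\F_p[t]$ by letting $t$ act as $g$, so that its submodules are exactly its $\F_p\langle g\rangle$-submodules, and to exploit the short exact sequence $0\to V\to M\to V\wedge V\to 0$ coming from the hypothesis. The trivial bound is $S_M\le\mathcal{G}_m(p)\le C(p)D(p)\,p^{m^2/4}$, and since $(m-4)^2/4=(m^2-8m+16)/4$ we must save roughly $2m$ in the exponent. The whole point of the hypotheses is that $g$ acts non-trivially on $V$, which forbids $M$ from being a semisimple module with a single eigenvalue — the unique configuration realizing $S_M\approx p^{m^2/4}$ in Theorem~\ref{upperbound} — while the relation $m=v+\binom v2$ with $\binom v2=m-v$ lets the product structure of the extension turn this into a saving of order $v^3$, far more than $2m$ once $v$ is moderately large.

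First I would dispose of the case where $g$ acts on $V$ as a scalar $c$; since $g\neq 1$ and $g\in\GL(d,\F_p)$ we have $c\notin\{0,1\}$, hence $c^2\neq c$. Then $g$ acts as $c^2$ on the quotient $V\wedge V$, so $V$ and $V\wedge V$ are supported at the coprime primes $(t-c)$ and $(t-c^2)$; the extension splits, $M=M_{t-c}\oplus M_{t-c^2}$ with $\dim M_{t-c}=v$ and $\dim M_{t-c^2}=m-v$, and $g$ is scalar on each summand, so every subspace is a submodule and $S_M=\mathcal{G}_v(p)\,\mathcal{G}_{m-v}(p)$. By Lemma~\ref{qests} this gives $\log_p S_M\le\tfrac14\bigl(v^2+(m-v)^2\bigr)+2\ep=\tfrac14\bigl(m^2-2v(m-v)\bigr)+2\ep$, and $2v(m-v)=2v\binom v2=v^2(v-1)$; one then compares directly with $(m-4)^2/4+C$ using $m=v(v+1)/2$, the generous term $2m-4$ in $C$ (active for $m\le 45$, i.e.\ $v\le 9$) absorbing the discrepancy at the finitely many small $v$ for which $v^2(v-1)<8m-16$. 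Note this case is vacuous for $p=2$, where the only scalar matrix is the identity, and that $p=2$ is also where $\ep$ is largest.

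When $g|_V$ is not scalar I would bound $S_M$ through the extension directly: a submodule $N$ is determined by $N_0=N\cap V$, by $\bar N=(N+V)/V\subseteq V\wedge V$, and by a lift, the number of lifts for fixed $(N_0,\bar N)$ being at most $\bigl|\Hom_{\F_p\langle g\rangle}(\bar N,\,V/N_0)\bigr|$. Now $V$ is non-scalar, so $s_V\le C(p)D(p)\,p^{(v^2-2v+2)/4}$ by Theorem~\ref{upperbound}, and $s_{V\wedge V}\le C(p)D(p)\,p^{\binom v2^2/4}$ in any case; since $v^2+\binom v2^2=m^2-2v\binom v2$, the product $s_V\,s_{V\wedge V}$ already carries a saving of order $v^3$. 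The delicate point is the lift count: the naive estimate $|\Hom_{\F_p}(\bar N,V/N_0)|=p^{\dim\bar N\cdot(v-\dim N_0)}$ only reproduces the trivial bound, so one must use that $\Hom$ is taken over $\F_p\langle g\rangle$ — which is far smaller, because either the eigenvalues occurring in $\bar N$ and in $V/N_0$ differ (killing the $\Hom$ entirely), or, in the worst sub-case where $g|_V$ has the single eigenvalue $1$ (so a Jordan block of size $\ge 2$), the non-semisimplicity of $V$ forces its type to have few parts, bounding $\dim\Hom_{\F_p\langle g\rangle}(\bar N,V/N_0)$ by roughly $\dim\bar N$ rather than $\dim\bar N\cdot\dim(V/N_0)$. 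Feeding this into the sum over $(N_0,\bar N)$ via the Gaussian-coefficient and geometric-series estimates of Appendix~\ref{a_estimates} again yields a saving $\asymp v^3$. One last degeneracy: when $g|_V$ has a single eigenvalue and $M$ happens to be (nearly) cyclic over $\F_p[t]$, Theorem~\ref{upperbound} is vacuous ($\sum_i\alpha_i^2=m^2$), but a cyclic module $\F_p[t]/(f)^k$ has only $k+1$ submodules and, for $v\ge 4$, $M$ cannot be cyclic since its quotient $\wedge^2(J_v(1))$ is not; so the remaining genuinely cyclic configurations ($v\le 3$) give only an $O(m)$ bound, trivially inside the target.

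The main obstacle is the numerical bookkeeping of $\log_p S_M\le(m-4)^2/4+C$ uniformly in $p$ and across all $v$: the clean $v^3$-saving only dominates $8m-16$ once $v\gtrsim 10$ ($m>45$), which is exactly why $C$ is piecewise, with the linear term $2m-4$ for $v\le 9$ and the bounded term $5\ep+4$ thereafter. One has to track the several contributions of $\ep=\log_p(C(p)D(p))$ — one per summand arising from $s_V$, from $s_{V\wedge V}$, and from each use of the estimates of Appendix~\ref{a_estimates} — and check they stay absorbed in $C$ for every prime, the tightest constraint coming from $p=2$ (largest $\ep$, but no scalar case) and from the low-dimensional modules where the only available bound on $S_M$ is a trivial chain count; everything else is the routine-but-lengthy verification of polynomial inequalities in $v$.
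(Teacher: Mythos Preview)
Your extension-and-lift strategy is genuinely different from the paper's argument. The paper never counts through the filtration $0\to V\to M\to V\wedge V\to 0$; it works instead with the primary decomposition $M=\bigoplus_f M_f$. If some initial segment $M_{f_1}\oplus\cdots\oplus M_{f_t}$ has dimension in $[4,\,m-4]$, the bound is immediate from $S_M\le\mathcal{G}_w(p)\,\mathcal{G}_{m-w}(p)$. Otherwise a single component $M_1$ has $\dim M_1\ge m-3$, and the hypotheses on $V$ and $V\wedge V$ enter only through a short structural claim ruling out $g$ acting as a scalar on $M_1$ (and on its projection to $V$); a type-by-type analysis of $M_1$ in the spirit of Theorem~\ref{upperbound} then finishes.

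There is a real gap in your handling of the unipotent single-eigenvalue case. You assert that when $g|_V$ has the sole eigenvalue $1$, ``the non-semisimplicity of $V$ forces its type to have few parts,'' hence $\dim\Hom_{\F_p\langle g\rangle}(\bar N,V/N_0)\approx\dim\bar N$. The implication \emph{few parts $\Rightarrow$ small $\Hom$} is sound (if $V$ were a single Jordan block of size $v$ then indeed $\dim\Hom(\bar N,V)\le\dim\bar N$), but \emph{non-semisimple $\Rightarrow$ few parts} is false: the hypothesis that $g$ acts non-trivially forces only \emph{one} block of size $\ge 2$. Take $V$ with Jordan type $(2,1^{v-2})$ at the eigenvalue $1$; this has $v-1$ parts. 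Then $\dim\ker(g-1)|_V=v-1$, so with $N_0=0$ and $\bar N$ a trivial module of dimension $k$ one gets $\dim\Hom_{\F_p\langle g\rangle}(\bar N,V)=k(v-1)$, essentially the naive $\F_p$-linear size. Since $V\wedge V$ then has Jordan type $(2^{v-2},1^{\binom{v-2}{2}+1})$ with socle of dimension $\binom{v}{2}-(v-2)$, such $\bar N$ are abundant and your lift saving evaporates precisely in the case you flagged as worst. A finer double sum over the types of $(N_0,\bar N)$ might still close, but the controlling mechanism is a socle-dimension trade-off rather than ``few parts,'' and carrying that out is a different and considerably longer computation than what you sketched.
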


\begin{proof}
First, if $v \le 9$, then $m \le 45$.  In this case,
\begin{eqnarray*}
S_M &\le& \mathcal{G}_m(p) \\
&\le& C(p) D(p) p^{m^2/4} \\
&=& C(p) D(p) p^{(m-4)^2/4 + 2m-4},
\end{eqnarray*}
proving the result.  So we may assume that $v \ge 10$.

Write $M = \oplus_{i=1}^k{M_i}$, where for each $i$, $M_i = M_{f_i}$ for some $f_i \in \Phi$ and $\dim_{\F_p}{M_i} = m_i$; we may assume that $m_1 \ge m_2 \ge \cdots \ge m_k$.  Note that $m_1 + \cdots + m_k = m$.  Then $V = \oplus_{i=1}^m{\pi M_i}$ where $\pi$ is the projection from $M$ onto $V$.

Fix $0 < t < k$ and set $W = M_1 \oplus \cdots \oplus M_t$.  Also let $w = \dim{W} = m_1 + \cdots + m_t$.  Then $S_M \le \mathcal{G}_w(p) \mathcal{G}_{M-w}(p)$ since any submodule of $M$ is a direct sum of submodules of the summands $M_i$.  By Lemma~\ref{qests},
\[
S_M \le C(p)^2 D(p)^2 p^{w^2/4 + (M-w)^2/4}.
\]
When $4 \le w \le M-4$, it follows that
\begin{eqnarray*}
S_M &\le& C(p)^2 D(p)^2 p^{4 + (M-4)^2/4} \textrm{ and} \\
\log_p{S_M} &\le& (M-4)^2/4 + 2\ep + 4,
\end{eqnarray*}
proving the result.  If we cannot choose $t$ so that $4 \le w \le m-4$, then since $m > 9$ implies that $m_1 \not\le 3$, it must be that $m_1 \ge m-3$ and $k \le 4$.  Write $Y = M_2 \oplus \cdots \oplus M_k$; then $y = \dim{Y} \le 3$.  (It is possible that $Y$ is the zero module and that $y = 0$.)  At this point we need to prove a technical claim which we will use twice.

\hspace{1in}

\textbf{Claim:}  Suppose that $V$ is the direct sum of $\F_p \left< g \right>$-modules $A$ and $B$ of dimensions $a \ge 4$ and $v-a$ over $\F_p$, and suppose that $A \subset M_1 \pi$.  If $g$ acts as a scalar $c$ on $A$, then $c = 1$ and $A \otimes B$ is the direct sum of $a$ copies of $B$.

\hspace{1in}

\emph{Proof of claim:}  If $V = A \oplus B$, then $V \wedge V \cong (A \wedge A) \oplus (B \wedge B) \oplus (A \otimes B)$.  If $g$ acts as a scalar $c$ on $A$, then $A \cong \oplus \{ \F_p[t]/(t-c) \}^a$ and $M_1 = M_{f_1}$ with $f_1 = t-c$.  In this case $g$ acts as the scalar $c^2$ on $A \wedge A$, so $A \wedge A \cong \{ \F_p[t]/(t-c^2) \}^{a(a-1)/2}$.  If $c \neq 1$, then $A \wedge A \not\subseteq M_1$ and hence $A \wedge A \subseteq Y$.  But then $a(a-1)/2 = \dim(A \wedge A) \le \dim{Y} \le 3$, which is impossible.  Therefore $c = 1$.  Since $g$ acts on $V$ non-trivially, the action on $B$ is non-trivial and $A \otimes B$ is the direct sum of $a$ copies of $B$.

\hspace{1in}

Now take $A = \pi M_1$ and $B = \pi Y$ so that $V = A \oplus B$.  Suppose that $g$ acts on $A$ as a scalar $c$.  Since $v \ge 7$ and $\dim{B} \le \dim{Y} \le 3$, we see that $a \ge 4$, and by the claim, $c = 1$ and $A \otimes B$ is the direct sum of $a$ copies of $B$.  If $B$ is the zero module, this contradicts the fact that $g$ acts non-trivially on $V$.  Otherwise, $v-a > 0$.  Since $B$ is the image of $Y$, it follows that $A \otimes B \subseteq Y$, and $a(v-a) \le \dim{Y} \le 3$, which is false.  Therefore $g$ does not act on $\pi M_1$ as a scalar, and hence does not act on $M_1$ as a scalar.

We may assume that $M_1 = M_f$ where $f$ has degree $u$ over $\F_p$ and $M_1$ and $M_1 \pi$ have types $\alpha'$ and $\beta'$ respectively, where $\beta \subseteq \alpha$.  Write $\alpha = (\alpha_1, \dots, \alpha_r)$ and $\beta = (\beta_1, \dots, \beta_s)$.

\hspace{1in}

\noindent
\emph{Case 1: $u > 1$}.

Writing $S_{M_1}$ for the number of submodules of $M_1$, we have
\begin{eqnarray*}
S_{M_1} &\le& \mathcal{G}_{m_1/u}(q) \\
&\le& C(q) D(q) q^{m_1^2/4u^2} \\
&\le& C(p) D(p) p^{m_1^2/4u} \\
&\le& C(p) D(p) p^{m_1^2/8}.
\end{eqnarray*}
Then
\begin{eqnarray*}
S_{M} &\le& S_{M_1} \mathcal{G}_y(p) \\
&\le& C(p)^2 D(p)^2 p^{m_1^2/8 + y^2/4} \\
&\le& C(p)^2 D(p)^2 p^{m^2/8 + 9/4} \\
&\le& C(p)^2 D(p)^2 p^{(m-4)^2/4 + 9/4},
\end{eqnarray*}
where the last line uses the fact that $m \ge 14$.  Thus $\log_p{S_M} \le C + (m-4)^2/4$.

\hspace{1in}

\noindent
\emph{Case 2: $u = 1$}.

In this case, $f = t-c$ for some $c \in \F_p$.  Since $g$ does not act as a scalar on $M_1$ or $\pi M_1$, we know $\alpha_2 \ge \beta_2 > 0$.

By Equation~\ref{logbound},
\[
\log_p{S_M} \le (\alpha_1^2 + \cdots + \alpha_r^2)/4 + r\ep,
\]
so
\[
\log_p{S_M} \le \log_p{S_{M_1}} + \log_p{\mathcal{G}_y(p)} \le (\alpha_1^2 + \cdots + \alpha_r^2 + y^2)/4 + (r+1)\ep.
\]

\hspace{1in}

\noindent
\emph{Subcase 2.1: $\alpha_1 \le m-4$}

If $r = 2$, then
\begin{eqnarray*}
\log_p{S_M} &\le& (\alpha_1^2 + \alpha_2^2 + y^2)/4 + 3\ep\\
&\le& ((m-4)^2 + 4^2 + 0^2)/4 + 3\ep\\
&\le& (m-4)^2/4 + C.
\end{eqnarray*}
If $r = 3$, then
\begin{eqnarray*}
\log_p{S_M} &\le& (\alpha_1^2 + \alpha_2^2 + \alpha_3^2 + y^2)/4 + 4\ep \\
&\le& ((m-4)^2 + 3^2 + 1^2 + 0^2)/4 + 4\ep \\
&\le& (m-4)^2/4 + C.
\end{eqnarray*}
Finally, if $4 \le r \le m$, then by Lemma~\ref{quadbound}, we get
\begin{eqnarray*}
\log_p{S_M} &\le& ((m-r)^2 + r)/4 + (r+1)\ep.
\end{eqnarray*}
The right-hand side is maximized at $r = 4$ or $r = m$.  Since $m > 45$ and $\ep \le 6$, it turns out that it is maximized at $r = 4$, where we get a bound of $(m-4)^2/4 + 5\ep + 1$.

\hspace{1in}

\noindent
\emph{Subcase 2.2: $\alpha_1 \ge m-3$}.

So we may assume that $\alpha_1 \ge m-3$.  Then $\alpha_2+\cdots+\alpha_r+y \le 3$, and so $\beta_2 + \cdots + \beta_s + \dim(\pi Y) \le 3$.  Since $\beta_1 + \cdots + \beta_s + \dim(\pi Y) = v \ge 10$, it follows that $\beta_1 \ge 7$ and $\beta_1 - \beta_2 \ge 4$.  Note that $\beta_1 - \beta_2$ is the number of summands of $\pi M_1$ that are isomorphic to $\F_p[t]/(f-c)$.  So write $\pi M_1 = A \oplus C$, where $a = \dim{A} = \beta_1 - \beta_2$ and $g$ acts as the scalar $c$ on $A$ and not on $C$.  Set $B = C \oplus \pi Y$.  Then $V = A \oplus B$ and by the claim, $c = 1$ and $A \otimes B$ is a direct sum of $a$ copies of $B$.  Then $A \otimes B$ is contained in $Y$ plus the components of $M_1$ that $g$ does not act as a scalar on, so that $a \beta_2 \le \dim{(A \otimes B)} \le \alpha_2 + y \le 3$, which is impossible.
\end{proof}

\section{A Proof of Theorem~\ref{limit2thm}}

We can now prove Theorem~\ref{limit2thm} and Corollary~\ref{limit2cor}, restated here for convenience.

\renewcommand{\arraystretch}{1.2}
\begin{thmx}[Theorem~\ref{limit2thm}]
Fix a prime $p$ and integers $d$ and $n$ so that either $n = 2$ and $d \ge 10$ or $n \ge 3$ and $d \ge 3$.  Let $F$ be the free group of rank $d$ and let $d_n$ be the dimension of $F_n/F_{n+1}$.  Let
\[
c_1 = \left\{
  \begin{array}{r@{\quad:\quad}l}
    C(p)^5 D(p)^4 p^{17/4} & \textrm{$n = 2$ and $d \ge 10$} \\
    C(p)^2 D(p) p^{3/4} & n \ge 3.
  \end{array}
  \right.
\]
Let
\[
c_2 = \left\{
  \begin{array}{r@{\quad:\quad}l}
    -d & n = 2 \\
    d^2-d_n/2 & n \ge 3.
  \end{array}
  \right.
\]
Then
\begin{enumerate}
\renewcommand{\labelenumi}{\emph{(\alph{enumi})}}
\renewcommand{\theenumi}{\ref{est2thm}(\alph{enumi})}
\item
\[
1 \le \frac{|\mathfrak{C}_{d, n}| \cdot |\GL(d,\F_p)|}{|\mathcal{C}_{d, n}|} \le 1 + c_1 p^{c_2}.
\]
\item
\[
1 \le \frac{|\mathfrak{C}_{d, n}|}{|\mathfrak{D}_{d, n}|} \le \frac{1+c_1 p^{c_2}}{1-c_1 p^{c_2}}.
\]
\end{enumerate}
\end{thmx}
\renewcommand{\arraystretch}{1}

\begin{proof}
Let $(\mathcal{C}_{d,n})^g$ be the set of elements of $\mathcal{C}_{d,n}$ fixed by $g$.  Then $|(\mathcal{C}_{d,n})^g|$ is just the number of submodules of $F_n/F_{n+1}$ viewed as a $\F_p \left< g \right>$-module.  We explain first why only the identity element of $\GL(d, \F_p)$ can act as a scalar on $F_n/F_{n+1}$.  By Corollary~\ref{wedgecor}, $F_n/F_{n+1}$ has a $\F_p \GL(d, \F_p)$-submodule $M$ which is isomorphic to an extension of $V \wedge V$ by $V$, where $V$ is the natural $\F_p \GL(d,\F_p)$-module.  If $g \in \GL(d,\F_p)$ acts on $F_n/F_{n+1}$ as a scalar $c \in \F_p$, then it acts on $V$ as the scalar $c$, and hence on $V \wedge V$ as the scalar $c^2$.  Thus $c = c^2$ and $c = 1$, so that $g$ is the identity on $V$, that is, the identity element in $\GL(d, \F_p)$.

Suppose first that $n > 2$.  We know from Theorem~\ref{upperbound} that if $g \neq 1$,
\[
|(\mathcal{C}_{d,n})^g| \le C(p)^2 D(p)^2 p^{(d_n^2 - 2d_n + 2)/4}.
\]
By the Cauchy-Frobenius Lemma,
\begin{eqnarray*}
|\GL(d,\F_p)| \cdot |\mathfrak{C}_{d,n}| &=& \sum_{g \in \GL(d,\F_p)}{|(\mathcal{C}_{d,n})^g|} \\
&=& |\mathcal{C}_{d,n}| + \sum_{1 \neq g \in \GL(d,\F_p)}{|(\mathcal{C}_{d,n})^g|} \\
&\le& |\mathcal{C}_{d,n}| + (|\GL(d,\F_p)| - 1) C(p)^2 D(p)^2 p^{(d_n^2-2d_n+2)/4}.
\end{eqnarray*}
By Equation~\ref{gnbounds} and the fact that $2-9p^{(1-d_n)/2}/2 > 1$,
\[
|\mathcal{C}_{d,n}| \ge D(p) p^{d_n^2/4-1/4}.
\]
Since $|\GL(d,\F_p)| \le p^{d^2}$, it follows that
\begin{eqnarray*}
1 &\le& \frac{|\GL(d,\F_p)| \cdot |\mathfrak{C}_{d,n}|}{|\mathcal{C}_{d,n}|} \\
&\le& 1 + C(p)^2 D(p) \; p^{(d_n^2-2d_n+2)/4 + d^2 - d_n^2/4 + 1/4} \\
&=& 1 + c_1 p^{d^2 - d_n/2}.
\end{eqnarray*}
If $n = 2$, then $F_2/F_3$ is an extension of $V \wedge V$ by $V$, and using the estimates of Lemma~\ref{strongerbound} and the argument above we obtain
\begin{eqnarray*}
1 &\le& \frac{|\GL(d,\F_p)| \cdot |\mathfrak{C}_{d,n}|}{|\mathcal{C}_{d,n}|} \\
&\le& 1 + c_1 p^{-d}.
\end{eqnarray*}
This proves part $(a)$.

To prove part $(b)$, we observe that $|\mathcal{C}_{d,n}| = \sum{|\GL(d,\F_p)|/|\GL(d,\F_p)_{(w)}|}$, where the sum is over all $\GL(d,\F_p)$-orbits in $\mathcal{C}_{d,n}$ and $|\GL(d,\F_p)_{(w)}|$ is the order of the stabilizer in $\GL(d,\F_p)$ of any element $w$ of the orbit under consideration.  Now $|\mathfrak{D}_{d, n}|$ is just the number of orbits for which $|\GL(d,\F_p)_{(w)}| = 1$, so
\[
|\mathcal{C}_{d,n}| \le |\GL(d,\F_p)| \cdot |\mathfrak{D}_{d,n}| + |\GL(d,\F_p)| (|\mathfrak{C}_{d,n}| - |\mathfrak{D}_{d,n}|) / 2.
\]
That is,
\[
(2/|\GL(d,\F_p)|) |\mathcal{C}_{d,n}| - |\mathfrak{C}_{d,n}| \le |\mathfrak{D}_{d,n}|,
\]
so that
\begin{eqnarray*}
\frac{|\mathfrak{C}_{d,n}|}{|\mathfrak{D}_{d,n}|} &\le& \frac{|\mathfrak{C}_{d,n}|}{2|\mathcal{C}_{d,n}|/|\GL(d,\F_p)| - |\mathfrak{C}_{d,n}|} \\
&\le& \frac{|\mathfrak{C}_{d,n}| \cdot |\GL(d,\F_p)| /|\mathcal{C}_{d,n}|}{2 - |\mathfrak{C}_{d,n}| \cdot |\GL(d,\F_p)| /|\mathcal{C}_{d,n}|} \\
&\le& \frac{1 + c_1 p^{c_2}}{1 - c_1 p^{c_2}}.
\end{eqnarray*}
\end{proof}

\begin{cor}
If $n \ge 2$, then
\[
\lim_{d \to \infty}{\frac{|\mathfrak{D}_{d,n}|}{|\mathfrak{C}_{d,n}|}} = 1.
\]
If $d \ge 3$, then
\[
\lim_{n \to \infty}{\frac{|\mathfrak{D}_{d,n}|}{|\mathfrak{C}_{d,n}|}} = 1.
\]
If $n = 2$ and $d \ge 10$, or $n \ge 3$ and $d \ge 5$, or $n \ge 4$ and $d \ge 3$, then
\[
\lim_{p \to \infty}{\frac{|\mathfrak{D}_{d,n}|}{|\mathfrak{C}_{d,n}|}} = 1
\]
\end{cor}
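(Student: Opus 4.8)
The plan is to read everything off Theorem~\ref{limit2thm}(b), which sandwiches $|\mathfrak{C}_{d,n}|/|\mathfrak{D}_{d,n}|$ between $1$ and $(1+c_1p^{c_2})/(1-c_1p^{c_2})$ whenever $n=2,\ d\ge 10$ or $n\ge 3,\ d\ge 3$. Taking reciprocals, $|\mathfrak{D}_{d,n}|/|\mathfrak{C}_{d,n}|$ lies between $(1-c_1p^{c_2})/(1+c_1p^{c_2})$ and $1$, so it suffices to check, in each of the three limiting regimes, that the hypotheses of Theorem~\ref{limit2thm} eventually hold and that $c_1p^{c_2}\to 0$; the squeeze then yields $|\mathfrak{D}_{d,n}|/|\mathfrak{C}_{d,n}|\to 1$.

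First I would treat $d\to\infty$ with $n\ge 2$ fixed. When $n=2$ we eventually have $d\ge 10$ and $c_1p^{c_2}=C(p)^5D(p)^4\,p^{17/4-d}\to 0$. When $n\ge 3$ the factor $c_1=C(p)^2D(p)p^{3/4}$ does not depend on $d$, while $c_2=d^2-d_n/2\to-\infty$ because $d_n$ grows like $d^n/n$ (Corollary~\ref{dimcor}, together with the explicit bounds of Lemma~\ref{dnasymptotics2}) and $n\ge 3$; hence $c_1p^{c_2}\to 0$. Next, for $n\to\infty$ with $d\ge 3$ fixed, eventually $n\ge 3$, so Theorem~\ref{limit2thm} applies with $c_1$ independent of $n$ and $c_2=d^2-d_n/2$; since $\dim\F_p[\Lambda^i]\ge 1$ for every $i$ when $d\ge 2$, the sequence $d_n$ is strictly increasing and unbounded, so $c_2\to-\infty$ and $c_1p^{c_2}\to 0$.

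Finally I would treat $p\to\infty$ with $(d,n)$ in one of the listed ranges. Here $C(p)\to 1$ and $D(p)\to 1$, so $c_1p^{c_2}\to 0$ as soon as the exponent of $p$ is strictly negative. For $n=2,\ d\ge 10$ that exponent is $17/4-d<0$; for $n\ge 3$ it is $3/4+d^2-d_n/2$, and the point is that $d_n/2>d^2+3/4$ holds exactly under the hypotheses $n\ge 3,\ d\ge 5$ and $n\ge 4,\ d\ge 3$ — the extremal small cases $d_3$ at $d=5$ and $d_4$ at $d=3$ already clear the bound, and $d_n$ is monotone in both $d$ and $n$.

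I expect the only real content to be this last numerical verification, which is exactly what the inequalities on $d_n$ in Lemma~\ref{dnasymptotics2} are for (and which parallels the use of that lemma in the proof of Corollary~\ref{limit1cor}); everything else is the elementary squeeze argument, observing that $c_1p^{c_2}<1$ once it is small so the displayed bounds are meaningful, and that the listed hypotheses on $(d,n)$ are precisely those guaranteeing both applicability of Theorem~\ref{limit2thm} and negativity of the relevant exponent of $p$.
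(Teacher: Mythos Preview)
Your approach is essentially the paper's: read the three limits off Theorem~\ref{limit2thm}(b) by showing $c_1p^{c_2}\to 0$ in each regime, with the only substantive work being the numerical verification that $3/4+d^2-d_n/2<0$ for the listed $(d,n)$ in the $p\to\infty$ case.

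One small imprecision in your last paragraph: monotonicity of $d_n$ in $d$ by itself is not enough, since the threshold $2d^2+3/2$ grows with $d$ too. What you actually need is that $d_n-2d^2$ exceeds $3/2$ for all the listed pairs, and checking the single ``corner'' $(d,n)=(5,3)$ does not automatically propagate to $(6,3),(7,3),\dots$ via monotonicity of $d_n$ alone. The paper handles this by invoking Lemma~\ref{dnasymptotics2} (Equation~\eqref{dninequality3}), which gives $d_n>4d^2-11/16$ and hence $d^2-d_n/2<-d^2+11/32<-3/4$ on the ranges that lemma covers, and then dispatches the finitely many remaining $(d,n)$ by direct computation. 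Your sketch would be complete once you either cite that lemma or note that $d_3-2d^2$ is itself increasing for $d\ge 5$ (its leading term is $d^3/3$), together with the two base checks you mention.
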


\begin{proof}
It is clear from Theorem~\ref{limit2thm} that the first two limits hold.  For the third limit, note that by Lemma~\ref{dnasymptotics2} Equation~\ref{dninequality3},
\[
d^2 - d_n/2 < -3/4
\]
for all values of $d$ and $n$ for which Equation~\ref{dninequality3} holds.  For the finitely many values of $d$ and $n$ for which Equation~\ref{dninequality3} does not hold but $n = 2$ and $d \ge 10$, or $n \ge 3$ and $d \ge 5$, or $n \ge 4$ and $d \ge 3$, a computer check shows that
\[
d^2 - d_n/2 < -3/4.
\]
Combined with Theorem~\ref{limit2thm}, this shows that
\[
\lim_{p \to \infty}{\frac{|\mathfrak{C}_{d,n}|}{|\mathfrak{A}_{d,n}|}} = 1
\]
for the given values of $d$ and $n$.
\end{proof}

\chapter{A Survey on the Automorphism Groups of Finite $p$-Groups}
\label{c_examples}
\chaptermark{Survey of Automorphism Groups}

This chapter constitutes a survey of some of what is known about the automorphism groups of finite $p$-groups.  The focus is on three topics: explicit computations for familiar finite $p$-groups; constructions of finite $p$-groups whose automorphism groups satisfy certain conditions; and the discovery of finite $p$-groups whose automorphism groups are or are not $p$-groups themselves.

\section{The Automorphisms of Familiar \lowercase{$p$}-Groups}
\label{examplesec}

There are several familiar families of finite $p$-groups for which some information about their automorphism groups is known.  In particular, for a couple of these families, the automorphism groups have been described in a reasonably complete manner.  The goal of this section to present these results as concretely as possible.  We begin with a nearly exact determination of the automorphism groups of the extraspecial $p$-groups.  The next subsection discusses the maximal unipotent subgroups of Chevalley groups, for which Gibbs~\cite{gib} describes six types of automorphisms that generate the automorphism group.  For type $A_{\ell}$, Pavlov~\cite{pav} and Weir~\cite{wei} have (essentially) computed the exact structure of the automorphism group.  The last three subsections summarize what is known about the automorphism groups of the Sylow $p$-subgroups of the symmetric group, $p$-groups of maximal class, and certain stem covers.  We note that Barghi and Ahmedy~\cite{ba} claim to determine the automorphism group of a class of special $p$-groups constructed by Verardi~\cite{ver}; unfortunately, as pointed out in the MathSciNet review of~\cite{ba}, the proofs are incorrect.

\subsection{The Extraspecial $p$-Groups}

Winter~\cite{win} gives a nearly complete description of the automorphism group of an extraspecial $p$-group.  (Griess~\cite{gri} states many of these results without proof.)  Following Winter's exposition, we will present some basic facts about extraspecial $p$-groups and then describe their automorphisms.

Recall that a finite $p$-group $G$ is \emph{special} if either $G$ is elementary abelian or $Z(G) = G' = \Phi(G)$.  Furthermore, a non-abelian special $p$-group $G$ is \emph{extraspecial} if $Z(G) = G' = \Phi(G) \cong C_p$.  The order of an extraspecial $p$-group is always an odd power of $p$, and there are two isomorphism classes of extraspecial $p$-groups of order $p^{2n+1}$ for each prime $p$ and positive integer $n$, as proved in Gorenstein~\cite[Theorem 5.2]{gor}.  When $p = 2$, both isomorphism classes have exponent $4$.  When $p$ is odd, one of these isomorphism classes has exponent $p$ and the other has exponent $p^2$.

Any extraspecial $p$-group $G$ of order $p^{2n+1}$ has generators $x_1, x_2, \dots, x_{2n}$ satisfying the following relations, where $z$ is a fixed generator of $Z(G)$:
\begin{alignat*}{3}
\left[x_{2i-1}, x_{2i}\right] &= z && \quad \textrm{for $1 \le i \le n$,} \\
\left[x_i, x_j\right] &= 1 && \quad \textrm{for $1 \le i,j \le n$ and $|i-j| > 1$, and} \\
x_i^p &\in Z(G) && \quad \textrm{for $1 \le i \le 2n$}.
\end{alignat*}

When $p$ is odd, either $x_i^p = 1$ for $1 \le i \le 2n$, in which case $G$ has exponent $p$, or $x_1^p = z$ and $x_i^p = 1$ for $2 \le i \le 2n$, in which case $G$ has exponent $p^2$.  When $p = 2$, either $x_i^2 = 1$ for $1 \le i \le 2n$, or $x_1^2 = x_2^2 = z$ and $x_i^2 = 1$ for $3 \le i \le 2n$.

Recall that if two groups $A$ and $B$ have isomorphic centers $Z(A) \stackrel{\phi}{\cong} Z(B)$, then the \emph{central product} of $A$ and $B$ is the group
\[
(A \times B)/\{(z, \phi(z)^{-1}) \; : \; z \in Z(A)\}.
\]
All extraspecial $p$-groups can be written as iterated central products as follows.  If $p$ is odd, let $M$ be the extraspecial $p$-group of order $p^3$ and exponent $p$, and let $N$ be the extraspecial $p$-group of order $p^3$ and exponent $p^2$.  The extraspecial $p$-group of order $p^{2n+1}$ and exponent $p$ is the central product of $n$ copies of $M$, while the extraspecial $p$-group of order $p^{2n+1}$ and exponent $p^2$ is the central product of $n-1$ copies of $M$ and one copy of $N$.  If $p = 2$, the extraspecial $2$-group of order $2^{2n+1}$ and $x_1^2 = x_2^2 = 1$ is isomorphic to the central product of $n$ copies of the dihedral group $D_8$, while the extraspecial $2$-group of order $2^{2n+1}$ and $x_1^2 = x_2^2 = z$ is isomorphic to the central product of $n-1$ copies of $D_8$ and one copy of the quaternion group $Q_8$.

One of the isomorphism classes can be viewed more concretely.  The group of $(n+1) \times (n+1)$ matrices with ones on the diagonal, arbitrary entries from $\F_p$ in the rest of the first row and last column, and zeroes elsewhere is an extraspecial $p$-group of order $p^{2n+1}$.  When $p$ is odd, this is the extraspecial $p$-group of exponent $p$.  When $p = 2$, this is the central product of $n$ copies of $D_8$.

In~\cite{win}, Winter states the following theorem on the automorphism groups of the extraspecial $p$-groups for all primes $p$ (an explicit description of the automorphisms follows the theorem).

\begin{thm}[Winter~\cite{win}]
\label{spthm}
Let $G$ be an extraspecial $p$-group of order $p^{2n+1}$.  Let $I = \Inn(G)$ and let $H$ be the normal subgroup of $\Aut(G)$ which acts trivially on $Z(G)$.  Then
\begin{enumerate}
\item $I \cong (C_p)^{2n}$.
\item $\Aut(G) \cong H \rtimes \left< \theta \right>$, where $\theta$ is an automorphism of order $p-1$.
\item If $p$ is odd and $G$ has exponent $p$, then $H/I \cong \Sp(2n, \F_p)$, and the order of $H/I$ is $p^{n^2} \prod_{i=1}^n{(p^{2i}-1)}$.
\item If $p$ is odd and $G$ has exponent $p^2$, then $H/I \cong Q \rtimes \Sp(2n-2, \F_p)$, where $Q$ is a normal extraspecial $p$-group of order $p^{2n-1}$, and the order of $H/I$ is $p^{n^2} \prod_{i=1}^{n-1}{(p^{2i} - 1)}$.  The group $Q \rtimes \Sp(2n-2, \F_p)$ is isomorphic to the subgroup of $\Sp(2n, \F_p)$ consisting of elements whose matrix $(a_{ij})$ with respect to a fixed basis satisfies $a_{11} = 1$ and $a_{1i} = 0$ for $i > 1$.
\item If $p = 2$ and $G$ is isomorphic to the central product of $n$ copies of $D_8$, then $H/I$ is isomorphic to the orthogonal group of order $2^{n(n-1)+1}(2^n-1)\prod_{i=1}^{n-1}{(2^{2i}-1)}$ that preserves the quadratic form $\xi_1 \xi_2 + \xi_3 \xi_4 + \cdots + \xi_{2n-1} \xi_{2n}$ over $\F_2$.
\item If $p = 2$ and $G$ is isomorphic to the central product of $n-1$ copies of $D_8$ and one copy of $Q_8$, then $H/I$ is isomorphic to the orthogonal group of order $2^{n(n-1)+1}(2^n+1)\prod_{i=1}^{n-1}{(2^{2i}-1)}$ that preserves the quadratic form $\xi_1 \xi_2 + \xi_3 \xi_4 + \cdots + \xi_{2n-1}^2 + \xi_{2n-1} \xi_{2n} + \xi_{2n}^2$ over $\F_2$.
\end{enumerate}
\end{thm}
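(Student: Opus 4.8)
The plan is to translate the whole problem into the linear algebra of the quotient $V = G/Z(G)$, a $2n$-dimensional $\F_p$-vector space (since $Z(G) = G' = \Phi(G) \cong C_p$). The commutator map descends to a non-degenerate alternating form $B\colon V \times V \to Z(G)$, $B(\bar x,\bar y) = [x,y]$, and the $p$-th power map descends to extra structure: for $p$ odd it gives a linear functional $\ell\colon V \to Z(G)$, which is zero in the exponent-$p$ case and nonzero in the exponent-$p^2$ case; for $p = 2$ it gives a non-degenerate quadratic form $Q$ polarizing to $B$, of $+$-type for the central product of copies of $D_8$ and of $-$-type when a $Q_8$ factor is present. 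One first checks, from the defining relations listed above, that the two isomorphism classes of extraspecial groups correspond exactly to these two alternatives (zero/nonzero $\ell$, or $+$/$-$ type of $Q$).

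Next I would introduce two homomorphisms on $\Aut(G)$: the character $\chi\colon \Aut(G) \to \Aut(Z(G)) \cong \F_p^{\times}$ recording the action on the characteristic subgroup $Z(G)$, whose kernel is the group $H$ in the statement; and the restriction map $\rho\colon \Aut(G) \to \GL(V)$. Part (1) is immediate, since $\Inn(G) \cong G/Z(G) \cong C_p^{2n}$ because $\Phi(G) = Z(G)$. The key computation is $\ker\rho = \Inn(G)$: an endomorphism inducing the identity on $G/\Phi(G) = V$ must have the form $x \mapsto x\,c(x)$ with $c \in \Hom(G, Z(G)) \cong V^{\ast}$ (the homomorphism property forces $c$ to be a homomorphism, killing $G' = Z(G)$), each such map is an automorphism by Proposition~\ref{fratautprop}, and conjugation by $g$ realizes the functional $\bar x \mapsto B(\bar x,\bar g)$; non-degeneracy of $B$ makes $\bar g \mapsto c_g$ an isomorphism $V \to V^{\ast}$, so the inner automorphisms exhaust $\ker\rho$.

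Then I would identify $\mathrm{im}(\rho)$. Compatibility with $B$ forces $\rho(\phi)$ to scale $B$ by $\chi(\phi)$, and compatibility with the power map forces $\rho(\phi)$ to scale $\ell$ by $\chi(\phi)$ (for $p$ odd) or to preserve $Q$ (for $p = 2$). Conversely, any linear map obeying these constraints respects all the defining relations, hence lifts to an endomorphism, which is an automorphism by Proposition~\ref{fratautprop}; this surjectivity step is the main technical point. Restricting to $H = \ker\chi$ then gives $H/\Inn(G)$ isomorphic to $\Sp(2n,\F_p)$ in the exponent-$p$ case; to $\mathrm{Stab}_{\Sp(2n,\F_p)}(\ell)$ in the exponent-$p^2$ case; and to the full orthogonal group $\mathrm{O}^{\pm}_{2n}(\F_2)$ of $Q$ when $p = 2$ (where $\chi$, hence $\theta$, is trivial). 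For the exponent-$p^2$ case one computes directly that $\mathrm{Stab}_{\Sp(2n,\F_p)}(\ell)$ is precisely the set of symplectic matrices with first row $(1,0,\dots,0)$, that it is $Q \rtimes \Sp(2n-2,\F_p)$ with unipotent radical $Q$ a Heisenberg (hence extraspecial) group of order $p^{2n-1}$, and that multiplying orders gives $p^{n^2}\prod_{i=1}^{n-1}(p^{2i}-1)$; the exponent-$p$ and $p=2$ order formulas are just the standard orders of $\Sp(2n,\F_p)$ and $\mathrm{O}^{\pm}_{2n}(\F_2)$.

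Finally, for the semidirect decomposition $\Aut(G) \cong H \rtimes \langle\theta\rangle$ (only nontrivial when $p$ is odd), I would exhibit $\theta$ explicitly as a diagonal automorphism: choose $\lambda \in (\Z/p^2)^{\times}$ of order exactly $p-1$ that is a primitive root mod $p$, and let $\theta$ send $x_{2i-1} \mapsto x_{2i-1}^{\lambda}$ and fix the other generators. One checks that $\theta$ respects the relations, that $\chi(\theta)$ generates $\F_p^{\times}$ (so $H \cap \langle\theta\rangle = 1$ and $H\langle\theta\rangle = \Aut(G)$), and that $\theta$ has order exactly $p-1$ — the choice of $\lambda$ modulo $p^2$ (not merely modulo $p$) is what makes $\theta^{p-1} = 1$ even though $x_1$ has order $p^2$; since $H = \ker\chi$ is normal, this is a split extension. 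The hardest part will be the surjectivity/lifting argument, i.e. verifying that every structure-preserving linear map genuinely comes from an automorphism, together with the concrete identification of the vector stabilizer in $\Sp(2n,\F_p)$ as $Q \rtimes \Sp(2n-2,\F_p)$ and the pairing of the two extraspecial isomorphism types with the two orthogonal types.
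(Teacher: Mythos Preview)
The paper does not supply its own proof of this theorem; it attributes the result to Winter and follows the statement with an explanatory discussion rather than a proof. That discussion introduces exactly the ingredients you use: the non-degenerate symplectic form $(\bar x,\bar y)\mapsto a$ where $[x,y]=z^a$ on $G/Z(G)$, an explicit choice of $\theta$ (via a primitive root $m$ mod $p$, with $\theta(x_{2i-1})=x_{2i-1}^m$, $\theta(x_{2i})=x_{2i}$), and an explicit lifting formula sending $T\in\Sp(2n,\F_p)$ to an endomorphism $\phi$ of $G$, which is an automorphism precisely when $T$ lies in the subgroup named in parts~3--6. Your plan is the same argument organized conceptually: set up $B$, $\ell$ (or $Q$), identify $\ker\rho=\Inn(G)$ via non-degeneracy of $B$, and characterize $\mathrm{im}(\rho|_H)$ as the isometry group of the extra structure. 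So your proposal is correct and matches the paper's sketch.

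One point where you are in fact more careful than the paper: in the exponent-$p^2$ case you insist on $\lambda\in(\Z/p^2)^{\times}$ of exact order $p-1$, so that $\theta^{p-1}(x_1)=x_1^{\lambda^{p-1}}=x_1$. The paper's description takes $0<m<p$ a primitive root mod $p$, which need not satisfy $m^{p-1}\equiv 1\pmod{p^2}$; with that choice $\theta^{p-1}$ may only be inner rather than the identity. Your refinement is the right fix.
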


The automorphisms in $\Aut(G)$ can be described more explicitly.  First, the automorphism $\theta$ may be chosen as follows.  Let $m$ be a primitive root modulo $p$ with $0 < m < p$.  Then define $\theta$ by $\theta(x_{2i-1}) = x_{2i-1}^m$ and $\theta(x_{2i}) = x_{2i}$ for $1 \le i \le n$ and by $\theta(z) = z^m$.  Next, the inner automorphisms are the $p^{2n}$ automorphisms $\sigma$ such that $\sigma(z) = z$ and $\sigma(x_i) = x_i z^{d_i}$ for each $i$ and some choice of integers $0 \le d_i < p$.

It remains to describe $H$.  For each $x \in G$, let $\overline{x}$ denote the coset $x Z(G)$.  Now $G/Z(G)$ becomes a non-degenerate symplectic space over $\F_p$ with the symplectic form $(\overline{x}, \overline{y}) = a$, where $[x,y] = z^a$ and $0 \le a < p$.  The symplectic group $\Sp(2n, \F_p)$ acts on $G/Z(G)$, preserving the given symplectic form.  Let $T \in \Sp(2n, \F_p)$ and let $A = (a_{ij})$ be the matrix of $T$ relative to the basis $\{\overline{x}_i\}$ (with $0 \le a_{ij} < p$).  Each element $x \in G$ can be uniquely expressed as $x = \left( \prod_{i=1}^{2n}{x_i^{a_i}} \right) z^c$ with $0 \le a_i, c < p$.  Define $\phi : G \to G$ by
\[
\phi(x) = \left[ \prod_{i=1}^{2n} \left( \prod_{j=1}^{2n}{x_j^{a_{ij}}} \right)^{a_i} \right] z^c.
\]
Then $\phi$ induces $T$ on $G/Z(G)$, and $\phi$ is an automorphism of $G$ if and only if $T$ is in the subgroup of $\Sp(2n, \F_p)$ to which $H/I$ is isomorphic (as given in statement 3, 4, 5, or 6 of Theorem~\ref{spthm}, depending on $p$ and the isomorphism class of $G$).

Note that the set of automorphisms $\phi$ does not necessarily constitute a subgroup of $H$, and so it is not obvious that $H$ splits over $I$ (and Winter does not address this issue).  However, as Griess proves in~\cite{gri}, when $p = 2$, $H$ splits if $n \le 2$ and does not split if $n \ge 3$.  Griess also states, but does not prove, that when $p$ is odd, $H$ always splits over $I$.  This observation is also made in, and can be deduced from, Isaacs~\cite{isa2} and~\cite{isa3} and Glasby and Howlett~\cite{gh}.

A short exposition of this proof when $p$ is odd and $G$ has exponent $p$ was communicated via the group-pub-forum mailing list by Isaacs~\cite{isa}.  Let $J/I$ be the central involution of the symplectic group $H/I$, and let $P$ be a Sylow 2-subgroup of $J$.  Then $J$ is normal in $H$, $|P| = 2$, and the non-identity element of $P$ acts on $I$ by sending each element to its inverse.  Then $1 = C_I(P) = I \cap N_H(I)$.  On the other hand, by the Frattini argument, $H = J N_H(P)$, and since $P \le J \cap N_H(P)$, it follows that $H = I N_H(P)$.  But this means that $N_H(P)$ is a complement of $I$ in $H$, and so $H$ splits over $I$.  According to Griess~\cite{gri}, the proof when $G$ has exponent $p^2$ is more technical.

\subsection{Maximal Unipotent Subgroups of a Chevalley Group}

Associated to any simple Lie algebra $\mathcal{L}$ over $\C$ and any field $K$ is the \emph{Chevalley group $G$ of type $\mathcal{L}$ over $K$}.  Table~\ref{chevtab} lists the Chevalley groups of types $A_{\ell}$, $B_{\ell}$, $C_{\ell}$, and $D_{\ell}$ over the finite field $\F_q$, as given in Carter~\cite{car}.  A few clarifications are necessary: the entry for type $B_{\ell}$ requires that $\F_q$ have odd characteristic; $O_{2\ell+1}(\F_q)$ is the orthogonal group which leaves the quadratic form $\xi_1 \xi_2 + \xi_3 \xi_4 + \cdots + \xi_{2\ell-1} \xi_{2\ell} + \xi_{2\ell+1}^2$ invariant over $\F_q$; and $O_{2\ell}(\F_q)$ is the orthogonal group which leaves the quadratic form $\xi_1 \xi_2 + \xi_3 \xi_4 + \cdots + \xi_{2\ell-1} \xi_{2\ell}$ invariant over $\F_q$.  Gibbs~\cite{gib} examines the automorphisms of a maximal unipotent subgroup of a Chevalley group over a field of characteristic not two or three.  We are only interested in finite groups, so from now on we will let $K = \F_q$, where $\F_q$ has characteristic $p > 3$ and $q = p^n$.  In this case, the maximal unipotent subgroups are the Sylow $p$-subgroups of the Chevalley group.  After some preliminaries on maximal unipotent subgroups, we will present his results.

\begin{table}
\begin{center}
\begin{tabular}{|c|c|}
\hline
Type & Chevalley Group \\
\hline
$A_{\ell}$ & $\mathrm{PSL}_{\ell+1}(\F_q)$ \\
$B_{\ell}$ & $\mathrm{P(O_{2\ell+1}'(\F_q))}$ \\
$C_{\ell}$ & $\mathrm{PSp_{2\ell}(\F_q)}$ \\
$D_{\ell}$ & $\mathrm{P(O_{2\ell}'(\F_q))}$ \\
\hline
\end{tabular} \\
\parbox{3in}{\caption{\label{chevtab}The Chevalley groups of types $A_{\ell}$, $B_{\ell}$, $C_{\ell}$ and $D_{\ell}$.}}
\end{center}
\end{table}

Let $\Sigma$, $\Sigma^+$, and $\pi$ denote the sets of roots, positive roots, and fundamental roots, respectively, of $\mathcal{L}$ relative to some Cartan subalgebra.  Then the Chevalley group $G$ is generated by $\{x_r(t) \; : \; r \in \Sigma, t \in \F_q \}$.  One maximal unipotent subgroup $U$ of $G$ is constructed as follows.  As a set,
\[
U = \{x_r(t) \; : \; r \in \Sigma^+, t \in \F_q \}.
\]
For any $r, s \in \Sigma^+$ and $t, u \in \F_q$, the multiplication in $U$ is given by
\begin{eqnarray*}
x_r(t) x_r(u) &=& x_r(t+u) \\
\left[x_s(u), x_r(t)\right] &=& \left\{
  \begin{array}{c@{\quad:\quad}l}
  1 & r+s \textrm{ is not a root} \\
  \prod\limits_{ir+js \in \Sigma}{x_{ir+js}(C_{ij,rs}(-t)^iu^j)} & r+s \textrm{ is a root}. \\
  \end{array} \right.
\end{eqnarray*}
Here $i$ and $j$ are positive integers and $C_{ij,rs}$ are certain integers which depend on $\mathcal{L}$.  The order of $U$ is $q^N$, where $N = |\Sigma^+|$, and $U$ is a Sylow $p$-subgroup of $G$.

Gibbs~\cite{gib} shows that $\Aut(G)$ is generated by six types of automorphisms, namely graph automorphisms, diagonal automorphisms, field automorphisms, central automorphisms, extremal automorphisms, and inner automorphisms.  Let the subgroup of $\Aut(G)$ generated by each type of automorphism be denoted by $P$, $D$, $F$, $C$, $E$, and $I$ respectively.  Let $P_r$ be the additive group generated by the roots of $\mathcal{L}$ and let $r_N$ be the highest root.  Label the fundamental roots $r_1, r_2, \dots, r_{\ell}$.

\begin{enumerate}
\item \emph{Graph Automorphisms}:  An automorphism $\sigma$ of $P_r$ that permutes both  $\pi$ and $\Sigma$ induces a graph automorphism of $U$ by sending $x_r(t)$ to $x_{\sigma(r)}(t)$ for all $r \in \pi$ and $t \in \F_q$.  Graph automorphisms correspond to automorphisms of the Dynkin diagram, and so types $A_{\ell} \; (\ell > 1)$, $D_{\ell} \; (\ell > 4)$, and $E_6$ have a graph automorphism of order 2, while the graph automorphisms in type $D_4$ form a group isomorphic to $S_3$.
\item \emph{Diagonal Automorphisms}:  Every character $\chi$ of $P_r$ with values in $\F_q^{\ast}$ induces a diagonal automorphism which maps $x_r(t)$ to $x_r(\chi(r)t)$ for all $r \in \Sigma^+$ and $t \in \F_q$.
\item \emph{Field Automorphisms}:  Every automorphism $\sigma$ of $\F_q$ induces a field automorphism of $U$ which maps $x_r(t)$ to $x_r(\sigma(t))$ for all $r \in \Sigma^+$ and $t \in \F_q$.
\item \emph{Central Automorphisms}:  Let $\sigma_i$ be endomorphisms of $\F_q^+$.  These induce a central automorphism that maps $x_{r_i}(t)$ to $x_{r_i}(t) x_{r_N}(\sigma_i(t))$ for $i = 1, \dots, \ell$ and all $t \in \F_q$.
\item \emph{Extremal Automorphisms}:  Suppose $r_j$ is a fundamental root such that $r_N - r_i$ is also a root.  Let $u \in \F_q^{\ast}$.  This determines an extremal automorphism which acts trivially on $x_{r_i}(t)$ for $i \neq j$ and sends $x_{r_j}(t)$ to
\[
x_{r_j}(t) x_{r_N-r_j}(ut) x_{r_N}((1/2)N_{r_N-r_j,r_j}ut^2).
\]
Here $N_{r_N-r_j, r_j}$ is a certain constant that depends on the type.  In type $C_{\ell}$, $r_N - 2r_j$ is also a root, and the map that acts trivially on $x_i(t)$ for $i \neq j$ and sends $x_{r_j}(t)$ to
\[
x_{r_j}(t) x_{r_N-2r_j}(ut) x_{r_N-r_j}((1/2)N_{r_N-2r_j,r_j}ut^2) x_{r_N}((1/3)C_{12,r_N-r_j,r_j}ut^3)
\]
is also an automorphism of $U$.
\end{enumerate}

Steinberg~\cite{ste} showed that the automorphism group of a Chevalley group over a finite field is generated by graph, diagonal, field, and inner automorphisms, which shows that $P$, $D$, and $F$ are, in fact, subgroups of $\Aut(U)$.  It is easy to see that the central automorphisms are automorphisms, and a quick computation verifies this for the extremal automorphisms as well.  Note that by multiplying an extremal automorphism by a judicious choice of central automorphism, the $x_{r_N}(\cdot)$ term in the description of the extremal automorphisms disappears.  Therefore, it is legitimate to omit the $x_{r_N}(\cdot)$ term in the definition of an extremal automorphism, and this is what we will use for what follows.

Gibbs does not compute the precise structure of $\Aut(U)$.  This has been done in type $A_{\ell}$, however, for all characteristics; Pavlov~\cite{pav} computes $\Aut(U)$ over $\F_p$, while Weir~\cite{wei} computes it over $\F_q$ (although his computations contain a mistake which we will address in a moment).  We will present the result for type $A_{\ell}$ as explicitly as possible, pausing to note that it does seem feasible to compute the structure of $\Aut(U)$ for other types in a similar manner.

As mentioned before, in type $A_{\ell}$, we can view $U$ as the set of $(\ell+1) \times (\ell+1)$ upper triangular matrices with ones on the diagonal and arbitrary entries from $\F_q$ above the diagonal.  There are $\binom{\ell+1}{2}$ positive roots in type $A_{\ell}$, given by $r_i + r_{i+1} + \cdots + r_j$ for $1 \le i \le j \le \ell$.  Let $E_{i,j}$ be the $(\ell+1) \times (\ell+1)$ matrix with a $1$ in the $(i,j)$-entry and zeroes elsewhere.  Then $x_r(t) = I + tE_{i,j}$, where $r = r_i + r_{i+1} + \cdots + r_j$.  In particular, $x_{r_i}(t) = I + tE_{i,i+1}$.  In type $A_{\ell}$, some of the given types of automorphisms admit simpler descriptions; we will be content to describe their action on the elements $x_{r_i}(t)$.

As mentioned, in type $A_{\ell}$ there is one nontrivial graph automorphism of order $2$, and it acts by sending $x_{r_i}(t)$ to $x_{r_{n+1-i}}(t)$.  The diagonal automorphisms correspond to selecting $\chi_1, \dots, \chi_n \in \F_q^{\ast}$ and mapping $x_{r_i}(t)$ to $x_{r_i}(\chi_i t)$.  This is equivalent to conjugation by a diagonal matrix of determinant 1.  The diagonal automorphisms form an elementary abelian subgroup of order $(p-1)^{n-1}$.  The field automorphisms of $\F_q$ are generated by the Frobenius automorphism and form a cyclic subgroup of order $n$.

Let $a_1, \dots, a_n$ generate the additive group of $\F_q$.  Then the elements $x_{r_i}(a_j)$ for $i,j=1,\dots,n$ generate $U$.  The central automorphisms are generated by the automorphisms $\tau_j^m$ which send $x_{r_j}(a_m)$ to $x_{r_j}(a_m) x_{r_N}(1)$ and fix $x_{r_i}(a_k)$ for $i \neq j$ and $k \neq m$, where $j = 2, \dots, \ell-1$ and $m = 1, \dots, n$.  (When $j = 1$ or $j = \ell$, this automorphism is inner.)  The extremal automorphisms are generated by the automorphism that sends $x_{r_1}(t)$ to $x_{r_1}(t) x_{r_N-r_1}(t)$ and fixes $x_{r_i}(t)$ for $2 \le i \le \ell$ and the automorphism that sends $x_{r_{\ell}}(t)$ to $x_{r_{\ell}}(t) x_{r_N-r_{\ell}}(t)$ and fixes $x_{r_i}(t)$ for $1 \le i \le \ell-1$.  Finally the inner automorphism group is, of course, isomorphic to $U/Z(U)$ (the center of $U$ is generated by $x_{r_N}(t)$).

It is not hard to use these descriptions to deduce that
\[
\Aut(U) \cong ((I \rtimes (E \times C)) \rtimes (D \rtimes F)) \rtimes P.
\]
Furthermore $E \times C$ is elementary abelian of order $q^{n(\ell-2)+2}$, $D$ is elementary abelian of order $(q-1)^{\ell}$, $F$ is cyclic of order $n$, and $I$ has order $q^{(\ell^2+\ell-2)/2}$.  It follows that the order of $\Aut(U)$ is
\[
2 n (q-1)^{\ell} q^{(\ell^2+\ell+2n\ell-4n+2)/2}.
\]

The error in Weir's paper~\cite{wei} stems from his claim that any $g \in \GL_n(\F_p)$ acting on $\F_q$ induces an automorphism of $U$ that maps $x_{r_i}(a_k)$ to $x_{r_i}(g(a_k))$, generalizing the field automorphisms.  However, it is clear that $g$ must, in fact, be a field automorphism, as for any $t, u \in \F_q$,
\[
\left[ x_{r_1}(t), x_{r_{2}}(u) \right] = x_{r_1+r_2}(tu) = \left[ x_{r_1}(tu), x_{r_2}(1) \right].
\]
Applying $g$ to all terms shows that $g(tu) = g(t)g(u)$.

\subsection{Sylow $p$-Subgroups of the Symmetric Group}

The automorphism groups of Sylow $p$-subgroups of the symmetric group for $p > 2$ were examined independently by Bondarchuk~\cite{bon} and Lentoudis~\cite{len, len2, len3, lt}.  Their results are reasonably technical.  They do show that the order of the automorphism group of the Sylow $p$-subgroup of $S_{p^m}$ is
\[
(p-1)^m p^{n(m)},
\]
where
\[
n(m) = p^{m-1} + p^{m-2} + \cdots + p^2 + \frac{1}{2} (m^2-m+2)p - 1.
\]
Note that the Sylow-$p$ subgroup of $S_{p^m}$ is isomorphic to the $m$-fold iterated wreath product of $C_p$.

\subsection{$p$-Groups of Maximal Class}

A $p$-group of order $p^n$ is of \emph{maximal class} if it has nilpotence class $n-1$.  Many examples are given in~\cite[Examples 3.1.5]{lm}, with the most familiar being the dihedral and quaternion groups of order $2^n$ when $n \ge 4$.  It is not too hard to prove some basic results about the automorphism group of an arbitrary $p$-group of maximal class.  Our presentation follows Baartmans and Woeppel~\cite[Section 1]{bw}.

\begin{thm}
Let $G$ be a $p$-group of maximal class of order $p^n$, where $n \ge 4$ and $p$ is odd.  Then $\Aut(G)$ has a normal Sylow $p$-subgroup $P$ and $P$ has a $p'$-complement $H$, so that $\Aut(G) \cong H \rtimes P$.  Furthermore, $H$ is isomorphic to a subgroup of $C_{p-1} \times C_{p-1}$.
\end{thm}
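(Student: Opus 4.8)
The plan is to feed the elementary structure theory of $p$-groups of maximal class into the general machinery of Chapter~\ref{c_series} and then invoke Schur--Zassenhaus. Recall from Chapter~\ref{c_main} that for any finite $p$-group $G$ there is an exact sequence $1 \to K(G) \to \Aut(G) \to A(G) \to 1$ in which $K(G)$ is a $p$-group and $A(G)$ is a subgroup of $\GL(d(G), \F_p)$. So the whole problem reduces to pinning down $A(G)$ inside $\GL(2, \F_p)$ and then reassembling $\Aut(G)$.

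First I would record the two facts about a $p$-group $G$ of maximal class of order $p^n$ with $n \ge 4$ and $p$ odd that do the real work: (i) $d(G) = 2$, so that $A(G) \le \GL(2, \F_p)$; and (ii) the two-step centralizer $G_1 = C_G(\gamma_2(G)/\gamma_4(G))$ is a characteristic maximal subgroup of $G$. Both are standard (see Leedham-Green and McKay~\cite{lm} or Baartmans and Woeppel~\cite{bw}). Since $G_1$ is characteristic it is invariant under $\Aut(G)$, and since it has index $p$ it contains $\Phi(G)$; hence $G_1/\Phi(G)$ is a line in the $2$-dimensional $\F_p$-space $G/\Phi(G)$ stabilized by $A(G)$. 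Choosing a basis adapted to this line, $A(G)$ lies in the Borel subgroup $B \le \GL(2, \F_p)$ of upper-triangular matrices.

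Next I would exploit the decomposition $B = U \rtimes T$, where $U$ is the normal unipotent radical of order $p$ and $T \cong C_{p-1} \times C_{p-1}$ is the diagonal torus. Then $A(G) \cap U$ is a normal subgroup of $A(G)$ which is a $p$-group of index dividing $(p-1)^2$, i.e.\ a normal Sylow $p$-subgroup of $A(G)$, and $A(G)/(A(G)\cap U)$ embeds into $B/U \cong C_{p-1} \times C_{p-1}$. Pulling this back along $\Aut(G) \twoheadrightarrow A(G)$ and using that $K(G)$ is a $p$-group: the preimage $P$ of $A(G) \cap U$ is a normal $p$-subgroup of $\Aut(G)$ whose index divides $(p-1)^2$, so $P$ is a normal Sylow $p$-subgroup of $\Aut(G)$. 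Finally Schur--Zassenhaus produces a $p'$-complement $H$, giving $\Aut(G) \cong H \rtimes P$ with $H \cong \Aut(G)/P \cong A(G)/(A(G)\cap U)$ embedded in $C_{p-1} \times C_{p-1}$, as claimed.

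The only genuinely group-theoretic input — and hence the main obstacle — is fact (ii), the existence of a characteristic maximal subgroup of $G$; everything after that is bookkeeping with the exact sequence for $\Aut(G)$ together with the linear algebra of the Borel of $\GL(2,\F_p)$. I would also note where $p$ being odd is used: for $p = 2$ the torus $C_{p-1} \times C_{p-1}$ is trivial (so the statement degenerates), and the structure theory of maximal-class $2$-groups requires separate treatment in any case.
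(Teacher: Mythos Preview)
Your proof is correct, but it follows a different line from the one the paper sketches (following Baartmans and Woeppel). The paper first observes that a $p$-group of maximal class admits a characteristic \emph{cyclic} series $G = G_0 \rhd G_1 \rhd \cdots \rhd G_n = 1$ (Huppert~\cite[III, Lemmas 14.2 and 14.4]{hup}), then quotes Durbin and McDonald~\cite{dm}: any finite group with a characteristic cyclic series has a \emph{supersolvable} automorphism group, whence $\Aut(G)$ has a normal Sylow $p$-subgroup with $p'$-complement and exponent dividing $p^t(p-1)$. The bound $H \hookrightarrow C_{p-1} \times C_{p-1}$ then comes from examining the action of $H$ on the cyclic series and on $G/\Phi(G)$.

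Your route bypasses supersolvability entirely. You use only one structural fact specific to maximal class --- the two-step centralizer $G_1 = C_G(\gamma_2(G)/\gamma_4(G))$ is a characteristic maximal subgroup --- and then argue directly inside $\GL(2,\F_p)$ via the $K(G)/A(G)$ sequence of Chapter~\ref{c_series}, the Borel decomposition $B = U \rtimes T$, and Schur--Zassenhaus. This is arguably more elementary (no Durbin--McDonald) and fits the thesis machinery neatly. The paper's approach, on the other hand, is more portable: as Baartmans and Woeppel remark, it works verbatim for any finite $p$-group with a characteristic cyclic series, not just those of maximal class. So the two proofs trade generality for self-containment.
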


The proof of this theorem begins by observing that $G$ has a characteristic cyclic series $G = G_0 \rhd G_1 \rhd \cdots \rhd G_n = 1$; that is, each $G_i$ is characteristic and $G_i/G_{i+1}$ is cyclic (see Huppert~\cite[Lemmas 14.2 and 14.4]{hup}).  By a result of Durbin and McDonald~\cite{dm}, $\Aut(G)$ is supersolvable and so has a normal Sylow $p$-subgroup $P$ with $p'$-complement $H$, and the exponent of $\Aut(G)$ divides $p^t (p-1)$ for some $t > 0$.  The additional result about the structure of $H$ comes from examining the actions of $H$ on the characteristic cyclic series and on $G/\Phi(G)$.  Baartmans and Woeppel remark that the above theorem holds for any finite $p$-group $G$ with a characteristic cyclic series.

Baartmans and Woeppel~\cite{bw} follow up these general results by focusing on automorphisms of $p$-groups of maximal class of exponent $p$ with a maximal subgroup which is abelian.  More specifically, the characteristic cyclic series can be taken to be a composition series, in which case $G_i = \gamma_i(G)$ for $i \ge 2$ and $G_1 = C_G(G_2/G_4)$.   Baartmans and Woeppel assume that $G_2$ is abelian.

In this case, they show by construction that $H \cong C_{p-1} \times C_{p-1}$.  Furthermore, $P$ is metabelian of nilpotence class $n-2$ and order $p^{2n-3}$.  (Recall that a \emph{metabelian group} is a group whose commutator subgroup is abelian.)  The group $\Inn(G)$ has order $p^{n-1}$ and maximal class $n-2$.  The commutator subgroup $P'$ is the subgroup of $\Inn(G)$ induced by $G_2$.  Baartmans and Woeppel do explicitly describe the automorphisms of $G$, but the descriptions are too complicated to include here.

Other authors who investigate automorphisms of certain finite $p$-groups of maximal class include: Abbasi~\cite{abb}; Miech~\cite{mie}, who focuses on metabelian groups of maximal class; and Wolf~\cite{wol}, who looks at the centralizer of $Z(G)$ in certain subgroups of $\Aut(G)$.

Finally, in~\cite{juh}, Juh\'{a}sz considers more general $p$-groups than $p$-groups of maximal class.  Specifically, he looks at $p$-groups $G$ of nilpotence class $n-1$ in which $\gamma_1(G)/\gamma_2(G) \cong C_{p^m} \times C_{p^m}$ and $\gamma_i(G)/\gamma_{i+1}(G) \cong C_{p^m}$ for $2 \le i \le n-1$.  He refers to such groups as being of type $(n,m)$.  Groups of type $(n,1)$ are the $p$-groups of maximal class of order $p^n$.

Assume that $n \ge 4$ and $p > 2$.  As with groups of maximal class, the automorphism group of a group $G$ of type $(n,m)$ is a semi-direct product of a normal Sylow $p$-subgroup $P$ and its $p'$-complement $H$, and $H$ is isomorphic to a subgroup of $C_{p-1} \times C_{p-1}$.  Juh\'{a}sz' results are largely technical, dealing with the structure of $P$, especially when $G$ is metabelian.

\subsection{Stem Covers of an Elementary Abelian $p$-Group}
\label{stemcoversec}

In~\cite{web2}, Webb looks at the automorphism groups of stem covers of elementary abelian $p$-groups.  We start with some preliminaries on stem covers.  A group $G$ is a \emph{central extension} of $Q$ by $N$ if $N$ is a normal subgroup of $G$ lying in $Z(G)$ and $G/N \cong Q$.  If $N$ lies in $[G, G]$ as well, then $G$ is a \emph{stem extension} of $Q$.  The Schur multiplier $M(Q)$ of $Q$ is defined to be the second cohomology group $H^2(Q, \C^{\ast})$, and it turns out that $N$ is isomorphic to a subgroup of $M(Q)$.  Alternatively, $M(Q)$ can be defined as the maximum group $N$ so that there exists a stem extension of $Q$ by $N$.  Such a stem extension is called a $\emph{stem cover}$.

Webb takes $Q$ to be elementary abelian of order $p^n$ with $p$ odd and $n \ge 2$.  Let $G$ be a stem cover of $Q$.  Then $N = Z(G) = [G,G] = M(Q) \cong Q \wedge Q$ and has order $p^{\binom{n}{2}}$.  Therefore $\Aut_c(G)$ are the automorphisms of $G$ which act trivially on $G/N \cong Q$.  Each automorphism $\alpha \in \Aut_c(G)$ corresponds uniquely to a homomorphism $\overline{\alpha} \in \Hom(Q, N)$ via the relationship $\overline{\alpha}(g N) = g^{-1} \cdot \alpha(g)$ for all $g \in G$.  Of course, $\Hom(Q, N)$ is an elementary abelian $p$-group of order $n \binom{n}{2}$, and so $\Aut(G)$ is an extension of a subgroup of $\Aut(Q) \cong \GL(n, \F_p)$ by an elementary abelian $p$-group of order $n \binom{n}{2}$.  Webb proves that the subgroup of $\Aut(Q)$ in question is usually trivial, leading to her main theorem.

\begin{thm}[Webb~\cite{web2}]
Let $G$ be elementary abelian of order $p^n$ with $p$ odd.  As $n \to \infty$, the proportion of stem covers of $G$ with elementary abelian automorphism group of order $p^{n \binom{n}{2}}$ tends to 1.
\end{thm}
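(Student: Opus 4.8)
The plan is to set up a bijection between stem covers and certain orbits, and then count orbits by the Cauchy--Frobenius lemma, much as in the proof of Theorem~\ref{limit2thm}. As recalled in the discussion preceding the theorem, if $Q$ is elementary abelian of order $p^n$ ($p$ odd, $n\ge 2$) and $G$ is a stem cover of $Q$, then $[G,G]=Z(G)=N$, the commutator map identifies $N$ with $Q\wedge Q$, and --- since $p$ is odd and $G$ has class two --- the $p$-th power map $x\mapsto x^p$ is an $\F_p$-linear homomorphism $G\to N$ that kills $[G,G]$, hence a well-defined element $P_G\in\Hom(Q,N)=\Hom(Q,Q\wedge Q)=:V$. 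Conversely every $P\in V$ arises from a stem cover $G_P$, one has $G_P\cong G_{P'}$ exactly when $P'=(g\wedge g)\circ P\circ g^{-1}$ for some $g\in\GL(Q)\cong\GL(n,\F_p)$, and the map $\Aut(G_P)\to\GL(n,\F_p)$ of the preceding discussion has image the stabilizer of $P$ in $\GL(n,\F_p)$ and kernel $\Aut_c(G_P)\cong\Hom(Q,N)$ of order $p^{n\binom{n}{2}}$. Thus isomorphism classes of stem covers of $Q$ correspond to $\GL(n,\F_p)$-orbits on $V$, and $\Aut(G_P)$ is elementary abelian of order exactly $p^{n\binom{n}{2}}$ if and only if $P$ lies in a regular orbit, i.e.\ has trivial stabilizer.

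Write $\Gamma=\GL(n,\F_p)$ acting on $V$. By the Cauchy--Frobenius lemma the number of orbits is $|\Gamma|^{-1}\sum_{g\in\Gamma}|V^g|$, while the number of regular orbits is $|V_{\mathrm{reg}}|/|\Gamma|$ with $V_{\mathrm{reg}}=V\setminus\bigcup_{1\ne g\in\Gamma}V^g$. Subtracting gives $0\le(\#\text{orbits})-(\#\text{regular orbits})\le 2|\Gamma|^{-1}\sum_{1\ne g\in\Gamma}|V^g|$, and since $\#\text{orbits}\ge|V|/|\Gamma|$, the proportion of non-regular orbits is at most $2|V|^{-1}\sum_{1\ne g\in\Gamma}|V^g|$. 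As $|V|=p^{n\binom{n}{2}}$ and $|\Gamma|<p^{n^2}$, it is enough to prove a uniform bound $\dim_{\F_p}V^g\le n\binom{n}{2}-(1+\delta)n^2$ for some fixed $\delta>0$, all $g\ne 1$, and all large $n$ (finitely many $n$ being irrelevant to the limit).

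The dimension bound is the crux. Here $V^g=\Hom_{\F_p\langle g\rangle}(Q,Q\wedge Q)$, where $g$ acts on $Q\wedge Q$ as $g\wedge g$. Passing to $\overline{\F}_p$ and decomposing $Q=\bigoplus_\lambda Q_\lambda$ into generalized eigenspaces of $g$, with $a_\lambda=\dim Q_\lambda$ and $\sum_\lambda a_\lambda=n$, a $g$-equivariant map carries $Q_\lambda$ into $(Q\wedge Q)_\lambda$; computing $\dim(Q\wedge Q)_\mu$ from the $a_\lambda$ and using $\dim\Hom_{\overline{\F}_p\langle g\rangle}(Q_\lambda,(Q\wedge Q)_\lambda)\le a_\lambda\dim(Q\wedge Q)_\lambda$ (with a strict gain whenever $g$ acts non-semisimply, via $\min(a,b)\le ab$ on Jordan-block sizes) bounds $\dim V^g$ by one half of a fixed cubic form in $(a_\lambda)$ whose value at the identity is exactly $n\binom{n}{2}$. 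The plan is then to show this form drops by at least $(1+\delta)n^2$ --- say with $\delta=\frac{1}{2}$ --- as soon as $g\ne 1$, that is, as soon as the multiplicities are not all at $\lambda=1$ or $g$ has a nontrivial Jordan block. The extremal (smallest-decrease) cases are the near-identity elements: $g$ with a single nontrivial eigenvalue of multiplicity one, or $g$ unipotent with exactly one Jordan block of size two; for these a direct computation yields $\dim V^g\le n\binom{n}{2}-\frac{3}{2}n^2+O(n)$. The hard part will be precisely this step: it must be established uniformly over all $g\ne 1$, the unipotent (and more generally non-semisimple) case does not reduce to the semisimple one, and handling it requires control of the Jordan type of $g\wedge g$. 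Granting the bound, $\sum_{1\ne g\in\Gamma}|V^g|\le|\Gamma|\,p^{\,n\binom{n}{2}-\frac{3}{2}n^2+O(n)}\le p^{\,n\binom{n}{2}-\frac{1}{2}n^2+O(n)}$, which is $o(|V|)$, so by the previous paragraph the proportion of regular orbits --- equivalently, the proportion of stem covers of $G$ whose automorphism group is elementary abelian of order $p^{n\binom{n}{2}}$ --- tends to $1$ as $n\to\infty$.
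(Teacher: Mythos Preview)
The paper does not give its own proof of this theorem; it is stated as Webb's result, with only the setup recalled in the preceding paragraph (the identification $\Aut_c(G)\cong\Hom(Q,N)$ and the remark that $\Aut(G)/\Aut_c(G)$ embeds in $\Aut(Q)\cong\GL(n,\F_p)$, which Webb shows is usually trivial). So there is no proof in the paper to compare against directly. That said, your outline is exactly the natural elaboration of that sketch, and it is also the template of the paper's Theorem~\ref{limit2thm}: parametrize the objects (here stem covers, via the $p$-th power map $P\in V=\Hom(Q,Q\wedge Q)$), identify isomorphism classes with $\GL(n,\F_p)$-orbits, and use Cauchy--Frobenius plus a fixed-point bound to show almost all orbits are regular. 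Your orbit-counting inequalities are correct, and your extremal semisimple computation giving a drop of $\tfrac{3}{2}n^2+O(n)$ is right.

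The genuine gap is the one you flag yourself: you have not established the uniform bound $\dim_{\F_p}V^g\le n\binom{n}{2}-(1+\delta)n^2$ for all $g\ne 1$. Checking the two ``near-identity'' cases is suggestive but not a proof; one must rule out that some other $g$ (say with two eigenvalues of comparable multiplicity, or a large unipotent block) does worse. The non-semisimple case in particular requires controlling how the Jordan type of $g$ on $Q$ determines that of $g\wedge g$ on $Q\wedge Q$, and then bounding $\sum_\lambda\dim\Hom_{\F_p[t]_{(t-\lambda)}}(Q_\lambda,(Q\wedge Q)_\lambda)$ in terms of those types --- this is exactly the kind of module-counting that Chapter~\ref{c_submodules} carries out for a different module, and it is where the real work lies. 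As written, then, this is a correct plan with the central estimate left as a black box rather than a complete proof.
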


\section{Quotients of Automorphism Groups}
\label{constructionsec}

Not every finite $p$-group is the automorphism group of a finite $p$-group.  A recent paper in this vein is by Cutolo, Smith, and Wiegold~\cite{csw}, who show that the only $p$-group of maximal class which is the automorphism group of a finite $p$-group is $D_8$.  But there are several extant results which show that certain quotients of the automorphism group can be arbitrary.

\subsection{The Quotient $\Aut(G)/\Aut_c(G)$}

\begin{thm}[Heineken and Liebeck~\cite{hl2}]
\label{hlthm}
Let $K$ be a finite group and let $p$ be an odd prime.  There exists a finite $p$-group $G$ of class $2$ and exponent $p^2$ such that $\Aut(G)/\Aut_c(G) \cong K$.
\end{thm}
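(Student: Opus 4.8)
The plan is to follow Heineken and Liebeck's approach, which has two ingredients: realize $K$ as the automorphism group of a combinatorial object, and attach to that object, functorially, a finite $p$-group $G$ of class $2$ and exponent $p^2$ in such a way that the central automorphisms are exactly the ``kernel'' and the action induced on $G/\Phi(G)$ recovers $K$.

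First I would reduce to graphs. By Frucht's theorem and the standard refinements controlling valencies, every finite group $K$ is isomorphic to the automorphism group of some finite loopless directed graph $\Gamma$ on a vertex set $\{1,\dots,n\}$ having at least one edge; Heineken and Liebeck in addition arrange $\Gamma$ to be rigid enough that no vertex carries an accidental local symmetry, which is the bulk of the combinatorial bookkeeping. Then I would build the group. Let $V=\bigoplus_{i=1}^{n}\F_p e_i$ and let $G=G(\Gamma)$ be generated by $x_1,\dots,x_n$ subject to: $G$ has nilpotence class at most $2$; $[G,G]$ is central of exponent $p$ and the commutators $[x_i,x_j]\ (i<j)$ are independent; and $x_i^{\,p}=\prod_{j}[x_i,x_j]^{a_{ij}}$, where $a_{ij}=1$ exactly when $i\to j$ is an edge of $\Gamma$. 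Since $p$ is odd, the class-$2$ identity $(xy)^p=x^py^p[y,x]^{\binom{p}{2}}$ combined with exponent $p$ on $[G,G]$ shows that $\binom{p}{2}\equiv 0\pmod p$, so the $p$-th power map is an endomorphism of $G$; its image lies in $[G,G]$ and it kills $Z(G)$, so it descends to an $\F_p$-linear map $\wp\colon V\to[G,G]$ that records the edges of $\Gamma$. A short verification then gives: $G$ is finite; $\Phi(G)=G^p[G,G]=[G,G]=Z(G)$ (using $n\ge 2$), which is isomorphic to $\wedge^2V$; $G$ has class exactly $2$; and $G$ has exponent exactly $p^2$ because $\Gamma$ has an edge.

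Next I would compute $\Aut(G)$. Since $\Phi(G)=Z(G)$ we have $\Aut_c(G)=\Aut_f(G)$, so the exact sequence of Section~\ref{autgroupsec} identifies $\Aut(G)/\Aut_c(G)$ with the image $A(G)$ of $\Aut(G)$ in $\GL(V)=\GL(n,\F_p)$; it remains to show $A(G)\cong\Aut(\Gamma)\cong K$. The inclusion $\Aut(\Gamma)\le A(G)$ is routine: a graph automorphism permutes the $e_i$, the induced permutation of the $x_i$ preserves every defining relation (the power relations transform correctly precisely because $a_{\pi(i)\pi(j)}=a_{ij}$), and hence lifts to an automorphism of $G$ inducing that permutation on $V$. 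The reverse inclusion is the rigidity core: for $\alpha\in\Aut(G)$, naturality of the commutator forces $\alpha$ to act on $[G,G]\cong\wedge^2V$ as $\wedge^2\bar\alpha$, and naturality of the $p$-th power map then forces $\wp\circ\bar\alpha=(\wedge^2\bar\alpha)\circ\wp$. A scalar $\bar\alpha=\lambda\,\mathrm{id}$ is already excluded unless $\lambda=1$ (since $\wp(\lambda v)=\lambda\wp(v)$ while $(\wedge^2(\lambda\,\mathrm{id}))\wp(v)=\lambda^2\wp(v)$ and $\wp\ne 0$), and the full compatibility, together with the rigidity built into $\Gamma$, forces $\bar\alpha$ to permute $\{e_i\}$ by an automorphism of $\Gamma$. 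Establishing this last point — choosing $\Gamma$ so that the $\wp$-compatible elements of $\GL(V)$ are exactly the graph automorphisms, and disposing of the monomial corrections — is where the real work lies and is the step I expect to be the main obstacle.

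Finally, combining the two inclusions yields $\Aut(G)/\Aut_c(G)\cong A(G)\cong\Aut(\Gamma)\cong K$, while $G$ has class $2$ and exponent $p^2$ by the verifications above; this is the assertion of the theorem.
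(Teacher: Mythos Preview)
Your proposal is correct and follows essentially the same route as the paper's account of Heineken and Liebeck's construction: encode $K$ as the automorphism group of a directed graph, attach to each vertex a generator of a class-$2$ $p$-group whose commutators are free and whose $p$-th powers record the out-edges, and then identify $A(G)=\Aut(G)/\Aut_c(G)$ with the graph automorphisms via the compatibility of $\wp$ with $\wedge^2\bar\alpha$.

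The one point worth flagging is your appeal to a generic Frucht-type graph. The paper (following Heineken--Liebeck) uses a very specific graph---the Cayley digraph of $K$ with each generator-$x_i$ arc subdivided into a path of length $i$---precisely because the rigidity step you correctly identify as the crux (that every $\wp$-compatible element of $\GL(V)$ is a permutation matrix coming from $\Aut(\Gamma)$) is proved by exploiting the local combinatorics of that particular graph; an arbitrary graph with $\Aut(\Gamma)\cong K$ need not have this stronger property. This is also why the paper notes a separate construction for $|K|<5$. So your outline is right, but to close the argument you should commit to the subdivided Cayley graph rather than an unspecified Frucht graph.
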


The construction given by Heineken and Liebeck can be described rather easily.  Let $K$ be a group generated by elements $x_1, x_2, \dots, x_d$.  Let $D'(K)$ be the directed Cayley graph of $K$ relative to the given generators.  Form a new digraph $D(K)$ by replacing every arc in $D'(K)$ by a directed path of length $i$ if the original arc corresponded to the generator $x_i$.  Then $\Aut(D(K)) = K$.

Let $v_1, v_2, \dots, v_m$ be the vertices of $D(K)$.  Let $G$ be the $p$-group generated by elements $v_1, v_2, \dots, v_m$ where
\begin{enumerate}
\item $G'$ is the elementary abelian $p$-group freely generated by
\[
\{ [v_i, v_j] \; : \; 1 \le i < j \le m \}.
\]
\item For each vertex $v_i$, if $v_i$ has outgoing arcs to $v_{i_1}, v_{i_2}, \dots, v_{i_k}$, then
\[
v_i^p = [v_i, v_{i_1} \cdots v_{i_k}].
\]
\end{enumerate}
Heineken and Liebeck show that $\Aut(G)/\Aut_c(G) \cong K$ when $|K| \ge 5$.  (They give a special construction for $|K| < 5$.)  As Webb~\cite{web2} notes, $G$ is a special $p$-group.

They are actually able to determine the automorphism group of $G$ much more precisely, at least when $|K| \ge 5$.  Let the vertices of $D'(K)$ be called \emph{group-points}; they are naturally identified with vertices of $D(K)$.  Let $S$ be the set of vertices of $D(K)$ consisting of the group-point $e$ corresponding to the identity of $K$ and all vertices that can be reached along a directed path from $e$ that does not pass through any other group-points.  Assume that the vertices of $D(K)$ are labeled so that $v_1, \dots, v_s$ are the elements of $S$.  The central automorphisms which fix $v_{s+1}, v_{s+2}, \dots, v_m$ generate an elementary abelian $p$-group $U$ of dimension $s |G'| = (1/2)ks^2(ks-1)$.  Every central automorphism of $G$ is of the form $\prod_{v \in K}{v^{-1} \alpha_v v}$, where the elements $\alpha_v \in U$ and $v \in K$ are uniquely determined.  Thus $\Aut_c(G)$ is the direct product of the conjugates of $U$ in $\Aut(G)$ and $\Aut(G) = U \wr K$.  It follows that $\Aut(G)$ has order $k p^{\ell}$, where $\ell = (1/2)k^2 s^2(ks-1)$ and $s = (1/2)d(d+1) + 1$ when $d \ge 2$ and $s = 1$ when $d = 1$.

Lawton~\cite{law} modifies Heineken and Liebeck's techniques to construct smaller groups $G$ with $\Aut(G)/\Aut_c(G) \cong K$.  He uses undirected graphs which are much smaller, and the $p$-groups he defines is significantly simpler.

Webb~\cite{web} uses similar, though more complicated techniques, to obtain further results.  She defines a class of graphs called \emph{$Z$-graphs}; it turns out that almost all finite graphs are $Z$-graphs (that is, the proportion of graphs on $n$ vertices which are $Z$-graphs goes to 1 as $n$ goes to infinity).  To each $Z$-graph $\Lambda$, Webb associates a special $p$-group $G$ for which $\Aut(G)/\Aut_c(G) \cong \Aut(\Lambda)$.  The set of all special $p$-groups that arise from $Z$-graphs on $n$ vertices is denoted by $\mathcal{G}(p, n)$.

\begin{thm}[Webb]
Let $p$ be any prime.  Then the proportion of groups in $\mathcal{G}_{p,n}$ whose automorphism group is $(C_p)^r$, where $r = n^2(n-1)/2$, goes to 1 as $n \to \infty$.
\end{thm}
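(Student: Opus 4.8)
The plan is to reduce the statement to a counting fact about graphs. Write $G(\Lambda) \in \mathcal{G}_{p,n}$ for the special $p$-group that Webb~\cite{web} attaches to a $Z$-graph $\Lambda$ on $n$ vertices. The first step is to record two structural facts from that construction: that $\Lambda \mapsto G(\Lambda)$ is a bijection from isomorphism classes of $Z$-graphs on $n$ vertices onto $\mathcal{G}_{p,n}$ (one recovers $\Lambda$ from the group-theoretic data of $G(\Lambda)$), and that $G(\Lambda)$ is special with $G(\Lambda)/Z(G(\Lambda)) \cong (C_p)^n$ and $Z(G(\Lambda)) = G(\Lambda)' = \Phi(G(\Lambda)) \cong (C_p)^{\binom{n}{2}}$. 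Granting these, counting isomorphism classes in $\mathcal{G}_{p,n}$ is the same as counting $Z$-graphs on $n$ vertices up to isomorphism.

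Next I would pin down exactly when $\Aut(G(\Lambda)) \cong (C_p)^r$. Since $Z = Z(G(\Lambda))$ is central of exponent $p$ and $G(\Lambda)$ has class $2$, every homomorphism $G(\Lambda)/Z \to Z$ lifts to a central automorphism: sending a generator $x_i$ to $x_i z_i$ with $z_i \in Z$ respects the commutator relations automatically and respects the power relations built from $\Lambda$ because $(x_i z_i)^p = x_i^p z_i^p [z_i,x_i]^{\binom{p}{2}} = x_i^p$, and the resulting endomorphism is an automorphism as it is the identity modulo $\Phi(G(\Lambda))$. Hence $\Aut_c(G(\Lambda)) \cong \Hom((C_p)^n, (C_p)^{\binom{n}{2}}) \cong (C_p)^{n\binom{n}{2}} = (C_p)^{r}$, using $r = n^2(n-1)/2 = n\binom{n}{2}$. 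Combined with Webb's isomorphism $\Aut(G(\Lambda))/\Aut_c(G(\Lambda)) \cong \Aut(\Lambda)$, this gives $\Aut(G(\Lambda)) \cong (C_p)^r$ if and only if $\Aut(\Lambda) = 1$ (if $\Aut(\Lambda) \neq 1$, then $|\Aut(G(\Lambda))| > p^{r}$).

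The final step is the counting. Let $T_n$, $Z_n$, $A_n$ be the numbers of isomorphism classes of all graphs, of $Z$-graphs, and of \emph{asymmetric} graphs (those with trivial automorphism group) on $n$ vertices. Webb's result gives $Z_n/T_n \to 1$, and the classical theorem of Erd\H{o}s and R\'enyi that almost all graphs are asymmetric gives $A_n/T_n \to 1$. A union bound on the complements shows that the number of graphs that are simultaneously $Z$-graphs and asymmetric is at least $T_n - (T_n - Z_n) - (T_n - A_n) = Z_n + A_n - T_n$, so the proportion of $Z$-graphs that are asymmetric is at least $1 - (T_n - A_n)/Z_n \to 1$. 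By the two previous steps this is exactly the proportion of groups in $\mathcal{G}_{p,n}$ with automorphism group isomorphic to $(C_p)^{r}$, which proves the theorem. (If ``proportion'' is read with labeled rather than unlabeled objects, one inserts the standard remark that almost all graphs are asymmetric and hence admit exactly $n!$ labelings, so the labeled and unlabeled proportions have the same limit.)

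The main obstacle is not the counting — which is routine once the asymmetric-graph theorem of Erd\H{o}s and R\'enyi is invoked — but extracting cleanly from Webb's paper the two structural inputs of the first step: the injectivity of $\Lambda \mapsto G(\Lambda)$ on isomorphism classes, and the fact that $\Aut_c(G(\Lambda))$ attains the full rank $r$ for \emph{every} $Z$-graph $\Lambda$, so that the only possible source of extra automorphisms of $G(\Lambda)$ is $\Aut(\Lambda)$. The verification that every homomorphism $G(\Lambda)/Z \to Z$ integrates to an automorphism (sketched above) depends on the precise form of the power relations encoded by $\Lambda$, and for $p = 2$ one must run the same check with the appropriate special $2$-group in place of the exponent-$p$ construction.
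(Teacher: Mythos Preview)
Your proposal is correct and follows essentially the same line as the paper. The paper does not give a detailed proof here (this is a survey chapter citing Webb's original work), but its explanatory sentence --- that $\Aut_c(G) \cong \Hom(G/Z(G),Z(G)) \cong (C_p)^r$ for every $G \in \mathcal{G}_{p,n}$, and that Webb shows $\Aut(G)/\Aut_c(G) \cong \Aut(\Lambda)$ is usually trivial --- is exactly the skeleton you have fleshed out, including your invocation of the Erd\H{o}s--R\'enyi asymmetric-graph theorem for the ``usually trivial'' step.
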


The reason the group $(C_p)^r$ arises as the automorphism group is that for $G \in \mathcal{G}(p, n)$, $\Aut_c(G)$ is isomorphic to $\Hom(G/Z(G), Z(G))$, and hence it is isomorphic to $(C_p)^r$.  Webb then shows that $\Aut(G)/\Aut_c(G)$ is usually trivial.

\begin{thm}[Webb]
\label{webbthm}
Let $K$ be a finite group which is not cyclic of order five or less.  Then for any prime $p$, there is a special $p$-group $G \in \mathcal{G}(p, 2|K|)$ with $\Aut(G)/\Aut_c(G) \cong K$.
\end{thm}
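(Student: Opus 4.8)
The construction $\Lambda \mapsto G$ described just before the theorem already gives, for every $Z$-graph $\Lambda$ on $n$ vertices, a special $p$-group $G \in \mathcal{G}(p,n)$ with $\Aut(G)/\Aut_c(G) \cong \Aut(\Lambda)$. So the plan is to reduce the theorem to a purely combinatorial statement: \emph{if $K$ is a finite group that is not cyclic of order at most five, then there is a $Z$-graph $\Lambda$ on exactly $2|K|$ vertices with $\Aut(\Lambda) \cong K$.} Taking $n = 2|K|$ and feeding $\Lambda$ into the construction then produces the desired $G \in \mathcal{G}(p, 2|K|)$, and the theorem follows for every prime $p$ at once.

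To build $\Lambda$ I would start from the directed, edge-colored Cayley graph of $K$ with respect to a generating set $S$, whose vertex set is $K$ and whose automorphism group is exactly $K$ acting by left translation. The standard device (as in Frucht's theorem and in the Heineken--Liebeck construction recalled above) is to replace the colors and directions by small rigid gadgets so as to get an ordinary undirected graph with the same automorphism group; the subtlety here is that we are allowed only $2|K|$ vertices, so the gadget attached to each element of $K$ must consist of a single extra ``anchor'' vertex. Thus $\Lambda$ will have vertex set $K \sqcup K'$ with $K'$ a disjoint copy of $K$: the edges inside $K$ encode an undirected, colorless reduction of the Cayley graph, each edge $\{v, v'\}$ pins the anchor $v'$ to $v$, and a carefully chosen pattern of edges from each anchor back into $K$ records enough of the color and direction data that the only surviving automorphisms are the left translations.

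The three things to verify are: (i) every automorphism of $\Lambda$ preserves the bipartition $K \sqcup K'$ --- this should follow from a degree or local-structure argument distinguishing anchors from group-vertices; (ii) the induced action on $K$ is exactly left translation by an element of $K$, so $\Aut(\Lambda) \cong K$ --- here one checks that the anchor patterns rigidly encode $S$ and its colors, forcing any automorphism to be color- and direction-preserving on the Cayley graph; and (iii) $\Lambda$ lies in Webb's class of $Z$-graphs, which is precisely the condition that makes the associated special $p$-group satisfy $\Aut_c(G) \cong \Hom(G/Z(G), Z(G))$ and behave as the construction requires. Steps (ii) and (iii) together are where the hypothesis enters: for $K$ cyclic of order at most five there is simply not enough room in a single anchor per vertex to break all the unwanted symmetry --- the same phenomenon behind the $|K| \ge 5$ restriction in Heineken--Liebeck and behind the non-existence of graphical regular representations for the smallest cyclic groups --- so these groups must genuinely be excluded rather than rescued by an ad hoc variant.

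The main obstacle is item (ii) combined with the $Z$-graph check in (iii): one must exhibit a single explicit family of anchor-to-$K$ adjacency patterns that simultaneously (a) is rigid enough that no extra graph automorphisms appear, (b) is symmetric enough that every left translation of $K$ still extends to an automorphism of $\Lambda$, and (c) keeps $\Lambda$ inside the $Z$-graph class, all within the tight budget of $2|K|$ vertices. This forces a somewhat delicate case analysis according to $d(K)$ and the structure of $S$, and it is exactly in the smallest cyclic cases that no admissible pattern exists, which is why the statement carries that exclusion.
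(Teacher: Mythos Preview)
The paper does not contain a proof of this theorem. It appears in the survey chapter (Chapter~\ref{c_examples}) and is simply stated as a result of Webb~\cite{web}, with only a brief description of the ambient construction: to each $Z$-graph $\Lambda$ on $n$ vertices one associates a special $p$-group $G \in \mathcal{G}(p,n)$ with $\Aut(G)/\Aut_c(G) \cong \Aut(\Lambda)$. No argument for the theorem itself is given in the text, so there is no paper proof to compare against.

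That said, your reduction is the right one and matches what the paper attributes to Webb: the group-theoretic content is entirely absorbed by the $\Lambda \mapsto G$ construction, and what remains is the graph-theoretic claim that every finite group $K$ (other than the small cyclic exceptions) is realized as $\Aut(\Lambda)$ for some $Z$-graph $\Lambda$ on $2|K|$ vertices. Your proposed Cayley-graph-plus-anchor scheme is in the spirit of the Heineken--Liebeck and Lawton constructions the paper discusses just before this theorem, and the paper explicitly says Webb's techniques are ``similar, though more complicated.'' However, your proposal remains a strategy rather than a proof: you have not specified the anchor adjacency pattern, have not verified the $Z$-graph property (which is a specific technical condition in Webb's paper, not reproduced here), and have not carried out the rigidity argument. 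To turn this into an actual proof you would need to consult~\cite{web} for the definition of $Z$-graph and for Webb's explicit construction, which the thesis does not reproduce.
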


In particular, Theorem~\ref{webbthm} extends Heineken and Liebeck's result to the case $p = 2$.  Note that in Theorems~\ref{hlthm} and~\ref{webbthm}, the constructed groups are special and $\Aut_c(G) = \Aut_f(G)$, so that these theorems also prescribe $\Aut(G)/\Aut_f(G)$.  The $p = 2$ analogue of Heineken and Liebeck's result was discussed by Hughes~\cite{hug}.

\subsection{The Quotient $\Aut(G)/\Aut_f(G)$}

Bryant and Kov\'{a}cs~\cite{bk} look at prescribing the quotient $\Aut(G)/\Aut_f(G)$, taking a different approach from Heineken and Liebeck in that they assign $\Aut(G)/\Aut_f(G)$ as a linear group (and they do not bound the class of $G$).

\begin{thm}[Bryant and Kov\'{a}cs~\cite{bk}]
Let $p$ be any prime.  Let $K$ be a finite group with dimension $d \ge 2$ as a linear group over $\F_p$.  Then there exists a finite $p$-group $G$ such that $\Aut(G)/\Aut_f(G) \cong K$ and $d(G) = d$.
\end{thm}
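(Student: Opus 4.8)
The plan is to realize $G$ as a quotient $F/L$ of a relatively free $p$-group, exploiting the correspondence in Theorem~\ref{isoenum2}. Fix a faithful representation $K\hookrightarrow\GL(d,\F_p)$ witnessing that $K$ has dimension $d$, and identify $K$ with its image. Let $F$ be the free group of rank $d$; for $n\ge 2$ the group $F/F_{n+1}$ is the relatively free group of rank $d$ in the variety of $p$-groups of lower $p$-length at most $n$, and it is a finite $p$-group of lower $p$-length $n$. Since $F_n/F_{n+1}$ is central and elementary abelian in $F/F_{n+1}$, every $\F_p$-subspace $L$ of $F_n/F_{n+1}$ is a normal subgroup lying in $(F/F_{n+1})_n$, so Theorem~\ref{isoenum2} gives $A(F/L)\cong N_{\GL(d,\F_p)}(L)$, the \emph{setwise} stabilizer of $L$ for the $\GL(d,\F_p)$-action on the module $F_n/F_{n+1}$. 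Moreover $L\le(F/F_{n+1})_2=\Phi(F/F_{n+1})$, so by the Burnside Basis Theorem $d(F/L)=d$, and $\Aut(F/L)/\Aut_f(F/L)\cong A(F/L)$. Thus the theorem reduces to the module-theoretic statement: \emph{for some $n\ge 2$ there is an $\F_p$-subspace $L$ of $F_n/F_{n+1}$ whose setwise $\GL(d,\F_p)$-stabilizer equals $K$}; then $G=F/L$ works, and if $K\neq\GL(d,\F_p)$ one may even take $L$ proper so that $G$ has lower $p$-length exactly $n$.

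Next I would transport this to the free Lie algebra. By Theorem~\ref{structurethm}, for $p$ odd $F_n/F_{n+1}\cong\F_p[\Lambda^1]\oplus\cdots\oplus\F_p[\Lambda^n]$ as an $\F_p\GL(d,\F_p)$-module, while for $p=2$ it is $E\oplus\F_2[\Lambda^3]\oplus\cdots\oplus\F_2[\Lambda^n]$ with $\F_2[\Lambda^1]$ a submodule of $E$; in either case the degree-$1$ component is the natural module $V=\F_p^d$, on which $K$ acts faithfully. A first key observation is that a scalar $cI\in\GL(d,\F_p)$ acts on the degree-$i$ component as multiplication by $c^i$, so for $n\ge 2$ the only element of $\GL(d,\F_p)$ acting as a scalar on all of $F_n/F_{n+1}$ (equivalently, stabilizing every subspace) is the identity; this removes the scalar obstruction that would block the analogous construction inside a single homogeneous component such as $\wedge^2V$. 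The construction of $L$ itself is the crux: one builds a \emph{graph-type} subspace spanned by vectors $w+\varphi(w)$, where $w$ runs over a $K$-submodule $U$ sitting inside some $\F_p[\Lambda^{i_1}]$ and $\varphi\colon U\to\F_p[\Lambda^{i_2}]$ (with $i_1\neq i_2$) is a $K$-equivariant linear map chosen sufficiently ``generically'' that no $g\in\GL(d,\F_p)\setminus K$ stabilizes the resulting subspace. Making this precise requires the $\GL(d,\F_p)$-module structure of the Lie components: one needs, for suitable large $i$, enough pairwise non-isomorphic submodules (or subquotients realizable as needed) in $\bigoplus_i\F_p[\Lambda^i]$ to encode a faithful copy of $K$, and then a counting argument over the finitely many $g\notin K$ showing that a generic $\varphi$ together with a large enough $n$ succeeds.

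The main obstacle is exactly this last step: producing a subspace of the free Lie algebra module with \emph{exactly} the prescribed stabilizer $K$, not merely one containing $K$. The difficulty has two sources. First, the modules $\F_p[\Lambda^i]$ are in general non-semisimple over $\F_p$ and their submodule lattices are intricate, so ``pick a $K$-submodule of the right isomorphism type and take a graph'' needs care about which subquotients can actually be isolated. Second, one must verify faithfulness of the detection, i.e. that a single generic choice defeats every non-$K$ element of $\GL(d,\F_p)$ simultaneously. Bryant and Kov\'{a}cs carry this out; I would follow their construction, organizing it as (a) a rigidity lemma isolating which subspaces of $\bigoplus_{i\le n}\F_p[\Lambda^i]$ can have stabilizer $K$, (b) a genericity and counting estimate showing such subspaces exist once $n$ is large, and (c) the $p=2$ bookkeeping for the component $E$, where $\F_2^\times$ is trivial so no scalar issue arises but the extension structure of $E$ over $\F_2[\Lambda^1]$ must be tracked. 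Once $L$ is in hand, the conclusion that $G=F/L$ satisfies $d(G)=d$ and $\Aut(G)/\Aut_f(G)\cong K$ is immediate from the reduction in the first paragraph.
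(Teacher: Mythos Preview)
Your reduction in the first paragraph is correct and matches the paper: via Theorem~\ref{isoenum2}, one needs a subspace $L\le F_n/F_{n+1}$ whose setwise $\GL(d,\F_p)$-stabilizer is exactly $K$, and then $G=F/L$ has $d(G)=d$ and $\Aut(G)/\Aut_f(G)\cong K$.

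Where you diverge is in producing $L$. You propose a graph-type subspace built from a generic $K$-equivariant map between Lie components, followed by a genericity/counting argument over $g\notin K$. This is \emph{not} what Bryant and Kov\'acs do, contrary to your final paragraph. Their key lemma---and the one the paper sketches---is that for all sufficiently large $n$ the $\F_p\GL(d,\F_p)$-module $F_n/F_{n+1}$ contains a copy of the regular module $\F_p[\GL(d,\F_p)]$. Given such a regular submodule $R$, take $L=\operatorname{span}\{e_k:k\in K\}\subset R$; under the left-regular action $gL=L\iff gK=K\iff g\in K$, and since $R$ is itself a $\GL(d,\F_p)$-submodule, the stabilizer of $L$ computed in the ambient module equals its stabilizer in $R$. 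This single observation replaces your entire genericity program and sidesteps both obstacles you flag (non-semisimplicity of the $\F_p[\Lambda^i]$ and simultaneously defeating every $g\notin K$). Your route might be made to work, but it is harder and is not the argument in the literature; the actual content of Bryant--Kov\'acs is the regular-submodule statement, and that is where the effort goes.
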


This theorem is non-constructive, in contrast to the results of Heineken and Liebeck.  To understand the main idea, let $F$ be the free group of rank $d$.  By Theorems~\ref{isoenum1} and~\ref{isoenum2}, if $U$ is a normal subgroup of $F$ with $F_{n+1} \le U \le F_n$, then $G = F/U$ is a finite $p$-group and $\Aut(G)/\Aut_f(G)$ is isomorphic to the normalizer of $U$ in $\GL(d,\F_p)$.  Bryant and Kov\'{a}cs show that if $n$ is large enough, then $F_n/F_{n+1}$ contains a regular $\F_p \GL(d, \F_p)$-module, which shows that any subgroup $K$ of $\GL(d,\F_p)$ occurs as the normalizer of some normal subgroup $U$ of $F$ with $F_n \le U \le F_{n+1}$.

\section{Orders of Automorphism Groups}
\label{ordersec}

The first two subsections in this section describe some general theorems about the orders of automorphism groups of finite $p$-groups.  The third subsection gives the order of the automorphism group of an abelian $p$-group, and the last subsection offers many explicit examples of $p$-groups whose automorphism group is a $p$-group.  As proved in Chapters~\ref{c_main} through~\ref{c_submodules}, in some asymptotic senses, the automorphism group of a finite $p$-group is almost always a $p$-group.  However, as mentioned in Section~\ref{summarysec}, the answer to the following question is unknown.

\begin{question}
Let $w_{p,n}$ be the proportion of $p$-groups with order at most $p^n$ whose automorphism group is a $p$-group.  Is it true that $\lim_{n \to \infty}{w_{p,n}} = 1$?
\end{question}

\subsection{Nilpotent Automorphism Groups}

In~\cite{yin}, Ying states two results about the occurrence of automorphism groups of $p$-groups which are $p$-groups, the second being a generalization of a result of Heineken and Liebeck~\cite{hl}.

\begin{thm}
If $G$ is a finite $p$-group and $\Aut(G)$ is nilpotent, then either $G$ is cyclic or $\Aut(G)$ is a $p$-group.
\end{thm}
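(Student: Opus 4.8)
The plan is to reduce the statement to showing that, when $G$ is not cyclic, the Hall $p'$-subgroup of $\Aut(G)$ is trivial. Since $\Aut(G)$ is nilpotent it is the internal direct product $\Aut(G)=P\times Q$ of its Sylow $p$-subgroup $P$ and its characteristic Hall $p'$-subgroup $Q$; if $Q=1$ then $\Aut(G)=P$ is a $p$-group, as wanted. Note that $\Inn(G)$, being a $p$-group, lies in $P$, so $[Q,\Inn(G)]\le[P,Q]=1$. I will assume $Q\ne1$ and $G$ not cyclic, and aim for a contradiction, treating the nonabelian and abelian cases separately.

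Suppose first that $G$ is nonabelian. Because $Q$ centralises $\Inn(G)$, each $\tau\in Q$ satisfies $g^{-1}\tau(g)\in Z(G)$ for every $g$; thus $\tau$ is a central automorphism and $f_\tau\colon g\mapsto g^{-1}\tau(g)$ is a homomorphism $G\to Z(G)$ that vanishes precisely when $\tau=1$. Hence $W:=[G,Q]$, generated by the elements $f_\tau(g)$, lies in $Z(G)$ and is characteristic in $G$. The standard facts about coprime action of the $p'$-group $Q$ on the $p$-group $G$ give $G=C_G(Q)\cdot W$ and $[W,Q]=W$; since $W$ is abelian this forces $C_W(Q)=1$, and as $W\le Z(G)$ we obtain an internal direct product $G=C\times W$ with $C:=C_G(Q)$. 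Here $W\ne1$ (as $Q\ne1$) and $C\ne1$ (otherwise $G=W$ would be abelian), so $Z(C)$ is a nontrivial abelian $p$-group and $Z(C)\le Z(G)$. The crux is now to play $Q$ against the ``off-diagonal'' automorphisms of $G=C\times W$: for each $\chi\in\Hom(W,Z(C))$ the map $\psi_\chi$ fixing $C$ pointwise and sending $w\mapsto w\,\chi(w)$ on $W$ is an automorphism of $G$ of $p$-power order (as $\chi(W)\le Z(G)$ is a $p$-group), so $\psi_\chi\in P$ and therefore commutes with every $\tau\in Q$. Expanding $\psi_\chi\tau=\tau\psi_\chi$ on $W$ — using $\tau|_C=\mathrm{id}$, $\tau(W)=W$, and that $f_\tau(W)$ and $\chi(W)$ lie in $Z(G)$ — should yield $\chi(f_\tau(w))=1$ for all $w$, i.e.\ $\chi$ vanishes on $f_\tau(W)$; letting $\tau$ range over $Q$ and using that the subgroups $f_\tau(W)$ generate $[W,Q]=W$ forces $\chi=0$. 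Thus $\Hom(W,Z(C))=0$, which is impossible since $W$ and $Z(C)$ are nontrivial abelian $p$-groups.

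It remains to rule out $G$ abelian and non-cyclic, say $G\cong C_{p^{\lambda_1}}\times\cdots\times C_{p^{\lambda_k}}$ with $\lambda_1\ge\cdots\ge\lambda_k\ge1$ and $k\ge2$. If $p$ is odd, I would produce inside $\Aut(G)$ the automorphisms $u\colon x_1\mapsto x_1x_2$ and $d\colon x_2\mapsto x_2^{g}$ (both fixing the remaining generators), where $g$ has multiplicative order $p-1$ modulo $p^{\lambda_2}$; one checks $u$ is well defined and $dud^{-1}=u^{g}$ with $g\not\equiv1\pmod p$, so $\langle u\rangle\rtimes\langle d\rangle$ is a non-nilpotent subgroup of $\Aut(G)$, contradicting the hypothesis. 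If $p=2$ and two of the $\lambda_i$ coincide, then $\Aut(G)$ contains $\GL(2,\Z/2^{\lambda_i}\Z)$, which surjects onto $\GL(2,\F_2)\cong S_3$ and so is not nilpotent — again a contradiction. Finally, if $p=2$ and the $\lambda_i$ are pairwise distinct, then the kernel of $\Aut(G)\to\GL(k,\F_2)$ is $K(G)$, a $2$-group, while the image stabilises the complete flag of $G/2G$ cut out by the images of the characteristic subgroups $G[2^m]=\{g:2^mg=0\}$, hence lies in a Borel subgroup of $\GL(k,\F_2)$, which is a $2$-group; so $\Aut(G)$ is a $2$-group, contradicting $Q\ne1$. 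In every case we reach a contradiction, proving the theorem.

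The step I expect to be hardest is the heart of the nonabelian case: recognising that the $p$-elements $\psi_\chi$ are forced to centralise $Q$ and squeezing the vanishing of $\Hom(W,Z(C))$ out of that. Getting there relies on the coprime-action bookkeeping that peels off from $G$ a nontrivial abelian direct factor $W$ lying in $Z(G)$. By comparison, the abelian case is largely routine once one invokes that $K(G)$ is always a $p$-group.
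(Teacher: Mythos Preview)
The paper does not supply its own proof of this theorem; it merely states the result and attributes it to Ying~\cite{yin}. So there is no ``paper proof'' to compare against, and your argument must be assessed on its own merits.

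Your proof is correct. A few remarks on the details:

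\textbf{Nonabelian case.} The line of attack is clean and goes through. The claim that $W=[G,Q]$ is characteristic in $G$ is true (because $Q$, being the unique Hall $p'$-subgroup of the nilpotent group $\Aut(G)$, is normal, so conjugation by any $\sigma\in\Aut(G)$ permutes the elements of $Q$ and hence sends each $[g,\tau]$ to $[\sigma(g),\sigma\tau\sigma^{-1}]\in[G,Q]$), but you only use the weaker fact that $W$ is $Q$-invariant. Your computation of $\psi_\chi\tau=\tau\psi_\chi$ on $W$ does give exactly $\chi(f_\tau(w))=1$, and since the images $f_\tau(W)$ for $\tau\in Q$ generate $[W,Q]=W$, the vanishing of $\chi$ follows. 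The contradiction with $\Hom(W,Z(C))\ne 0$ is immediate for nontrivial abelian $p$-groups.

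\textbf{Abelian case.} All three subcases are handled correctly. For $p$ odd your $u,d$ do define automorphisms (the well-definedness of $u$ uses $\lambda_1\ge\lambda_2$), and $dud^{-1}=u^g$ with $g\not\equiv 1\pmod p$ makes $\langle u,d\rangle$ non-nilpotent. For $p=2$ with a repeated part, $\GL_2(\Z/2^m\Z)$ embeds in $\Aut(G)$ and surjects onto $S_3$, so $\Aut(G)$ is not nilpotent. For $p=2$ with distinct parts, your flag argument via the characteristic subgroups $G[2^m]$ is correct: their images in $G/2G$ form a complete flag precisely because the $\lambda_i$ are distinct, so $A(G)$ lies in a Borel subgroup of $\GL(k,\F_2)$, which is a $2$-group; together with the paper's Proposition that $K(G)$ is a $p$-group, this forces $\Aut(G)$ to be a $2$-group. (Incidentally, this shows that the paper's parenthetical remark in Chapter~\ref{c_main} that $\Aut(G)$ is a $p$-group ``if and only if $p=2$ and the integers $\lambda_i$ are not all distinct'' contains a typo: it should read ``are all distinct''.)
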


\begin{thm}
Let $p$ be an odd prime and let $G$ be a finite $p$-group generated by two elements and with cyclic commutator subgroup.  Then $\Aut(G)$ is not a $p$-group if and only if $G$ is the semi-direct product of an abelian subgroup by a cyclic subgroup.
\end{thm}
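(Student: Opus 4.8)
The plan is to pass to the induced action on the Frattini quotient. Because $d(G)=2$, the exact sequence $1\to K(G)\to\Aut(G)\to A(G)\to 1$ exhibits $A(G)$ as a subgroup of $\Aut(G/\Phi(G))\cong\GL(2,\F_p)$, and since $K(G)$ is a $p$-group (P. Hall's proposition, proved above), $\Aut(G)$ is a $p$-group if and only if $A(G)$ is. Moreover, by the proposition that a prime-to-$p$ automorphism of a finite $p$-group acting trivially on $G/\Phi(G)$ must be trivial, $A(G)$ fails to be a $p$-group exactly when $G$ admits an automorphism of prime order $q\ne p$ acting nontrivially on $G/\Phi(G)$. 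So the theorem becomes: such an automorphism exists if and only if $G$ is an internal semidirect product $A\rtimes C$ with $A$ abelian and normal and $C$ cyclic.

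\textbf{The easy direction ($G\cong A\rtimes C$ implies $\Aut(G)$ is not a $p$-group).} If $A$ is trivial then $G$ is cyclic, and for odd $p$ the group $\Aut(C_{p^n})$ has order $p^{n-1}(p-1)$, hence is not a $p$-group. If $A\ne 1$, fix an integer $k$ whose multiplicative order modulo $\exp(A)$ is divisible by $p-1$; this is possible since $(\Z/\exp(A)\Z)^{\times}$ is cyclic of order divisible by $p-1$ for $p$ odd. Writing each element of $G=AC$ uniquely as $ac^i$ with $a\in A$, define $\psi_k(ac^i)=a^kc^i$. Since $c$ acts on $A$ by an automorphism and $x\mapsto x^k$ commutes with every automorphism of the abelian group $A$, a direct check shows $\psi_k$ is an endomorphism; it is bijective because $k$ is invertible mod $\exp(A)$, and $\psi_k^{\ell}(ac^i)=a^{k^{\ell}}c^i$ shows $\psi_k$ has order divisible by $p-1>1$. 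Hence $\Aut(G)$ is not a $p$-group. (This direction uses neither $d(G)=2$ nor cyclicity of $G'$.)

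\textbf{The converse (main work).} Suppose $\sigma\in\Aut(G)$ has prime order $q\ne p$; since $|G|$ is a power of $p$, $\langle\sigma\rangle$ acts coprimely on $G$, so $\sigma$ acts completely reducibly on every elementary abelian $\sigma$-invariant section, fixed points lift along $\sigma$-invariant quotients, $G=C_G(\sigma)[G,\sigma]$, and $H^1(\langle\sigma\rangle,-)=0$ for $\langle\sigma\rangle$-modules. We may assume $\sigma$ acts nontrivially on $V:=G/\Phi(G)\cong\F_p^2$, so $q\mid p^2-1$. First I would record the structural consequences of the hypotheses: since $G'=\langle c\rangle\cong C_{p^m}$ is cyclic and $G$ is nilpotent, $G$ acts on $G'$ through a $p$-subgroup of $\Aut(G')$, whence $\gamma_3(G)=[G,G']\le (G')^p=\Phi(G')$; consequently $G'=\langle[a,b]\rangle$ for any generating pair $a,b$ of $G$, and when $G$ is non-abelian the commutator induces a surjective, $\sigma$-equivariant alternating $\F_p$-form on $V$ valued in $G'/(G')^p$. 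Now split into cases according to the $\F_p\langle\sigma\rangle$-module structure of $V$: (a) $V$ is the sum of two distinct $\sigma$-eigenlines over $\F_p$; (b) $V$ is $\sigma$-irreducible (eigenvalues conjugate in $\F_{p^2}\setminus\F_p$); (c) $\sigma$ acts on $V$ by a nonidentity scalar. In each case, using coprime action to lift a $\sigma$-invariant line of $V$ to a $\sigma$-invariant cyclic subgroup $\langle g\rangle$ with $g\notin\Phi(G)$, one shows that the normal closure $A:=\langle g^{G}\rangle$ is a $\sigma$-invariant proper normal subgroup with $G/A$ cyclic, that the extension $1\to A\to G\to G/A\to 1$ splits (replace $g$ within its coset by a $\sigma$-fixed lift missing $A$, via vanishing of $H^1$), and — the crucial point — that $A$ is abelian: one has $A'\le G'=\langle c\rangle$ cyclic, $A$ is $\sigma$-invariant, and the $\sigma$-equivariant commutator constraints together with $\gamma_3(G)\le(G')^p$ force $A'=1$ (a nontrivial $A'$ would require $\sigma$ to act with a common eigenvalue of multiplicative order $p$ on $G'/(G')^p$ and on a quotient of $V$, impossible since $q\ne p$). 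This gives $G\cong A\rtimes\langle g\rangle$, the degenerate subcases ($G$ abelian, $\langle g\rangle$ already normal, or $A=1$) all yielding a group of the required shape.

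\textbf{Expected main obstacle.} The hard part is exactly step (iii) above: controlling the normal closure $A$ and proving it is abelian. The difficulty is that ``$G\cong A\rtimes C$'' can hold only for non-obvious choices of $A$ (the extraspecial group of order $p^{3}$ and exponent $p$ already shows that $A$ need not be any characteristic subgroup and need not contain $[G,\sigma]$), so one cannot simply read the decomposition off a canonically defined subgroup; the abelianness must be extracted from the interplay of the coprime $\sigma$-action with the cyclicity of $G'$. In practice this amounts to reproving enough of Blackburn's classification of $2$-generator $p$-groups with cyclic commutator subgroup to see precisely which of those groups carry an automorphism of order prime to $p$ that is nontrivial modulo $\Phi(G)$; cyclicity of $G'$ enters to pin $\gamma_3(G)$ inside $(G')^p$ and thereby kill the commutators that would otherwise obstruct the splitting. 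Everything else — the reduction to $A(G)$, the coprime-action input, and the easy direction — is routine.
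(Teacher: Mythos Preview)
The paper does not prove this theorem. It appears in the survey chapter on automorphism groups of finite $p$-groups, where the result is simply attributed to Ying~\cite{yin} (as a generalization of work of Heineken and Liebeck~\cite{hl}) and stated without any argument. There is therefore no proof in the paper to compare your proposal against.

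That said, your outline is reasonable in shape. The reduction to $A(G)\le\GL(2,\F_p)$ via Hall's proposition is exactly the machinery the thesis develops, and your easy direction is a clean, self-contained argument. For the converse you correctly identify the genuine content: given a coprime automorphism $\sigma$ acting nontrivially on $G/\Phi(G)$, one must manufacture an abelian normal complemented subgroup, and the cyclicity of $G'$ is what makes this possible. Your sketch of the case split on the $\F_p\langle\sigma\rangle$-structure of $V$ and the use of coprime-action lifting is the natural line; but as you yourself flag, the step ``$A'=1$'' is asserted rather than proved, and your justification (that a nontrivial $A'$ would force $\sigma$ to act with an eigenvalue of order $p$) is not quite right as stated, since $\sigma$ acts on the cyclic $p$-group $G'$ through $\Aut(G')\cong C_{p^{m-1}(p-1)}$ and could well act nontrivially there with order dividing $p-1$. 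To close the argument you would need to track more carefully how $\sigma$ acts on $G'/(G')^p$ relative to its action on $V$ via the commutator form, and this is where the real work in Ying's paper (or in Blackburn's underlying classification) lies.
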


Heineken and Liebeck~\cite{hl} also have a criterion which determines whether or not a $p$-group of class 2 and generated by two elements has an automorphism of order 2 or if the automorphism group is a $p$-group.  If $p$ is an odd prime and $G$ is a $p$-group that admits an automorphism which inverts some non-trivial element of $G$, then $G$ is an \emph{s.i. group} (a some-inversion group).  Clearly if $G$ is an s.i. group, it has an automorphism of order 2.  If $G$ is not an s.i. group, it is called an \emph{n.i. group} (a no-inversion group).

\begin{thm}
Let $p$ be an odd prime and let $G$ be a $p$-group of class 2 generated by two elements.  Choose generators $x$ and $y$ such that
\[
\left< x, G' \right> \cap \left< y, G' \right> = G',
\]
and suppose that
\[
\left< x \right> \cap G' = \left< x^{p^m} \right> \textrm{ and } \left< y \right> \cap G' = \left< y^{p^n} \right>.
\]
\begin{enumerate}
\item If either $x^{p^m} = 1$ or $y^{p^n} = 1$, then $G$ is an s.i. group.
\item If $x^{p^m} = [x,y]^{rp^k} \neq 1$ and $y^{p^n} = [x,y]^{sp^l} \neq 1$ with $(r,p) = (s,p) = 1$, and $(n-l+k-m)(k-l)$ is non-negative, then $G$ is an s.i. group.
\item If $k$ and $l$ are defined as in (2) and $(n-l+k-m)(k-l)$ is negative, then $G$ is an n.i. group and its automorphism group is a $p$-group.
\end{enumerate}
\end{thm}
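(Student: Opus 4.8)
The plan is to fix coordinates once and then dispose of the three cases. Since $G$ has class $2$ and is $2$-generated, $G'=\langle c\rangle$ with $c=[x,y]$ is cyclic and central, commutators are $\Z$-bilinear modulo $G'$, and the collection formula gives $(uv)^{p^{j}}=u^{p^{j}}v^{p^{j}}[v,u]^{\binom{p^{j}}{2}}$ for all $j$. The three intersection hypotheses say exactly that $G/G'=\langle\bar x\rangle\times\langle\bar y\rangle\cong C_{p^{m}}\times C_{p^{n}}$ and that $x^{p^{m}},\,y^{p^{n}}\in G'$, so $G$ has a presentation
\[
G=\bigl\langle\, x,\,y \ \bigm|\ [c,x]=[c,y]=1,\ c^{p^{e}}=1,\ x^{p^{m}}=c^{\alpha},\ y^{p^{n}}=c^{\beta}\,\bigr\rangle,
\]
where $p^{e}=\mathrm{ord}(c)$ and $(\alpha,\beta)$ equals $(0,\beta)$, $(\alpha,0)$, or $(rp^{k},sp^{l})$ with $(r,p)=(s,p)=1$ according to the three cases (if $m=0$ or $n=0$ the group is cyclic and lies in case (1), and in cases (2)--(3) one has $m,n\ge1$). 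Every candidate map on $x,y$ will be recognized as an automorphism by checking that it respects these relations and is onto (finiteness then giving bijectivity); concretely, a map $x\mapsto x^{a}y^{b}c^{u}$, $y\mapsto x^{a'}y^{b'}c^{v}$ automatically respects the $c$-relations (its effect on $c$ is $c\mapsto c^{ab'-a'b}$), so it is an automorphism precisely when $ab'-a'b\not\equiv0\pmod p$ and the two power relations $x^{p^m}=c^\alpha$, $y^{p^n}=c^\beta$, rewritten via the collection formula as congruences modulo $p^{e}$ in $(a,b,u)$ and in $(a',b',v)$, hold; the parameters $u,v$ enter only through $up^{m},vp^{n}$ and so are essentially free.

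For part (1), if $x^{p^{m}}=1$ then $x\mapsto x$, $y\mapsto y^{-1}$ (which sends $c\mapsto c^{-1}$) visibly respects all defining relations and is surjective, hence is an automorphism inverting $y$, so $G$ is an s.i.\ group; the case $y^{p^{n}}=1$ is symmetric. For part (2), after the symmetry $x\leftrightarrow y$ (which preserves the sign of $(n-l+k-m)(k-l)$) I may assume $k\ge l$, so the hypothesis reads $n+k\ge m+l$; using $(r,p)=(s,p)=1$ one writes $x^{p^{m}}$ as a power of $y^{p^{n}}$ and then looks for $\phi$ of the above form whose induced matrix on $G/G'$ has determinant $\equiv-1$ modulo $\mathrm{ord}(c^{p^{k}})$ --- for instance of shape $x\mapsto x\,y^{-\ast}c^{u}$, $y\mapsto y^{-1}c^{v}$ --- with $u,v$ chosen so that the two power-relation congruences are satisfied. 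The inequality $n+k\ge m+l$ is exactly what makes those congruences solvable, and the resulting $\phi$ inverts the nontrivial element $c^{p^{k}}$; hence $G$ is an s.i.\ group. (This is elementary bookkeeping but splits into sub-cases according to the relative sizes of $m,k$ and of $n,l$.)

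For part (3) we are in the case $k\neq l$; assume $k>l$, so that $m-k>n-l$, which in turn forces $m>n$. The goal is that $\Aut(G)$ is a $p$-group; the ``n.i.''\ conclusion then follows automatically, since an automorphism inverting a nontrivial element would restrict to an order-$2$ automorphism of a cyclic subgroup of odd order $>1$, impossible inside a $p$-group with $p$ odd. Because $\Aut_{f}(G)$ is a $p$-group and $\Aut(G)/\Aut_{f}(G)$ embeds in $\GL(2,\F_{p})$ (here $d(G)=2$ as $m,n\ge1$), it suffices to show every automorphism $\sigma$ acts unipotently on $G/\Phi(G)\cong\F_{p}^{2}$. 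Since $G'$ is characteristic, $\sigma$ induces an automorphism of $G/G'\cong C_{p^{m}}\times C_{p^{n}}$, and well-definedness there together with $m>n$ forces $a'\equiv0\pmod p$ in the notation above; thus $\sigma$ fixes the line $\langle\bar y\rangle\subseteq G/\Phi(G)$ (equivalently, $\langle y,\Phi(G)\rangle$ is characteristic). Next, feeding an arbitrary $\sigma\colon x\mapsto x^{a}y^{b}c^{u}$, $y\mapsto x^{a'}y^{b'}c^{v}$ into the two power-relation congruences and tracking the $p$-adic valuations of the terms $x^{ap^{m}}=c^{ar p^k}$, $y^{bp^{m}}$, $\binom{p^{m}}{2}$, $c^{up^m}$, etc.\ --- the strict inequality $m-k>n-l$, together with $k>l$, makes the critical valuations coincide in just the right way --- one is forced to $a\equiv b'\equiv1\pmod p$. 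Hence $\sigma$ is the identity on $\langle\bar y\rangle$ and on $G/\langle y,\Phi(G)\rangle$, so it acts unipotently on $G/\Phi(G)$; therefore the image of $\Aut(G)$ in $\GL(2,\F_{p})$ lies in a Sylow $p$-subgroup and $\Aut(G)$ is a $p$-group.

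The crux is the valuation bookkeeping in part (3): one must push the two power-relation congruences far enough, past the contributions of the free parameters $u,v$, to pin $a$ and $b'$ down modulo $p$, and the degenerate sub-cases (e.g.\ $m=k$, or $l\ge n$, where extra terms enter at the decisive valuation) require either continuing to higher powers of $p$ or instead comparing $\mathrm{ord}(\sigma(x)),\mathrm{ord}(\sigma(y))$ directly against $\mathrm{ord}(x),\mathrm{ord}(y)$. Parts (1) and (2) are routine by comparison, once the correct automorphism has been guessed and the relations checked.
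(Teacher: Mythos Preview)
The paper does not contain a proof of this theorem. It appears in the survey chapter (Chapter~\ref{c_examples}) where it is stated, without proof, as a result of Heineken and Liebeck~\cite{hl}. There is therefore nothing in the paper to compare your attempt against.

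That said, your overall strategy is the natural one and matches how such results are typically proved: write down a presentation of $G$ using that $G'=\langle c\rangle$ is central and cyclic, parametrize candidate automorphisms by their effect on the generators, and reduce everything to congruences coming from the power relations $x^{p^m}=c^{\alpha}$, $y^{p^n}=c^{\beta}$. Part~(1) is correct as written. Part~(2) is only sketched; the claim that ``the inequality $n+k\ge m+l$ is exactly what makes those congruences solvable'' is the entire content of that case and would need to be worked out explicitly. In part~(3), the reduction to showing that every automorphism acts unipotently on $G/\Phi(G)$ is sound, and the observation that $m>n$ forces $a'\equiv 0\pmod p$ is correct; but the step where you conclude $a\equiv b'\equiv 1\pmod p$ from the valuation bookkeeping is, as you yourself flag, the crux, and your account of it is not a proof but a description of what one hopes will happen. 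The degenerate sub-cases you mention are real and do require care. If you want a self-contained argument you will need to actually carry out those computations rather than gesture at them.
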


\subsection{Wreath Products}

For any group $G$, let $\pi(G)$ be the set of distinct prime factors of $|G|$.  In~\cite{hor2}, Horo{\v{s}}evski{\u\i} gives the following two theorems on the order of the automorphism group of a wreath product.

\begin{thm}
Let $G$ and $H$ be non-trivial finite groups, and let $G_1$ be a maximal abelian subgroup of $G$ which can be distinguished as a direct factor of $G$.  Then
\[
\pi(\Aut(G \wr H)) = \pi(G) \cup \pi(H) \cup \pi(\Aut(G)) \cup \pi(\Aut(H)) \cup \pi(\Aut(G_1 \wr H)).
\]
\end{thm}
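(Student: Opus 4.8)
The plan is to prove the two inclusions separately. Write $W = G \wr H = B \rtimes H$ with $B = \prod_{h \in H} G_h$ the base group, each $G_h \cong G$, on which $H$ acts by its (free) left-regular permutation action.

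For ``$\supseteq$'' I would exhibit explicit automorphisms realising each prime. Since $H$ is non-trivial, a short computation shows $Z(W) \le B$ and, moreover, that an element of $B$ supported on a single coordinate is non-central; so, choosing by Cauchy's theorem an element of $G$ of prime order $p \mid |G|$ and placing it in one coordinate, conjugation by the resulting element of $B$ is an inner automorphism of order exactly $p$, whence $\pi(G) \subseteq \pi(\Aut W)$. Likewise conjugation by an element of $H$ of order $p \mid |H|$ is non-trivial modulo $Z(W)$, giving $\pi(H) \subseteq \pi(\Aut W)$. The diagonal action $\alpha \mapsto \hat\alpha$ (apply $\alpha$ in every coordinate, fix $H$) is an injective homomorphism $\Aut(G) \hookrightarrow \Aut(W)$, and the coordinate-relabelling action $\beta \mapsto \tilde\beta$ is an injective homomorphism $\Aut(H) \hookrightarrow \Aut(W)$; these give $\pi(\Aut G)$ and $\pi(\Aut H)$. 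The remaining inclusion $\pi(\Aut(G_1 \wr H)) \subseteq \pi(\Aut W)$ is not elementary and I will obtain it, together with the reverse inclusion, from the structure of $\Aut(W)$.

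For ``$\subseteq$'' I would decompose $G = G_1 \times G_0$ with $G_0$ having no non-trivial abelian direct factor, so that $B = B_1 \times B_0$ with $B_i = \prod_h (G_i)_h$ both $H$-invariant and $W \cong (G_1 \wr H) \times_H (G_0 \wr H)$ (the subgroup of pairs $w_i \in G_i \wr H$ with equal images in $H$). The key input is the structure theorem for $\Aut(G \wr H)$ as used by Horo{\v{s}}evski{\u\i}~\cite{hor2}: $\Aut(W)$ is generated by (i) inner automorphisms, (ii) the diagonal copy of $\Aut(G)$, (iii) the copy of $\Aut(H)$, (iv) ``shear'' automorphisms built from $H$-equivariant homomorphisms $B \to Z(B)$ and from derivations $H \to B$, and (v) automorphisms that rearrange the abelian part $B_1$ — the ones responsible for $B$ failing to be characteristic in general (already for $D_8 = C_2 \wr C_2$). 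The prime divisors of orders in (i) and (iv) lie in $\pi(G) \cup \pi(H)$ (the shears have exponent dividing $\exp Z(G)$ or $|H|$), those in (ii) and (iii) lie in $\pi(\Aut G) \cup \pi(\Aut H)$, and — this is the crux — the part of $\Aut(W)$ contributed by (v) is identified with the corresponding part of $\Aut(G_1 \wr H)$, since ``linear'' mixing of this type can only occur on the abelian direct factor; hence its primes lie in $\pi(\Aut(G_1 \wr H))$. Reading off prime divisors then yields ``$\subseteq$'', and the same identification embeds the relevant part of $\Aut(G_1 \wr H)$ into $\Aut(W)$, completing ``$\supseteq$''.

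The main obstacle is the structure theorem itself, and in particular isolating the abelian-factor contribution cleanly. Because $B$ need not be characteristic in $W$, one cannot simply assert that every automorphism stabilises $B$ up to an inner twist and an element of $\Aut(H)$; instead I would have to pin down a genuinely characteristic subgroup (assembled from $B_0$, $B_1'$, $[B,H]$ and similar pieces) that recovers the coordinate structure up to a ``tame'' ambiguity, then show that after correction by automorphisms of types (i)--(iv) what remains acts on $B_1$ only in ways already present in $\Aut(G_1 \wr H)$, and conversely that every automorphism of $G_1 \wr H$ — including those permuting or mixing constituents of its base group — lifts to $\Aut(W)$. That bookkeeping is the delicate part; everything else is routine once the decomposition $W \cong (G_1 \wr H) \times_H (G_0 \wr H)$ is in hand.
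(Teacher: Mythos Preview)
The paper does not prove this theorem. It appears in the survey chapter (Chapter~\ref{c_examples}) and is simply quoted as a result of Horo{\v{s}}evski{\u\i} with the citation~\cite{hor2}; no proof, sketch, or indication of method is given. So there is no ``paper's own proof'' to compare your proposal against.

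As a standalone sketch, your outline is reasonable and you have correctly located where the real content lies: the structure theorem for $\Aut(G\wr H)$, and in particular the isolation of the contribution coming from the abelian direct factor $G_1$. Your easy inclusions are fine. Your honest acknowledgement that the base group need not be characteristic (witness $D_8 = C_2\wr C_2$) and that one must therefore find a genuinely characteristic subgroup recovering the coordinate structure is exactly the difficulty Horo{\v{s}}evski{\u\i} has to overcome. What you have written is not a proof but a roadmap that points to~\cite{hor2} for the substantive step --- which is precisely what the paper itself does.
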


\begin{thm}
Let $P_1, P_2, \dots, P_m$ be non-trivial finite $p$-groups.  Then
\[
\pi(\Aut(P_1 \wr P_2 \wr \cdots \wr P_m)) = \bigcup_{i=1}^m{\pi(\Aut(P_i))} \cup \{p\}.
\]
\end{thm}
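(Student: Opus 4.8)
The plan is to argue by induction on $m$, with $m \geq 2$ throughout: the case $m=1$ is genuinely exceptional, and indeed the asserted equality fails there (for instance $\Aut(C_p) \cong C_{p-1}$ has $p \notin \pi(\Aut(C_p))$ when $p$ is odd). Write $W = P_1 \wr P_2 \wr \cdots \wr P_m = P_1 \wr B$, where $B = P_2 \wr \cdots \wr P_m$ is itself a non-trivial finite $p$-group. Applying the preceding theorem with $G = P_1$ and $H = B$, and using $\pi(P_1) = \pi(B) = \{p\}$, we get
\[
\pi(\Aut(W)) = \{p\} \cup \pi(\Aut(P_1)) \cup \pi(\Aut(B)) \cup \pi(\Aut(G_1 \wr B)),
\]
where $G_1$ is a maximal abelian subgroup of $P_1$ that is a direct factor. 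By the inductive hypothesis, $\pi(\Aut(B)) = \bigcup_{i=2}^{m} \pi(\Aut(P_i)) \cup \{p\}$ (which reads $\pi(\Aut(P_2))$ when $m=2$). Thus everything hinges on estimating the last term $\pi(\Aut(G_1 \wr B))$.

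To control that term I would first establish, independently of the present theorem, the following lemma: \emph{if $A$ is a non-trivial finite abelian $p$-group and $H$ is a non-trivial finite $p$-group, then $\pi(\Aut(A \wr H)) = \pi(\Aut(A)) \cup \pi(\Aut(H)) \cup \{p\}$.} Granting this, if $G_1$ is trivial then $G_1 \wr B = B$ and there is nothing to prove; otherwise apply the lemma with $A = G_1$ and $H = B$. Because $G_1$ is a direct factor of $P_1$ there is an injection $\Aut(G_1) \hookrightarrow \Aut(P_1)$, so $\pi(\Aut(G_1)) \subseteq \pi(\Aut(P_1))$, and together with the inductive value of $\pi(\Aut(B))$ this yields $\pi(\Aut(G_1 \wr B)) \subseteq \bigcup_{i=1}^{m} \pi(\Aut(P_i)) \cup \{p\}$. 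Substituting into the display above proves $\pi(\Aut(W)) \subseteq \bigcup_{i=1}^{m} \pi(\Aut(P_i)) \cup \{p\}$.

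For the reverse inclusion, note that $W$ is non-abelian, so $\Inn(W)$ is a non-trivial $p$-group and $p \in \pi(\Aut(W))$. Each $\alpha \in \Aut(P_1)$ acts coordinatewise on the base group $P_1^{(B)}$, commuting with the permutation action of $B$, and so extends to an automorphism of $W$; hence $\Aut(P_1) \hookrightarrow \Aut(W)$ and $\pi(\Aut(P_1)) \subseteq \pi(\Aut(W))$. Likewise each $\beta \in \Aut(B)$ extends to $W$ — acting as $\beta$ on the top quotient and by the induced coordinate permutation on the base group — so $\Aut(B) \hookrightarrow \Aut(W)$ and $\pi(\Aut(B)) \subseteq \pi(\Aut(W))$; by the inductive hypothesis $\pi(\Aut(B)) \supseteq \pi(\Aut(P_i))$ for $2 \leq i \leq m$. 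Combining, $\bigcup_{i=1}^{m} \pi(\Aut(P_i)) \cup \{p\} \subseteq \pi(\Aut(W))$, and the theorem follows.

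The crux — and the main obstacle — is the lemma on $\Aut(A \wr H)$ with $A$ abelian. This is exactly the case where the recursion of the preceding theorem degenerates, since an abelian group is its own maximal abelian direct factor, so a direct structural analysis is needed. The inclusion $\supseteq$ is as in the previous paragraph. For $\subseteq$, the base group $N = A^{(H)}$ is nearly characteristic in $A \wr H$: it can fail to be characteristic outright (as $C_2 \wr C_2 \cong D_8$ already shows), but one verifies that its images under $\Aut(A \wr H)$ are constrained enough that the prime set is unaffected. Modulo a normal $p$-subgroup of $\Aut(A \wr H)$ — the $1$-cocycles in $\Der(H, N)$, which record automorphisms trivial on both $N$ and $W/N$ — one is reduced to the stabilizer of $N$, which maps with $p$-group kernel onto subgroups of $\Aut(H)$ (acting on $W/N \cong H$) and of the group of $H$-equivariant automorphisms of $N$. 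Viewing $N = A^{(H)}$ as a $\Z[H]$-module with $H$ permuting coordinates regularly, so that $N \cong \Z[H] \otimes_{\Z} A$, the $H$-equivariant automorphisms form the unit group of an endomorphism ring which, because $H$ is a $p$-group, is built from a copy of $\Aut(A)$ and a unipotent part coming from the augmentation ideal; its order therefore involves only primes in $\pi(\Aut(A)) \cup \{p\}$. Assembling these contributions yields the lemma, and the delicate points are precisely the "nearly characteristic" statement for $N$ and the exact identification of the $H$-equivariant automorphism group.
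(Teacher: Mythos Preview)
The paper does not prove this theorem: it appears in a survey section and is simply attributed to Horo\v{s}evski\u{\i}~\cite{hor2}, with no argument given. So there is no ``paper's own proof'' to compare against; what follows is an assessment of your outline on its own merits.

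Your inductive skeleton is sound. Writing $W = P_1 \wr B$ and invoking the preceding theorem reduces everything to $\pi(\Aut(G_1 \wr B))$ with $G_1$ abelian, and you correctly observe that this is precisely where the recursion of the first Horo\v{s}evski\u{\i} theorem becomes tautological (an abelian group is its own maximal abelian direct factor). The containments $\pi(\Aut(G_1)) \subseteq \pi(\Aut(P_1))$ and the embeddings $\Aut(P_1), \Aut(B) \hookrightarrow \Aut(W)$ are fine, so the reverse inclusion is clean.

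The gap is the lemma itself. Two specific points are not established:
\begin{itemize}
\item The ``nearly characteristic'' claim for the base $N = A^{(H)}$ is doing real work and you have not proved it. You need that the stabiliser of $N$ in $\Aut(A\wr H)$ has $p$-power index; merely noting that $N$ can fail to be characteristic (as in $D_8$) and asserting the images are ``constrained enough'' is not an argument. In fact the base group \emph{is} characteristic in $A \wr H$ for $p$-groups except in a short list of degenerate cases, but pinning this down (or circumventing it) is exactly what Horo\v{s}evski\u{\i} has to do.
\item The computation of the $H$-equivariant automorphism group of $N$ is not carried out. Your heuristic that $\End_H(N) \cong \End(A)[H]$ with unit group built from $\Aut(A)$ and a unipotent radical is on the right track --- the augmentation map $\End(A)[H] \to \End(A)$ has kernel the augmentation ideal, which is nilpotent because $H$ is a $p$-group and $\End(A)$ has $p$-power order --- but you have to check this gives a \emph{ring} homomorphism (delicate when $\End(A)$ is noncommutative) and that the $1$-units over a nilpotent ideal in a ring of $p$-power additive order form a $p$-group.
\end{itemize}
So the strategy is correct and matches what one would expect from Horo\v{s}evski\u{\i}'s two-theorem setup, but the abelian-base lemma is stated rather than proved; filling it in is the actual content of the result.
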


Thus given any finite $p$-groups whose automorphism groups are $p$-groups, we can construct infinitely many more by taking iterated wreath products.

\subsection{The Automorphism Group of an Abelian $p$-Group}

Macdonald~\cite[Chapter II, Theorem 1.6]{mac} calculates the order of the automorphism group of an abelian $p$-group using Hall polynomials.

\begin{thm}
Let $G$ be an abelian $p$-group of type $\lambda$.  Then
\[
|\Aut(G)| = p^{|\lambda| + 2n(\lambda)} \prod_{i \ge 1}{\phi_{m_i(\lambda)}(p^{-1})},
\]
where $m_i(\lambda)$ is the number of parts of $\lambda$ equal to $i$, $n(\lambda) = \sum_{i \ge 1}{\binom{\lambda_i'}{2}}$, and $\phi_m(t) = (1-t)(1-t^2) \cdots (1-t^m)$.
\end{thm}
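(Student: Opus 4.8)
The plan is to compute $|\Aut(G)|$ by first counting \emph{all} endomorphisms of $G$ and then determining, via reduction modulo $p$, what fraction of them are invertible. Write $G = C_{p^{\lambda_1}} \times \cdots \times C_{p^{\lambda_\ell}}$ with $\ell = \lambda_1'$ the number of parts, and fix a generator of each cyclic factor. An endomorphism $\psi$ corresponds to a matrix $A = (a_{ij})$ recording the images of these generators, and the order relations show that a choice of entries defines a well-defined endomorphism precisely when $a_{ij} \in p^{\max(\lambda_i - \lambda_j,\,0)}\Z/p^{\lambda_i}\Z$ for all $i,j$. Hence $|\mathrm{End}(G)| = \prod_{i,j} p^{\min(\lambda_i,\lambda_j)}$. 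First I would record the elementary identity
\[
\sum_{i,j} \min(\lambda_i,\lambda_j) = |\lambda| + 2\sum_{i<j}\min(\lambda_i,\lambda_j) = |\lambda| + 2n(\lambda),
\]
where the last step is the standard fact $\sum_{i<j}\min(\lambda_i,\lambda_j) = \sum_k \binom{\lambda_k'}{2}$ (both sides count pairs of boxes of the Young diagram of $\lambda$ lying in a common column). This produces the prefactor $p^{|\lambda| + 2n(\lambda)}$.

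Next I would study the reduction map $r \colon \mathrm{End}(G) \to \mathrm{End}(G/pG) = M_\ell(\F_p)$, a ring homomorphism. Since $pG \le \Phi(G)$, a standard Frattini argument shows that $\psi$ is surjective --- equivalently, since $G$ is finite, an automorphism --- if and only if $r(\psi)$ is invertible. Ordering the indices by decreasing $\lambda_i$ and grouping indices with equal parts into blocks of sizes $m_1, m_2, \dots$ (the multiplicities $m_i(\lambda)$, listed by part size), the congruence $a_{ij}\equiv 0 \pmod p$ whenever $\lambda_i > \lambda_j$ shows that the image of $r$ is the block-triangular subalgebra $T \subseteq M_\ell(\F_p)$ whose diagonal blocks are the full matrix algebras $M_{m_i}(\F_p)$; moreover $r$ maps \emph{onto} $T$ (lift the on-or-below-diagonal entries arbitrarily and take the remaining entries to be $0$). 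Being an additive surjection, $r$ has all fibres of the common size $|\ker r| = |\mathrm{End}(G)|/|T|$.

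Finally, a block-triangular matrix with square diagonal blocks is invertible exactly when each diagonal block is, so $\Aut(G) = r^{-1}(T^\times)$ and, since the off-diagonal block entries contribute identically to $T$ and to $T^\times$, we get $|T^\times|/|T| = \prod_i |\GL(m_i,\F_p)|/p^{m_i^2}$. Therefore
\[
|\Aut(G)| = |T^\times|\cdot|\ker r| = \frac{|T^\times|}{|T|}\,|\mathrm{End}(G)| = p^{|\lambda|+2n(\lambda)}\prod_i \frac{|\GL(m_i,\F_p)|}{p^{m_i^2}},
\]
and the identity $|\GL(m,\F_p)|/p^{m^2} = \prod_{k=1}^m(1-p^{-k}) = \phi_m(p^{-1})$ yields the claimed formula. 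I expect the only real friction to be the two combinatorial identities and keeping the block bookkeeping consistent; the single statement doing conceptual work is the reduction-mod-$p$ criterion for an endomorphism to be an automorphism, which is routine for finite $p$-groups. (This elementary route sidesteps the Hall-polynomial machinery used in Macdonald's account.)
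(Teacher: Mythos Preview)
Your argument is correct. The paper does not actually prove this theorem; it merely quotes it from Macdonald~\cite[Chapter II, Theorem 1.6]{mac} and remarks that Macdonald's derivation goes through Hall polynomials. So there is no ``paper's proof'' to compare against directly, only the cited source.

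That said, your route is genuinely different from Macdonald's. Macdonald obtains the formula as a byproduct of the Hall-polynomial machinery: he counts automorphisms by counting, for each type, the elements of $G$ of that type and the ways to extend partial generating sequences, which is where the factors $\phi_{m_i}(p^{-1})$ emerge. Your approach instead factors the count as $|\Aut(G)| = |\mathrm{End}(G)|\cdot |T^\times|/|T|$ via the reduction map to $G/pG$, which is more elementary and structurally transparent: the prefactor $p^{|\lambda|+2n(\lambda)}$ is visibly $|\mathrm{End}(G)|$, and the correction $\prod_i \phi_{m_i}(p^{-1})$ is visibly the density of units in the block-triangular image. The only step that needs a word of care is the surjectivity of $r$ onto the block-triangular algebra $T$ (you need to check that each entry position not forced to vanish mod $p$ can indeed be lifted to hit any residue, which you do), and the identification $pG = \Phi(G)$ so that the Frattini criterion applies. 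Both are routine, and your sketch handles them. The combinatorial identity $\sum_{i<j}\min(\lambda_i,\lambda_j)=\sum_k\binom{\lambda_k'}{2}$ is standard and your column-pair interpretation is the right one.
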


There are a variety of results in the literature on the automorphism groups of abelian $p$-groups, of which we will mention three that are interesting and not so technical.  Morgado~\cite{mor, mor2} proves the following theorem about the splitting of the sequence $1 \to K(G) \to \Aut(G) \to A(G) \to 1$ described in Chapter~\ref{c_series}.

\begin{thm}
Let $G$ be an elementary abelian $p$-group.  Let $K(G)$ be the subgroup of $\Aut(G)$ that acts trivially on $G/\Phi(G)$ and let $A(G)$ be the subgroup of $\Aut(G/\Phi(G))$ induced by the action of $\Aut(G)$ on $G/\Phi(G)$.  If $p \ge 5$, then the exact sequence $1 \to K(G) \to \Aut(G) \to A(G) \to 1$  splits if and only $G$ has type $(p^m, p, \dots, p)$ for some positive integer $m$.  If $p = 2$ or $p = 3$, this condition is sufficient but not necessary.
\end{thm}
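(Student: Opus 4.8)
The plan is to reduce everything to an explicit description of the three groups and then argue the two directions separately. Write $G\cong C_{p^{\lambda_1}}\times\cdots\times C_{p^{\lambda_d}}$ with $\lambda_1\ge\cdots\ge\lambda_d\ge 1$, so that the type singled out in the theorem is exactly the case $d=1$ or $\lambda_2=1$, and fix generators $e_1,\dots,e_d$. First I would identify $\mathrm{End}(G)$ with the ring of matrices $(a_{ij})$ whose $(i,j)$ entry lies in $\Hom(C_{p^{\lambda_j}},C_{p^{\lambda_i}})\cong p^{\max(\lambda_i-\lambda_j,0)}\Z/p^{\lambda_i}\Z$, so that $\Aut(G)=\mathrm{End}(G)^{\times}$ consists of the matrices whose reduction modulo $p$ is invertible. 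Reduction modulo $p$ carries $\mathrm{End}(G)$ onto the subring $R\subseteq M_d(\F_p)$ of matrices vanishing in every position $(i,j)$ with $\lambda_i>\lambda_j$, whose group of units is the standard parabolic subgroup $P\le\GL(d,\F_p)$ with block sizes the multiplicities of the distinct parts of $\lambda$. Lifting an invertible matrix of $R$ entrywise shows $\Aut(G)\to P$ is onto, so $A(G)=P$; and $K(G)=1+\mathfrak{n}$, where $\mathfrak{n}$ is the kernel of reduction modulo $p$. Since every element of $\mathfrak{n}$ maps $G$ into $pG$, the ideal $\mathfrak{n}$ is nilpotent and $|\mathfrak{n}|$ is a power of $p$, so $K(G)$ is a normal $p$-subgroup; hence the question becomes precisely whether $P$ lifts to a complement of $K(G)$ in $\mathrm{End}(G)^{\times}$.

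For sufficiency, which I expect to hold for every prime $p$, suppose $\lambda=(m,1,\dots,1)$, so $G=C_{p^m}\times C_p^{d-1}$ and $P$ has Levi factor $\GL(1,\F_p)\times\GL(d-1,\F_p)$. I would build a section $s\colon P\to\Aut(G)$ using the Teichm\"{u}ller lift $\F_p^{\times}\to(\Z/p^m\Z)^{\times}$ on the $\GL(1)$-part (for $p=2$ this part is trivial), the canonical identification $\Aut(C_p^{d-1})=\GL(d-1,\F_p)$ on the $\GL(d-1)$-part, and the evident $\F_p$-linear lift of the off-diagonal entries --- unambiguous because the second block has exponent $p$. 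A short check, whose only nonlinear ingredient is the multiplicativity of the Teichm\"{u}ller lift, shows $s$ is a homomorphism; the cyclic case $d=1$ I would dispose of separately using $\Aut(C_{p^m})\cong K(G)\times C_{p-1}$ for $p$ odd and $A(G)=1$ for $p=2$. This also proves the ``sufficient'' half of the statement for $p\in\{2,3\}$.

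The substantive direction is necessity for $p\ge 5$. Assume $\lambda_2\ge 2$ and suppose $s\colon A(G)\to\Aut(G)$ is a section. Consider the root element $\rho\in A(G)=P$ acting modulo $p$ by $e_1\mapsto e_1+e_2$ and $e_j\mapsto e_j$ for $j\ne 1$; since $\lambda_2\le\lambda_1$ it lies in $P$ and has order $p$, so $\phi:=s(\rho)$ is an order-$p$ element of $\Aut(G)$ reducing to $\rho$. I would derive a contradiction by pinning down the $e_2$-coordinate of $\phi^p(e_1)$ modulo $p^2$. Writing $\phi(e_1)=ue_1+ve_2+(\text{terms in }e_3,\dots,e_d)$ and $\phi(e_i)=w_ie_1+\cdots$, the relation $\overline{\phi}=\rho$ forces $u\equiv v\equiv 1$ and $w_2\equiv 0\pmod{p}$, and puts every $e_3,\dots,e_d$-coordinate of $\phi(e_1)$ together with every off-diagonal coordinate of every $\phi(e_i)$ into $p\Z$. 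Consequently, in the expansion of $\phi^p(e_1)$ any monomial that visits a coordinate outside $\{e_1,e_2\}$ picks up at least two of these small factors and so vanishes modulo $p^2$, while a direct recursion in the $2\times 2$ corner shows that the $e_2$-coordinate of $\phi^p(e_1)$ is congruent modulo $p^2$ to $p\bigl(1+u_1\binom{p}{2}+w_1\binom{p}{3}\bigr)$, where $u=1+pu_1$ and $w_2=pw_1$. For $p\ge 5$ both $\binom{p}{2}$ and $\binom{p}{3}$ are divisible by $p$, so this coordinate is $\equiv p\pmod{p^2}$, nonzero because $\lambda_2\ge 2$; hence $\phi^p\ne 1$, a contradiction, so the sequence does not split. (This recovers the classical fact that $\mathrm{SL}_2(\Z/p^2\Z)\to\mathrm{SL}_2(\F_p)$ is non-split for $p\ge 5$, and it yields no obstruction when $\lambda_2=1$, where the coordinate lives in $\Z/p\Z$.) When $p\in\{2,3\}$ the identity degenerates, since $\binom{3}{3}=1$ and $\binom{2}{2}=1$, so the obstruction can be cancelled by a suitable choice of parameters and there do exist split instances with $\lambda_2\ge 2$: for $p=2$, for instance, $\Aut(C_4\times C_4)=\GL(2,\Z/4\Z)$ contains a complement isomorphic to $\GL(2,\F_2)\cong S_3$, namely the image of the standard copy of $S_3$ in $\GL(2,\Z)$, so the condition fails to be necessary, and $p=3$ is settled by an analogous short computation.

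The step I expect to be the main obstacle is this last one: one has to verify that the $p$-th power identity is genuinely forced no matter how the free parameters of the lift $\phi$ are chosen, and one has to account carefully for the contributions of the coordinates $e_3,\dots,e_d$ --- this is precisely where the hypothesis $p\ge 5$ is consumed --- and then, separately, one has to produce the split instances that make the ``only if'' direction fail when $p=2$ or $p=3$, which is short but needs its own argument.
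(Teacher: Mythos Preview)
The paper does not give its own proof of this theorem: it appears in the survey chapter on automorphism groups of abelian $p$-groups and is simply attributed to Morgado~\cite{mor, mor2}. (Note also that the hypothesis ``elementary abelian'' in the displayed statement is a typo in the paper --- for an elementary abelian group $\Phi(G)=1$ and the sequence is trivially split --- and should read ``finite abelian''; you have implicitly made this correction.) So there is no in-paper argument to compare against.

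Your strategy is sound and would yield a complete proof. The sufficiency direction is correct: for type $(p^m,p,\dots,p)$ the parabolic $A(G)$ has Levi $\GL_1\times\GL_{d-1}$, and the section built from the Teichm\"uller lift on the $\GL_1$ block together with the identity on the $C_p^{d-1}$ block is a genuine homomorphism (the only nontrivial check is that $\omega(a)\cdot v=a\cdot v$ when $v$ lands in a coordinate of exponent $p$, which holds because $\omega(a)\equiv a\pmod p$). For necessity when $p\ge 5$, your reduction to the $2\times 2$ corner is valid: every off-diagonal entry of any lift $\phi$ other than the $(2,1)$ entry is $\equiv 0\pmod p$, so any path in the expansion of $(\phi^p)_{21}$ that leaves $\{e_1,e_2\}$ must both exit and re-enter that set, picking up two such factors and hence vanishing modulo $p^2$. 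The remaining $2\times 2$ computation is essentially the classical fact that $\GL_2(\Z/p^2\Z)\to\GL_2(\F_p)$ does not split for $p\ge 5$, and your binomial-coefficient argument captures it. Two small points: your displayed formula $p\bigl(1+u_1\binom{p}{2}+w_1\binom{p}{3}\bigr)$ is missing a few terms (for instance a $z_1\binom{p}{2}$ contribution from the $(2,2)$ diagonal and the $v$-perturbation), but every omitted term is also divisible by $p\binom{p}{2}$ or $p\binom{p}{3}$, so the conclusion $(\phi^p)_{21}\equiv p\pmod{p^2}$ is unaffected for $p\ge 5$; and your $p=2$ example (the image of $S_3\subset\GL_2(\Z)$ in $\GL_2(\Z/4\Z)$) is correct, while the $p=3$ case you defer does work by a similar explicit lift.
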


In a similar vein, Avi{\~n}{\'o} discusses the splitting of $\Aut(G)$ over a different naturally defined normal subgroup.  A different type of result comes from Abraham~\cite{abr}, who shows that for any integer $n \ge 0$ and for $p \ge 3$, the automorphism group of any abelian $p$-group G contains a unique subgroup which is maximal with respect to being normal and having exponent at most $p^n$.

\subsection{Other $p$-Groups Whose Automorphism Groups are $p$-Groups}

In this subsection, we collect constructions of finite $p$-groups whose automorphism groups are $p$-groups.

The first example of a finite $p$-group whose automorphism group is a $p$-group was given by Miller~\cite{mil}, who constructed a non-abelian group of order 64 with an abelian automorphism group of order 128.  Generalized Miller's construction, Struik~\cite{str} gave the following infinite family of 2-groups whose automorphism groups are abelian $2$-groups:
\begin{eqnarray*}
G &=& \left< a,b,c,d \; : \; a^{2^n} = b^2 = c^2 = d^2 = 1, \right. \\
&& \qquad \left. [a,c] = [a,d] = [b,c] = [c,d] = 1, bab = a^{2^{n-1}}, bdb = cd \right>,
\end{eqnarray*}
where $n \ge 3$.  ($G$ can be expressed as a semi-direct product as well.)  Struik shows that $\Aut(G) \cong (C_2)^6 \times C_{2^{n-2}}$.  (As noted in~\cite{str}, it turns out that Macdonald~\cite[p. 237, Revision Problem \#46]{mac2} asks the reader to show that $\Aut(G)$ is an abelian $2$-group.)  Also, Jamali~\cite{jam} has constructed, for $m \ge 2$ and $n \ge 3$, a non-abelian $n$-generator group of order $2^{2n+m-2}$ with exponent $2^m$ and abelian automorphism group $(C_2)^{n^2} \times C_{2^{m-2}}$.

More examples of 2-groups whose automorphism groups are 2-groups are given by Newman and O'Brien~\cite{no}. As an outgrowth of their computations on $2$-groups of order dividing 128, they present (without proof) three infinite families of 2-groups for which $|G| = |\Aut(G)|$.  They are, for $n \ge 3$,
\begin{enumerate}
\item $C_{2^{n-1}} \times C_2$,
\item $\left< a, b \; : a^{2^{n-1}} = b^2 = 1, a^b = a^{1+2^{n-2}} \right>$, and
\item $\left< a, b, c \; : \; a^{2^{n-2}} = b^2 = c^2 = [b, a] = 1, a^c = a^{1+2^{n-4}}, b^c = ba^{2^{n-3}} \right>$.
\end{enumerate}

Moving on to finite $p$-groups where $p$ is odd, for each $n \ge 2$ Horo{\v{s}}evski{\u\i}~\cite{hor} constructs a $p$-group with nilpotence class $n$ whose automorphism group is a $p$-group, and for each $d \ge 3$ he constructs a $p$-group on $d$ generators for each $d \ge 3$ whose automorphism group is a $p$-group.  (He gives explicit presentations for these groups.)

Curran~\cite{cur2} shows that if $(p-1,3) = 1$, then there is exactly one group of order $p^5$ whose automorphism group is a $p$-group (and it has order $p^6$).  It has the following presentation:
\begin{eqnarray*}
G &=& \left< a,b \; : \; b^p = [a,b]^p = [a,b,b]^p = [a,b,b,b]^p = [a,b,b,b,b] = 1, \right. \\
&& \qquad \left. a^p = [a,b,b,b] = [b,a,b]^{-1} \right>.
\end{eqnarray*}
When $(p-1,3) = 3$, there are no groups of order $p^5$ whose automorphism group is a $p$-group.  However, in this case, there are three groups of order $p^5$ which have no automorphisms of order 2.  Curran also shows that $p^6$ is the smallest order of a $p$-group which can occur as an automorphism group (when $p$ is odd).

Then, in~\cite{cur3}, Curran constructs 3-groups $G$ of order $3^n$ with $n \ge 6$ where $|\Aut(G)| = 3^{n+3}$ and $p$-groups $G$ for certain primes $p > 3$ with $|\Aut(G)| = p|G|$.  The MathSciNet review of~\cite{cur3} remarks that F. Menegazzo notes that for odd $p$ and $n \ge 3$, the automorphism group of
\[
G = \left< a,b \; : \; a^{p^n} = 1, b^{p^n} = a^{p^{n-1}}, a^b = a^{1+p} \right>
\]
has order $p |G|$.

Ban and Yu~\cite{by2} prove the existence of a group $G$ of order $p^n$ with $|\Aut(G)| = p^{n+1}$, for $p > 2$ and $n \ge 6$.  In~\cite{hl}, Heineken and Liebeck construct a $p$-group of order $p^6$ and exponent $p^2$ for each odd prime $p$ which has an automorphism group of order $p^{10}$.

Jonah and Konvisser~\cite{jk} exhibit $p+1$ nonisomorphic groups of order $p^8$ with elementary abelian automorphism group of order $p^{16}$ for each prime $p$. All of these groups have elementary abelian and isomorphic commutator subgroups and commutator quotient groups, and they are nilpotent of class two. All their automorphisms are central.

Malone~\cite{mal} gives more examples of $p$-groups in which all automorphisms are central: for each odd prime $p$, he constructs a nonabelian finite $p$-group $G$ with a nonabelian automorphism group which comprises only central automorphisms.  Moreoever, his proof shows that if $F$ is any nonabelian finite $p$-group with $F' =Z(F)$ and $\Aut_c(F)=\Aut(F)$, then the direct product of $F$ with a cyclic group of order $p$ has the required property for $G$.

Caranti and Scoppola~\cite{cs} show that for every prime $p > 3$, if $n \ge 6$, there is a metabelian $p$-group of maximal class of order $p^n$ which has automorphism group of order $p^{2(n-2)}$, and if $n \ge 7$, there is a metabelian $p$-group of maximal class of order $p^n$ with an automorphism group of order $p^{2(n-2)+1}$.  They also show the existence of non-metabelian $p$-groups ($p > 3$) of maximal class whose automorphism groups have orders $p^7$ and $p^9$.

\chapter{An Application of Automorphisms of $p$-Groups}
\label{c_apps}
\chaptermark{Applications}

Given all of the preceding information on the automorphism groups of finite $p$-groups, we would like to consider the connections between these automorphism groups and topics outside of group theory.  The connection we will explore in this chapter involves a certain Markov chain on a finite $p$-group that has been ``twisted'' by a simple automorphism of the group.  In Section~\ref{randomwalksec}, we bound the convergence rate of this Markov chain.  This generalizes results of Chung, Diaconis, and Graham~\cite{cdg}.

There are two appearances of automorphisms of finite $p$-groups in other contexts that we will mention but will not explore.  One is that if we have a group $G$, a subgroup $A$ of $\Aut(G)$, and a random walk on $G$ driven by a probability measure constant on the $A$-orbits in $G$, then the random walk projects to a Markov chain on the $A$-orbits in $G$.  A classical example of this is the fact that a standard random walk on the hypercube projects to the Ehrenfest urn model.  See Diaconis~\cite[Section 3A.3]{dia} for more information.

Automorphisms of $p$-groups also arise in supercharacter theory, which has been developed recently in the work of Diaconis and Isaacs~\cite{di} and Diaconis and Thiem~\cite{dt}, generalizing results of Andr\'{e}, Yan, and others.  One way to construct a supercharacter theory on a group $G$ uses the action of a subgroup of $\Aut(G)$ on $G$.  Neither the projection construction nor the supercharacter construction involving automorphisms seems to have been explored in general.

\section{A Twisted Markov Chain}
\sectionmark{Markov Chain}
\label{randomwalksec}

We begin this section by reviewing some basic facts about probabilities on groups.  Let $G$ be a finite group and let $P$ and $Q$ be probabilities on $G$.  The \emph{convolution} $P \ast Q$ is the probability on $G$ defined by
\[
(P \ast Q)(g) = \sum_{h \in G}{P(gh^{-1}) Q(h)}.
\]
The \emph{Fourier transform} $\hat{P}$ of $P$ is defined on representations $\rho$ of $G$ by
\[
\hat{P}(\rho) = \sum_{g \in G}{P(g) \rho(g)}.
\]
Convolution and the Fourier transform are related by the equation $\widehat{P \ast Q} = \hat{P} \hat{Q}$.  The \emph{total variation distance} between $P$ and $Q$ is given by
\[
\| P - Q \|_{\mathrm{TV}} = \max_{A \subset G}{|P(A) - Q(A)|} = \frac{1}{2} \sum_{g \in G}{|P(g) - Q(g)|}.
\]
The \emph{Upper Bound Lemma} of Diaconis and Shahshahani~\cite{ds} bounds the total variation distance between $P$ and the uniform distribution $U$ using the Fourier transform:
\[
4 \| P - U \|_{\mathrm{TV}}^2 \le |G| \sum_{g \in G}{|P(g) - U(g)|^2} = \sum_{1 \neq \rho \in \mathrm{Irr}(G)}{\dim(\rho) \cdot \mathrm{Tr}(\hat{P}(\rho) \hat{P}(\rho)^{\ast})},
\]
where ${}^\ast$ denote complex conjugation.  The inequality is an application of the Cauchy-Schwartz inequality, the intermediate term is the chi-square distance between $P$ and $U$, and the equality follows from the Plancherel Theorem.  When $G$ is abelian, the Upper Bound Lemma reduces to
\[
\| P - U \|_{\mathrm{TV}}^2 \le \frac{1}{4} \sum_{1 \neq \rho \in \mathrm{Irr}(G)}{|\hat{P}(\rho)|^2}.
\]
Now we can proceed to discuss certain Markov chains on $G$.  Suppose that $\sigma$ is an automorphism of $G$ and $P$ is a probability on $G$.  Let $X_0$ be the identity element $e$ of $G$, and define random variables $X_1, X_2, \dots$ on $G$ by
\[
X_{n+1} = \sigma(X_n) g_n,
\]
where the $g_n$ are independent random variables on $G$, each with distribution $P$.  Then $\{X_n\}$ is a Markov chain.  Let $P_n$ be the probability distribution induced by $X_n$.  We can express $P_n$ as a simple convolution of probabilities by writing
\begin{equation}
\label{convolution}
X_n = \sigma^{n-1}(g_1) \sigma^{n-2}(g_2) \cdots g_n.
\end{equation}
Then $P_n = \sigma^{n-1}(P) \ast \sigma^{n-2}(P) \ast \cdots \ast P$, where $\sigma^i(P)$ denotes the probability distribution given by $\sigma^i(P)(g) = P(\sigma^{-i}(g))$.  We would like to know bounds on the distance $\| P_n - U \|_{\mathrm{TV}}$; that is, how fast does $P_n$ converge to the uniform distribution?

Various cases of this Markov chain have been examined by several authors.  Chung, Diaconis, and Graham~\cite{cdg} analyze $\{X_n\}$ when $G = C_p$, where $p$ is an odd prime, $\sigma$ is multiplication by 2, and $P(0) = P(1) = P(-1) = 1/3$.    They show that the chain converges in $O(\log{p} \log{\log{p}})$ steps.  Furthermore, although this is the correct rate of convergence for infinitely many $p$, the correct rate of convergence is $O(\log{p})$ time for almost all $p$ (although no infinite sequence of primes $p$ for which this is true is known).  They also discuss some other choices of $\sigma$ and $P$.

The motivation in~\cite{cdg} stems from the theory of pseudorandom number generators.  The Markov chain $\{X_n\}$ on $G = C_p$ when $\sigma$ is the identity automorphism and $P(0) = P(1) = P(-1) = 1/3$ takes $O(p^2)$ steps to converge.  The Markov chain analyzed by Chung, Diaconis, and Graham requires the same number of random bits but converges much faster.  For the general problem, we can view $\{X_n\}$ as a ``twist'' of the Markov chain obtained when $\sigma$ is the identity automorphism.  Intuitively, we might expect that the twisted chain converges faster (or at least as fast).  If so, it would be interesting to know when the convergence of a Markov chain can be sped up by twisting it with an automorphism.

Diaconis and Graham~\cite{dg} analyze $\{X_n\}$ when $G = C_2^d$, $\sigma$ is multiplication by a matrix in a specific conjugacy class of $\GL(d, \F_2)$, and $P$ is the probability on $G$ satisfying $P(1,0,\dots,0) = \theta$ and $P(0,\dots,0) = 1-\theta$ with $0 < \theta < 1$.  They show that the chain converges in $O(d \log{d})$ time.

Asci~\cite{asc} examines the case when $G = C_p^d$ and $\sigma$ is a general element of $\GL(d, \F_p)$, generalizing the work of Chung, Diaconis, and Graham.  Asci shows that the Markov chain converges in $O(p^2 \log{p})$ steps in general, while if $\sigma$ has integer eigenvalues which are all neither 1 nor -1, then $O((\log{p})^2)$ steps suffice.  Also, if $\sigma$ has integer eigenvalues and exactly one eigenvalue has absolute value 1, then $O(p^2)$ steps are necessary and sufficient for convergence.

In this section, we sharpen Asci's results in certain cases.  In the context of the general problem, we consider the group $G = C_p^d$, where $p$ is an odd prime.  The automorphism $\sigma$ will be multiplication by a diagonalizable matrix in $\GL(d,\F_p)$, and $P$ will satisfy $P(0) = 1-q$ and $P(e_k) = P(-e_k) = q/2d$, where $e_k$ is the $k$-th unit vector, $k = 1,2,\dots,d$, and $0 \le q \le 1$.  In Subsection~\ref{upperboundsec}, we prove an upper bound on the convergence rate of $\{X_n\}$.  In Subsection~\ref{lowerboundsec}, we consider a special case of $\{X_n\}$ and show that in this case, the convergence rate is within a constant multiple of the upper bound.  Our methods are largely direct generalizations of the methods of Chung, Diaconis, and Graham used in~\cite{cdg}.

Before moving on the statements and proofs, it should be mentioned that Hildebrand~\cite{hil3, hil1, hil2} has written several papers generalizing the work of Chung, Diaconis, and Graham so that at each step of the Markov chain on $G = C_p$, the automorphism $\sigma$ is chosen independently from a fixed probability distribution on $\Aut(C_p)$.  Among many other results, Hildebrand shows that in all non-trivial cases, the Markov chain converges in $O((\log{p})^2)$ steps, and under more restrictive conditions, the Markov chain converges in $O(\log{p} \log{\log{p}})$ steps.

\subsection{An Upper Bound on the Convergence Rate}
\label{upperboundsec}

For the remainder of this section, let $G = C_p^d$ and define a probability distribution $P$ on $G$ by $P(0) = 1-q$ and $P(e_k) = P(-e_k) = q/2d$, where $e_k$ is the $k$-th unit vector, $k = 1, 2, \dots, d$, and $0 \le q \le 1$.  Suppose $A \in \GL(d, \F_p)$ is diagonalizable (over $\F_p$) and has eigenvalues $a_1, a_2, \dots, a_d$.  Let $\{X_n\}$ be the Markov chain on $G$ given by $X_0 = 0$ and $X_{n+1} = AX_n + g_n$, where the $g_n$ are independent random variables with distribution $P$, and let $P_n$ be the probability distribution on $G$ induced by $X_n$.

The upper bound we derive for the convergence rate of $\{X_n\}$ depends on the simpler Markov chain that is obtained when $d = 1$ and is discussed in~\cite{cdg}.  Define a Markov chain $\{Y_n\}$ on $C_p$ by
\[
Y_{n+1} = b Y_n + g_n \textrm{ and } Y_0 = 0,
\]
where $b$ is a non-zero element of $C_p$ and the $g_n$ are independent random variables with distribution $Q$, where $Q(0) = 1-q$ and $Q(1) = Q(-1) = q/2$.  Write $Q_n$ for the measure induced by $Y_n$.  Write $D_n(b, q)$ for the expression given by the Upper Bound Lemma for $4 \| Q_n - U \|^2_{\mathrm{TV}}$.  The irreducible characters of $C_p$ are $\rho_y(g) = e^{2 \pi i y g/ p}$ for $y = 0, 1, \dots, p-1$.  Therefore,
\begin{eqnarray*}
D_n(b, q)
&=& \sum_{y=1}^{p-1}{\prod_{j=0}^{n-1}{\left| 1 - q + \frac{q}{2} e^{2 \pi i b^j y/p} + \frac{q}{2} e^{-2\pi i b^j y/p} \right|^2}} \\
&=& \sum_{y=1}^{p-1}{\prod_{j=0}^{n-1}{\left( 1 - q + q \cos{\left( \frac{2 \pi i b^j y}{p} \right)} \right)^2}}
\end{eqnarray*}
In particular, when $b = 2$, the proof of~\cite[Theorem 1]{cdg} directly extends to show that
\[
4 \| Q_n - U \|^2_{\mathrm{TV}} \le D_n(2, 1/8d) \le 2 e^{t (1-q)^{2r}} - 2,
\]
where $t = \lceil \log_2{p} \rceil$, $r = 4d (\ln{d} + \ln{t} + c)$, and $n = rt$.  We will apply this result to our chain in Corollary~\ref{ubcor}; analogous results for other $b$ would lead to analogous results for $C_p^d$.  Returning to our chain $\{X_n\}$, we can bound the convergence rate of $\{X_n\}$ using the expressions $D_n(b,q)$.

\begin{thm}
\label{ubthm}
Let $p$ be an odd prime.  Then
\[
4 \| P_n - U \|_{\mathrm{TV}}^2 \le \exp{ \left( D_n(a_1, q/8d) + D_n(a_2, q/8d) + \cdots + D_n(a_d, q/8d) \right) } - 1.
\]
\end{thm}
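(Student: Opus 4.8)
The plan is to push $P_n$ through the abelian Upper Bound Lemma, exactly as in Chung--Diaconis--Graham, and reduce everything to the one-dimensional quantities $D_n(a_k,\cdot)$.

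First I would identify $\mathrm{Irr}(C_p^d)$ with $\F_p^d$ by writing $\rho_y(g)=e^{2\pi i\langle y,g\rangle/p}$, so that the abelian Upper Bound Lemma gives $4\|P_n-U\|_{\mathrm{TV}}^2\le\sum_{0\neq y\in\F_p^d}|\widehat{P_n}(\rho_y)|^2$. Since $P_n=\sigma^{n-1}(P)\ast\cdots\ast\sigma(P)\ast P$ and a change of variables in the defining sum gives $\widehat{\sigma^{j}(P)}(\rho_y)=\widehat P(\rho_{(A^{\mathrm T})^{j}y})$, the convolution rule yields $\widehat{P_n}(\rho_y)=\prod_{j=0}^{n-1}\widehat P(\rho_{(A^{\mathrm T})^{j}y})$, where, directly from the definition of $P$, $\widehat P(\rho_z)=1-\tfrac qd\sum_{l=1}^{d}f(z_l)$ with $f(m)=1-\cos(2\pi m/p)\in[0,2)$ (this is where $p$ odd enters). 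I would carry out the rest with $A$ diagonal, say $A=\mathrm{diag}(a_1,\dots,a_d)$; then $(A^{\mathrm T})^{j}y=(a_1^{j}y_1,\dots,a_d^{j}y_d)$, and for $y$ with support $S=\{k:y_k\neq0\}$,
\[
\widehat P(\rho_{(A^{\mathrm T})^{j}y})=1-\frac qd\sum_{k\in S}f(a_k^{j}y_k).
\]

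The key step is an elementary pointwise inequality that trades the inner sum for a product: for each $j$,
\[
\Bigl(1-\frac qd\sum_{k\in S}f(a_k^{j}y_k)\Bigr)^{2}\le\prod_{k\in S}\Bigl(1-\frac q{8d}f(a_k^{j}y_k)\Bigr)^{2},
\]
which I would deduce from the Weierstrass product inequality (all factors on the right lie in $[3/4,1]$) together with the estimate $|1-t|\le1-t/8$ for $0\le t\le 16/9$; the shrinking of the perturbation coefficient from $q/d$ to $q/8d$ is precisely what pays for replacing a square of a sum by a product of squares. Multiplying over $j$ and then summing over all $y$ with support $S$ — which factors over $k\in S$ because $a_k^{j}y_k$ involves only $y_k$ — gives $\sum_{\mathrm{supp}(y)=S}|\widehat{P_n}(\rho_y)|^2\le\prod_{k\in S}D_n(a_k,q/8d)$. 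Finally, summing over all nonempty $S\subseteq\{1,\dots,d\}$ and using $\sum_{\emptyset\neq S}\prod_{k\in S}x_k=\prod_k(1+x_k)-1\le\exp(\sum_k x_k)-1$ produces the claimed bound.

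The main obstacle I anticipate is making the constant in the pointwise inequality uniform: the argument of $f$ can drive $\tfrac qd\sum_{k\in S}f(a_k^{j}y_k)$ toward $2q$, so for $q$ near $1$ and $|S|$ large the bound $|1-t|\le1-t/8$ fails for that $j$. This should be reparable by a short case split — in the offending regime the relevant $D_n(a_k,q/8d)$ are themselves large enough that the target bound is vacuous, since a twisted chain forced into that regime cannot have mixed — or by a marginally sharper pointwise estimate; either way it is routine but delicate. A secondary point is the reduction to the diagonal case: if $\sigma$ is not diagonal in the standard basis in which $P$ is defined, its eigenspaces are not coordinate subspaces, so the support decomposition must be taken relative to the eigenprojections of $\sigma$ and the resulting mixed terms disentangled; that the final bound sees $\sigma$ only through its eigenvalues indicates this goes through, but it is the other place where genuine care is needed.
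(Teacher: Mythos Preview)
Your pointwise inequality is the gap, and neither proposed repair works. When $t=\tfrac qd\sum_{k\in S}f_k$ is close to $2$ (which occurs for $q$ near $1$, $|S|=d$, and all $\cos\theta_{k,j}$ near $-1$), the left side $(1-t)^2$ is close to $1$ while the right side $\prod_k(1-\tfrac{q}{8d}f_k)^2\le(1-\tfrac{q}{4d})^{2d}\le e^{-q/2}$ is bounded strictly below $1$; so no constant in place of $8$ can rescue the pointwise bound, ruling out your second fix. The first fix fails too: a single offending index $j$ says nothing about $D_n(a_k,q/8d)$, which is a product over \emph{all} $j$ and a sum over \emph{all} nonzero $y_k$. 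The chain can be fully mixed---so the target bound is far from vacuous---while still exhibiting isolated bad pairs $(j,y)$.

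The missing move, which the paper makes before any attempt to factor, is to replace $\cos$ by $|\cos|$. With $g_k=1-|\cos\theta_{k,j}|\in[0,1]$ one has $B_j':=1-\tfrac qd\sum_{k\in S}g_k\in[0,1]$, and $B_j^2\le(B_j')^2$ since $B_j'\ge B_j$ and $B_j+B_j'\ge 2(1-q|S|/d)\ge 0$. From here the paper applies AM--GM over $j$ and then $1-x\le e^{-x}$ before factoring; but with nonnegativity in hand your own Weierstrass route already goes through cleanly, since $\prod_{k\in S}(1-\tfrac qd g_k)\ge 1-\tfrac qd\sum_{k\in S}g_k=B_j'$ requires no side condition. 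At the end one converts back via the elementary inequality $1-|\cos\theta|\ge\tfrac14(1-\cos 2\theta)$ and absorbs the doubling by substituting $y_k\mapsto 2y_k$ in the sum defining $D_n$ (this is where $p$ odd is used), producing the $q/8d$. So the idea you are missing is not a sharper constant but the preliminary passage to $|\cos|$ that forces every factor nonnegative. Your diagonalisation worry is secondary; the paper handles it in one line by noting that applying a fixed linear bijection of $C_p^d$ to $X_n$ preserves the total variation distance to uniform.
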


\begin{proof}
The matrix $A$ is diagonalizable, so we can choose $C \in \GL(d,\F_p)$ so that $B = C A C^{-1}$ is a diagonal matrix (with entries $a_1, a_2, \dots, a_d$).  Define a new Markov chain $\{Z_n\}$ on $G$ by $Z_0 = 0$ and $Z_{n+1} = BZ_n + h_n$, where the $h_n$ are independent random variables with distribution $P$.  From Equation~\ref{convolution}, we see that the random variable $C^{-1} X_n C$ is identically distributed to $Z_n$.  Thus $\|X_n -  U\|_{\mathrm{TV}}$ and $\|Z_n - U\|_{\mathrm{TV}}$ are equal, and we may assume that $A$ is a diagonal matrix.  (Diaconis and Graham~\cite{dg} were the first to observe that the convergence rate of the Markov chain only depends on the conjugacy class of $A$, or the conjugacy class of $\sigma$ in $\Aut(G)$ in the general case.)

The measure $P_n$ is the convolution of the measures $\mu_j$, given by
\[
\mu_j(\pm a_k^j e_k) = \frac{q}{2d}
\]
for $k = 1,\dots,d$ and
\[
\mu_j(0) = 1-q,
\]
for $j = 0,\dots,n-1$.  The Fourier transform of $\mu_j$ is given by
\begin{eqnarray*}
\hat{\mu}_j(y) &=& 1-q + \sum_{k=1}^d{\frac{q}{2d} \left[ \exp{\left( \frac{2\pi i(y \cdot A^j e_k)}{p} \right)} + \exp{\left( \frac{2\pi i(y \cdot -A^j e_k)}{p} \right)} \right]} \\
&=& 1-q + \frac{q}{d} \sum_{k=1}^d{\cos{\left( \frac{2\pi \cdot a_k^j y_k}{p} \right)}}.
\end{eqnarray*}
Let $[d] = \{1, 2, \dots, d\}$.  By the Upper Bound Lemma,
\begin{eqnarray*}
&& 4 \| P_n - U \|^2_{\mathrm{TV}} \\
&\le& \sum_{0 \neq y \in C_p^d}{\prod_{j=0}^{n-1}{\left[ 1-q + \frac{q}{d} \sum_{k=1}^d{\cos{\left( \frac{2\pi \cdot a_k^j y_k}{p} \right)}} \right]^2}} \\
&=& \sum_{I \subset [d]}{\sum_{\{y \; : \; y_i \neq 0 \Leftrightarrow i \in I\}}{\prod_{j=0}^{n-1}{\left[ 1-q + \frac{q(d-|I|)}{d} + \frac{q}{d} \sum_{i \in I}{\cos{ \left( \frac{2\pi \cdot a_i^j y_i}{p} \right) }} \right]^2}}} \\
&=& \sum_{I \subset [d]}{\sum_{\{y \; : \; y_i \neq 0 \Leftrightarrow i \in I\}}{\prod_{j=0}^{n-1}{\left[ 1-q + \frac{q(d-m)}{d} + \frac{q}{d} \sum_{i \in I}{ \left| \cos{ \left( \frac{2\pi \cdot a_i^j y_i}{p} \right)  } \right| } \right]^2}}}.
\end{eqnarray*}
By the arithmetic-geometric mean inequality and the fact that $1 + x \le e^x$ for all $x \ge 0$,
\begin{eqnarray*}
&& 4 \| P_n - U \|^2_{\mathrm{TV}} \\
&\le& \sum_{I \subset [d]}{\sum_{\{y \; : \; y_i \neq 0 \Leftrightarrow i \in I\}}{ \left[ 1-q + \frac{q(d-|I|)}{d} + \frac{q}{dn} \sum_{j=0}^{n-1}{ \sum_{i \in I}{ \left| \cos{ \left( \frac{2\pi \cdot a_i^j y_i}{p} \right)  } \right| }} \right]^{2n} }} \\
&=& \sum_{I \subset [d]}{\sum_{\{y \; : \; y_i \neq 0 \Leftrightarrow i \in I\}}{ \left[ 1 - \frac{q}{dn} \sum_{j=0}^{n-1}{ \sum_{i \in I}{ \left(1 - \left| \cos{ \left( \frac{2\pi \cdot a_i^j y_i}{p} \right)  } \right| \right) }} \right]^{2n} }} \\
&\le& \sum_{I \subset [d]}{\sum_{\{y \; : \; y_i \neq 0 \Leftrightarrow i \in I\}}{ \exp{ \left( -\frac{2q}{d} \sum_{j=0}^{n-1}{ \sum_{i \in I}{ \left(1 - \left| \cos{ \left( \frac{2\pi \cdot a_i^j y_i}{p} \right)  } \right| \right) } } \right)}}} \\
&=& \sum_{I \subset [d]}{\sum_{\{y \; : \; y_i \neq 0 \Leftrightarrow i \in I\}}{ \prod_{i \in I}{\prod_{j=0}^{n-1}{ \exp{ \left( -\frac{2q}{d} + \frac{2q}{d} \left| \cos{ \left( \frac{2\pi \cdot a_i^j y_i}{p} \right)  } \right| \right) } }}}} \\
&=& \sum_{I \subset [d]}{ \prod_{i \in I}{ \sum_{y=1}^{p-1}{\prod_{j=0}^{n-1}{ \exp{ \left( -\frac{2q}{d} + \frac{2q}{d} \left| \cos{ \left( \frac{2\pi \cdot a_i^j y}{p} \right)  } \right| \right) } }}}} \\
&=& \prod_{i=1}^d{\left( 1 + \sum_{y=1}^{p-1}{\prod_{j=0}^{n-1}{ \exp{ \left( -\frac{2q}{d} + \frac{2q}{d} \left| \cos{ \left( \frac{2\pi \cdot a_i^j y}{p} \right) } \right| \right) }}} \right)} - 1 \\
&\le& \exp{ \left( \sum_{i=1}^d{\sum_{y=1}^{p-1}{\prod_{j=0}^{n-1}{ \exp{ \left( -\frac{2q}{d} + \frac{2q}{d} \left| \cos{ \left( \frac{2\pi \cdot a_i^j y}{p} \right) } \right| \right) }}}} \right) } - 1.
\end{eqnarray*}
Finally, the inequalities $e^{-x} \le 1 - x/2$ for $0 \le x \le 1$ and $a - a|\cos{x}| \ge a/4 - (a/4)\cos{(2x)}$ for all $x$ and $0 \le a \le 1$ show that
\begin{eqnarray*}
&& 4 \| P_n - U \|^2_{\mathrm{TV}} \\
&\le& \exp{ \left( \sum_{i=1}^d{\sum_{y=1}^{p-1}{\prod_{j=0}^{n-1}{ \left( 1  -\frac{q}{2d} + \frac{q}{2d} \left| \cos{ \left( \frac{2\pi \cdot a_i^j y}{p} \right) } \right| \right)^2 }} } \right) } - 1 \\
&\le& \exp{ \left( \sum_{i=1}^d{\sum_{y=1}^{p-1}{\prod_{j=0}^{n-1}{ \left( 1  -\frac{q}{8d} + \frac{q}{8d} \cos{ \left( \frac{
2\pi \cdot a_i^j y}{p} \right) } \right)^2 }} } \right) } - 1 \\
&=& \exp{ \left( D_n(a_1, q/8d) + D_n(a_2, q/8d) + \cdots + D_n(a_d, q/8d) \right) } - 1.
\end{eqnarray*}
\end{proof}

\begin{cor}
\label{ubcor}
Let $t = \lceil \log_2{p} \rceil$.  When $A = 2I$ and $q = 1$, if $n = 4dt(\ln{t} + \ln{d} + c)$ for $c > 0$, then $\| P_n - U \|_{\mathrm{TV}}^2 \le 2e^{-c}$.
\end{cor}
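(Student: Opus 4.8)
The plan is to obtain Corollary~\ref{ubcor} as a direct specialization of Theorem~\ref{ubthm}, with the one-dimensional estimate recorded immediately before the corollary supplying the quantitative input. There is essentially no new idea here: all the work has been done in Theorem~\ref{ubthm} and in the extension of \cite[Theorem~1]{cdg} to the multiplier $b=2$, and what remains is bookkeeping with the constants.

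First I would set $A = 2I$, so that every eigenvalue $a_i$ equals $2$, and put $q = 1$, so that the per-coordinate parameter $q/8d$ appearing in Theorem~\ref{ubthm} becomes $1/8d$. Theorem~\ref{ubthm} then collapses to
\[
4\|P_n - U\|_{\mathrm{TV}}^2 \le \exp\bigl(d\, D_n(2,1/8d)\bigr) - 1 ,
\]
so the whole problem reduces to controlling $d\, D_n(2,1/8d)$.

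Next I would invoke the bound recorded just before the corollary (the extension of \cite[Theorem~1]{cdg} to $b = 2$): with $t = \lceil \log_2 p\rceil$, $r = 4d(\ln d + \ln t + c)$, and $n = rt$ — and $rt = 4dt(\ln t + \ln d + c)$ is exactly the $n$ in the statement — one has $D_n(2,1/8d) \le 2e^{t(1-1/8d)^{2r}} - 2$. Since $(1 - 1/8d)^{2r} \le e^{-2r/8d} = e^{-r/4d}$ and $r/4d = \ln d + \ln t + c$, this gives $t(1-1/8d)^{2r} \le e^{-c}/d$, hence $D_n(2,1/8d) \le 2(e^{e^{-c}/d} - 1)$. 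Multiplying by $d$ and using that $x \mapsto x(e^{a/x} - 1)$ is nonincreasing for $a > 0$ (immediate from the termwise expansion of the exponential, since $d \ge 1$), I obtain $d\, D_n(2,1/8d) \le 2(e^{e^{-c}} - 1)$, and therefore $4\|P_n - U\|_{\mathrm{TV}}^2 \le \exp\bigl(2(e^{e^{-c}}-1)\bigr) - 1$.

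It then remains to check that the right-hand side is at most $8e^{-c}$, which is the same as $\|P_n - U\|_{\mathrm{TV}}^2 \le 2e^{-c}$. For $c \le \ln 2$ this is automatic, since $\|P_n - U\|_{\mathrm{TV}} \le 1$; for $c \ge \ln 2$, writing $u = e^{-c} \le \tfrac12$ one has $e^u - 1 \le u + u^2 \le \tfrac32 u$, so the bound is dominated by $e^{3u} - 1$, and $e^{3u} - 1 \le 8u$ on $[0, \tfrac12]$ because $u \mapsto 8u - (e^{3u}-1)$ is concave, vanishes at $0$, and is positive at $\tfrac12$. I expect this final elementary inequality — and, more broadly, tracking the numerical constants from Theorem~\ref{ubthm} and \cite{cdg} tightly enough to land on the clean value $2e^{-c}$ rather than some larger multiple of $e^{-c}$ — to be the only fiddly point; every step before it is a substitution into results already established.
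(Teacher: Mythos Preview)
Your proposal is correct and follows essentially the same route as the paper: specialize Theorem~\ref{ubthm} to $A=2I$ and $q=1$, feed in the one-dimensional bound $D_n(2,1/8d)\le 2e^{t(1-1/8d)^{2r}}-2$ from the extension of \cite[Theorem~1]{cdg}, and finish with elementary inequalities. Your handling of the final numerical step is in fact more careful than the paper's compressed three-line computation.
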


\begin{proof}
Let $r = 4d(\ln{t} + \ln{d} + c)$.  By Theorem~\ref{ubthm} and the previous comment bounding $\| Q_n - U \|_{\mathrm{TV}}^2$,
\begin{eqnarray*}
4 \| P_n - U \|_{\mathrm{TV}}^2
&\le& \exp{ \left( 2d(e^{t(1-1/8d)^{2r}} - 1) \right) } - 1 \\
&\le& \exp{ \left( 2d(e^{-\ln{d}-c} - 1) \right) } - 1 \\
&\le& 8e^{-c}.
\end{eqnarray*}
\end{proof}

\subsection{A Lower Bound on the Convergence Rate}
\label{lowerboundsec}

In this subsection, we will show that when $p = 2^t-1$, $A = 2I$, and $q = 1$, the upper bound given by Corollary~\ref{ubcor} for the rate of convergence of the Markov chain $\{X_n\}$ defined in Subsection~\ref{upperboundsec} is correct up to a constant multiple.

\begin{thm}
\label{lowerboundthm}
Suppose $p = 2^t - 1$ is prime.  For the Markov chain $\{X_n\}$ with $A=2I$ and $q = 1$, if
\[
n < \frac{dt (\ln{t}+\ln{d}-1)}{6 \pi^2},
\]
then
\[
\| P_n - U \|^2_{TV} \ge \frac{1}{4}
\]
for large $t$.
\end{thm}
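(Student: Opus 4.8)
The plan is to establish a lower bound on the total variation distance by bounding below the chi-square distance (or rather, by using a test function / second-moment argument directly on $P_n$), mirroring the approach of Chung, Diaconis, and Graham~\cite{cdg} for the $d=1$ case. The key observation is that with $A = 2I$ and $q=1$, the chain decomposes into $d$ independent copies of the one-dimensional chain $\{Y_n\}$ on $C_p$ with $b=2$ and $Q(1)=Q(-1)=q'/2$, $Q(0)=1-q'$ where $q' = 1/d$ effectively governs how often each coordinate gets hit (since at each step, with probability $1/d$ we move in a uniformly chosen coordinate). So $P_n = Q_n^{(1)} \times \cdots \times Q_n^{(d)}$ where each $Q_n^{(i)}$ is the distribution of a one-dimensional chain that has received roughly $n/d$ effective steps. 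I would first make this decomposition precise, noting that coordinate $i$ of $X_n$ is exactly the one-dimensional chain driven by the sub-sequence of steps in which coordinate $i$ was selected.

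The heart of the argument is a lower bound on $\|P_n - U\|_{TV}$ coming from the Fourier coefficient at a single frequency. First I would recall from~\cite{cdg} (the proof of their lower bound, Theorem 2 or its analogue) that the key mechanism on $C_p$ with $p = 2^t - 1$ is that multiplication by $2$ has order $t$ (since $2^t \equiv 1$), so the orbit $\{y, 2y, 4y, \dots, 2^{t-1}y\}$ has size at most $t$, and the Fourier transform $\widehat{Q_n}(y) = \prod_{j=0}^{n-1}(1 - q' + q'\cos(2\pi 2^j y / p))$ is essentially $\prod_{j} (\cos(2\pi 2^j y/p))$-type product which, because $2^j y \bmod p$ cycles through only $t$ distinct residues, can stay bounded away from $0$ when $n/d$ is not much larger than $t \log(\dots)$. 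Specifically, choosing $y = 1$, the factors repeat with period $t$ in $j$, and the product over one period of $\cos(2\pi 2^j/p)$ is some fixed constant $\kappa < 1$ depending on $t$; one needs that $|\widehat{Q_n^{(i)}}(1)|$ is not too small, which requires the number of effective steps in coordinate $i$ to be $O(t\log\log\dots)$ — and a Chernoff/concentration bound shows that with the stated $n$, \emph{all} coordinates receive few enough steps with high probability... actually more carefully: by a second-moment computation on $\sum_i |\widehat{P_n}(e_i)|$ or by using the standard lower bound $4\|P_n - U\|_{TV}^2 \geq |\widehat{P_n}(\chi)|^2$ for any single nontrivial character $\chi$, one reduces to showing that for \emph{some} character the Fourier coefficient is close to $1$.

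The cleanest route: since $P_n$ is a product measure, take the character $\chi(x) = \exp(2\pi i x_1 / p)$ depending only on the first coordinate; then $\widehat{P_n}(\chi) = \widehat{Q_n^{(1)}}(1)$, a one-dimensional quantity. The number of steps that landed in coordinate $1$ is $\mathrm{Binomial}(n, 1/d)$, with mean $n/d < t(\ln t + \ln d - 1)/(6\pi^2)$. Conditioning on this count being $m$, the standard estimate from~\cite{cdg} gives $|\widehat{Q_m^{(1)}}(1)|^2 \geq$ something like $\exp(-c' m/t)$ when $m \lesssim t \ln t$, using the bound $1 - q' + q'\cos\theta \geq \exp(-\theta^2/2 \cdot \text{const})$ and the fact that over a period $\sum_{j=0}^{t-1}(2^j \bmod p)^2 / p^2$ is $O(1)$ (this is the "sum of squares of a geometric sequence mod $p$" estimate, controlled because $2^j < p$). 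Carefully tracking constants — in particular that $\sum_{j} \sin^2(\pi 2^j y/p)$ over a full period is bounded by a constant times $\sum (2^j/p)^2 \leq \sum 4^{-k} \cdot (\text{const}) = O(1)$ — I would show $\mathbb{E}[|\widehat{Q_n^{(1)}}(1)|^2] \geq \exp(-O(n/(dt))) \geq \exp(-O(\ln t + \ln d)) $, which is too big; so one needs to be sharper and show it is in fact $\geq 1/2$ or bounded below by a constant, forcing $\|P_n - U\|_{TV}^2 \geq 1/4$. The main obstacle, and the part requiring genuine care, is getting the constant $6\pi^2$ exactly right: this comes from pinning down $\sum_{j=0}^{t-1} \sin^2(\pi 2^j/p) \leq \pi^2 \sum_{j} (2^j/p)^2 < \pi^2 \cdot \frac{1}{p^2}\cdot\frac{4^t}{3} = \pi^2 \cdot \frac{(p+1)^2}{3p^2} < \frac{2\pi^2}{3}$ (for $p \geq 3$), combined with the per-period contraction factor and the inequality $1 - x \geq e^{-2x}$ for small $x$, then optimizing over the truncation. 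I expect the bulk of the work — and the real risk of error — to be in this constant-chasing and in handling the binomial fluctuation of the per-coordinate step counts rigorously (a union bound over $d$ coordinates combined with a Chernoff tail, or simply working with the single coordinate $1$ and its expectation as sketched), so I would structure the write-up to isolate the one-dimensional Fourier estimate as a self-contained lemma first, then assemble.
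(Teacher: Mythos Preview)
Your proposed ``cleanest route'' via a single Fourier coefficient does not work, and the product-measure claim is false. With $A=2I$ and $q=1$, the coordinates of $X_n$ are \emph{not} independent: at each step exactly one coordinate moves, so the step-counts across coordinates are multinomial (summing to $n$), not independent binomials. More importantly, even setting that aside, the single-character bound is too weak. One computes directly (no conditioning on binomial counts is needed) that
\[
\widehat{P_n}(e_1)=\prod_{j=0}^{n-1}\Bigl(\tfrac{d-1}{d}+\tfrac{1}{d}\cos\tfrac{2\pi\cdot 2^j}{p}\Bigr)=\Pi_1^{\,r}\qquad(n=rt),
\]
and the estimate $|\Pi_1|^d\ge e^{-3\pi^2}$ gives $|\Pi_1|^r\ge \exp(-3\pi^2 n/(dt))$. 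With $n$ at the threshold $dt(\ln t+\ln d-1)/(6\pi^2)$ this is only $\gtrsim (td)^{-1/2}$, so $\|P_n-U\|_{\mathrm{TV}}\ge \tfrac12|\widehat{P_n}(e_1)|$ tends to $0$, not to a positive constant.

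The paper's proof supplies the missing idea: one must aggregate \emph{many} Fourier modes via a test function and apply Wilson's second-moment method. The function
\[
f(y)=\sum_{i=1}^{d}\sum_{j=0}^{t-1} q^{\,2^{j} y_i},\qquad q=e^{2\pi i/p},
\]
sums over all $d$ coordinates and all $t$ positions in the multiplicative orbit of $2$. Then $E_U(f)=0$, $\mathrm{Var}_U(f)=dt$, while $E_{P_n}(f)=dt\,\Pi_1^r$; with the correct choice of $r$ one has $E_{P_n}(f)\asymp (dt)^{1/2}\cdot(\text{const}\to\infty)$, which separates from $2\sqrt{\mathrm{Var}_U(f)}=2\sqrt{dt}$. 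The substantive work is then controlling $\mathrm{Var}_{P_n}(f)$, which brings in the products $\Pi_j$ and the cross-terms $\Gamma_j$, and requires showing $(1/t)\sum_j(\Pi_j/\Pi_1^2)^r$ and $(1/t)\sum_j(\Gamma_j/\Pi_1^2)^r$ both tend to $1$. Your constant-tracking for $|\Pi_1|^d$ is on target, but the argument must be rebuilt around a summed test function and a genuine second-moment (Chebyshev) comparison rather than a single character.
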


\begin{proof}
Our proof uses the Second Moment Method developed by Wilson; see~\cite{sc} for more details.  For any function $f : C_p^d \to \C$ and any constants $\alpha, \beta > 0$, Chebyshev's inequality says that
\[
\mathrm{Pr}_U \left\{ x \; : \; |f(x) - E_U(f)| \ge \alpha \sqrt{\mathrm{Var}_U(f)} \right\} \le \frac{1}{\alpha^2}
\]
and
\[
\mathrm{Pr}_{P_n} \left\{ x \; : \; |f(x) - E_{P_n}(f)| \ge \beta \sqrt{\mathrm{Var}_U(f)} \right\} \le \frac{1}{\beta^2}.
\]
If $X$ and $Y$ denote the complements of the sets in question and they are disjoint, then
\[
\| P_n - U \|_{\mathrm{TV}} \ge 1 - \frac{1}{\alpha} - \frac{1}{\beta}.
\]
To prove the theorem, it is enough to choose an appropriate function $f : C_p^d \to \C$ so that if $\alpha = \beta = 2$ and $t$ is large enough, then $X$ and $Y$ are disjoint.  Define $f$ to be the function
\[
f(y) = \sum_{i=1}^d{\sum_{j=0}^{t-1}{q^{2^j y}}},
\]
where $q = e^{2 \pi i / p}$.  Let $n = rt$ with $r$ to be chosen later.  Recall that
\[
\widehat{P}_n(z) = \sum_{y \in C_p^d}{P_n(y) q^{y \cdot z}}.
\]
The expectation and variance of $f$ under the uniform distribution $U$ and the distribution $P_n$ on $C_p^d$ are calculated in Lemma~\ref{meanlem}.  Let
\begin{eqnarray*}
\Pi_j &=& \prod_{a = 0}^{t-1}{\left[ \frac{d-1}{d} + \frac{1}{d} \cos{ \left( \frac{2\pi \cdot 2^a(2^j-1)}{p} \right) } \right]} \\
\Gamma_j &=& \prod_{a = 0}^{t-1}{\left[ \frac{d-2}{d} + \frac{1}{d} \cos{ \left( \frac{2\pi \cdot 2^a}{p} \right)} + \frac{1}{d} \cos{ \left( \frac{2\pi \cdot 2^a 2^j}{p} \right)} \right]}.
\end{eqnarray*}
Then, by Lemma~\ref{meanlem},
\begin{eqnarray*}
E_U(f) &=& 0 \\
E_U(f \overline{f}) &=& dt \\
\mathrm{Var}_U(f) &=& dt \\
E_{P_n}(f) &=& dt \Pi_1^r \\
E_{P_n}(f \overline{f}) &=& dt \sum_{j=0}^{t-1}{\Pi_j^r} + t(d^2-d) \sum_{j=0}^{t-1}{\Gamma_j^r} \\
\mathrm{Var}_{P_n}(f) &=& dt \sum_{j=0}^{t-1}{\Pi_j^r} + t(d^2-d) \sum_{j=0}^{t-1}{\Gamma_j^r} - d^2 t^2 \Pi_1^{2r}.
\end{eqnarray*}
Using this information in the bounds obtained by Chebyshev's inequality (with $\alpha = \beta = 2$) gives
\[
\mathrm{Pr}_U \left\{ x \; : \; |f(x)| \ge 2 d^{1/2} t^{1/2} \right\} \le \frac{1}{4}
\]
and
\[
\mathrm{Pr}_{P_n} \left\{ x : |f(x) - dt \Pi_1^r| \ge 2 \sqrt{\mathrm{Var}_{P_n}(f)} \right\} \le \frac{1}{4}.
\]
Choose $r$ to be an even integer of the form
\[
r = \frac{d (\ln{t} + \ln{d})}{-2 \ln{|\Pi_1^d|}} - d\lambda.
\]
Then
\[
dt \Pi_1^r = d^{1/2} t^{1/2} |\Pi_1|^{-d\lambda}
\]
To show that $X$ and $Y$ are disjoint for large enough $t$, it is enough to show that $E_{P_n}(f) - 2 \sqrt{\mathrm{Var}_U(f)} \to \infty$ and $\sqrt{\mathrm{Var}_{P_n}(f)}/E_{P_n}(f) \to 0$ as $t \to \infty$.  The first fact is proved in Lemma~\ref{pi1lem} and the second fact is proved in Lemma~\ref{biglem}.  The theorem follows from this and the bound $-2 \ln{|\Pi_1^d|} < 6 \pi^2$ obtained from Lemma~\ref{pi1lem}.
\end{proof}

\appendix
\chapter{Numerical Estimates for Theorem~\ref{mainthm}}
\label{a_estimates}

The purpose of this section is to prove several estimates needed in Chapters~\ref{c_normal} and~\ref{c_submodules}.  Most of the estimates involve Gaussian coefficients, and so we will begin with the relevant definitions and bounds on the Gaussian coefficients obtained by Wilf~\cite{wil}.

The \emph{Gaussian coefficient} (also called the \emph{$q$-binomial coefficient})
\[
\sbinom{n}{k}_q = \frac{(q^n-1) \cdots (q^n-q^{k-1})}{(q^k-1) \cdots (q^k-q^{k-1})}
\]
is the number of $k$-dimensional subspaces of a vector space of dimension $n$ over $\F_q$.  We shall be concerned with estimates for $\sbinom{n}{k}_q$ and for the \emph{Galois number}
\[
\mathcal{G}_n(q) = \sum_{k=0}^n{\sbinom{n}{k}_q},
\]
which is the total number of subspaces of a vector space of dimension $n$ over $\F_q$.  (A survey of these numbers is given by Goldman and Rota~\cite{gr}.)  First we need a technical lemma.

\begin{lem}
\label{polybound}
Let $q > 1$ and define
\[
C(q) = \sum_{r=-\infty}^{\infty}{q^{-r^2}}.
\]
Let $f(x) = -ax^2 + bx + c$ with $a > 0$.  For any pair of integers $t \le u$, set
\[
A(q) = \sum_{r=t}^u{q^{f(r)}}.
\]
Then $A(q) \le C(q^a) q^{f(y)}$ for some real number $y \in [t, u]$.
\end{lem}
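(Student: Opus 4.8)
The plan is to argue by cases on where the vertex of the downward parabola $f$ lies relative to $[t,u]$. Writing $x^* = b/(2a)$ for the location of the maximum and completing the square, $f(x) = f(x^*) - a(x-x^*)^2$, so with $\alpha = a\ln q > 0$ we have
\[
A(q) = q^{f(x^*)} \sum_{r=t}^{u} q^{-a(r-x^*)^2} = q^{f(x^*)} \sum_{r=t}^{u} e^{-\alpha(r-x^*)^2}.
\]
Thus everything reduces to bounding a truncated Gaussian sum, and the choice of $y$ will depend on whether $x^*$ is inside or outside the interval.

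\emph{Interior case ($t \le x^* \le u$).} Here I would simply take $y = x^* \in [t,u]$. Since the omitted terms are nonnegative, $\sum_{r=t}^u e^{-\alpha(r-x^*)^2} \le \sum_{r\in\Z} e^{-\alpha(r-x^*)^2}$, so it suffices to show $\sum_{r\in\Z} e^{-\alpha(r-x)^2} \le \sum_{r\in\Z} e^{-\alpha r^2} = C(q^a)$ for every real $x$ — i.e.\ that the Jacobi-type theta sum is maximized at the integers. I would prove this by the Poisson summation formula: the Fourier transform of $e^{-\alpha(\cdot-x)^2}$ is $\sqrt{\pi/\alpha}\,e^{-\pi^2\xi^2/\alpha}e^{-2\pi i x\xi}$, so
\[
\sum_{r\in\Z} e^{-\alpha(r-x)^2} = \sqrt{\pi/\alpha}\left(1 + 2\sum_{k\ge 1} e^{-\pi^2 k^2/\alpha}\cos(2\pi k x)\right),
\]
and since all coefficients $\sqrt{\pi/\alpha}\,e^{-\pi^2 k^2/\alpha}$ are positive the right-hand side is largest at $x=0$, where it equals $\sum_{r\in\Z} e^{-\alpha r^2} = C(q^a)$. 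This yields $A(q) \le C(q^a)q^{f(y)}$.

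\emph{Exterior cases ($x^* < t$ or $x^* > u$).} By the symmetry $r \mapsto t+u-r$ I may assume $x^* < t$, and I would take $y = t$, the point of $[t,u]$ at which $f$ is largest on $\{t,\dots,u\}$. Expanding,
\[
f(r)-f(t) = -a(r-t)^2 - 2a(r-t)(t-x^*) \le -a(r-t)^2
\]
for $r\in\{t,\dots,u\}$, because $r-t\ge 0$ and $t-x^*>0$. Hence
\[
\frac{A(q)}{q^{f(t)}} = \sum_{r=t}^{u} q^{f(r)-f(t)} \le \sum_{s=0}^{u-t} (q^a)^{-s^2} \le \sum_{s\in\Z}(q^a)^{-s^2} = C(q^a),
\]
which is the claimed bound with $y=t$ (the step $\sum_{s=0}^{u-t} \le \sum_{s\in\Z}$ uses only that $C(q^a)\ge 1$).

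The single non-routine ingredient is the theta-maximization inequality in the interior case; the exterior cases are an elementary telescoping estimate, and the integrality of $t,u$ is used only to guarantee that the endpoints are admissible values of $y$. I would present the cases in the order interior, then exterior. One could instead derive the interior step from the Jacobi theta transformation formula rather than Poisson summation directly, but the essential point is unchanged: positivity of the Fourier coefficients of $x\mapsto \sum_{r\in\Z} e^{-\alpha(r-x)^2}$.
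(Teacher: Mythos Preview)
Your proof is correct and follows essentially the same approach as the paper: both reduce to the theta-maximization inequality $\sum_{r\in\Z} q^{-a(r-y)^2} \le \sum_{r\in\Z} q^{-ar^2}$, and both establish it via the Fourier/Jacobi transformation (the paper cites Jacobi's functional equation for $\theta_3$, which is exactly your Poisson summation computation). The only organizational difference is that the paper proves $f(r)-f(y)\le -a(r-y)^2$ uniformly for all three cases (interior and both exterior) and then applies the theta bound once, whereas you treat the exterior cases separately and observe that there the trivial inequality $\sum_{s\ge 0}(q^a)^{-s^2}\le C(q^a)$ suffices---a slight simplification, but not a different argument.
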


\begin{proof}
Suppose the maximum of $f(x)$ in $[t, u]$ occurs at $x = y$.  The global maximum of $f(x)$ occurs at $x = b/2a$, so one of three cases holds: $b/2a \le y = t$, $t \le y = b/2a \le u$, or $u = y \le b/2a$.  In each case, if $t \le r \le u$, then
\begin{eqnarray*}
&&-a(r-y)^2 - f(r) + f(y) \\
&=& -a(r-y)^2 - (-ar^2+br+c) + (-ay^2+by+c) \\
&=& (2ay-b)(r-y) \\
&\ge& 0.
\end{eqnarray*}
Thus
\begin{eqnarray*}
A(q) &=& q^{f(y)} \sum_{r = t}^u{q^{f(r)-f(y)}} \\
&\le& q^{f(y)} \sum_{r = t}^u{q^{-a(r-y)^2}} \\
&\le& q^{f(y)} \sum_{r = -\infty}^{\infty}{q^{-a(r-y)^2}},
\end{eqnarray*}
and it suffices to show that
\[
g(y) = \sum_{r = -\infty}^{\infty}{s^{-(r-y)^2}} \le g(0),
\]
where $s = q^a$.  To prove this inequality, define the theta function
\[
\theta_3(z, w) = \sum_{r = -\infty}^{\infty}{e^{r^2 \pi i w} e^{2 r i z}},
\]
where $|e^{\pi iw}| < 1$.  Jacobi's functional equation for this function (see Whittaker and Watson~\cite[Section 21.51]{ww}) asserts that
\[
\theta_3(z, w) = \frac{1}{\sqrt{-iw}} e^{z^2/\pi i w} \theta_3(z/w, -1/w),
\]
where $\sqrt{e^{i \theta}}$ denotes $e^{i\theta/2}$ for $0 \le \theta \le 2\pi$.  The function $g(y)$ is related to the theta function as follows:
\begin{eqnarray*}
g(y) &=& s^{-y^2} \sum_{r = -\infty}^{\infty}{s^{-r^2} e^{-2ri(iy \ln{s})}} \\
&=& s^{-y^2} \theta_3(-iy \ln{s}, w),
\end{eqnarray*}
where $\pi i w = -\ln{s}$.  Applying the functional equation leads to
\begin{eqnarray*}
g(y) &=& \frac{s^{-y^2} \sqrt{\pi}}{\sqrt{\ln{s}}} e^{y^2 \ln{s}} \theta_3(-\pi y, -1/w) \\
&=& \sqrt{\frac{\pi}{\ln{s}}} \sum_{r = -\infty}^{r = \infty}{e^{-r^2 \pi^2 / \ln{s}} e^{-2ir\pi y}} \\
&=& \sqrt{\frac{\pi}{\ln{s}}} (1 + 2 \sum_{r=1}^{\infty}{e^{-r^2 \pi^2 / \ln{s}} \cos{2r\pi y}}) \\
&\le& \sqrt{\frac{\pi}{\ln{s}}} (1 + 2 \sum_{r=1}^{\infty}{e^{-r^2 \pi^2 / \ln{s}}}) \\
&=& g(0).
\end{eqnarray*}
This completes the proof.
\end{proof}

For $q > 1$, define
\begin{eqnarray*}
D(q) &=& \prod_{j=1}^{\infty}{(1-q^{-j})^{-1}} \\
S_n(q) &=& \sum_{k=0}^n{q^{k(n-k)}} = q^{n^2/4} \sum_{k=0}^n{q^{-(k-n/2)^2}}.
\end{eqnarray*}

Note that both $C(q)$ and $D(q)$ decrease to 1 as $q \to \infty$.  If $q \ge 2$, then $C(q) \le C(2) < 9/4$ and $D(q) \le D(2) < 7/2$.  The following estimates on Gaussian coefficients and Galois numbers were either obtained by Wilf~\cite{wil} or follow from his work.

\begin{lem} \label{qests}
Let $q$ be a prime power.  Then
\begin{eqnarray}
\sbinom{n}{k}_q &\le& D(q) q^{k(n-k)} \label{coefbounds} \\
D(q) q^{n^2/4-1/4} \left(2 - \frac{9 q^{(1-n)/2}}{2} \right) &\le& \mathcal{G}_n(q) \label{gnbounds} \\
&\le& S_n(q) D(q) \nonumber \\
&\le& C(q) D(q) q^{n^2/4} \nonumber
\end{eqnarray}
\end{lem}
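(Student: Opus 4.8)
The statement to prove is Lemma~\ref{qests}, which collects three inequalities: the upper bound $\sbinom{n}{k}_q \le D(q) q^{k(n-k)}$ on Gaussian coefficients, and the chain of bounds on the Galois number $\mathcal{G}_n(q)$, namely $D(q) q^{n^2/4-1/4}(2 - \tfrac{9}{2} q^{(1-n)/2}) \le \mathcal{G}_n(q) \le S_n(q) D(q) \le C(q) D(q) q^{n^2/4}$.

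\textbf{Plan.} The overall strategy is to derive everything from elementary manipulations of the product formula for $\sbinom{n}{k}_q$ together with the already-established Lemma~\ref{polybound}. First I would prove \eqref{coefbounds}. Writing
\[
\sbinom{n}{k}_q = \prod_{i=0}^{k-1}{\frac{q^{n-i}-1}{q^{i+1}-1}} = q^{k(n-k)} \prod_{i=1}^{k}{\frac{1-q^{-(n-k+i)}}{1-q^{-i}}},
\]
I observe that since $n-k+i \ge i$, each factor $1-q^{-(n-k+i)} \le 1$, so the product is bounded above by $\prod_{i=1}^{k}(1-q^{-i})^{-1} \le \prod_{j=1}^{\infty}(1-q^{-j})^{-1} = D(q)$. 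That gives \eqref{coefbounds} immediately.

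\textbf{Galois number bounds.} For the upper bounds on $\mathcal{G}_n(q)$, I would sum \eqref{coefbounds} over $k$: $\mathcal{G}_n(q) = \sum_{k=0}^n \sbinom{n}{k}_q \le D(q) \sum_{k=0}^n q^{k(n-k)} = D(q) S_n(q)$, which is the second inequality. For the third, apply Lemma~\ref{polybound} with $f(x) = -x^2 + nx$ (so $a = 1$, $b = n$, $c = 0$), noting the maximum of $f$ on $[0,n]$ occurs at $x = n/2$ where $f(n/2) = n^2/4$; this yields $S_n(q) = \sum_{k=0}^n q^{f(k)} \le C(q) q^{n^2/4}$, hence $S_n(q) D(q) \le C(q) D(q) q^{n^2/4}$. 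For the lower bound on $\mathcal{G}_n(q)$, I would isolate the two central terms: when $n = 2m$ is even, $\sbinom{2m}{m}_q \ge q^{m^2}$ and one more term like $\sbinom{2m}{m-1}_q \ge q^{m^2-1}$; when $n$ is odd, $n = 2m+1$, the two central binomial coefficients $\sbinom{2m+1}{m}_q$ and $\sbinom{2m+1}{m+1}_q$ are equal and each is at least $q^{m(m+1)} = q^{(n^2-1)/4}$. More carefully, to get the factor $D(q) q^{n^2/4 - 1/4}(2 - \tfrac92 q^{(1-n)/2})$ one needs a \emph{lower} bound on a central Gaussian coefficient of the form $\sbinom{n}{k}_q \ge D(q)^{-1}\cdot(\text{something})$ — actually the cleaner route is to cite Wilf~\cite{wil} directly for this lower bound, since the inequality \eqref{gnbounds} is attributed to him in the surrounding text; I would reproduce his argument, which bounds $\mathcal{G}_n(q)$ below by twice the largest term minus a geometric tail correction, the tail being controlled by the ratio of consecutive Gaussian coefficients $\sbinom{n}{k-1}_q/\sbinom{n}{k}_q = (q^{k}-1)/(q^{n-k+1}-1)$.

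\textbf{Main obstacle.} The routine parts (the two upper bounds) are straightforward telescoping estimates. The delicate step is the lower bound $\mathcal{G}_n(q) \ge D(q) q^{n^2/4-1/4}(2 - \tfrac92 q^{(1-n)/2})$: one must show a central Gaussian coefficient is \emph{at least} of order $D(q) q^{n^2/4 - 1/4}$ — note that $D(q)$ appears on the \emph{favorable} side here — and then control the sum of all the remaining terms by a convergent geometric series with explicit constant $9/2$ (coming from $D(2) < 7/2$ and $C(2) < 9/4$, or similar crude bounds valid for $q \ge 2$). Getting the explicit numerical constant clean, rather than merely asymptotic, is the fiddly part; I expect to follow Wilf's computation verbatim, checking that the error terms telescope into the claimed form, and to handle the parity of $n$ in two cases. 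Since the paper explicitly says these estimates "were either obtained by Wilf~\cite{wil} or follow from his work," citing \cite{wil} for \eqref{gnbounds} and supplying only the short derivations of \eqref{coefbounds} and the upper half of \eqref{gnbounds} is a legitimate and efficient way to complete the proof.
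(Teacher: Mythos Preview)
Your handling of \eqref{coefbounds} and the two upper bounds on $\mathcal{G}_n(q)$ is correct and coincides with the paper's argument (the paper simply cites Wilf for \eqref{coefbounds} and for $\mathcal{G}_n(q)\le S_n(q)D(q)$, then invokes Lemma~\ref{polybound} with $f(x)=x(n-x)$ exactly as you do).

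For the lower bound your plan drifts from the paper's and contains a small confusion. The paper does not isolate one or two central Gaussian coefficients and bound the remaining ones by a geometric tail. Instead it imports from Wilf the inequality
\[
\mathcal{G}_n(q)\;\ge\;S_n(q)-\frac{2S_{n-1}(q)+2q^{-2n}}{q-1},
\]
lower-bounds $S_n(q)\ge 2q^{n^2/4-1/4}$ using the two central terms of $\sum_k q^{k(n-k)}$, upper-bounds $S_{n-1}(q)\le C(q)q^{(n-1)^2/4}$ via Lemma~\ref{polybound}, drops the negligible $2q^{-2n}$, and finishes with the crude estimate $2C(q)/(q-1)<9/2$ for $q\ge 2$. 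So the constant $9/2$ comes from $2C(2)<9/2$ alone; $D(2)<7/2$ plays no role. More importantly, your suggestion that one must show a central $\sbinom{n}{k}_q$ is \emph{at least} $D(q)q^{n^2/4-1/4}$ is misdirected: the trivial lower bound $\sbinom{n}{k}_q\ge q^{k(n-k)}$ carries no $D(q)$ factor, and you will not manufacture one from a single coefficient. In the paper's derivation the displayed chain actually ends at $q^{n^2/4-1/4}(2-\tfrac{9}{2}q^{(1-n)/2})$ without the $D(q)$; the stated $D(q)$ factor on the left of \eqref{gnbounds} is not produced by the argument as written, so do not try to reverse-engineer it from a coefficient-by-coefficient estimate.
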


\begin{proof}
Equation~\ref{coefbounds} and $\mathcal{G}_n(q) \le S_n(q) D(q)$ are proved in~\cite{wil}.  The inequality $S_n(q) \le C(q) q^{n^2/4}$ follows from Lemma~\ref{polybound}, taking $f(x) = x(n-x) = -x^2+nx$ and noting that $x(n-x) \le n^2/4$ for all $x$.  This proves $\mathcal{G}_n(q) \le C(q) D(q) q^{n^2/4}$.

The lower bound for $\mathcal{G}_n(q)$ is slightly more complicated, but it is easy to see from~\cite{wil}, Lemma~\ref{polybound}, and the definition of $S_n(q)$ that
\begin{eqnarray*}
\mathcal{G}_n(q) &\ge& S_n(q) - \frac{2S_{n-1}(q)+2q^{-2n}}{q-1} \\
&\ge& 2 q^{n^2/4-1/4} - \frac{2 C(q) q^{(n-1)^2/4}}{q-1} \\
&\ge& q^{n^2/4-1/4} \left( 2 - \frac{2 C(q) q^{(1-n)/2}}{q-1} \right) \\
&\ge& q^{n^2/4-1/4} \left( 2 - \frac{9q^{(1-n)/2}}{2} \right),
\end{eqnarray*}
where the last inequality uses the fact that $2C(q)/(q-1) < 9/2$.
\end{proof}

Next we will find numerical bounds for $d_n$, the rank of $F_n/F_{n+1}$.  These are needed for Lemma~\ref{gaussprods} and Theorems~\ref{limit1cor} and~\ref{limit2cor}.  Recall that
\[
d_n = \sum_{i=1}^n{\frac{1}{i} \sum_{j|i}{\mu(i/j) \cdot d^j}}.
\]

\begin{lem}
\label{dnasymptotics1}
For any positive integer $n$ and $d \ge 5$,
\[
d_n \le \frac{10}{7} \cdot \frac{d^n}{n}.
\]
\end{lem}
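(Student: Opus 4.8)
The plan is to reduce the statement to an elementary inequality about the series $\sum_{i=1}^n d^i/i$ and then prove that inequality by induction on $n$. First I would recall from Witt's formula (stated just before Corollary~\ref{dimcor}) that $\ell_i := \frac{1}{i}\sum_{j\mid i}\mu(i/j)d^j = \dim(K[\Lambda^i])$, so that Corollary~\ref{dimcor} reads $d_n=\sum_{i=1}^n\ell_i$. Applying M\"obius inversion to Witt's formula (setting $a_i=i\ell_i$ and $b_i=d^i$, so that the relation $a_i=\sum_{j\mid i}\mu(i/j)b_j$ inverts to $b_i=\sum_{j\mid i}a_j$) yields the identity
\[
d^i=\sum_{j\mid i} j\,\ell_j .
\]
Since each $\ell_j$ is a nonnegative integer, this gives $i\,\ell_i\le d^i$, i.e.\ $\ell_i\le d^i/i$ for every $i\ge 1$, and hence $d_n\le\sum_{i=1}^n d^i/i$. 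It therefore suffices to prove that
\[
\sum_{i=1}^n\frac{d^i}{i}\ \le\ \frac{10}{7}\cdot\frac{d^n}{n}\qquad\text{for all }d\ge 5,\ n\ge 1 .
\]

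I would prove this last inequality by induction on $n$. The cases $n=1$ (where the left side is just $d$) and $n=2$ (where the claim is $d+\tfrac{d^2}{2}\le\tfrac{5d^2}{7}$, equivalent to $d\ge\tfrac{14}{3}$) are checked directly. For $n\ge 3$, assuming the bound for $n-1$,
\[
\sum_{i=1}^n\frac{d^i}{i}=\frac{d^n}{n}+\sum_{i=1}^{n-1}\frac{d^i}{i}\le\frac{d^n}{n}+\frac{10}{7}\cdot\frac{d^{n-1}}{n-1},
\]
and the desired bound follows once $\frac{d^n}{n}+\frac{10}{7}\cdot\frac{d^{n-1}}{n-1}\le\frac{10}{7}\cdot\frac{d^n}{n}$, which rearranges to $10n\le 3d(n-1)$; for $n\ge 3$ one has $\tfrac{10n}{3(n-1)}\le 5\le d$, so this holds and the induction closes.

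The argument is essentially routine, so there is no real obstacle. The two points that need a little care are: obtaining the clean bound $\ell_i\le d^i/i$ from Witt's formula via the necklace identity above (rather than the weaker estimate one gets by crudely bounding the negative M\"obius terms, which would not give a constant as small as $10/7$); and noticing that the numerical inequality $10n\le 3d(n-1)$ is exactly what pins the constant $10/7$ at the threshold $d=5$, so the base cases $n=1$ and $n=2$, which lie outside the range where the inductive step is valid, must be verified on their own.
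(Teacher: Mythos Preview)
Your proof is correct and follows essentially the same approach as the paper: both first establish $\ell_i \le d^i/i$ (the paper via the interpretation of $i\ell_i$ as the number of $d$-ary sequences of exact period $i$, you via M\"obius inversion---these are the same fact) and then bound $\sum_{i=1}^n d^i/i \le \tfrac{10}{7}\,d^n/n$ by induction on $n$, with the inductive step reducing to exactly the inequality $10n \le 3d(n-1)$.
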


\begin{proof}
For any $i$ and $d$, the expression
\[
\sum_{j|i}{\mu(i/j) \cdot d^j}
\]
counts (for example) the number of infinite $d$-ary sequences with (minimum) period $i$.  This is at most $d^i$, the number of infinite $d$-ary sequences whose period divides $i$.  Thus
\[
d_n \le d + \frac{d^2}{2} + \cdots + \frac{d^n}{n}.
\]
We will prove the claim by induction on $n$.  When $n = 1$, this is trivially true.  When $n = 2$,
\begin{eqnarray*}
d_2 &\le& d + \frac{d^2}{2} \\
&=& \left(1 + \frac{2}{d} \right) \frac{d^2}{2} \\
&\le& \frac{7}{5} \cdot \frac{d^2}{2} \\
&\le& \frac{10}{7} \cdot \frac{d^2}{2},
\end{eqnarray*}
and the claim is true.  Now suppose $n > 2$ and assume that
\[
d_{n-1} \le \frac{10}{7} \cdot \frac{d^{n-1}}{n-1}.
\]
Then
\begin{eqnarray*}
d_n &\le& d_{n-1} + \frac{d^n}{n} \\
&\le& \frac{10}{7} \cdot \frac{d^{n-1}}{n-1} + \frac{d^n}{n} \\
&=& \left( 1 + \frac{10n}{7d(n-1)} \right) \frac{d^n}{n} \\
&\le& \left( 1 + \frac{10\cdot3}{7\cdot5\cdot2} \right) \frac{d^n}{n} \\
&=& \frac{10}{7} \cdot \frac{d^n}{n}.
\end{eqnarray*}
The claim holds by induction.
\end{proof}

\begin{lem}
\label{dnasymptotics2}
Suppose $n \ge 3$ and $d \ge 6$ or $n \ge 10$ and $d \ge 5$.  Then
\begin{equation}
\label{dninequality1}
d_n - 4 d_{n-1} - 2 d_{n-2} \ge -\frac{15}{2}
\end{equation}
and
\begin{equation}
\label{dninequality2}
d_n - 2 d_{n-1} - \frac{2}{n-2} \cdot d_{n-2} \ge -1.
\end{equation}
Suppose $n \ge 10$ and $d \ge 5$, or $n \ge 5$ and $d \ge 6$, or $n \ge 4$ and $d \ge 8$, or $n \ge 3$ and $d \ge 17$.  Then
\begin{equation}
\label{dninequality3}
d_n - 4 d_{n-1} - 4d^2 + 11/16 > 0.
\end{equation}
\end{lem}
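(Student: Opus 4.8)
The plan is to reduce each of the three inequalities to a comparison between the leading Witt term $L_n := d_n - d_{n-1}$ of $d_n$ and a controlled combination of the lower-order terms, to prove that comparison for all but an explicit short list of small pairs $(d,n)$ by crude but sufficiently sharp estimates, and then to dispatch the remaining pairs by direct computation from Witt's formula (Corollary~\ref{dimcor}).

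First I would record the elementary two-sided estimate for the Witt numbers $L_i = \tfrac1i\sum_{j\mid i}\mu(i/j)d^j$. Since $i L_i$ counts the aperiodic strings of length $i$ over a $d$-letter alphabet, one has $i L_i \le d^i$; on the other side, every proper divisor of $i$ is at most $i/2$, so $i L_i \ge d^i - \sum_{j=1}^{\lfloor i/2\rfloor} d^j \ge d^i\bigl(1 - \tfrac{1}{(d-1)d^{\lceil i/2\rceil - 1}}\bigr)$. For $d \ge 5$ and $i \ge 3$ this yields $\tfrac{19}{20}\cdot\tfrac{d^i}{i} \le L_i \le \tfrac{d^i}{i}$, while $L_1 = d$ and $L_2 = \tfrac{d^2-d}{2}$ are exact. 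Together with Lemma~\ref{dnasymptotics1} (which gives $d_i \le \tfrac{10}{7}\cdot\tfrac{d^i}{i}$ for $d\ge 5$) and the trivial $d_i \ge L_i$, these are the only analytic inputs.

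Next, using $d_n = L_n + L_{n-1} + d_{n-2}$ I would rewrite the three left-hand sides as $L_n$ minus lower-order terms: $d_n - 4d_{n-1} - 2d_{n-2} = L_n - 3L_{n-1} - 5d_{n-2}$ for \eqref{dninequality1}; $d_n - 2d_{n-1} - \tfrac{2}{n-2}d_{n-2} = L_n - L_{n-1} - (1+\tfrac{2}{n-2})d_{n-2}$ for \eqref{dninequality2}; and $d_n - 4d_{n-1} - 4d^2 + \tfrac{11}{16} = L_n - 3L_{n-1} - 3d_{n-2} - 4d^2 + \tfrac{11}{16}$ for \eqref{dninequality3}, where for $n\ge 3$ the extra $-4d^2$ is just another lower-order term. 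Feeding in the estimates above, each left-hand side is bounded below by an expression of the shape $c\,\tfrac{d^n}{n} - c'\,\tfrac{d^{n-1}}{n-1} - c''\,\tfrac{d^{n-2}}{n-2}\,(-4d^2)$, and the point is that $\tfrac{d^n}{n}$ dominates, since the ratio $\tfrac{d^n/n}{d^{n-1}/(n-1)} = \tfrac{d(n-1)}{n}$ exceeds any prescribed constant once $d$ or $n$ is large enough; making these thresholds explicit proves each inequality for all $(d,n)$ in its stated range apart from a finite list of small pairs. Inequality \eqref{dninequality2} is by far the easiest, because the factor $\tfrac{2}{n-2}$ makes its last term negligible; \eqref{dninequality1} and \eqref{dninequality3} require a little more care.

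Finally, each residual pair is handled by evaluating $d_n, d_{n-1}, d_{n-2}$ exactly via Corollary~\ref{dimcor}. For fixed $n$ the left-hand side of each inequality is an explicit polynomial in $d$ of degree $n$ with positive leading coefficient, hence increasing for $d$ past a small threshold, so only a bounded range of $d$ need be inspected — a finite verification. I expect the main obstacle to be that the inequalities are genuinely tight at the corners of their ranges — e.g. $d_n - 4d_{n-1} - 2d_{n-2}$ equals $-5$ at $(d,n)=(6,3)$ and $0$ at $(6,4)$, and \eqref{dninequality3} is essentially tight near $(17,3)$ and $(8,4)$ — so the estimates in the middle step must be kept sharp enough (in particular exploiting that Witt's formula has only genuinely lower-order correction terms, rather than using the worst-case $\tfrac{d-2}{d-1}$ constant) not to overshoot these corners, and the real content is the bookkeeping that cleanly isolates the analytic region from the small residual set handled by direct computation.
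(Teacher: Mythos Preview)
Your proposal is correct and follows essentially the same route as the paper: rewrite each left-hand side as the leading Witt term minus lower-order terms, bound those lower-order terms via Lemma~\ref{dnasymptotics1}, reduce to an explicit inequality in $d$ and $n$ that holds outside a finite set, and finish the residual cases by direct evaluation of $d_n$ from Corollary~\ref{dimcor}. Your treatment is in fact slightly more careful than the paper's in one respect—you explicitly record the two-sided bound $\tfrac{19}{20}\,d^i/i \le L_i \le d^i/i$, whereas the paper tacitly replaces $d_n-d_{n-1}$ by $d^n/n$ as though it were an equality.
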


\begin{proof}
By the definition of $d_n$ and Lemma~\ref{dnasymptotics1},
\begin{eqnarray*}
d_n - 4 d_{n-1} - 2 d_{n-2} &=& \frac{d^n}{n} - 3 d_{n-1} - 2 d_{n-2} \\
&\ge& \frac{d^n}{n} - \frac{30}{7} \cdot \frac{d^{n-1}}{n-1} - \frac{20}{7} \cdot \frac{d^{n-2}}{n-2}.
\end{eqnarray*}
To prove Equation~\ref{dninequality1} for given values of $n$ and $d$, it is certainly sufficient to show that
\[
\frac{d^n}{n} - \frac{30}{7} \cdot \frac{d^{n-1}}{n-1} - \frac{20}{7} \cdot \frac{d^{n-2}}{n-2} \ge 0,
\]
or equivalently that
\begin{equation}
\label{monotoneeqn}
\frac{1}{n} - \frac{30}{7d(n-1)} - \frac{20}{7d(n-2)} \ge 0.
\end{equation}
The left-hand side of this equation is obviously increasing as a function of $d$.  Furthermore,
\begin{eqnarray*}
\frac{1}{n+1} - \frac{30}{7dn} + \frac{20}{7d(n-1)}
&=& \frac{n}{n+1} \cdot \frac{1}{n} - \frac{n-1}{n} \cdot \frac{30}{7d(n-1)} - \frac{n-2}{n-1} \cdot \frac{20}{7d(n-2)} \\
&\ge& \frac{n}{n+1} \cdot \frac{1}{n} - \frac{n}{n+1} \cdot \frac{30}{7d(n-1)} - \frac{n}{n+1} \cdot \frac{20}{7d(n-2)} \\
&\ge& \frac{n}{n+1} \left( \frac{1}{n} - \frac{30}{7d(n-1)} - \frac{20}{7d(n-2)} \right).
\end{eqnarray*}
Thus if Equation~\ref{monotoneeqn} holds for some positive integers $d$ and $n$, it holds for all larger $d$ and $n$.  It turns out that Equation~\ref{monotoneeqn} holds for the following pairs of values: $d=5$ and $n=40$; $d=6$ and $n=6$; $d=7$ and $n=4$, and $d=8$ and $n=3$.  This proves Equation~\ref{dninequality1} for all values of $d$ and $n$ except: $d = 5$ and $10 \le n \le 39$; $d = 6$ and $3 \le n \le 5$; and $d = 7$ and $n = 3$.  A computer check shows that Equation~\ref{dninequality1} in these cases as well, proving the general claim about Equation~\ref{dninequality1}.

Turning to Equation~\ref{dninequality2}, we find
\begin{eqnarray*}
d_n - 2 d_{n-1} - \frac{2}{n-2} \cdot d_{n-2}
&=& \frac{d^n}{n} - d_{n-1} - \frac{2}{n-2} \cdot d_{n-2} \\
&\ge& \frac{d^n}{n} - \frac{10}{7} \cdot \frac{d^{n-1}}{n-1} - \frac{20}{7(n-2)} \cdot \frac{d^{n-2}}{n-2}.
\end{eqnarray*}
To prove Equation~\ref{dninequality2} for given values of $n$ and $d$, it suffices to show that
\[
\frac{d^n}{n} - \frac{10}{7} \cdot \frac{d^{n-1}}{n-1} - \frac{20}{7(n-2)} \cdot \frac{d^{n-2}}{n-2} \ge 0,
\]
or equivalently that
\begin{equation}
\label{monotoneeqn2}
\frac{1}{n} - \frac{10}{7d(n-1)} - \frac{20}{7d^2(n-2)^2} \ge 0.
\end{equation}
As with Equation~\ref{monotoneeqn}, if this equation holds for some positive integers $d$ and $n$, then it holds for all larger $d$ and $n$.  In fact, it holds for $d = 5$ and $n = 10$ and for $d = 6$ and $n = 3$.  This proves Equation~\ref{dninequality2}.

Finally, for Equation~\ref{dninequality3}, we have
\begin{eqnarray*}
d_n - 4 d_{n-1} - 4d^2
&=& \frac{d^n}{n} - 3 d_{n-1} - 4d^2 \\
&\ge& \frac{d^n}{n} - \frac{30}{7} \cdot \frac{d^{n-1}}{n-1} - 4 d^2.
\end{eqnarray*}
To prove Equation~\ref{dninequality3} for given values of $n$ and $d$, it suffices to show that
\[
\frac{d^n}{n} - \frac{30}{7} \cdot \frac{d^{n-1}}{n-1} - 4 d^2 > 0,
\]
or equivalently that
\[
\label{monotoneeqn3}
\frac{1}{n} - \frac{30}{7d(n-1)} - \frac{4}{d^{n-2}} > 0.
\]
As with Equations~\ref{monotoneeqn} and~\ref{monotoneeqn2}, if this equation holds for some positive integers $d$ and $n$, then it holds for all larger $d$ and $n$.  In fact, it holds for the following pairs of values: $d = 5$ and $n = 14$; $d = 6$ and $n = 7$; $d = 7$ and $n = 5$; $d = 10$ and $n = 4$, and $d = 25$ and $n = 3$.  The finitely many cases of Equation~\ref{dninequality3} remaining are true by a computer check.
\end{proof}

The following lemma is needed in Chapter~\ref{c_normal} to bound products of Gaussian coefficients, and we will finish this appendix with Lemma~\ref{quadbound}, which is used in Chapter~\ref{c_submodules}.

\begin{lem} \label{gaussprods}
Fix a prime $p$ and integers $n \ge 3$ and $d \ge 6$ or $n \ge 10$ and $d \ge 5$.  Let $F$ be the free group of rank $d$, and let $d_i$ be the dimension of $F_i/F_{i+1}$ for all $i$.  For $1 \le i \le n-1$ and $0 \le u_i \le d_i$, let
\[
A_i(u_i) = \sum{\prod_{j=i}^{n-1}{p^{-(u_{j+1}-d_{j+1})(u_{j+1}-u_j/2)}}},
\]
where the sum is over all integers $u_{i+1}, \dots, u_n$ such that
\begin{eqnarray*}
0 \le &u_j& \le d_j \qquad \textrm{for } i+1 \le j \le n-2 \\
1 \le &u_{n-1}& \le d_{n-1} \\
2 \le &u_n& \le d_n.
\end{eqnarray*}
Then for $1 \le i \le n-2$,
\[
A_i(u_i) \le C(p^{15/16}) C(p)^{n-i-1} p^{-15/16+d_n^2/4+d_{n-1}-d_n/4} p^{-u_i(d_{i+1}-1)/2}.
\]
\end{lem}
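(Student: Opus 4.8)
The plan is to prove the bound by downward induction on $i$, starting from $i=n-2$ and descending to $i=1$, exploiting the self-similar structure
\[
A_i(u_i) = \sum_{u_{i+1}} p^{(d_{i+1}-u_{i+1})(u_{i+1}-u_i/2)}\, A_{i+1}(u_{i+1}),
\]
where the summation range for $u_{i+1}$ is $[0,d_{i+1}]$ whenever $i+1\le n-2$. At each stage the summation variable occurs in the exponent only through a downward parabola, so Lemma~\ref{polybound} collapses the sum to $C(p^a)$ times $p$ raised to the maximum of that parabola over the summation interval, where $-a$ is the leading coefficient. The whole argument then reduces to locating those maxima, which is where the rapid growth of the $d_j$ enters.

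\textbf{Base case $i=n-2$.} Here $A_{n-2}(u_{n-2})=\sum_{u_{n-1}=1}^{d_{n-1}} p^{(d_{n-1}-u_{n-1})(u_{n-1}-u_{n-2}/2)}\sum_{u_n=2}^{d_n} p^{(d_n-u_n)(u_n-u_{n-1}/2)}$. First apply Lemma~\ref{polybound} with $a=1$ to the inner sum: its exponent $f(x)=(d_n-x)(x-u_{n-1}/2)$ satisfies $f(x)\le d_n^2/4 - d_n u_{n-1}/4 + u_{n-1}^2/16$ for every $x$, so the inner sum is at most $C(p)\,p^{\,d_n^2/4 - d_n u_{n-1}/4 + u_{n-1}^2/16}$. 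Substituting this into the outer sum, the exponent becomes
\[
g(u_{n-1}) = -(15/16)\,u_{n-1}^2 + \bigl(d_{n-1}+u_{n-2}/2-d_n/4\bigr)u_{n-1} + \bigl(d_n^2/4 - d_{n-1}u_{n-2}/2\bigr),
\]
whose leading coefficient $-15/16$ is precisely what produces the factor $C(p^{15/16})$ on a further application of Lemma~\ref{polybound}. The vertex of $g$ sits at $\tfrac{8}{15}(d_{n-1}+u_{n-2}/2-d_n/4)$, which is $\le 1$ exactly when $d_n \ge 4d_{n-1}+2u_{n-2}-15/2$; since $u_{n-2}\le d_{n-2}$ this follows from Equation~\ref{dninequality1} of Lemma~\ref{dnasymptotics2}. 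Hence $g$ is decreasing on $[1,d_{n-1}]$, so its value there is at most $g(1) = -15/16 + d_{n-1} - d_n/4 + d_n^2/4 - u_{n-2}(d_{n-1}-1)/2$ (using $u_{n-2}/2 - d_{n-1}u_{n-2}/2 = -u_{n-2}(d_{n-1}-1)/2$), and collecting the constants $C(p)\cdot C(p^{15/16})$ gives exactly the asserted bound for $i=n-2$.

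\textbf{Inductive step.} Assume the bound for $i+1$, with $i+1\le n-2$. Inserting it into the recursion yields
\[
A_i(u_i) \le C(p^{15/16})\,C(p)^{n-i-2}\, p^{-15/16+d_n^2/4+d_{n-1}-d_n/4}\sum_{u_{i+1}=0}^{d_{i+1}} p^{\,h(u_{i+1})},
\]
where $h(u_{i+1})=(d_{i+1}-u_{i+1})(u_{i+1}-u_i/2) - u_{i+1}(d_{i+2}-1)/2$ is a downward parabola with leading coefficient $-1$ and $h'(0)=d_{i+1}+u_i/2-(d_{i+2}-1)/2$. Provided $d_{i+2}\ge 2d_{i+1}+u_i+1$ — which holds since $u_i\le d_i$ once $d_{i+2}\ge 2d_{i+1}+d_i+1$, a consequence of the growth estimates for the $d_j$ (Corollary~\ref{dimcor} and Lemma~\ref{dnasymptotics1}, with at most a finite check for small $d$) — we get $h'(0)\le 0$, so $h$ is decreasing on $[0,d_{i+1}]$ and $h(y)\le h(0)=-d_{i+1}u_i/2 \le -u_i(d_{i+1}-1)/2$. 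Applying Lemma~\ref{polybound} with $a=1$ to the remaining sum contributes one more factor $C(p)$, and the exponents combine to give $A_i(u_i)\le C(p^{15/16})C(p)^{n-i-1} p^{-15/16+d_n^2/4+d_{n-1}-d_n/4-u_i(d_{i+1}-1)/2}$, completing the induction.

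The main obstacle is not the (admittedly tedious) algebra of nested quadratic exponents, but verifying the two ``parabola decreasing on the summation interval'' inequalities uniformly over the stated ranges $n\ge 3,\ d\ge 6$ and $n\ge 10,\ d\ge 5$: namely $d_n\ge 4d_{n-1}+2d_{n-2}-15/2$ for the base case and $d_{i+2}\ge 2d_{i+1}+d_i+1$ for the inductive step. The first is literally Equation~\ref{dninequality1}; the second I would isolate as a small preliminary lemma, proved by the same monotonicity-plus-finite-check method used for Lemma~\ref{dnasymptotics2} after bounding $d_{i+2}-2d_{i+1}-d_i$ below via Witt's formula. Once both are in hand the induction runs mechanically.
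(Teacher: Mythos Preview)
Your proposal is correct and follows essentially the same approach as the paper: downward induction on $i$ with the base case $i=n-2$, collapsing each sum via Lemma~\ref{polybound} applied to a downward parabola in the exponent, and using the growth of the $d_j$ to pin the maximum at the left endpoint of the summation range. Your base case is identical to the paper's (including the invocation of Equation~\ref{dninequality1}); for the inductive step the paper cites Equation~\ref{dninequality2} (applied with $n$ replaced by $i+2$) to force the vertex below $1/2$, whereas you ask for the slightly stronger vertex $\le 0$ via $d_{i+2}\ge 2d_{i+1}+d_i+1$ and propose isolating that as a separate lemma---a cosmetic difference, and your version makes the application of Lemma~\ref{polybound} (which uses the continuous maximum on $[t,u]$) marginally cleaner.
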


\begin{proof}
First note that
\[
A_{n-1}(u_{n-1}) = \sum_{u_n=2}^{d_n}{p^{-(u_n-d_n)(u_n-u_{n-1}/2)}}.
\]
As a function of $u_n$, the expression $-(u_n-d_n)(u_n-u_{n-1}/2)$ is at most $(d_n-u_{n-1}/2)^2/4$, so that
\[
A_{n-1}(u_{n-1}) \le C(p) p^{(d_n-u_{n-1}/2)^2/4}
\]
by Lemma~\ref{polybound}.  The proof of the theorem is by backward induction on $i$.  Note that
\[
A_i(u_i) = \sum_{u_{i+1}}{p^{-(u_{i+1}-d_{i+1})(u_{i+1}-u_i/2)} A_{i+1}(u_{i+1})}.
\]
When $i = n-2$, using our bound on $A_{n-1}(u_{n-1})$ gives
\begin{eqnarray*}
&& A_{n-2}(u_{n-2}) \\
&\le& C(p) p^{d_n^2/4} \sum_{u_{n-1} = 1}^{d_{n-1}}{p^{u_{n-1}^2/16-u_{n-1}d_n/4 + (d_{n-1}-u_{n-1})(u_{n-1}-u_{n-2}/2)}} \\
&=& C(p) p^{d_n^2/4} \sum_{u_{n-1} = 1}^{d_{n-1}}{p^{-15u_{n-1}^2/16+(-d_n/4+u_{n-2}/2+d_{n-1})u_{n-1}-d_{n-1}u_{n-2}/2}}
\end{eqnarray*}
As a function of $u_{n-1}$, the polynomial
\[
-15u_{n-1}^2/16+(-d_n/4+u_{n-2}/2+d_{n-1})u_{n-1}-d_{n-1}u_{n-2}/2
\]
is maximized at
\[
u_{n-1} = 8 (-d_n/4+u_{n-2}/2+d_{n-1})/15.
\]
By Lemma~\ref{dnasymptotics2}, Equation~\ref{dninequality1}, this is at most 1 when $n \ge 3$ and $d \ge 6$ or $n \ge 10$ and $d \ge 5$.  So as $u_{n-1}$ ranges from $1$ to $d_{n-1}$, the polynomial is maximized at $u_{n-1} = 1$.  By Lemma~\ref{polybound},
\begin{eqnarray*}
A_{n-2}(u_{n-2}) &\le& C(p^{15/16}) C(p) p^{d_n^2/4-15/16-d_n/4+d_{n-1}} p^{(1-d_{n-1})u_{n-2}/2}.
\end{eqnarray*}
This proves the theorem for the base case $i = n-2$.  By induction, for $i \le n-3$,
\begin{eqnarray*}
A_{i}(u_i) &=& \sum_{u_{i+1} = 0}^{d_{i+1}}{p^{-(u_{i+1}-d_{i+1})(u_{i+1}-u_i/2)} A_{i+1}(u_{i+1})} \\
&\le& C(p)^{n-i-2} p^{-15/16+d_n^2/4+d_{n-1}-d_n/4} \\
&& \qquad \cdot \sum_{u_{i+1} = 0}^{d_{i+1}}{p^{-(u_{i+1}-d_{i+1})(u_{i+1}-u_i/2)-u_{i+1}(d_{i+2}-1)/2}}.
\end{eqnarray*}
As a function of $u_{i+1}$, the polynomial
\begin{eqnarray*}
&& -(u_{i+1}-d_{i+1})(u_{i+1}-u_i/2)-u_{i+1}(d_{i+2}-1)/2) \\
&=& -u_{i+1}^2+(d_{i+1}+u_i/2-(d_{i+2}-1)/2)u_{i+1} - d_{i+1}u_i/i
\end{eqnarray*}
is maximized at
\[
u_{i+1} = (d_{i+1} + u_i/i - (d_{i+2}-1)/2))/2.
\]
By Lemma~\ref{dnasymptotics2}, Equation~\ref{dninequality2}, this is at most 1/2 when $n \ge 3$ and $d \ge 6$ or $n \ge 10$ and $d \ge 5$.  So as $u_{i+1}$ ranges from $0$ to $d_{i+1}$, the polynomial is maximized at $u_{i+1} = 0$.  Thus
\[
A_i(u_i) \le C(p)^{15/16} C(p)^{n-i-1} p^{-15/16+d_n^2/4+d_{n-1}-d_n/4} p^{-(d_{i+1}-1)u_i/i}
\]
and the result is proved by induction.
\end{proof}

\begin{lem} \label{quadbound}
Suppose that $\alpha_1, \dots, \alpha_r$ are positive integers with $n = \alpha_1 + \cdots + \alpha_r$.  Then
\begin{equation} \label{ub1}
\alpha_1^2 + \cdots + \alpha_r^2 \le (n-r+1)^2 + (r-1),
\end{equation}
and this bound is achieved when $\alpha_1 = \alpha_2 = \cdots = \alpha_{r-1} = 1$.  Furthermore, if $n \ge \ep + 1$ and $r \ge 2$, then
\begin{equation} \label{ub2}
\alpha_1^2 + \cdots + \alpha_r^2 + \ep r \le (n-1)^2 + 1 + 2\ep.
\end{equation}
\end{lem}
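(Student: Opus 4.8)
The plan is to establish (\ref{ub1}) by identifying the extremal configuration of $(\alpha_1,\dots,\alpha_r)$ via a smoothing (exchange) argument, and then to deduce (\ref{ub2}) by feeding that bound into a one-variable convexity estimate in the parameter $r$.

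First I would prove (\ref{ub1}). Note that since each $\alpha_i\ge 1$ we automatically have $r\le n$, so $n-r+1\ge 1$. Now suppose $(\alpha_1,\dots,\alpha_r)$ is a tuple of positive integers summing to $n$ in which two parts $\alpha_i$ and $\alpha_j$ both exceed $1$. Replace the pair $(\alpha_i,\alpha_j)$ by $(\alpha_i+\alpha_j-1,\,1)$; this keeps the total sum and the number of parts fixed, keeps all entries positive integers, and changes the sum of squares by
\[
(\alpha_i+\alpha_j-1)^2+1-\alpha_i^2-\alpha_j^2 \;=\; 2(\alpha_i-1)(\alpha_j-1)\;\ge\;0 .
\]
Iterating this move, the sum of squares is weakly increased and terminates at a tuple with at most one part exceeding $1$, namely $(n-r+1,1,\dots,1)$. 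For that tuple $\sum_i\alpha_i^2=(n-r+1)^2+(r-1)$, which proves the inequality and simultaneously exhibits the asserted equality case $\alpha_1=\cdots=\alpha_{r-1}=1$.

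To obtain (\ref{ub2}), combine (\ref{ub1}) with the trivial addition of $\ep r$:
\[
\alpha_1^2+\cdots+\alpha_r^2+\ep r \;\le\; (n-r+1)^2+(r-1)+\ep r \;=:\; h(r).
\]
Viewing $h$ as a function of a real variable $r$, it is an upward parabola, so on the interval $2\le r\le n$ its maximum is attained at an endpoint. One computes $h(2)=(n-1)^2+1+2\ep$, which is exactly the right-hand side of (\ref{ub2}), and $h(n)=n(1+\ep)$. The inequality $h(n)\le h(2)$ rearranges to $\ep(n-2)\le(n-1)(n-2)$, i.e.\ to $\ep\le n-1$ (with equality when $n=2$), which is precisely the hypothesis $n\ge\ep+1$. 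Hence $h(r)\le h(2)$ throughout $2\le r\le n$, giving (\ref{ub2}).

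There is no serious obstacle in this lemma; it is elementary. The only point requiring a moment's care is the reduction in (\ref{ub2}) to checking the two endpoints $r=2$ and $r=n$, together with the observation that the hypothesis $n\ge\ep+1$ is exactly what is needed to ensure the $r=n$ endpoint does not exceed the $r=2$ endpoint — which is why that hypothesis appears in the statement.
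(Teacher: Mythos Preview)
Your proof is correct. For (\ref{ub1}) you use a smoothing/exchange argument where the paper uses induction on $r$; these are equivalent in substance, though your version makes the extremal configuration emerge more directly. For (\ref{ub2}) you reduce to a convex function of $r$ and check the two endpoints $r=2$ and $r=n$, whereas the paper expands $(n-r+1)^2=((n-1)-(r-2))^2$ and bounds the cross term $2(n-1)(r-2)$ from below using $2(n-1)\ge \ep+r-1$ (a consequence of $n\ge\ep+1$ and $n\ge r$). Your convexity argument is arguably cleaner and makes it transparent why the hypothesis $n\ge\ep+1$ is exactly what is needed; the paper's algebraic route avoids any case distinction but is less illuminating about where the hypothesis enters.
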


\begin{proof}
For Equation~\ref{ub1}, we use a simple induction argument.  It is clearly true for $r = 1$.  Suppose it is true up through $r$; we will prove it for $r+1$.
\begin{eqnarray*}
\alpha_1^2 + \cdots + \alpha_r^2 + \alpha_{r+1}^2 &\le&
(n-\alpha_{r+1}-r+1)^2 + (r-1) + \alpha_{r+1}^2 \\
&\le& (n-r+1-\alpha_{r+1})^2 + \alpha_{r+1}^2 + (r-1) \\
&\le& (n-r+1-1)^2 + 1^2 + (r-1) \\
&=& (n-r)^2 + r,
\end{eqnarray*}
proving Equation~\ref{ub1}.  As for Equation~\ref{ub2},
\begin{eqnarray*}
\alpha_1^2 + \cdots + \alpha_r^2 + \ep r &\le& (n-r+1)^2 + (r-1) + \ep r \\
&=& ((n-1)-(r-2))^2 + r - 1 + \ep r \\
&=& (n-1)^2 - 2(n-1)(r-2) + (r-2)^2 + r - 1 + \ep r \\
&\le& (n-1)^2 - (\ep + r - 1)(r-2) + (r-2)^2 + r - 1 + \ep r \\
&=& (n-1)^2 + 1 + 2 \ep,
\end{eqnarray*}
where the first inequality follows from Equation~\ref{ub1} and the second inequality follows from the fact that since $n \ge \ep + 1$ and $n \ge r$, we know that $n \ge (\ep + r + 1)/2$.
\end{proof}

\chapter{Numerical Estimates for Theorem~\ref{ubthm}}
\label{a_rwestimates}

This appendix contains numerical estimates used in the proof of Theorem~\ref{lowerboundthm}.  All of the following results assume that $p = 2^t-1$ is prime, $q = e^{2\pi i/p}$, and $n = rt$.  Let $G = C_p^d$ and define a probability distribution on $G$ by $P(e_k) = P(-e_k) = 1/2d$, where $e_k$ is the $k$-th unit vector and $k = 1, 2, \dots, d$.  Let $\{X_n\}$ be the Markov chain on $G$ given by $X_0 = 0$ and $X_{n+1} = 2X_n + g_n$, where the $g_n$ are independent random variables with distribution $P$, and let $P_n$ be the probability distribution on $G$ induced by $X_n$.  The function $f : C_p^d \to \C$ is defined by
\[
f(y) = \sum_{i=1}^d{\sum_{j=0}^{t-1}{q^{2^j y}}}.
\]
Finally, $\Pi_j$ and $\Gamma_j$ are defined by the product formulas
\[
\Pi_j = \prod_{a = 0}^{t-1}{\left[ \frac{d-1}{d} + \frac{1}{d} \cos{ \left( \frac{2\pi \cdot 2^a(2^j-1)}{p} \right) } \right]}
\]
and
\[
\Gamma_j = \prod_{a = 0}^{t-1}{\left[ \frac{d-2}{d} + \frac{1}{d} \cos{ \left( \frac{2\pi \cdot 2^a}{p} \right)} + \frac{1}{d} \cos{ \left( \frac{2\pi \cdot 2^a 2^j}{p} \right)} \right]}.
\]

\begin{lem}
\label{meanlem}
\begin{eqnarray*}
E_U(f) &=& 0 \\
E_U(f \overline{f}) &=& dt \\
\mathrm{Var}_U(f) &=& dt \\
E_{P_n}(f) &=& dt \Pi_1^r \\
E_{P_n}(f \overline{f}) &=& dt \sum_{j=0}^{t-1}{\Pi_j^r} + t(d^2-d) \sum_{j=0}^{t-1}{\Gamma_j^r} \\
\mathrm{Var}_{P_n}(f) &=& dt \sum_{j=0}^{t-1}{\Pi_j^r} + t(d^2-d) \sum_{j=0}^{t-1}{\Gamma_j^r} - d^2 t^2 \Pi_1^{2r}.
\end{eqnarray*}
\end{lem}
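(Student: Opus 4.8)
The plan is to expand $f$ into irreducible characters of $G=C_p^d$ and read off every moment either from orthogonality (for the uniform distribution $U$) or from the convolution formula for the Fourier transform (for $P_n$). Writing $\rho_z$ for the character $y\mapsto q^{\,y\cdot z}$ of $C_p^d$, we have $f=\sum_{i=1}^d\sum_{j=0}^{t-1}\rho_{2^j e_i}$. The single arithmetic input used repeatedly is that, since $p=2^t-1$, the residue class of $2$ has multiplicative order exactly $t$ modulo $p$; hence $2^0,2^1,\dots,2^{t-1}$ are pairwise distinct and nonzero mod $p$, and multiplication by any power of $2$ merely permutes the set $\{2^a\bmod p:0\le a\le t-1\}$ cyclically.

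For $U$: orthogonality gives $E_U(\rho_z)=1$ if $z=0$ and $0$ otherwise. Every $2^je_i$ is nonzero, so $E_U(f)=0$. Expanding $f\overline f=\sum_{i,j,i',j'}\rho_{2^je_i-2^{j'}e_{i'}}$ and noting that $2^je_i=2^{j'}e_{i'}$ forces $i=i'$ and then $j=j'$ (distinctness of the $2^a$), we get $E_U(f\overline f)=dt$ and therefore $\mathrm{Var}_U(f)=dt-0=dt$.

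For $P_n$: by Equation~\ref{convolution}, $P_n=\sigma^{n-1}(P)\ast\cdots\ast\sigma^0(P)$ with $\sigma$ equal to multiplication by $A=2I$, so $\widehat{P_n}(\rho_z)=\prod_{a=0}^{n-1}\widehat{\sigma^a(P)}(\rho_z)$; a change of variables shows $\widehat{\sigma^a(P)}(\rho_z)=\widehat P(\rho_{2^a z})$, and a direct computation from the definition of $P$ gives $\widehat P(\rho_w)=\frac1d\sum_{k=1}^d\cos(2\pi w_k/p)$. Substituting $w=2^az$ for the two relevant shapes of $z$ reproduces precisely the bracketed factors in the definitions of $\Pi_j$ and $\Gamma_j$. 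Since that factor is periodic in $a$ with period $t$ and $n=rt$, the product over $a=0,\dots,n-1$ equals the $r$-th power of the product over one period $a=0,\dots,t-1$; and since multiplying the argument by a power of $2$ only reindexes that period cyclically, each such product can be rewritten as $\Pi_m^r$ or $\Gamma_m^r$ for the appropriate $m$.

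Carrying this out: $E_{P_n}(f)=\sum_{i,j}\widehat{P_n}(\rho_{2^je_i})$, and for every $i,j$ this product equals $\Pi_1^r$ (independent of $i$ and $j$ after the cyclic reindexing), so $E_{P_n}(f)=dt\,\Pi_1^r$. For $E_{P_n}(f\overline f)=\sum_{i,j,i',j'}\widehat{P_n}(\rho_{2^je_i-2^{j'}e_{i'}})$ I split according to whether $i=i'$: in that case the difference vector is $(2^j-2^{j'})e_i\equiv 2^{j'}(2^{\,j-j'}-1)e_i$ and the product collapses to $\Pi_{(j-j')\bmod t}^r$ (with $\Pi_0^r=1$ when $j=j'$), and summing over $i,j,j'$ yields $dt\sum_{j=0}^{t-1}\Pi_j^r$; when $i\ne i'$ the difference vector has the two nonzero coordinates $2^j$ and $-2^{j'}$, giving the three-term bracket of $\Gamma$ and, after the same collapse, $\Gamma_{(j'-j)\bmod t}^r$, summing over the $d^2-d$ ordered pairs to $t(d^2-d)\sum_{j=0}^{t-1}\Gamma_j^r$. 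Adding the two pieces gives the stated formula for $E_{P_n}(f\overline f)$, and because $\Pi_1$ is a product of real numbers, $E_{P_n}(f)$ is real, so $\mathrm{Var}_{P_n}(f)=E_{P_n}(f\overline f)-\bigl(E_{P_n}(f)\bigr)^2$ is exactly the claimed expression. The only place where care is needed — and the main obstacle — is this combinatorial bookkeeping converting $\prod_{a=0}^{rt-1}$ into $\bigl(\prod_{a=0}^{t-1}\bigr)^r$ and matching the cyclically-shifted period products against the definitions of $\Pi_j$ and $\Gamma_j$; it relies squarely on $2$ having exact order $t$ modulo $p=2^t-1$, while everything else is orthogonality of characters and the identity $\widehat{P\ast Q}=\widehat P\,\widehat Q$ recalled at the beginning of Section~\ref{randomwalksec}.
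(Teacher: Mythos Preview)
Your proof is correct and follows essentially the same approach as the paper's: expand $f$ as a sum of characters $\rho_{2^je_i}$, use orthogonality for the uniform moments, and for the $P_n$ moments compute $\widehat{P_n}(\rho_z)$ via the convolution formula, then collapse the $rt$-fold product to an $r$-th power of a single-period product using that $2$ has exact order $t$ modulo $p=2^t-1$. The paper carries out the same calculations term by term (with slightly less commentary on the cyclic reindexing), so there is no substantive difference.
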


\begin{proof}
\begin{eqnarray*}
E_U(f) &=& \frac{1}{p^d} \sum_{y \in C_p^d}{\sum_{i=1}^d{\sum_{j=0}^{t-1}{q^{2^j y_i}}}} \\
&=& \frac{1}{p^d} \sum_{i=1}^d{p^{d-1} \sum_{y_i = 0}^{p}{\sum_{j=0}^{t-1}{q^{2^j y_i}}}} \\
&=& \frac{d}{p} \sum_{j = 0}^{t-1}{\sum_{y = 0}^p{q^{2^j y}}} \\
&=& 0. \\
E_U(f \overline{f}) &=& \frac{1}{p^d} \sum_{y \in C_p^d}{\sum_{i,i'=1}^d{\sum_{j,j'=0}^{t-1}{q^{2^j y_i} q^{-2^{j'} y_{i'}}}}} \\
&=& \frac{1}{p^d} \sum_{i,i'=1}^d{\sum_{y \in C_p^d}{\sum_{j,j'=0}^{t-1}{q^{2^j y_i} q^{-2^{j'} y_{i'}}}}} \\
&=& \frac{1}{p^d} \left[ d \sum_{y \in C_p^d}{\sum_{j,j'=0}^{t-1}{q^{2^j y_1} q^{-2^{j'} y_{1}}}} + (d^2-d) \sum_{y \in C_p^d}{\sum_{j,j'=0}^{t-1}{q^{2^j y_1} q^{-2^{j'} y_2}}} \right] \\
&=& \frac{1}{p^d} \left[ d p^{d-1} \sum_{y = 0}^{p-1}{\sum_{j,j'=0}^{t-1}{q^{(2^j-2^{j'})y}}} \right. \\
&& \hspace{1in} \left. + (d^2-d) p^{d-2} \sum_{y=0}^{p-1}{\sum_{z=0}^{p-1}{\sum_{j,j'=0}^{t-1}{q^{2^j y - 2^{j'} z}}}} \right] \\
&=& dt + \frac{d^2-d}{p^2} \left( \sum_{y=0}^{p-1}{\sum_{j=0}^{t-1}{q^{2^j y}}} \right)^2 \\
&=& dt \\
E_{P_n}(f) &=& \sum_{y \in C_p^d}{P_n(y) f(y)} \\
&=& \sum_{i = 1}^d{\sum_{j = 0}^{t-1}{\sum_{y \in C_p^d}{P_n(y) q^{2^j y_i}}}} \\
&=& \sum_{i = 1}^d{\sum_{j = 0}^{t-1}{\widehat{Q}_N(2^j e_i)}} \\
&=& \sum_{i=1}^d{\sum_{j=0}^{t-1}{\prod_{k=0}^{N-1}{\left[ \frac{d-1}{d} + \frac{1}{d} \cos{ \left( \frac{2\pi \cdot 2^k 2^j}{p} \right)} \right]}}} \\
&=& d \sum_{j=0}^{t-1}{\prod_{a=0}^{t-1}{\left[ \frac{d-1}{d} + \frac{1}{d} \cos{\left( \frac{2\pi \cdot 2^a 2^j}{p} \right)} \right]^r}} \\
&=& dt \Pi_1^r. \\
E_{P_n}(f \overline{f}) &=& \sum_{y \in C_p^d}{P_n(y) f(y) \overline{f}(y)} \\
&=& \sum_{i,i'=1}^{d}{\sum_{j,j'=0}^{t-1}{\sum_{y \in C_p^d}{P_n(y) q^{2^j y_i} q^{-2^{j'} y_{i'}}}}} \\
&=& \sum_{i,i'=1}^{d}{\sum_{j,j'=0}^{t-1}{\widehat{P}_n(2^j e_i - 2^{j'} e_{j'})}} \\
&=& d \sum_{j,j'=0}^{t-1}{\widehat{P}_n((2^j-2^{j'})e_1)} + (d^2-d) \sum_{j,j'=0}^{t-1}{\widehat{P}_n(2^j e_1 - 2^{j'} e_2)} \\
&=& dt \sum_{j=0}^{t-1}{\Pi_j^r} + (d^2-d) \cdot \sum_{j,j'=0}^{t-1}{\prod_{a=0}^{t-1}{\left[ \frac{d-2}{d} + \frac{1}{d} \cos{ \left( \frac{2\pi \cdot 2^a 2^j}{p} \right) } \right. }} \\
&& \hspace{2.3in} + \left. \frac{1}{d} \cos{ \left( \frac{2\pi \cdot 2^a 2^{j'}}{p} \right) } \right]^r \\
&=& dt \sum_{j=0}^{t-1}{\Pi_j^r} + t(d^2-d) \sum_{j=0}^{t-1}{\prod_{a=0}^{t-1}{ \left[ \frac{d-2}{d} + \frac{1}{d} \cos{ \left( \frac{2\pi \cdot 2^a}{p} \right) } \right. }} \\
&& \hspace{2.3in} + \left. \frac{1}{d} \cos{ \left( \frac{2\pi \cdot 2^a 2^j}{p} \right) } \right]^r \\
&=& dt \sum_{j=0}^{t-1}{\Pi_j^r} + t(d^2-d) \sum_{j=0}^{t-1}{\Gamma_j^r}.
\end{eqnarray*}
\end{proof}

\begin{lem}
\label{pi1lem}
$|\Pi_1|^d$ is bounded away from $0$ and $1$ independent of $d$ and $t$, in particular,
\[
e^{-3\pi^2} \le |\Pi_1|^d \le e^{-\pi^2/4}.
\]
Thus $dt \Pi_1^r - 2 d^{1/2} t^{1/2} \to 0$ as $t \to \infty$ when $\lambda \ge 1$.
\end{lem}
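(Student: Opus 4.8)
The plan is to start from the factorization $\Pi_1=\prod_{a=0}^{t-1}\bigl(1-\tfrac2d\sin^2(\pi 2^a/p)\bigr)$, which comes from $\tfrac{d-1}{d}+\tfrac1d\cos\theta=1-\tfrac2d\sin^2(\theta/2)$, and then to control both the product $\prod_a(1-x_a)$ and the sum $-\sum_a\log(1-x_a)$ with $x_a=\tfrac2d\sin^2(\pi 2^a/p)\in[0,2/d]$. Since $p=2^t-1$ is prime, $t$ is prime and $2$ has order exactly $t$ modulo $p$, so $1,2,\dots,2^{t-1}$ are distinct residues lying strictly between $0$ and $p$; the facts I would extract are that $2^a/p<2^{a-t+1}\le\tfrac12$ for $0\le a\le t-2$ (so $\|2^a/p\|<2^{a-t+1}$), while $2^{t-1}/p\in(\tfrac12,\tfrac47]$ with $\|2^{t-1}/p\|=(2^{t-1}-1)/p\in[\tfrac37,\tfrac12)$. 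No $2^a/p$ equals $\tfrac12$, so $\Pi_1>0$ for $d\ge2$.

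For the upper bound $|\Pi_1|^d\le e^{-\pi^2/4}$ I would use $1-x\le e^{-x}$ in each factor, getting $|\Pi_1|^d\le\exp\bigl(-2\sum_{a=0}^{t-1}\sin^2(\pi 2^a/p)\bigr)$, so it is enough to show $\sum_a\sin^2(\pi 2^a/p)\ge\pi^2/8$. For $t\ge3$ I would keep only $a=t-1$ and $a=t-2$: by monotonicity of $\sin^2(\pi\,\cdot\,)$ on $[0,\tfrac12]$ and the windows above, $\sin^2(\pi 2^{t-1}/p)=\sin^2(\pi\|2^{t-1}/p\|)\ge\sin^2(\pi/3)=\tfrac34$ and $\sin^2(\pi 2^{t-2}/p)\ge\sin^2(\pi/4)=\tfrac12$, so the sum is at least $\tfrac54>\pi^2/8\approx1.234$; $t=2$ is the one-line case $p=3$. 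The margin here is small, so the two residue windows have to be pinned down exactly.

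For the lower bound $|\Pi_1|^d\ge e^{-3\pi^2}$ I would bound $-\log$ of each factor. For $d\ge3$ one has $x_a\le\tfrac23$, which lies in the range where $-\log(1-x)\le2x$, so $-\log|\Pi_1|^d\le4\sum_a\sin^2(\pi 2^a/p)\le4\pi^2\sum_a\|2^a/p\|^2$ using $\sin^2(\pi y)\le\pi^2\|y\|^2$; the tail estimate then gives $\sum_a\|2^a/p\|^2<\sum_{a=0}^{t-2}4^{a-t+1}+\tfrac14<\tfrac13+\tfrac14=\tfrac7{12}$, hence $-\log|\Pi_1|^d<\tfrac{7\pi^2}{3}<3\pi^2$. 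I expect this to be the main obstacle. First, it is exactly here that one needs to know $\sin^2(\pi 2^a/p)$ is bounded away from $1$ except possibly at $a=t-1$, which is precisely what the residue analysis provides. Second, the inequality $-\log(1-x)\le2x$ requires $x$ away from $1$, i.e.\ $d$ not too small; for $d\le2$ this uniform estimate fails and those cases need a separate treatment — for instance the telescoping identity $\prod_{a=0}^{t-1}\cos(2^a\pi/p)=\sin(2^t\pi/p)/(2^t\sin(\pi/p))=-1/(p+1)$ handles $d=2$ (giving $\Pi_1=(p+1)^{-2}$) and shows the behaviour there is genuinely different.

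Finally, for the displayed consequence: with $\Pi_1>0$ and $r=\dfrac{d(\ln t+\ln d)}{-2\ln|\Pi_1|^d}-d\lambda$, a direct computation gives $\Pi_1^r=(td)^{-1/2}\,(|\Pi_1|^d)^{-\lambda}$, so $dt\,\Pi_1^r-2\sqrt{dt}=\sqrt{td}\,\bigl((|\Pi_1|^d)^{-\lambda}-2\bigr)$. By the upper bound on $|\Pi_1|^d$ just obtained, $(|\Pi_1|^d)^{-\lambda}\ge e^{\lambda\pi^2/4}\ge e^{\pi^2/4}>2$ whenever $\lambda\ge1$, so this quantity tends to $+\infty$ as $t\to\infty$; and the lower bound on $|\Pi_1|^d$ yields the accompanying estimate $-2\ln|\Pi_1|^d<6\pi^2$ that is invoked for the bound on $r$ in Theorem~\ref{lowerboundthm}.
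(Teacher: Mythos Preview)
Your argument and the paper's share the same skeleton—bound each factor of $|\Pi_1|^d$ exponentially and reduce to a sum that is essentially the geometric series $\sum_a 4^a/p^2=\Theta(1)$—but the executions differ. The paper works directly with Taylor-type inequalities on $\cos$: for the lower bound it uses $\cos\theta\ge 1-\theta^2/2$ together with $(1-y/d)^d\ge e^{-2y}$, and for the upper bound an inequality of the shape $\cos\theta\le 1-c\theta^2$; in each case it then evaluates $\sum_a 4^a/p^2$ in closed form. You instead pass through the half-angle identity to $1-\tfrac2d\sin^2(\pi 2^a/p)$, which is cleaner. For the upper bound on $|\Pi_1|^d$ your device of isolating the two dominant terms $a=t-1,t-2$ is more robust than the paper's route: the term $a=t-1$ has argument $2\pi\cdot 2^{t-1}/p$ just above $\pi$, exactly where bounds of the form $\cos\theta\le 1-c\theta^2$ become delicate, and the paper's displayed chain has some slips there. (Your floor $\sin^2(\pi/3)=\tfrac34$ for that term is looser than needed—the true value is $\ge\sin^2(3\pi/7)\approx 0.95$—which is why your margin $5/4$ versus $\pi^2/8$ looks artificially tight.) For the lower bound on $|\Pi_1|^d$ your estimate and the paper's are essentially the same computation; you are right that it requires $d$ not too small (your version needs $d\ge3$, while the paper's written version needs $1-\tfrac{2\pi^2}{d}(2^{t-1}/p)^2>0$, i.e.\ $d\ge5$), and your observation that $|\Pi_1|^2=(p+1)^{-4}\to0$ for $d=2$ shows the claimed uniformity in $d$ genuinely fails at that end. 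Finally, your derivation of the ``Thus'' clause is correct: $dt\,\Pi_1^r-2\sqrt{dt}=\sqrt{dt}\,\bigl((|\Pi_1|^d)^{-\lambda}-2\bigr)\to+\infty$, which is exactly what the proof of Theorem~\ref{lowerboundthm} invokes; the ``$\to 0$'' in the lemma statement is a typo.
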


\begin{proof}
\begin{eqnarray*}
|\Pi_1|^d &=& \prod_{a = 0}^{t-1}{\left( \frac{d-1}{d} + \frac{1}{d} \cos{\left( \frac{2\pi \cdot 2^a}{p} \right)} \right)^d} \\
&\ge& \prod_{a = 0}^{t-1}{\left( \frac{d-1}{d} + \frac{1}{d} \left( 1 - \frac{1}{2} \left( \frac{2\pi \cdot 2^a}{p} \right)^2 \right) \right)^d} \\
&=& \prod_{a = 0}^{t-1}{\left( 1 - \frac{2\pi^2}{d} \cdot \left( \frac{2^a}{p} \right)^2 \right)^d} \\
&\ge& \prod_{a=0}^{t-1}{e^{-4\pi^2 \cdot 4^a / p^2}} \\
&=& \exp{\left( \frac{-4\pi^2}{p^2} \sum_{a=0}^{t-1}{4^a} \right)} \\
&=& \exp{\left( \frac{-4\pi^2}{p^2} \cdot \frac{4^t-1}{3} \right)} \\
&=& \exp{\left( \frac{-4\pi^2}{p^2} \cdot \frac{(p+1)^2-1}{3} \right)} \\
&\ge& \exp{\left( \frac{-4\pi^2}{3} \cdot \left( 1 + \frac{1}{p} \right)^2 \right)} \\
&\ge& e^{-3\pi^2}.
\end{eqnarray*}
\begin{eqnarray*}
|\Pi_1|^d &=& \prod_{a = 0}^{t-1}{\left( \frac{d-1}{d} + \frac{1}{d} \cos{\left( \frac{2\pi \cdot 2^a}{p} \right)} \right)^d} \\
&\le& \prod_{a = 0}^{t-1}{\left( \frac{d-1}{d} + \frac{1}{d} \left( 1 - \left( \frac{2\pi \cdot 2^a}{p} \right)^2 \right) \right)^d} \\
&=& \prod_{a = 0}^{t-1}{\left( 1 - \frac{\pi^2 \cdot 4^a}{dp^2} \right)^d} \\
&\le& \prod_{a = 0}^{t-1}{e^{-\pi^2 (2^a)^2/p^2}}\\
&=& \exp{\left( -\frac{\pi^2}{p^2} \sum_{a=0}^{t-1}{4^a} \right)} \\
&=& \exp{\left( -\frac{\pi^2\cdot (4^t-1)}{3p^2} \right)} \\
&=& \exp{\left( -\frac{\pi^2 \cdot ((p+1)^2-1)}{3p^2} \right)} \\
&\le& \exp{\left( -\frac{\pi^2}{3} \cdot (1 - 1/(p+1)^2) \right)} \\
&\le& e^{-\pi^2/4}.
\end{eqnarray*}
\end{proof}
For the following lemmas, let
\[
G(x,y) = \left| \frac{d-2}{d} + \frac{1}{d} \cos{2\pi x} + \frac{1}{d} \cos{2\pi y} \right|,
\]
and for convenience write $G(x) = G(x,0)$.

\begin{lem}
\label{ineqlem}
$|\Pi_j| \le |\Pi_1|$ and $|\Gamma_j| \le |\Pi_1|$ for all $j \ge 1$.
\end{lem}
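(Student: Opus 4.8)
The plan is to move from these trigonometric products to products over a cyclic subgroup of $(\Z/p\Z)^{\ast}$ and then exploit the special arithmetic of the Mersenne prime $p=2^t-1$. Since $0<2^s-1<p$ for $1\le s<t$, the residue $2$ has multiplicative order exactly $t$ modulo $p$, so $H:=\{2^a\bmod p:0\le a<t\}$ is a cyclic subgroup of $(\Z/p\Z)^{\ast}$ of order $t$; moreover multiplication by $2$ modulo $p$ is precisely the cyclic shift of the $t$-digit binary expansion. As $G(x)$ and $G(x,y)$ depend on their arguments only modulo $1$ and $|\cdot|$ is multiplicative, unwinding the definitions gives, for $1\le j\le t-1$ (the only case at issue, since $\Pi_j$ and $\Gamma_j$ are $t$-periodic in $j$ and $\Pi_0=1$ is excluded),
\[
|\Pi_1|=\prod_{a=0}^{t-1}G\Bigl(\tfrac{2^a}{p}\Bigr),\qquad
|\Pi_j|=\prod_{a=0}^{t-1}G\Bigl(\tfrac{2^a(2^j-1)}{p}\Bigr),\qquad
|\Gamma_j|=\prod_{a=0}^{t-1}G\Bigl(\tfrac{2^a}{p},\tfrac{2^{a+j}}{p}\Bigr),
\]
where $G(x)=G(x,0)=\tfrac{d-1}{d}+\tfrac1d\cos2\pi x$. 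Since $a\mapsto(a+j)\bmod t$ permutes $\Z/t\Z$, also $\prod_{a=0}^{t-1}G(2^{a+j}/p)=\prod_{a=0}^{t-1}G(2^a/p)=|\Pi_1|$.

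For $|\Gamma_j|\le|\Pi_1|$ I would first prove the pointwise estimate $G(x,y)^2\le G(x)G(y)$ for all real $x,y$, at least when $d\ge3$: writing $u=\cos2\pi x,\ v=\cos2\pi y\in[-1,1]$, it reads $(d-1+u)(d-1+v)\ge(d-2+u+v)^2$, i.e.
\[
2d-3-(d-3)(u+v)-(u^2+uv+v^2)\ge0 ,
\]
and the left side, concave in $(u,v)$ with unconstrained maximum outside $[-1,1]^2$ once $d\ge3$, attains its minimum over the square on the boundary, where a one-variable check shows it is nonnegative (vanishing only at $u=v=1$). A termwise application of this and the arithmetic--geometric mean inequality then yields
\[
|\Gamma_j|=\prod_{a=0}^{t-1}G\Bigl(\tfrac{2^a}{p},\tfrac{2^{a+j}}{p}\Bigr)\le\prod_{a=0}^{t-1}\sqrt{G\Bigl(\tfrac{2^a}{p}\Bigr)G\Bigl(\tfrac{2^{a+j}}{p}\Bigr)}=\sqrt{|\Pi_1|\cdot|\Pi_1|}=|\Pi_1| .
\]
(For the few small values of $d$ not covered, one checks the inequality directly, exploiting that the arguments $2^a/p$ are bounded away from $\tfrac12$.)

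The bound $|\Pi_j|\le|\Pi_1|$ is the main obstacle, and is genuinely arithmetic. The multiset $\{2^a(2^j-1)\bmod p:0\le a<t\}$ is the coset $(2^j-1)H$, and by the cyclic-shift description it consists of the $t$ rotations of the binary word of $2^j-1$, a single run of $j$ ones among $t$ positions; correspondingly $|\Pi_1|=\prod_{h\in H}G(h/p)$ is the product over the rotations of the one-bit word. Hence the task reduces to showing that, over the coset space $(\Z/p\Z)^{\ast}/H$, the coset $H$ maximizes $cH\mapsto\prod_{h\in cH}G(h/p)=\prod_{h\in cH}\bigl(1-\tfrac2d\sin^2(\pi h/p)\bigr)$ --- at least for $c=2^j-1$. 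Since $G(h/p)$ is decreasing in $\min(h,p-h)$, the distance from $h$ to the nearest multiple of $p$, this is plausible because the rotations of the one-bit word have, collectively, the smallest such distances; I would prove it by ordering the distance-multisets of the two cosets and bounding the ratio of the products factor by factor --- the pointwise domination of $H$ by the other coset fails only at the single largest element of $H$, and one must verify that the surplus from the remaining factors absorbs this deficit. (An alternative route is to upgrade the Gaussian-period inequality $\sum_{h\in H}\cos(2\pi h/p)\ge\sum_{h\in cH}\cos(2\pi h/p)$ from sums to products.) Making this quantitative for every $j$ and every fixed $d$ is the hard step; by comparison the $\Gamma_j$ bound and the reductions above are routine.
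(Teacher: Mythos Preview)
Your treatment of $|\Gamma_j|\le|\Pi_1|$ is correct. The paper simply calls this case ``obvious''; for $d\ge4$ the termwise bound $0\le G(x,y)\le G(x)$ already gives it, and your inequality $G(x,y)^2\le G(x)G(y)$ together with the cyclic-shift identity $\prod_aG(2^{a+j}/p)=\prod_aG(2^a/p)$ is exactly what is needed to cover $d=3$ as well. One exposition slip: for $3\le d\le6$ the unconstrained maximum of your concave quadratic actually sits \emph{inside} $[-1,1]^2$, not outside --- but this is irrelevant, since concavity alone forces the minimum over the square to occur at a corner, and your corner check is fine.

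The real gap is $|\Pi_j|\le|\Pi_1|$. You correctly reduce it to comparing $\prod_{h\in H}G(h/p)$ with $\prod_{h\in(2^j-1)H}G(h/p)$, correctly note the binary-shift description of these cosets for a Mersenne prime, and correctly observe that termwise domination fails at exactly one position --- and then you stop, calling the remaining bookkeeping ``the hard step.'' The paper does not carry this out either: its entire proof of this half of the lemma is a one-line citation of Fact~1 in Chung--Diaconis--Graham, which is precisely this product inequality in the Mersenne setting. So rather than inventing a surplus-versus-deficit argument from scratch, you should consult that reference and either cite it directly or check that their argument goes through with $G(x)=1-\tfrac{2}{d}\sin^2\pi x$ in place of their $\tfrac13+\tfrac23\cos2\pi x$. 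Your alternative route --- upgrading the Gaussian-period \emph{sum} inequality to a product inequality --- is not a shortcut; that passage from sums to products is the entire content of the claim.
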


\begin{proof}
This follows from Fact 1 in~\cite{cdg} in the case of $\Pi_j$ and is obvious in the case of $\Gamma_j$.
\end{proof}

\begin{lem}
\label{simprodlem}
There exists a constant $c_2$ independent of $d$ and $t$ so that for $k \ge 2$,
\[
\prod_{b = k+1}^{\ell}{G\left( 2^{-b} + \frac{2^{-b}}{p} \right)^{-1}} \le 1 + \frac{c_2}{d \cdot 4^k}.
\]
\end{lem}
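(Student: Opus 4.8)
The plan is to reduce the whole product to a geometrically convergent sum of small corrections. First I would unwind the definition: for a real argument $x$ one has $G(x) = G(x,0) = \left| \frac{d-2}{d} + \frac{1}{d}\cos 2\pi x + \frac{1}{d}\cos 0 \right| = \left| 1 - \frac{1}{d}\left(1 - \cos 2\pi x\right) \right|$. Since $1 - \cos 2\pi x \ge 0$, for $d \ge 2$ this value lies in $(0,1]$ (and for the small arguments we actually use, the same formula holds without the absolute value when $d = 1$ as well). In particular every factor $G(\cdot)^{-1}$ in the product is at least $1$, so the product over $b$ from $k+1$ to $\ell$ is bounded above by the product over all $b \ge k+1$; this is what eliminates the (unspecified) upper limit $\ell$ from the estimate, and also handles the degenerate case $\ell < k+1$ trivially.

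Next I would bound a single factor. Writing $x_b = 2^{-b} + 2^{-b}/p = 2^{-b}(1 + 1/p)$ and using $1 - \cos\theta \le \theta^2/2$ together with $(1 + 1/p)^2 \le 16/9 < 2$ (valid since $p = 2^t - 1 \ge 3$), one gets $1 - \cos 2\pi x_b \le 2\pi^2 x_b^2 \le 4\pi^2\, 4^{-b}$. Because $b \ge k+1 \ge 3$ we have $4^{-b} \le 1/64$, so $\frac{1}{d}(1 - \cos 2\pi x_b) \le \frac{4\pi^2}{64\,d} < \frac{1}{2}$ for $d \ge 2$ (the case $d = 1$, if it occurs, is identical with a slightly larger absolute constant). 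Hence the elementary inequality $(1-u)^{-1} \le e^{2u}$ for $0 \le u \le 1/2$ gives
\[
G(x_b)^{-1} = \left(1 - \frac{1 - \cos 2\pi x_b}{d}\right)^{-1} \le \exp\left(\frac{8\pi^2}{d\, 4^b}\right).
\]

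Multiplying these bounds and summing the resulting geometric series,
\[
\prod_{b=k+1}^{\ell} G(x_b)^{-1} \le \exp\left(\frac{8\pi^2}{d}\sum_{b=k+1}^{\infty} 4^{-b}\right) = \exp\left(\frac{8\pi^2}{3\,d\, 4^k}\right).
\]
Finally, since $k \ge 2$ and $d \ge 1$, the exponent $y = \frac{8\pi^2}{3\,d\,4^k}$ is at most the absolute constant $\frac{8\pi^2}{48} = \frac{\pi^2}{6}$, so from $e^y - 1 \le y\,e^y$ we get $e^y \le 1 + y\,e^{\pi^2/6}$, which yields the claimed inequality with $c_2 = \frac{8\pi^2}{3}\,e^{\pi^2/6}$, a constant depending on none of $d$, $t$, $k$. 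The only point requiring any care is checking that $b \ge k+1 \ge 3$ makes the arguments $x_b$ small enough for the three linearizations ($1 - \cos\theta \le \theta^2/2$, $(1-u)^{-1} \le e^{2u}$, $e^y \le 1 + y\,e^y$) to apply with explicit absolute constants, and observing that every factor is at least $1$ so that truncating the product at $\ell$ costs nothing; I do not expect any substantive obstacle beyond this bookkeeping.
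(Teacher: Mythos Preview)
Your proof is correct and follows essentially the same route as the paper: unwind $G(x)=1-\tfrac{1}{d}(1-\cos 2\pi x)$, note each factor $G(x_b)^{-1}\ge 1$ so the product can be extended to $b\to\infty$, linearize via $1-\cos\theta\le\theta^2/2$ and $(1+1/p)^2\le 2$, pass to an exponential of a geometric sum, and finally convert $e^y$ back to $1+O(y)$. The only cosmetic difference is that the paper uses $(1-u)^{-1}\le 1+2u$ followed by $1+x\le e^x$, whereas you go directly through $(1-u)^{-1}\le e^{2u}$; these are equivalent, and your bookkeeping on the range of $u$ and $y$ is if anything more explicit than the paper's.
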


\begin{proof}
\begin{eqnarray*}
&&\prod_{b = k+1}^{\ell}{G\left( 2^{-b} + \frac{2^{-b}}{p} \right)^{-1}} \\
&=& \prod_{b = k+1}^{\ell}{\left| \frac{d-1}{d} + \frac{1}{d} \cos{2\pi \cdot \left( 2^{-b} + \frac{2^{-b}}{p} \right)} \right|^{-1}} \\
&\le& \prod_{b = k+1}^{\infty}{\left| \frac{d-1}{d} + \frac{1}{d} \left( 1 - \frac{1}{2} \left( 2\pi \cdot \left( 2^{-b} + \frac{2^{-b}}{p} \right) \right)^2 \right) \right|^{-1}} \\
&=& \prod_{b = k+1}^{\infty}{\left( 1 - \frac{\pi^2}{2d} \left(1 + \frac{1}{p} \right)^2 4^{-b} \right)^{-1}} \\
&\le& \prod_{b = k+1}^{\infty}{\left( 1 + \frac{2\pi^2}{d} 4^{-b} \right)} \\
&\le& \exp{\left( \frac{2\pi^2}{d} \sum_{b=k+1}^{\infty}{4^{-b}} \right)} \\
&\le& \exp{\left( \frac{2\pi^2}{d} \cdot 4^{-k} \right)} \\
&\le& 1 + \frac{4\pi^2}{d \cdot 4^k}.
\end{eqnarray*}
\end{proof}

\begin{lem}
\label{constlem1}
There exists a constant $c_0$ independent of $d$ and $t$ so that for $t^{1/3} \le j \le t/2$,
\[
1 \le \left| \frac{\Pi_j}{\Pi_1^2} \right| \le 1 + \frac{c_0}{d \cdot 2^j}.
\]
\end{lem}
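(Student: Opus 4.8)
The plan is to express both sides as products of values of the function $G(x)=\frac{d-1}{d}+\frac1d\cos 2\pi x$ at fractions $2^a/p$, match the factors in pairs, and collect the resulting errors using Lemma~\ref{simprodlem}. Since $p=2^t-1$, the element $2$ has order exactly $t$ modulo $p$ (for $0<k<t$ one has $0<2^k-1<p$), so the residues $2^0,\dots,2^{t-1}$ are distinct and $\Pi_j=\prod_{a=0}^{t-1}G\!\left(2^a(2^j-1)/p\right)$, $\Pi_1=\prod_{a=0}^{t-1}G(2^a/p)$. Reindexing the second copy of $\Pi_1$ by the bijection $a\mapsto(a+j)\bmod t$ of $\Z/t$ gives $\Pi_1^2=\prod_{a=0}^{t-1}G(2^a/p)\,G\!\left(2^{(a+j)\bmod t}/p\right)$. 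Using $2^a(2^j-1)=2^{a+j}-2^a$ and reducing modulo $p$ (with $2^{a+j}=2^{(a+j)\bmod t}(p+1)$ when $a+j\ge t$), one sees $G\!\left(2^a(2^j-1)/p\right)=G(m_a/p)$ where $m_a=2^{a+j}-2^a\in(0,p)$ if $a+j<t$ and $m_a=2^a-2^{a+j-t}\in(0,p)$ if $a+j\ge t$. Setting $\pi_a:=2^{(a+j)\bmod t}/p$ when $a+j<t$ and $\pi_a:=2^a/p$ when $a+j\ge t$, I will work from the factorization
\[
\frac{\Pi_j}{\Pi_1^2}=\left(\prod_{a=0}^{t-1}\frac{G(m_a/p)}{G(\pi_a)}\right)\left(\frac{\prod_{a=0}^{t-1}G(\pi_a)}{\Pi_1^2}\right).
\]

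For the lower bound, note that in every case $0<m_a/p<\pi_a$, and that $G$ is strictly decreasing on $(0,\tfrac12)$ with $G(\tfrac12+s)=G(\tfrac12-s)$. When $\pi_a<\tfrac12$ this immediately gives $G(m_a/p)\ge G(\pi_a)$; the only indices with $\pi_a>\tfrac12$ are the (at most two) indices at which $\pi_a=2^{t-1}/p=\tfrac12+\tfrac1{2p}$, and there a short computation using $2^a\ge1$ shows $m_a/p\le\tfrac12-\tfrac1{2p}$, so $G(m_a/p)\ge G(\tfrac12-\tfrac1{2p})=G(\pi_a)$ once more. Hence the first bracket is $\ge1$. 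The second bracket equals $\bigl(\prod_{c=0}^{j-1}G(2^c/p)\bigr)^{-1}\bigl(\prod_{c=0}^{t-j-1}G(2^c/p)\bigr)^{-1}$, a product of reciprocals of numbers in $(0,1]$, hence also $\ge1$; this proves $|\Pi_j/\Pi_1^2|\ge1$.

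For the upper bound, observe that $2^c/p=2^{-(t-c)}+2^{-(t-c)}/p$, so both products in the second bracket are exactly of the shape estimated in Lemma~\ref{simprodlem}; applying it with $(k,\ell)=(t-j,t)$ and $(j,t)$ and using $j\le t/2$ (so $4^{t-j}\ge4^j\ge2^j$) bounds the second bracket by $1+O\!\left(1/(d\,2^j)\right)$. For the first bracket, the appropriate representatives of $m_a/p$ and $\pi_a$ differ by $2^a/p$ if $a+j<t$ and by $2^{a+j-t}/p$ if $a+j\ge t$; since $G\ge\frac{d-2}{d}$ and $|G'|\le\frac{2\pi}{d}$ everywhere (so, assuming $d\ge3$, $|(\log G)'|\le\frac{2\pi}{d-2}$), the mean value theorem gives $\bigl|\log G(m_a/p)-\log G(\pi_a)\bigr|\le\frac{2\pi}{d-2}$ times that difference. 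Summing the differences over all $a$ yields $\frac1p\bigl(\sum_{a=0}^{t-j-1}2^a+\sum_{b=0}^{j-1}2^b\bigr)<\frac{2^{t-j}+2^j}{p}=O(2^{-j})$, again by $j\le t/2$. Thus the first bracket is $\exp\!\bigl(O(1/(d2^j))\bigr)\le1+O(1/(d2^j))$, and multiplying the two bounds gives $|\Pi_j/\Pi_1^2|\le1+c_0/(d2^j)$ for an absolute constant $c_0$.

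The one delicate point — the main obstacle — is the behaviour at the ``transition'' indices $a\approx t-j$, where $\pi_a=2^{(a+j)\bmod t}/p$ passes from being exponentially small to being within $O(p^{-1})$ of $\tfrac12$: there one must use the crude first–derivative estimate for $\log G$ rather than the quadratic approximation $\log G(x)\approx-\frac2d\sin^2\pi x$ (good only for small $x$), and the flatness of $G$ near its minimum is what keeps those terms small. The hypothesis $j\le t/2$ is used exactly to make the wraparound perturbations $2^{a+j-t}/p$ no larger than the others and to keep $t-j$ large, while $j\ge t^{1/3}$ is only needed so that $j\ge2$, allowing Lemma~\ref{simprodlem} to be applied with $k=j$; small exceptional cases, if any, are checked directly.
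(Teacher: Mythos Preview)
Your proof is correct and follows essentially the same approach as the paper's: after unwinding your $\pi_a,m_a$ notation via the substitution $b=t-a$ (and $b=t-a-j$), your ``first bracket'' becomes exactly the paper's two ratio products and your ``second bracket'' is exactly the paper's two tail products handled by Lemma~\ref{simprodlem}, with the same Lipschitz-type estimate (you phrase it via the mean value theorem for $\log G$, the paper via $|\cos x-\cos y|\le|x-y|$ together with $G\ge(d-2)/d$). Your treatment of the lower bound is in fact more explicit than the paper's, which simply asserts it from the factorization; your handling of the two exceptional indices where $\pi_a=2^{t-1}/p>1/2$ is the correct justification.
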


\begin{proof}
\begin{eqnarray}
\left| \frac{\Pi_j}{\Pi_1^2} \right| &=&
\prod_{a=0}^{t-1}{G\left(\frac{2^a(2^j-1)}{p} \right) G\left(\frac{2^a}{p} \right)^{-2}} \nonumber \\
&=& \prod_{b=1}^t{G\left(\frac{2^{t-b}(2^j-1)}{p} \right)G\left(\frac{2^{t-b}}{p} \right)^{-2}} \nonumber \\
&=& \prod_{b=1}^t{G\left(\frac{(p+1)2^{-b}(2^j-1)}{p}\right) G\left(\frac{(
p+1)2^{-b}}{p}\right)^{-2}}  \nonumber \\
&=& \prod_{b=1}^t{G\left(2^{j-b} - 2^{-b} + \frac{2^{j-b}}{p} - \frac{2^{-b}}{p} \right) G\left(2^{-b} + \frac{2^{-b}}{p} \right)^{-2}} \nonumber \\
&=& \prod_{b=1}^j{G\left(2^{-b} + \frac{2^{-b}}{p} - \frac{2^{j-b}}{p}\right) G\left(2^{-b} + \frac{2^{-b}}{p} \right)^{-1}} \nonumber \\
&& \cdot \prod_{b=j+1}^t{G\left(2^{-b} + \frac{2^{-b}}{p} - 2^{j-b} - \frac{2^{j-b}}{p}\right) G\left(2^{-b} + \frac{2^{-b}}{p} \right)^{-1}} \nonumber \\
&& \cdot \prod_{b=1}^{t-j}{G\left(2^{-b}+\frac{2^{-b}}{p}\right)^{-1}}
\cdot \prod_{b=t-j+1}^t{G\left(2^{-b}+\frac{2^{-b}}{p}\right)^{-1}} \nonumber \\
&=& \prod_{b=1}^j{G\left(2^{-b} + \frac{2^{-b}}{p} - \frac{2^{j-b}}{p}\right) G\left(2^{-b} + \frac{2^{-b}}{p} \right)^{-1}} \label{atleastone} \\
&& \cdot \prod_{b=1}^{t-j}{G\left(-2^{-j-b} - \frac{2^{-j-b}}{p} + 2^{-b} + \frac{2^{-b}}{p}\right)G\left(2^{-b}+\frac{2^{-b}}{p}\right)^{-1}}  \nonumber \\
&& \cdot \prod_{b=j+1}^t{G\left(2^{-b} + \frac{2^{-b}}{p} \right)^{-1}}
\prod_{b=t-j+1}^t{G\left(2^{-b}+\frac{2^{-b}}{p}\right)^{-1}}. \nonumber
\end{eqnarray}
Note that by Equation~\ref{atleastone}, it follows that $| \Pi_j/\Pi_1^2 | \ge 1$.  It follows from Lemma~\ref{simprodlem} that
\begin{eqnarray*}
\prod_{b=j+1}^t{G\left( 2^{-b} + \frac{2^{-b}}{p} \right)^{-1}} &\le& 1 + \frac{c_2}{d \cdot 4^j} \\
\prod_{b=t-j+1}^t{G\left( 2^{-b} + \frac{2^{-b}}{p} \right)^{-1}} &\le& 1 + \frac{c_2}{d \cdot 4^{t-j}}. \\
\end{eqnarray*}
Furthermore (using the fact that $G(x) \ge (d-2)/d$),
\begin{eqnarray*}
&& \prod_{b=1}^j{G\left(2^{-b}+\frac{2^{-b}}{p}-\frac{2^{j-b}}{p}\right) G\left(2^{-b}+\frac{2^{-b}}{p}\right)^{-1}} \\
&\le&
\prod_{b=1}^j{\frac{G\left(2^{-b} + \frac{2^{-b}}{p} \right) + \frac{1}{d} \left| \cos{2\pi \left(2^{-b} + \frac{2^{-b}}{p} - \frac{2^{j-b}}{p} \right)} - \cos{2\pi \left( 2^{-b} + \frac{2^{-b}}{p} \right)} \right|}{G\left(2^{-b} + \frac{2^{-b}}{p} \right)}} \\
&\le& \prod_{b = 1}^j{\left( 1 + \frac{2\pi}{d} \cdot \frac{2^{j-b}}{p} \cdot G\left(2^{-b}+\frac{2^{-b}}{p}\right)^{-1} \right)} \\
&\le& \prod_{b = 1}^j{\left( 1 + \frac{2\pi}{p(d-2)} 2^{j-b} \right)} \\
&\le& \exp{\left( \frac{2\pi}{p(d-2)} \sum_{b=1}^j{2^{j-b}} \right)} \\
&\le& \exp{\left( \frac{2\pi \cdot 2^j}{p(d-2)} \right)} \\
&\le& 1 + \frac{c_3}{d \cdot 2^j},
\end{eqnarray*}
and similarly
\begin{eqnarray*}
&& \prod_{b=1}^{t-j}{\frac{G\left( 2^{-b} + \frac{2^{-b}}{p} - 2^{-j-b} - \frac{2^{-j-b}}{p} \right)}{G\left( 2^{-b} + \frac{2^{-b}}{p} \right)}} \\
&\le& \prod_{b=1}^{t-j}{\left( 1 + \frac{2\pi}{d-2} \cdot \left(2^{-j-b} + \frac{2^{-j-b}}{p} \right) \right)} \\
&\le& \exp{\left( \frac{4\pi}{d-2} \cdot 2^{-j} \right)} \\
&\le& 1 + \frac{c_4}{d \cdot 2^j}.
\end{eqnarray*}
It follows that there is an absolute constant $c_0$ independent of $d$ and $t$ such that
\[
1 \le \left| \frac{\Pi_j}{\Pi_1^2} \right| \le 1 + \frac{c_0}{d \cdot 2^j}
\]
for $t^{1/3} \le j \le t/2$.
\end{proof}

\begin{lem}
\label{constlem2}
There exists a constant $c_1$ independent of $d$ and $t$ so that for $t^{1/3} \le j \le t/2$,
\[
1 - \frac{c_1}{d \cdot 2^j} \le \left| \frac{\Gamma_j}{\Pi_1^2} \right| \le 1 + \frac{c_1}{d \cdot 2^j}.
\]
\end{lem}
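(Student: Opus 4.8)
For $d\ge 3$ every factor of $\Pi_1$ equals $G(2^a/p)=(d-1+\cos 2\pi 2^a/p)/d>0$. Since $p=2^t-1$ divides $2^{tm}-1$ for every $m$, the quantity $\cos(2\pi 2^a/p)$ depends only on $a\bmod t$, so $G(2^a 2^j/p)=G(2^{(a+j)\bmod t}/p)$; because $a\mapsto(a+j)\bmod t$ permutes $\{0,\dots,t-1\}$ we get $\prod_{a=0}^{t-1}G(2^a2^j/p)=\prod_{a=0}^{t-1}G(2^a/p)=\Pi_1$, and hence $\Pi_1^2=\prod_{a=0}^{t-1}G(2^a/p)\,G(2^a2^j/p)$. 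Therefore
\[
\frac{\Gamma_j}{\Pi_1^2}=\prod_{a=0}^{t-1}\frac{G(2^a/p,\,2^a2^j/p)}{G(2^a/p)\,G(2^a2^j/p)}.
\]
The plan then rests on the elementary identity, valid for all reals $x,y$ with the denominators positive (true for $d\ge 3$),
\[
\frac{G(x,y)}{G(x)\,G(y)}=1-\frac{(1-\cos 2\pi x)(1-\cos 2\pi y)}{(d-1+\cos 2\pi x)(d-1+\cos 2\pi y)},
\]
which is verified by clearing denominators. Writing $w_a$ for the subtracted fraction at $x=2^a/p$, $y=2^a2^j/p$, we obtain $\Gamma_j/\Pi_1^2=\prod_a(1-w_a)$ with $w_a\ge 0$; as each factor is $\le 1$ this gives $|\Gamma_j/\Pi_1^2|\le 1\le 1+c_1/(d2^j)$ at once, which is the upper bound. (This is why the proof is shorter than that of Lemma~\ref{constlem1}: the algebraic identity does the work that the telescoping did there.)

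\textbf{The lower bound.} Using $\prod_a(1-w_a)\ge 1-\sum_a w_a$ (legitimate once each $w_a\le 1$, which holds for $d\ge 5$) together with $w_a\le (1-\cos 2\pi x)(1-\cos 2\pi y)/(d-2)^2$, it suffices to prove, for an absolute constant $C_0$,
\[
\Sigma_j:=\sum_{a=0}^{t-1}\bigl(1-\cos\tfrac{2\pi 2^a}{p}\bigr)\bigl(1-\cos\tfrac{2\pi 2^a2^j}{p}\bigr)\le\frac{C_0}{2^j},
\]
and then take $c_1=C_0\cdot\sup_{d}\,d/(d-2)^2$, a finite number. To estimate $\Sigma_j$ I would apply $1-\cos 2\pi\theta\le 2\pi^2\theta^2$ (valid for all $\theta$) with $\theta=2^a/p$ in the first factor and with $\theta=2^{(a+j)\bmod t}/p$ in the second — this is the essential point, since reducing the exponent modulo $t$ is what keeps the subsequent geometric series convergent in the right direction. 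Splitting the sum at $a=t-j$ according to whether $(a+j)\bmod t$ is $a+j$ or $a+j-t$ turns $\Sigma_j$ into two explicit geometric series in $16^{a}$; summing them and using $16^t=(2^t)^4=(p+1)^4$ and $p=2^t-1\ge 2^{2j}-1$ (from $j\le t/2$) collapses everything to a bound of the form $C_0\,4^{-j}\le C_0\,2^{-j}$, independent of $t$. Combining, $\Gamma_j/\Pi_1^2\ge 1-C_0/((d-2)^2 2^j)\ge 1-c_1/(d2^j)$.

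\textbf{Main obstacle and leftover cases.} The only genuine computation is the bound on $\Sigma_j$: one must reduce $a+j$ modulo $t$ before invoking the quadratic estimate, and then check that the Mersenne relation $16^t=(p+1)^4$ cancels the $t$‑dependence so the constant is absolute; I expect this to be where the care is needed, everything else being bookkeeping. For $d\in\{3,4\}$ the factors $1-w_a$ need not be positive, but $\Sigma_j=O(4^{-j})$ already forces every $w_a$ to be small once $4^j$ exceeds an absolute constant, so the product inequality applies for all but finitely many $j$; for those finitely many $j$ (where $2^j$ is bounded) one uses $|\Gamma_j/\Pi_1^2|\le 1$ and enlarges $c_1$ so that $1-c_1/(d2^j)\le 0$, exactly the way the range $t^{1/3}\le j\le t/2$ is absorbed in Lemma~\ref{constlem1}.
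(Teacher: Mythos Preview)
Your argument is correct and takes a genuinely different route from the paper. The paper changes variables $b=t-a$, splits the product $\Gamma_j/\Pi_1^2$ into four ranges of $b$, and in each range bounds the ratio of a perturbed $G$-factor to an unperturbed one via the Lipschitz estimate $|\cos u-\cos v|\le|u-v|$; four separate products of the form $\prod(1\pm O(\text{small}))$ are then multiplied. Your approach bypasses all of that: the cyclic-shift observation $\prod_a G(2^{a+j}/p)=\Pi_1$ together with the exact identity
\[
\frac{G(x,y)}{G(x)G(y)}=1-\frac{(1-\cos 2\pi x)(1-\cos 2\pi y)}{(d-1+\cos 2\pi x)(d-1+\cos 2\pi y)}
\]
writes $\Gamma_j/\Pi_1^2=\prod_a(1-w_a)$ with $w_a\ge 0$, so the upper bound is immediate and the lower bound becomes the single geometric-series estimate on $\Sigma_j$. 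What you gain is economy and a sharper bound (you actually get $O(4^{-j})$ rather than $O(2^{-j})$, and the upper bound is exactly $1$); what the paper's telescoping buys is uniformity in $d$ without a separate small-$d$ discussion, since it never needs $1-w_a\ge 0$.

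One small repair: in your closing paragraph you invoke ``$|\Gamma_j/\Pi_1^2|\le 1$'' to dispose of the finitely many bad $j$ when $d=3$, but that inequality is precisely what can fail there (a factor $1-w_a$ can be as negative as $-3$, so the product need not lie in $[-1,1]$). Instead use Lemma~\ref{ineqlem} ($|\Gamma_j|\le|\Pi_1|$) together with Lemma~\ref{pi1lem} ($|\Pi_1|\ge e^{-3\pi^2/d}$) to get an absolute bound $|\Gamma_j/\Pi_1^2|\le e^{3\pi^2/d}$, and then enlarge $c_1$ accordingly. For $d\ge 4$ your argument already gives $0\le 1-w_a\le 1$ unconditionally (strict positivity since $\cos(2\pi\cdot 2^a/p)\ne -1$), so no patching is needed there.
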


\begin{proof}
\begin{eqnarray*}
\left| \frac{\Gamma_j}{\Pi_1^2} \right|
&=& \prod_{a=0}^{t-1}{G\left(\frac{2^a}{p}, \frac{2^{a+j}}{p}\right)G\left(\frac{2^a}{p} \right)^{-2}} \\
&=& \prod_{b=1}^t{G\left(\frac{2^{t-b}}{p}, \frac{2^{t-b+j}}{p} \right) G\left(\frac{2^{t-b}}{p} \right)^{-2}} \\
&=& \prod_{b=1}^t{G\left(\frac{(p+1)2^{-b}}{p}, \frac{(p+1)2^{-b+j}}{p} \right) G\left(\frac{(p+1)2^{-b}}{p}\right)^{-2}} \\
&=& \prod_{b=1}^t{G\left(2^{-b} + \frac{2^{-b}}{p}, 2^{-b+j} +  \frac{2^{-b+j}}{p} \right) G\left(2^{-b} + \frac{2^{-b}}{p}\right)^{-2}} \\
&=& \prod_{b=1}^j{G\left(2^{-b} + \frac{2^{-b}}{p}, \frac{2^{-b+j}}{p} \right) G\left(2^{-b} + \frac{2^{-b}}{p}\right)^{-1}} \\
&& \cdot \prod_{b=j+1}^t{G\left(2^{-b} + \frac{2^{-b}}{p}, 2^{-b+j} + \frac{2^{-b+j}}{p} \right) G\left(2^{-b} + \frac{2^{-b}}{p} \right)^{-1}} \\
&& \cdot \prod_{b=1}^{t-j}{G\left(2^{-b} + \frac{2^{-b}}{p}\right)^{-1}}
\cdot \prod_{b=t-j+1}^t{G\left(2^{-b} + \frac{2^{-b}}{p}\right)^{-1}} \\
&=& \prod_{b=1}^j{G\left(2^{-b} + \frac{2^{-b}}{p}, \frac{2^{-b+j}}{p} \right) G\left(2^{-b} + \frac{2^{-b}}{p}\right)^{-1}} \\
&& \cdot \prod_{b=1}^{t-j}{G\left(2^{-b-j} + \frac{2^{-b-j}}{p}, 2^{-b} + \frac{2^{-b}}{p} \right) G\left(2^{-b} + \frac{2^{-b}}{p}\right)^{-1}} \\
&& \cdot \prod_{b=j+1}^t{G\left(2^{-b} + \frac{2^{-b}}{p}\right)^{-1}}
\prod_{b=t-j+1}^t{G\left(2^{-b} + \frac{2^{-b}}{p}\right)^{-1}}
\end{eqnarray*}
As before,
\begin{eqnarray*}
\prod_{b=j+1}^t{\frac{1}{G\left( 2^{-b} + \frac{2^{-b}}{p} \right)}} &\le& 1 + \frac{c_2}{d \cdot 4^j} \\
\prod_{b=t-j+1}^t{\frac{1}{G\left( 2^{-b} + \frac{2^{-b}}{p} \right)}} &\le& 1 + \frac{c_2}{d \cdot 4^{t-j}}. \\
\end{eqnarray*}
Furthermore (using the fact that $G(x) \ge (d-2)/d$),
\begin{eqnarray*}
&& \prod_{b=1}^j{G\left(2^{-b} + \frac{2^{-b}}{p}, \frac{2^{-b+j}}{p} \right) G\left(2^{-b} + \frac{2^{-b}}{p}\right)^{-1}} \\
&\le& \prod_{b=1}^j{\frac{G\left(2^{-b} + \frac{2^{-b}}{p}\right) + \frac{1}{d} \left| 1 - \cos{2\pi \cdot \left( \frac{2^{-b+j}}{p} \right)} \right|}{G\left(2^{-b} + \frac{2^{-b}}{p}\right)}} \\
&\le& \prod_{b=1}^j{\left( 1 + \frac{2\pi}{d-2} \cdot \frac{2^{-b+j}}{p} \right)} \\
&\le& \exp{\left( \frac{2^{j+1}\pi}{(d-2)p} \sum_{b=1}^j{2^{-b}} \right)} \\
&\le& 1 + \frac{2^{j+2} \pi}{(d-2)p},
\end{eqnarray*}
and similarly,
\begin{eqnarray*}
&&\prod_{b=1}^{t-j}{G\left(2^{-b-j} + \frac{2^{-b-j}}{p}, 2^{-b} + \frac{2^{-b}}{p} \right) G\left(2^{-b} + \frac{2^{-b}}{p}\right)^{-1}} \\
&\le& \prod_{b=1}^{t-j}{\left( 1 + \frac{2\pi}{d-2} \cdot \left(2^{-b-j} + \frac{2^{-b-j}}{p} \right) \right)} \\
&\le& \exp{\left( \frac{2^{2-j}\pi}{d-2} \sum_{b=1}^{t-j}{2^{-b}} \right)} \\
&\le& 1 + \frac{2^{3-j}\pi}{d-2}.
\end{eqnarray*}
For the lower bound, we have
\begin{eqnarray*}
&& \prod_{b=1}^j{G\left(2^{-b} + \frac{2^{-b}}{p}, \frac{2^{-b+j}}{p} \right) G\left(2^{-b} + \frac{2^{-b}}{p}\right)^{-1}} \\
&\ge& \prod_{b=1}^j{\frac{G\left(2^{-b} + \frac{2^{-b}}{p}\right) - \frac{1}{d} \left| 1 - \cos{2\pi \cdot \left( \frac{2^{-b+j}}{p} \right)} \right|}{G\left(2^{-b} + \frac{2^{-b}}{p}\right)}} \\
&\ge& \prod_{b=1}^j{\left( 1 - \frac{2\pi}{d} \cdot \frac{2^{-b+j}}{p} \right)} \\
&\ge& \exp{\left( -\frac{2^j\pi}{dp} \sum_{b=1}^j{2^{-b}} \right)} \\
&\ge& 1 - \frac{2^{j-1} \pi}{dp},
\end{eqnarray*}
and similarly,
\begin{eqnarray*}
&&\prod_{b=1}^{t-j}{G\left(2^{-b-j} + \frac{2^{-b-j}}{p}, 2^{-b} + \frac{2^{-b}}{p} \right) G\left(2^{-b} + \frac{2^{-b}}{p}\right)^{-1}} \\
&\ge& \prod_{b=1}^{t-j}{\left( 1 - \frac{2\pi}{d} \cdot \left(2^{-b-j} + \frac{2^{-b-j}}{p} \right) \right)} \\
&\ge& \exp{\left( -\frac{2^{1-j}\pi}{d} \sum_{b=1}^{t-j}{2^{-b}} \right)} \\
&\ge& 1 - \frac{2^{-j}\pi}{d}.
\end{eqnarray*}
It follows that there is an absolute constant independent of $d$ and $t$ such that
\[
1 - \frac{c_1}{d \cdot 2^j} \le \left| \frac{\Gamma_j}{\Pi_1^2} \right| \le 1 + \frac{c_1}{d \cdot 2^j}
\]
for $t^{1/3} \le j \le t/2$.
\end{proof}

\begin{lem}
\label{symlem}
$\Pi_j = \Pi_{t-j}$ and $\Gamma_j = \Gamma_{t-j}$.
\end{lem}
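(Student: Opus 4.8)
The plan is to exploit the identity $p = 2^t - 1$, which gives $2^t \equiv 1 \pmod p$ and hence says that multiplication by $2$ cyclically permutes the set of residues $\{2^0, 2^1, \dots, 2^{t-1}\}$ modulo $p$. Since every factor in $\Pi_j$ and in $\Gamma_j$ is a value of $\cos$, it depends on its argument only modulo $2\pi$, so each factor depends on the relevant integer only modulo $p$. Consequently a product over $a = 0, \dots, t-1$ of such factors is unchanged if we cyclically shift the index, $a \mapsto a + k$, or equivalently if we multiply the ``seed'' $2^a$ by a fixed power of $2$. I would record this invariance first as a one-line observation (the multisets $\{2^a \bmod p : 0 \le a \le t-1\}$ and $\{2^{a+k} \bmod p : 0 \le a \le t-1\}$ coincide), and then apply it twice.

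For $\Pi_j$, write $g(x) = \tfrac{d-1}{d} + \tfrac1d \cos(2\pi x)$, so that $\Pi_j = \prod_{a=0}^{t-1} g(2^a(2^j-1)/p)$. I would shift $a \mapsto a + j$ in $\Pi_{t-j}$, which multiplies the seed by $2^j$, and use $2^j(2^{t-j}-1) = 2^t - 2^j \equiv 1 - 2^j = -(2^j-1) \pmod p$ to get $\Pi_{t-j} = \prod_a g(-2^a(2^j-1)/p)$. Since $\cos$ is even, $g$ is even, and this equals $\Pi_j$.

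For $\Gamma_j$, write $h(x,y) = \tfrac{d-2}{d} + \tfrac1d\cos(2\pi x) + \tfrac1d\cos(2\pi y)$, a function symmetric in its two arguments, so $\Gamma_j = \prod_{a=0}^{t-1} h(2^a/p, 2^{a+j}/p)$. In $\Gamma_{t-j} = \prod_a h(2^a/p, 2^{a+t-j}/p)$ I would multiply both arguments by $2^j$ (again a cyclic shift of the product) and use $2^{a+t} \equiv 2^a \pmod p$ to obtain $\prod_a h(2^{a+j}/p, 2^a/p)$; the symmetry of $h$ then identifies this with $\Gamma_j$.

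I do not expect a genuine obstacle here: the content is entirely the cyclic-shift invariance coming from $2^t \equiv 1 \pmod p$, together with the evenness of $\cos$ and the symmetry of the $\Gamma$-summand. The only place warranting a little care is making the reindexing rigorous — spelling out that a cyclic permutation of $\{0,\dots,t-1\}$ leaves the product invariant because the corresponding multiset of residues $2^a \bmod p$ is unchanged.
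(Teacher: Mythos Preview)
Your proposal is correct and essentially identical to the paper's proof: both arguments rest on $2^t\equiv 1\pmod p$, the periodicity and evenness of $\cos$, and (for $\Gamma_j$) the symmetry of the two-variable summand, with the cyclic reindexing $a\mapsto a+j$ making the match explicit. The only cosmetic difference is that for $\Pi_j$ the paper carries out the same reduction via the algebraic substitution $2^{t-j}=(p+1)2^{-j}$ inside each factor rather than by an explicit index shift, but the content is the same.
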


\begin{proof}
\begin{eqnarray*}
\Pi_{t-j} &=& \prod_{a = 0}^{t-1}{G\left( \frac{2^a(2^{t-j}-1)}{p} \right)} \\
&=& \prod_{a = 0}^{t-1}{G\left( \frac{2^{a+j}((p+1)2^{-j}-1)}{p} \right)} \\
&=& \prod_{a = 0}^{t-1}{G\left( \frac{2^a((p+1)-2^j)}{p} \right)} \\
&=& \prod_{a = 0}^{t-1}{G\left( \frac{2^a(2^j-1)}{p} \right)} \\
&=& \Pi_j \\
\Gamma_{t-j} &=& \prod_{a = 0}^{t-1}{G\left( \frac{2^a}{p}, \frac{2^{a+t-j}}{p} \right)} \\
&=& \prod_{a = 0}^{t-1}{G\left( \frac{2^{a+j}}{p}, \frac{2^{a+t}}{p} \right)} \\
&=& \prod_{a = 0}^{t-1}{G\left( \frac{2^{a+j}}{p}, \frac{2^a}{p} \right)} \\
&=& \Gamma_j.
\end{eqnarray*}
\end{proof}

\begin{lem}
\label{biglem}
\[
\frac{1}{dt} \sum_{j=0}^{t-1}{\left( \frac{\Pi_j^r}{\Pi_1^2} \right)^r} + \frac{1}{t} \left( 1 - \frac{1}{d} \right) \sum_{j=0}^{t-1}{\left( \frac{\Gamma_j}{\Pi_1^2} \right)^r} \to 1,
\]
as $t \to \infty$.
\end{lem}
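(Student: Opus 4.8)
The quantity on the left is, by Lemma~\ref{meanlem} together with the relation $dt\,\Pi_1^r=\sqrt{dt}\,|\Pi_1^d|^{-\lambda}$ built into the choice of $r$ in the proof of Theorem~\ref{lowerboundthm}, just $\mathrm{Var}_{P_n}(f)/E_{P_n}(f)^2+1$; so the point of this lemma is exactly that the variance is negligible compared with the square of the mean, which is what makes the Second Moment Method in Theorem~\ref{lowerboundthm} go through. The plan is to locate the main contribution in the ``bulk'' of indices $j$ of moderate size, where $\Pi_j$ and $\Gamma_j$ are both extremely close to $\Pi_1^2$, and to show that the ``edge'' indices near $j=0$ and (via the symmetry $\Pi_j=\Pi_{t-j}$, $\Gamma_j=\Gamma_{t-j}$ of Lemma~\ref{symlem}) near $j=t$ contribute a vanishing amount. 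Fix the cutoff $M=\lceil t^{1/3}\rceil$ and split each of the two sums into the single term $j=0$, the edge block $E=\{1,\dots,M-1\}\cup\{t-M+1,\dots,t-1\}$, and the bulk block $B=\{M,\dots,t-M\}$.

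For $j\in B$, reduce by symmetry to $M\le j\le t/2$ and apply Lemmas~\ref{constlem1} and~\ref{constlem2}, which give $1\le \Pi_j/\Pi_1^2\le 1+c_0/(d2^j)$ and $1-c_1/(d2^j)\le \Gamma_j/\Pi_1^2\le 1+c_1/(d2^j)$. Since $r$ grows only logarithmically in $t$ (from the choice of $r$ and the two-sided bound on $|\Pi_1^d|$ in Lemma~\ref{pi1lem}) while $2^j\ge 2^{t^{1/3}}$ grows faster than any power of $t$, we get $(\Pi_j/\Pi_1^2)^r=1+O(r/(d2^j))$ and $(\Gamma_j/\Pi_1^2)^r=1+O(r/(d2^j))$ uniformly over $B$, with $\sum_{j\ge M}r/(d2^j)=O(r/(d2^M))\to 0$. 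Hence $\sum_{j\in B}(\Pi_j/\Pi_1^2)^r=|B|+o(1)=t-2M+O(1)$ and likewise for $\Gamma$, so
\[
\frac{1}{dt}\sum_{j\in B}\Bigl(\tfrac{\Pi_j}{\Pi_1^2}\Bigr)^r\longrightarrow\frac1d,\qquad \frac{d-1}{dt}\sum_{j\in B}\Bigl(\tfrac{\Gamma_j}{\Pi_1^2}\Bigr)^r\longrightarrow\frac{d-1}{d},
\]
and these add up to exactly $1$. This is the source of the limit.

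Next bound the edge block. By Lemma~\ref{ineqlem}, $|\Pi_j|\le|\Pi_1|$ and $|\Gamma_j|\le|\Pi_1|$ for every $j\ge 1$, so (with $r$ even) $(\Pi_j/\Pi_1^2)^r\le \Pi_1^{-r}$ and $(\Gamma_j/\Pi_1^2)^r\le \Pi_1^{-r}$. From $\Pi_1^r=(dt)^{-1/2}|\Pi_1^d|^{-\lambda}$ we have $\Pi_1^{-r}=\sqrt{dt}\,|\Pi_1^d|^{\lambda}\le\sqrt{dt}$, so
\[
\frac{1}{dt}\sum_{j\in E}\Bigl(\tfrac{\Pi_j}{\Pi_1^2}\Bigr)^r\le \frac{|E|}{dt}\sqrt{dt}\le \frac{2M}{\sqrt{dt}}=O\bigl(t^{-1/6}\bigr)\longrightarrow 0,
\]
and the same bound, up to the harmless factor $d-1$, controls $\frac{d-1}{dt}\sum_{j\in E}(\Gamma_j/\Pi_1^2)^r$.

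Finally the two $j=0$ terms. For the second sum, $\Gamma_0=\prod_{a=0}^{t-1}\bigl[\tfrac{d-2}{d}+\tfrac2d\cos(2\pi 2^a/p)\bigr]$; the factor with $a=t-1$ equals $\tfrac{d-2}{d}+\tfrac2d\cos\!\bigl(\pi+\tfrac{\pi}{p}\bigr)$, which tends to $\tfrac{d-4}{d}$, so for large $t$ one has $\Gamma_0\le C_d<1$ with $C_d$ depending only on $d$, whence $\tfrac{d-1}{dt}(\Gamma_0/\Pi_1^2)^r=(d-1)\Gamma_0^{\,r}\,|\Pi_1^d|^{2\lambda}$ tends to $0$: since $r+d\lambda$ is of order $\log t$, either $r\to\infty$, forcing $\Gamma_0^{\,r}\to 0$, or $\lambda\to\infty$, forcing $|\Pi_1^d|^{2\lambda}\to 0$. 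The one genuinely delicate contribution, and the step I expect to be the main obstacle, is the $j=0$ term of the \emph{first} sum: because $\Pi_0=1$ it equals $\tfrac{1}{dt}\Pi_1^{-2r}=|\Pi_1^d|^{2\lambda}$, which by Lemma~\ref{pi1lem} is at most $e^{-\pi^2\lambda/2}$. In the regime relevant to Theorem~\ref{lowerboundthm} the surplus parameter $\lambda$ may be taken to grow with $t$ (it must, whenever $-2\ln|\Pi_1^d|$ is strictly below $6\pi^2$, for the hypothesis $n<dt(\ln t+\ln d-1)/6\pi^2$ to be compatible with the choice of $r$), so $|\Pi_1^d|^{2\lambda}\to 0$; this is precisely the place where the exact form of $r$ and the two-sided estimate of Lemma~\ref{pi1lem} must be used in tandem, and it forces one to carry the dependence on $\lambda$ through all the previous steps rather than treating $\lambda$ as inert. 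Collecting the three pieces shows that the left-hand side of the lemma equals $1+|\Pi_1^d|^{2\lambda}+o(1)\to 1$, completing the proof.
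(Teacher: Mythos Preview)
Your approach is essentially the same as the paper's: split the index range at $t^{1/3}$, use Lemmas~\ref{constlem1} and~\ref{constlem2} to show each bulk term satisfies $(\Pi_j/\Pi_1^2)^r,(\Gamma_j/\Pi_1^2)^r=1+o(1)$, fold the upper half onto the lower half via Lemma~\ref{symlem}, and crush the edge block using Lemma~\ref{ineqlem} together with $\Pi_1^{-r}=\sqrt{dt}\,|\Pi_1^d|^{\lambda}\le\sqrt{dt}$. The paper carries this out by showing each of $\tfrac1t\sum(\Pi_j/\Pi_1^2)^r$ and $\tfrac1t\sum(\Gamma_j/\Pi_1^2)^r$ is $1+o(1)$ and then taking the weighted average $\tfrac1d+\tfrac{d-1}{d}=1$; your version computes the two weighted pieces directly, but the estimates are identical.

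One point where you are in fact more careful than the paper: the paper absorbs $j=0$ into the edge block and bounds it by $\Pi_1^{-r}$ via Lemma~\ref{ineqlem}, but that lemma only applies for $j\ge 1$ and indeed $\Pi_0=1$, so the $j=0$ term is $\Pi_1^{-2r}$, not $\Pi_1^{-r}$. Your explicit computation $\tfrac{1}{dt}\Pi_1^{-2r}=|\Pi_1^d|^{2\lambda}$ and your observation that, under the hypothesis $n<\tfrac{dt(\ln t+\ln d-1)}{6\pi^2}$ of Theorem~\ref{lowerboundthm} combined with the strict bound $-2\ln|\Pi_1^d|<6\pi^2$ from Lemma~\ref{pi1lem}, the surplus $\lambda$ necessarily tends to infinity, is exactly what is needed to dispose of this term. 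The paper tacitly relies on the same phenomenon (the choice of $r$ forces $\lambda\to\infty$), so your write-up is a cleaner version of the intended argument rather than a different one.
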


\begin{proof}
Note that
\begin{eqnarray*}
\Pi_j &=& \prod_{a = 0}^{t-1}{\left[ \frac{d-1}{d} + \frac{1}{d} \cos{ \left( \frac{2\pi \cdot 2^a(2^j-1)}{p} \right) } \right]} \\
&=& \prod_{a = 0}^{t-1}{G\left( \frac{2^a(2^j-1)}{p} \right)} \\
\Gamma_j &=& \prod_{a = 0}^{t-1}{\left[ \frac{d-2}{d} + \frac{1}{d} \cos{ \left( \frac{2\pi \cdot 2^a}{p} \right)} + \frac{1}{d} \cos{ \left( \frac{2\pi \cdot 2^{a+j}}{p} \right)} \right]} \\
&=& \prod_{a = 0}^{t-1}{G\left( \frac{2^a}{p}, \frac{2^{a+j}}{p} \right)}.
\end{eqnarray*}
By Lemmas~\ref{constlem1} and~\ref{constlem2},
\begin{eqnarray*}
\sum_{t^{1/3} \le j \le t/2}{\left| \left( \frac{\Pi_j}{\Pi_1^2} \right)^r - 1 \right|} &\le& \frac{c_5 t r}{d \cdot 2^{t^{1/3}}} < \frac{c_6 \ln{d}}{2^{t^{1/4}}} \\
\sum_{t^{1/3} \le j \le t/2}{\left| \left( \frac{\Gamma_j}{\Pi_1^2} \right)^r - 1 \right|} &\le& \frac{c_5 t r}{d \cdot 2^{t^{1/3}}}  < \frac{c_6 \ln{d}}{2^{t^{1/4}}}.
\end{eqnarray*}
Then, using Lemma~\ref{symlem} and Lemma~\ref{ineqlem}, it follows that
\begin{eqnarray*}
\frac{1}{t} \sum_{j=0}^{t-1}{\left( \frac{\Pi_j}{\Pi_1^2} \right)^r}
&\le& \frac{2}{t} \left( \sum_{0 \le j < t^{1/3}}{\left( \frac{\Pi_j}{\Pi_1^2} \right)^r} + \sum_{t^{1/3} \le j \le t/2}{\left( \frac{\Pi_j}{\Pi_1^2} \right)^r} \right) \\
&\le& \frac{2}{t} \left( \sum_{0 \le j < t^{1/3}}{\Pi_1^{-r}} + \sum_{t^{1/3} \le j \le t/2}{\left( \frac{\Pi_j}{\Pi_1^2} \right)^r} \right) \\
&\le& \frac{2}{t} \left( \sum_{0 \le j < t^{1/3}}{2 d^{1/2} t^{1/2}} + \sum_{t^{1/3} \le j \le t/2}{\left( \frac{\Pi_j}{\Pi_1^2} \right)^r} \right) \\
&=& 1 + o(1).
\end{eqnarray*}
Similarly,
\begin{eqnarray*}
\frac{1}{t} \sum_{j=0}^{t-1}{\left( \frac{\Pi_j}{\Pi_1^2} \right)^r} = 1 + o(1).
\end{eqnarray*}
This proves the lemma.
\end{proof}

\bibliographystyle{plain}
\def\cprime{$'$}

\end{document}